\newtheorem{theorem}{Theorem}[section]
\newtheorem{proposition}[theorem]{Proposition}
\newtheorem{conjecture}[theorem]{Conjecture}
\newtheorem{lemma}[theorem]{Lemma}
\newtheorem{corollary}[theorem]{Corollary}
\newtheorem{definition}[theorem]{Definition}
\theoremstyle{remark}
\newtheorem{remark}[theorem]{Remark}
\newtheorem{example}[theorem]{Example}
\newcommand{\alert}[1]{{\color{DarkGreen}\emph{#1}}}
\newcommand{\ie}{\text{i.e.}\;}
\newcommand{\tpt}{(\mathbf{2\!+\!2})}
\newcommand{\ww}{\mathbf{w}}
\newcommand{\xx}{\mathbf{x}}
\newcommand{\JJ}{\mathcal{J}}
\newcommand{\MM}{\mathcal{M}}
\newcommand{\mm}{\text{M}}
\newcommand{\jj}{\text{J}}
\newcommand{\PP}{\mathcal{P}}
\newcommand{\QQ}{\mathcal{Q}}
\newcommand{\LL}{\mathcal{L}}
\newcommand{\Pm}[2]{#1^{\langle #2\rangle}}
\newcommand{\DM}{\emph{\textbf{D\!M}}}
\newcommand{\mdyck}[2]{\ifstrequal{#2}{1}{D_{#1}}{D_{#1}^{(#2)}}}
\newcommand{\mtam}[2]{\ifstrequal{#2}{1}{\mathcal{T}_{#1}}{\mathcal{T}_{#1}^{(#2)}}}
\newcommand{\mcambk}[2]{\ifstrequal{#2}{1}{\mathcal{C}_{#1}}{\Pm{\mathcal{C}_{#1}}{#2}}}
\newcommand{\dmn}[2]{\Pm{D_{#1}}{#2}}
\newcommand{\tmn}[2]{\Pm{\mathcal{T}_{#1}}{#2}}
\newcommand{\cf}{\mathfrak{c}}
\newcommand{\pf}{\mathfrak{p}}
\newcommand{\qf}{\mathfrak{q}}
\newcommand{\rf}{\mathfrak{r}}
\newcommand{\hh}{\mathbf{h}}
\newcommand{\uu}{\mathbf{u}}
\newcommand{\cb}{\mathbf{c}}
\newcommand{\pb}{\mathbf{p}}
\newcommand{\qb}{\mathbf{q}}
\newcommand{\xb}{\mathbf{x}}
\newcommand{\zb}{\mathbf{z}}
\newcommand{\cOne}{orange!80!gray}
\newcommand{\cTwo}{green!50!gray}
\newcommand{\dyckThree}[4]{
	\begin{tikzpicture}[scale=#3]\tiny
		\def\r{.4};
		\ifnumgreater{#4}{0}{\draw[white!50!gray](0,0) grid[step=\r] (3*\r,3*\r);}{}
		\draw(1.5*\r,3.25*\r);
		\foreach \a/\b/\c in {#1}{
			\draw[black](0,0) -- (\a*\r,1*\r) -- (\b*\r,1*\r) -- (\b*\r,2*\r) -- (\c*\r,2*\r) 
			  -- (\c*\r,3*\r) -- (3*\r,3*\r) -- cycle;
			\begin{pgfonlayer}{background}
				\fill[#2] (0,0) -- (\a*\r,1*\r) -- (\b*\r,1*\r) -- (\b*\r,2*\r) -- (\c*\r,2*\r) 
				  -- (\c*\r,3*\r) -- (3*\r,3*\r) -- cycle;
			\end{pgfonlayer}
		
		}
	\end{tikzpicture}
}
\newcommand{\dyckTwoThree}[5]{
	\begin{tikzpicture}[scale=#4]\tiny
		\def\r{.4};
		\fill[#2](0,0) -- (0,1*\r) -- (1*\r,1*\r) -- (1*\r,.5*\r) -- cycle;
		\fill[#3](1*\r,.5*\r) -- (1*\r,1*\r) -- (2*\r,1*\r) --  cycle;
		\fill[#2](2*\r,1*\r) -- (2*\r,2*\r) -- (3*\r,2*\r) -- (3*\r,1.5*\r) -- cycle;
		\fill[#3](3*\r,1.5*\r) -- (3*\r,2*\r) -- (4*\r,2*\r) --  cycle;
		\fill[#2](4*\r,2*\r) -- (4*\r,3*\r) -- (5*\r,3*\r) -- (5*\r,2.5*\r) -- cycle;
		\fill[#3](5*\r,2.5*\r) -- (5*\r,3*\r) -- (6*\r,3*\r) --  cycle;
 		\ifnumgreater{#5}{0}{\draw[white!50!gray](0,0) grid[step=\r] (6*\r,3*\r);}{}
		\foreach \a/\b/\c in {#1}{
			\draw[black](0,0) -- (\a*\r,1*\r) -- (\b*\r,1*\r) -- (\b*\r,2*\r) -- (\c*\r,2*\r) 
			  -- (\c*\r,3*\r) -- (6*\r,3*\r) -- cycle;
			\begin{pgfonlayer}{background}
				\ifthenelse{\b<2*\r}
					{\fill[#3](1*\r,1*\r) -- (1*\r,2*\r) -- (2*\r,2*\r) -- (2*\r,1*\r) -- cycle;}{}
				\ifthenelse{\b<1*\r}
					{\fill[#2](0*\r,1*\r) -- (0*\r,2*\r) -- (1*\r,2*\r) -- (1*\r,1*\r) -- cycle;}{}
				\ifthenelse{\c<4*\r}
					{\fill[#3](3*\r,2*\r) -- (3*\r,3*\r) -- (4*\r,3*\r) -- (4*\r,2*\r) -- cycle;}{}
				\ifthenelse{\c<3*\r}
					{\fill[#2](2*\r,2*\r) -- (2*\r,3*\r) -- (3*\r,3*\r) -- (3*\r,2*\r) -- cycle;}{}
				\ifthenelse{\c<2*\r}
					{\fill[#3](1*\r,2*\r) -- (1*\r,3*\r) -- (2*\r,3*\r) -- (2*\r,2*\r) -- cycle;}{}
				\ifthenelse{\c<1*\r}
					{\fill[#2](0*\r,2*\r) -- (0*\r,3*\r) -- (1*\r,3*\r) -- (1*\r,2*\r) -- cycle;}{}
			 \end{pgfonlayer}{background}
			
		}
	\end{tikzpicture}
}
\newcommand{\dyckFour}[3]{
	\begin{tikzpicture}[scale=#3]\tiny
		\def\r{.2};
		\foreach \a/\b/\c/\d in {#1}{
			\draw[black](0,0) -- (\a*\r,1*\r) -- (\b*\r,1*\r) -- (\b*\r,2*\r) -- (\c*\r,2*\r) 
			  -- (\c*\r,3*\r) -- (\d*\r,3*\r) -- (\d*\r,4*\r) -- (4*\r,4*\r) -- cycle;
			\begin{pgfonlayer}{background}
				\fill[#2](0,0) -- (\a*\r,1*\r) -- (\b*\r,1*\r) -- (\b*\r,2*\r) -- (\c*\r,2*\r) 
				  -- (\c*\r,3*\r) -- (\d*\r,3*\r) -- (\d*\r,4*\r) -- (4*\r,4*\r);
			\end{pgfonlayer}
		}
	\end{tikzpicture}
}
\author{Myrto Kallipoliti}
\address{Fak. f\"ur Mathematik, Universit\"at Wien, Oskar-Morgenstern-Platz 1, 1090 Vienna, Austria}
\email{myrto.kallipoliti@univie.ac.at}
\author{Henri M\"uhle}
\address{LIAFA, Universit{\'e} Paris Diderot, Case 7014, F-75205 Paris Cedex 13, France}
\email{henri.muehle@liafa.univ-paris-diderot.fr}
\thanks{This work was funded by the FWF Research Grant No. Z130-N13.  The second author was also partially supported by a Public Grant overseen by the French National Research Agency (ANR) as part of the ``Investissements d'Avenir'' Program (Reference: ANR-10-LABX-0098).} 
\title{The $m$-Cover Posets and Their Applications}
\keywords{m-cover poset, Path poset, Northeast paths, EL-shellability, Left-modularity, Trimness, (2+2)-free posets, Chord posets, Rooted Trees, M{\"o}bius function, Fu{\ss}-Catalan combinatorics, m-Dyck paths, m-Tamari lattice}
\subjclass[2010]{06A07 (primary), and 20F55 (secondary)}
\begin{document}

\begin{abstract}
	In this article we introduce the $m$-cover poset of an arbitrary bounded poset $\mathcal{P}$, which is a certain subposet of the $m$-fold direct product of $\mathcal{P}$ with itself.  Its ground set consists of multichains of $\mathcal{P}$ that contain at most three different elements, one of which has to be the least element of $\mathcal{P}$, and the other two elements have to form a cover relation in $\mathcal{P}$.  We study the $m$-cover poset from a structural and topological point of view.  In particular, we characterize the posets whose $m$-cover poset is a lattice for all $m>0$, and we characterize the special cases, where these lattices are EL-shellable, left-modular, or trim.  Subsequently, we investigate the $m$-cover poset of the Tamari lattice $\mathcal{T}_{n}$, and we show that the smallest lattice that contains the $m$-cover poset of $\mathcal{T}_{n}$ is isomorphic to the $m$-Tamari lattice $\mathcal{T}_{n}^{(m)}$ introduced by Bergeron and Pr{\'e}ville-Ratelle.  We conclude this article with a conjectural desription of an explicit realization of $\mathcal{T}_{n}^{(m)}$ in terms of $m$-tuples of Dyck paths.
\end{abstract}

\maketitle
\section{Introduction and Results}
  \label{sec:introduction}
Partially ordered sets (posets for short) play an important unifying role in combinatorics, and furthermore they provide a deep connection between combinatorics and other branches of mathematics.  It is often useful to construct new posets from old, and there are several ways for doing this, such as direct product, ordinal product or ordinal sum of given posets.  This paper is dedicated to the study and to the application of a new poset construction, called the \alert{$m$-cover poset} which we introduce for a bounded poset $\PP$ and a positive integer $m$.  The $m$-cover poset of $\PP$, denoted by $\Pm{\PP}{m}$, has as ground set the set of multichains of $\PP$ that have length $m$, and that contain at most three distinct elements of the form $\hat{0},p,q$ such that $\hat{0}$ is the least element of $\PP$ and $p$ is covered by $q$.  The order relation is given ``component-wise'', with respect to the order of $\PP$ (see Section~\ref{sec:mcover} for the precise definition).  

In the first part of this article, we focus on the study of this construction.  More precisely, we give formulas for the cardinality of the $m$-cover poset of an arbitrary bounded poset $\PP$, and we characterize the elements with precisely one lower or one upper cover.  Our structural investigation culminates in the following characterization of the bounded posets whose $m$-cover poset is a lattice for all $m$.

\begin{theorem}\label{thm:mcover_lattice_prop}
	Let $\PP$ be a bounded poset with least element $\hat{0}$ and greatest element $\hat{1}$. The $m$-cover poset $\Pm{\PP}{m}$ is a lattice for all $m>0$ if and only if the Hasse diagram of $\PP$ with $\hat{0}$ removed is a tree rooted at $\hat{1}$.
\end{theorem}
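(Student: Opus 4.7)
My plan is to prove the two directions of the equivalence separately.

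$(\Rightarrow)$: The contrapositive. Note that $\PP\setminus\{\hat{0}\}$ is always connected, because every non-zero element of $\PP$ admits a saturated chain to $\hat{1}$. Thus failing to be a tree rooted at $\hat{1}$ is equivalent to containing a cycle, and taking a minimum vertex of such a cycle forces some $p\in\PP\setminus\{\hat{0},\hat{1}\}$ to have at least two distinct upper covers $q_1,q_2$ in $\PP$. I will show that in this situation $\Pm{\PP}{2}$ fails to be a lattice. Consider $\uu=(p,q_1)$ and $\mathbf{v}=(p,q_2)$, which lie in $\Pm{\PP}{2}$ via the cover pairs $(p,q_1)$ and $(p,q_2)$. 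Let $r$ be a minimal common upper bound of $q_1,q_2$ in $\PP$; such $r$ exists by boundedness, and $r>q_1,q_2$ strictly because $q_1,q_2$ are incomparable covers of $p$. Following saturated chains from each $q_i$ up to $r$, I select lower covers $r_i\lessdot r$ with $r_i\geq q_i$; minimality of $r$ forces $r_1\neq r_2$, since otherwise this common element would be a strictly smaller common upper bound of $q_1,q_2$. Then $(r_1,r),(r_2,r)\in\Pm{\PP}{2}$ are incomparable upper bounds of $\uu,\mathbf{v}$. A short check shows that no element of $\Pm{\PP}{2}$ is simultaneously an upper bound of $\uu,\mathbf{v}$ and a lower bound of both $(r_1,r),(r_2,r)$: such an $(a,b)$ would be forced to have $b=r$ (by minimality of $r$) and $a\geq p$, $a\leq r_1\wedge r_2<r$, with $(a,r)\in\Pm{\PP}{2}$ requiring $a\lessdot r$, incompatible with $a<r_i$. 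Hence $\uu\vee\mathbf{v}$ does not exist.

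$(\Leftarrow)$: Suppose the Hasse diagram of $\PP\setminus\{\hat{0}\}$ is a tree rooted at $\hat{1}$, so every $x\in\PP\setminus\{\hat{0},\hat{1}\}$ has a unique upper cover, which I denote $\pi(x)$. A first observation is that $\PP$ itself is a lattice: the join of two non-zero elements is their lowest common ancestor in the tree, while the meet is the smaller element when they are comparable and $\hat{0}$ otherwise. Since $\Pm{\PP}{m}$ is finite with minimum $(\hat{0},\ldots,\hat{0})$ and maximum $(\hat{1},\ldots,\hat{1})$, it is a lattice as soon as meets exist (joins then arise as meets of upper bounds). My strategy is to prove that the componentwise meet $\ww=(u_1\wedge v_1,\ldots,u_m\wedge v_m)$ in $\PP^m$ always lies in $\Pm{\PP}{m}$; being the greatest lower bound in $\PP^m$, it is then automatically the meet in $\Pm{\PP}{m}$.

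Verifying this claim requires a case analysis on the cover pairs $(x_1,y_1)$ and $(x_2,y_2)$ that carry the non-zero values of $\uu$ and $\mathbf{v}$ respectively (with $y_i=\pi(x_i)$ when $x_i\neq\hat{0}$). When $x_1,x_2$ are comparable in the tree, say $x_1\leq x_2$, the element $x_2$ lies on the unique upward chain from $x_1$ to $\hat{1}$, and direct computation of tree-meets shows every $w_i$ lands in $\{\hat{0},x_1,y_1\}$. When $x_1,x_2$ are incomparable with tree-join $L$, one splits according to whether $y_1=L$, $y_2=L$, or $y_1,y_2<L$ strictly. The crucial subcase is $y_1=y_2=L$ (the cover pairs of $\uu$ and $\mathbf{v}$ share the same top); here $\ww$ could in principle take both siblings $x_1$ and $x_2$ as values. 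Writing $\uu=(\hat{0}^{a_1},x_1^{b_1},L^{c_1})$ and $\mathbf{v}=(\hat{0}^{a_2},x_2^{b_2},L^{c_2})$, one checks that the existence of an index $i$ with $w_i=x_1$ requires $a_2+b_2<a_1+b_1$, while the existence of an index $j$ with $w_j=x_2$ requires $a_1+b_1<a_2+b_2$; these inequalities are incompatible, so at most one of $x_1,x_2$ appears in $\ww$, and the values of $\ww$ sit inside a valid cover-pair triple. All remaining subcases either force $\ww=(\hat{0},\ldots,\hat{0})$ or collapse its values into a single cover pair's triple.

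The forward direction is relatively direct once the correct witness pair $(\uu,\mathbf{v})$ is spotted. The main obstacle is the backward direction: carrying out the case analysis completely and, above all, verifying the combinatorial exclusion in the ``sibling'' subcase $y_1=y_2=L$, where the multichain structure of $\uu$ and $\mathbf{v}$ is what prevents the componentwise meet from falling outside $\Pm{\PP}{m}$.
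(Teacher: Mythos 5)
Your proof is correct, but it does not follow the paper's route. The paper deduces this theorem as a corollary of a more general characterization (its Theorem~\ref{thm:mcover_lattice}: $\Pm{\PP}{m}$ is a lattice for all $m$ iff $\PP$ is a lattice with $p\wedge q\in\{\hat{0},p,q\}$ for all $p,q$), and the hard work sits in that intermediate result. Your backward direction is in substance the same argument as the paper's: show that the componentwise meet of two elements of $\Pm{P}{m}$ again lies in $\Pm{P}{m}$, hence is the meet there, and conclude by finiteness and boundedness; the difference is that you organize the case analysis around the tree (comparable values versus incomparable values with a lowest common ancestor $L$), which is cleaner than the paper's general case split on the value of $p_{1}\wedge q_{1}$, and your treatment of the sibling subcase $y_{1}=y_{2}=L$ via the incompatible inequalities $a_{2}+b_{2}<a_{1}+b_{1}$ and $a_{1}+b_{1}<a_{2}+b_{2}$ is exactly the point that needs checking. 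Your forward direction, however, is genuinely different: the paper takes a maximal pair $p,q$ with $p\wedge q=z\notin\{\hat{0},p,q\}$ and exhibits a failure of \emph{meets} in $\Pm{\PP}{m}$, whereas you take an element $p$ with two upper covers $q_{1},q_{2}$ (extracted from a minimal vertex of a cycle), pass to a minimal common upper bound $r$ with two distinct lower covers $r_{1},r_{2}$ above $q_{1},q_{2}$, and exhibit a failure of \emph{joins} for $(p,q_{1})$ and $(p,q_{2})$ in $\Pm{\PP}{2}$. Both witnesses are valid; yours buys a self-contained, shorter proof of this specific theorem (it never needs the meet-condition characterization), while the paper's detour buys the stronger standalone Theorem~\ref{thm:mcover_lattice} and the observation, used later in Remark~\ref{rem:no_sublattice}, that meets in $\Pm{\PP}{m}$ are always componentwise. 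The only spots worth tightening in a final write-up are the degenerate cases in the backward direction (one of the two tuples equal to $(\hat{0}^{m})$ or carrying a single nonzero value) and, in the forward direction, an explicit remark that the candidate $(a,b)$ cannot have $a=\hat{0}$ (since $a\geq p>\hat{0}$) or $a=r$ (since $a\leq r_{1}<r$) before invoking $a\lessdot r$; both are routine.
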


We continue our study of the $m$-cover posets by characterizing the cases where this poset is an EL-shellable, a left-modular or a trim lattice.  Before we state the corresponding results, we need to fix some notation.  Let $\PP_{k,l}$ denote the bounded poset whose proper part is the disjoint union of a $k$-chain and an $l$-antichain, and let $\pf$ be a northeast path, namely a lattice path consisting only of north-steps and east-steps.  Further, let $\PP_{k,l;\pf}$ denote the \alert{path poset} of $\PP_{k,l}$ and $\pf$, namely a certain poset that is constructed from $\PP_{k,l}$ by adding elements according to the steps in $\pf$, see Definition~\ref{def:path_poset}.  Then, we have the following results.

\begin{theorem}\label{thm:mcover_path_poset}
	Let $\PP$ be a bounded poset such that $\Pm{\PP}{m}$ is a lattice for all $m>0$.  The following are equivalent:
	\begin{enumerate}[(a)]
		\item $\PP$ is either a singleton or $\PP\cong\PP_{k,l;\pf}$ for some $k,l\geq 0$ and some northeast path $\pf$;
		\item $\PP$ is $\tpt$-free; 
		\item $\Pm{\PP}{m}$ is left-modular;\quad and
		\item $\Pm{\PP}{m}$ is EL-shellable.
	\end{enumerate}
\end{theorem}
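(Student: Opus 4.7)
The plan is to prove the equivalences via (a) $\Leftrightarrow$ (b) and (a) $\Rightarrow$ (d) $\Rightarrow$ (c) $\Rightarrow$ (b), relying throughout on Theorem~\ref{thm:mcover_lattice_prop}: the Hasse diagram $T$ of $\PP$ with $\hat{0}$ removed is a rooted tree with root $\hat{1}$, so two elements of the proper part of $\PP$ are comparable if and only if one is an ancestor of the other in $T$.

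For (a) $\Leftrightarrow$ (b), observe that a $\tpt$-configuration $a<b,\;c<d$ in $\PP$ forces two disjoint ancestor--descendant pairs of $T$ that sit in distinct subtrees of their common ancestor; equivalently, some internal vertex of $T$ has at least two children whose subtrees are non-trivial. Forbidding this yields a rooted tree consisting of a single spine together with leaves hanging off vertices along the spine. Encoding the attachment pattern of the $l$ leaves on the spine of length $k+1$ by a northeast path $\pf$ identifies such a tree with $\PP_{k,l;\pf}$, and a direct inspection of the definition confirms the converse: every $\PP_{k,l;\pf}$ is $\tpt$-free (and singletons are trivially so).

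For (a) $\Rightarrow$ (d), I would construct an edge labeling of $\Pm{\PP}{m}$ whose labels are pairs consisting of the affected coordinate and the underlying cover relation of $\PP$, ordered lexicographically. The tree structure of $T$ makes the underlying cover well defined, since every non-minimum element of $\PP$ has a unique parent in $T$. The key verification is that every interval of $\Pm{\PP}{m}$ contains exactly one weakly increasing maximal chain and that this chain is lexicographically smallest; this is where the caterpillar shape of $T$ is essential, since it precludes two competing ascending paths within an interval. The resulting labeling is then an EL-labeling, and its lexicographically smallest maximal chain is left-modular (either by direct verification or by invoking McNamara's characterization of EL-labelings arising from left-modular chains), which gives (d) $\Rightarrow$ (c) from the same construction. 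For (c) $\Rightarrow$ (b) I would argue by contrapositive: if $\PP$ contains a $\tpt$-pattern $a<b,\;c<d$, then already inside $\Pm{\PP}{m}$ for small $m$ one can exhibit a low-rank subinterval generated by the corresponding cover relations that is a non-left-modular lattice (a ``diamond-on-diamond'' glued at the bottom). Its presence obstructs left-modularity of every maximal chain of $\Pm{\PP}{m}$ passing through it, and likewise obstructs any EL-labeling, closing the cycle.

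\textbf{Main obstacle.} The most delicate step is proving that the proposed labeling is actually an EL-labeling in every interval of $\Pm{\PP}{m}$, not only in rank-$2$ intervals. The ``at most three distinct elements'' constraint on the ground set makes the combinatorics of higher-rank intervals intricate: not every coordinate-wise cover of the ambient direct product lies in $\Pm{\PP}{m}$, so a careful induction on $m$ (or on the spine length of $\PP$), leveraging the path-poset description, will be needed to establish unique weakly increasing maximal chains throughout. The structural step (b) $\Rightarrow$ (a) is subtler than it looks as well, since the precise match between an arbitrary $\tpt$-free spine-with-leaves tree and the parametric family $\PP_{k,l;\pf}$ requires a careful bijective encoding by the path $\pf$.
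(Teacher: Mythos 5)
Your skeleton (tree/caterpillar structure for $(a)\Leftrightarrow(b)$, a positive construction for the shellability statements, and a $\tpt$-pattern producing a bad interval for the converse) is the right shape, and the $(a)\Leftrightarrow(b)$ part matches the paper's argument via Proposition~\ref{prop:path_poset_char}. However, the two implications that would close your cycle both have genuine gaps. First, your route $(a)\Rightarrow(d)$ rests on an edge-labeling whose correctness you explicitly leave open, and already its well-definedness is doubtful: in a cover $\bigl(\hat{0}^{l_{0}},p^{l_{1}},q^{l_{2}}\bigr)\lessdot\bigl(\hat{0}^{l_{0}-1},p^{l_{1}+1},q^{l_{2}}\bigr)$ of $\Pm{\PP}{m}$ the affected coordinate jumps from $\hat{0}$ to $p$, which is \emph{not} a cover relation of $\PP$ unless $p$ is an atom, so ``the underlying cover relation of $\PP$'' is not available as a label. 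The paper avoids this entirely: it proves $(a)\Rightarrow(c)$ by exhibiting an explicit maximal chain and verifying Liu's criterion (Theorem~\ref{thm:left_modular}) coordinate case by coordinate case, by induction on the northeast path with Lemma~\ref{lem:induction_base} as base case, and then gets $(c)\Rightarrow(d)$ for free from Theorem~\ref{thm:left_modular_el}. Note also that what you call ``$(d)\Rightarrow(c)$'' is really a second consequence of hypothesis $(a)$, not an implication from $(d)$ alone; your overall logic still closes only if your contrapositives $\lnot(b)\Rightarrow\lnot(c)$ and $\lnot(b)\Rightarrow\lnot(d)$ hold.

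That is the second gap. A lattice is left-modular when \emph{some} maximal chain consists of left-modular elements; a ``bad'' subinterval does not obstruct this unless you show either that no such chain can exist anywhere in $\Pm{\PP}{m}$, or that left-modularity is inherited by intervals (true, but a theorem you would have to invoke, e.g.\ from Thomas's work on trim lattices), so ``obstructs left-modularity of every maximal chain passing through it'' proves nothing about chains that avoid the interval. EL-shellability, by contrast, manifestly restricts to intervals, which is why the paper runs the converse only through $(d)$: given a $\tpt$-pattern $x<y$, $x'<y'$, it shows (with some care, e.g.\ verifying $x\vee x'=y\vee y'=z$) that the proper part of $[\hat{0},z]$ splits into at least two disjoint pieces each of length $\geq 1$, whence its order complex is disconnected of dimension $\geq 1$ and the interval cannot be EL-shellable; this interval sits inside $\PP$, hence inside $\Pm{\PP}{m}$. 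Your ``low-rank diamond-on-diamond'' is also imprecise: the obstruction interval need not have low rank, since the chains from $x$ to $y\vee y'$ can be long. To repair your proof with minimal change, replace the direct EL-labeling by the left-modular chain verification, cite Theorem~\ref{thm:left_modular_el} for $(c)\Rightarrow(d)$, and run the converse as $\lnot(b)\Rightarrow\lnot(d)$ via the disconnected-interval argument.
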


\begin{theorem}\label{thm:mcover_trim}
	Let $\PP$ be a bounded poset such that $\Pm{\PP}{m}$ is a lattice for all $m>0$.  Then, $\Pm{\PP}{m}$ is trim if and only if $\PP\cong\PP_{k,1;\pf}$ for some $k\geq 0$, and some northeast path $\pf$ consisting only of north-steps.
\end{theorem}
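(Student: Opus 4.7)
Since a lattice is \emph{trim} precisely when it is both extremal and left-modular, Theorem~\ref{thm:mcover_path_poset} already reduces the problem to the case $\PP\cong\PP_{k,l;\pf}$ (the singleton case being trivially trim). What remains is to determine which triples $(k,l,\pf)$ make the lattice $\Pm{\PP_{k,l;\pf}}{m}$ extremal; that is, which make its length equal to both the number of join-irreducibles and the number of meet-irreducibles.

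I would begin by computing these three invariants. Since every cover in $\Pm{\PP}{m}$ raises a single entry of a multichain along a cover of $\PP$, the length of $\Pm{\PP_{k,l;\pf}}{m}$ equals $m\cdot\ell(\PP_{k,l;\pf})$. The join-irreducibles and meet-irreducibles can be enumerated with the help of the earlier characterizations of elements with a unique lower, respectively upper, cover: each such element is a multichain of the form $(\hat{0}^a,p^b,q^c)$ of a specific shape depending on which cover $p\lessdot q$ it uses, and each cover of $\PP$ contributes a number of such multichains governed by the local structure of the Hasse diagram of $\PP$ around $p$ and $q$.

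For the ``if'' direction, note that when $l=1$ and $\pf$ consists only of north-steps, the Hasse diagram of $\PP_{k,1;\pf}$ with $\hat{0}$ removed (which by Theorem~\ref{thm:mcover_lattice_prop} is a tree rooted at $\hat{1}$) degenerates into a single path, so that $\PP_{k,1;\pf}$ is essentially a chain. An explicit count then shows that the three invariants of $\Pm{\PP}{m}$ all coincide, giving extremality. For the ``only if'' direction, I would argue that as soon as $l\geq 2$ or $\pf$ contains an east-step, the Hasse diagram of $\PP$ develops a genuine branching; at each such branch, the associated cover relations contribute strictly more join-irreducibles (or, by the dual argument, meet-irreducibles) to $\Pm{\PP}{m}$ than they do steps to the length, so that extremality fails. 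The main obstacle will be the precise bookkeeping in this last step: for every cover $p\lessdot q$ of $\PP$, one must count how many join- and meet-irreducibles it produces in $\Pm{\PP}{m}$, and show that any additional branching strictly enlarges these counts relative to $\ell(\Pm{\PP}{m})$.
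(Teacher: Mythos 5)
Your overall strategy --- reduce to $\PP\cong\PP_{k,l;\pf}$ via Theorem~\ref{thm:mcover_path_poset} and then decide extremality by comparing $\ell\bigl(\Pm{\PP}{m}\bigr)$ with the numbers of join- and meet-irreducibles --- is the same as the paper's, but two of your key structural claims are wrong as stated. First, for $k\geq 1$ the poset $\PP_{k,1;N^{s}}$ is \emph{not} ``essentially a chain'': its Hasse diagram with $\hat{0}$ removed is a path descending from the new top to the old greatest element, which then \emph{branches} into the $k$-chain and the single leaf $a_{1}$ (already $\PP_{1,1}$ is the four-element Boolean lattice). So the ``if'' direction cannot be dispatched by appealing to a chain; you must run the count on the actual poset, and here Proposition~\ref{prop:mcover_irreducibles} matters in full: $\bigl\lvert\mm\bigl(\Pm{\PP}{m}\bigr)\bigr\rvert$ is \emph{not} simply $m\bigl\lvert\mm(\PP)\bigr\rvert$, because additional meet-irreducibles of the form $\bigl(\hat{0}^{l},\hat{1}^{m-l}\bigr)$ appear exactly when $\hat{1}\in\jj(\PP)$, and $\bigl(\hat{0}^{m}\bigr)$ contributes exactly when $\hat{0}\in\mm(\PP)$ (compare Corollary~\ref{cor:mcover_extremal}). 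Your plan never engages with these correction terms, and they are precisely what decide the borderline cases.

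Second, the ``only if'' mechanism you propose --- that each branching contributes strictly more join-irreducibles than steps to the length --- fails for east-steps. Take $\pf=NE$ and $l=1$: the east-step creates a new join-irreducible atom but simultaneously destroys the join-irreducibility of the top (which now has two lower covers), so $\bigl\lvert\jj(\PP_{k,1;NE})\bigr\rvert=k+2=\ell(\PP_{k,1;NE})$ and there is no surplus of join-irreducibles over the length. What actually goes wrong is that the east-step raises $\lvert\mm\rvert$ by one while leaving $\lvert\jj\rvert$ unchanged, so after any factor $NE$ in $\pf$ one has $\lvert\jj\rvert\neq\lvert\mm\rvert$ permanently; this is the paper's argument, and it rests on a step-by-step recursion for how $\ell$, $\lvert\jj\rvert$ and $\lvert\mm\rvert$ change when one appends $N$ or $E$ (an east-step increases $\lvert\jj\rvert$ only when the \emph{preceding} step is also an east-step), not on a local count at a branch vertex. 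Your parenthetical ``or, by the dual argument'' does not supply this recursion, and without it the only-if direction does not go through.
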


\smallskip

In the second part of this article, we apply this construction to a special family of lattices, the \alert{Tamari lattices} $\mtam{n}{1}$.  These lattices were originally defined in \cite{tamari62algebra} as partial orders on the set of binary bracketings of a string of length $n+1$ using $n$ pairs of parentheses.  Their cardinality is given by the $n$-th Catalan number, defined by $\text{Cat}(n)=\tfrac{1}{n+1}\tbinom{2n}{n}$.  The Tamari lattices are a well-studied, important family of lattices, with a huge impact on many, seemingly unrelated fields of mathematics.  Much of this impact comes from the fact that the Hasse diagram of $\mtam{n}{1}$ is isomorphic (as a graph) to the $1$-skeleton of the $(n-1)$-dimensional associahedron~\cite{stasheff63homotopy1}.  The Tamari lattices also form an important family of lattices that enjoys many lattice-theoretic properties.  See for instance \cite{hoissen12associahedra} for a recent survey on the impact of the Tamari lattices.

Bergeron and Pr{\'e}ville-Ratelle introduced a generalization of $\mtam{n}{1}$, the so-called \alert{$m$-Tamari lattice} $\mtam{n}{m}$, whose cardinality is given by the $(m,n)$-th Fu{\ss}-Catalan number, defined by $\text{Cat}^{(m)}(n)=\tfrac{1}{mn+1}\tbinom{(m+1)n}{n}$.  This lattice occurs in the computation of the graded Frobenius characteristic of the spaces of higher diagonal harmonics~\cite{bergeron12higher}.  It follows from \cite{bousquet11number} that $\mtam{n}{m}$ can be embedded as an interval in $\mtam{mn}{1}$, but so far no direct connection between $\mtam{n}{1}$ and $\mtam{n}{m}$ is available\footnote{It might seem odd that \cite{bousquet11number} proves a result on the $m$-Tamari lattices, which were introduced in \cite{bergeron12higher}, a paper that was published a year after \cite{bousquet11number}.  However, a first version of \cite{bergeron12higher} appeared on the arXiv in May 2011, while the first version of \cite{bousquet11number} appeared on the arXiv in June 2011.}.  We use the $m$-cover poset of $\mtam{n}{1}$ to provide such a connection. 

\begin{theorem}\label{thm:mtamari}
	For $m,n>0$, we have $\mtam{n}{m}\cong\DM\bigl(\tmn{n}{m}\bigr)$, where $\DM$ denotes the Dedekind-MacNeille completion.
\end{theorem}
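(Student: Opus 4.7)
The plan is to invoke the classical characterization of the Dedekind--MacNeille completion: for a subposet $Q$ of a complete lattice $L$, one has $L\cong\DM(Q)$ if and only if $Q$ sits inside $L$ as an order embedding whose image is both join-dense and meet-dense. Accordingly, I would proceed in three phases: construct an order embedding $\phi\colon\tmn{n}{m}\hookrightarrow\mtam{n}{m}$, check join-density of $\phi\bigl(\tmn{n}{m}\bigr)$, and then check meet-density.

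For the construction of $\phi$, I would work with the $m$-Dyck path realization of $\mtam{n}{m}$ together with the usual Dyck path model of $\mathcal{T}_n$. A typical element of $\tmn{n}{m}$ is a multichain $(\hat 0^{a},p^{b},q^{c})$ with $a+b+c=m$ and $p\lessdot q$ in $\mathcal{T}_n$ (the degenerate cases using only one or two of the three blocks are recovered by allowing $b$ or $c$, or both, to be zero). Since a cover $p\lessdot q$ in $\mathcal{T}_n$ is realized by the rotation of a single valley, I would declare $\phi$ to send such a multichain to the $m$-Dyck path obtained from the minimum of $\mtam{n}{m}$ by performing the associated local move, repeated in a column range determined by the triple $(a,b,c)$. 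Injectivity then comes from the fact that distinct cover pairs are supported at distinct valleys and distinct multiplicity triples produce distinct column ranges, while the equivalence of $x\leq y$ in $\tmn{n}{m}$ with $\phi(x)\leq\phi(y)$ in $\mtam{n}{m}$ should follow by directly comparing the componentwise order on multichains with the local description of the cover relations of the $m$-Tamari lattice.

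The technical core of the argument is density. Because $\mtam{n}{m}$ is finite, it suffices to show that $\phi\bigl(\tmn{n}{m}\bigr)$ contains every join-irreducible and every meet-irreducible element of $\mtam{n}{m}$. For the join-irreducibles, I would appeal to the description of cover relations in the $m$-Tamari lattice due to Bergeron and Pr\'eville-Ratelle: a join-irreducible $m$-Dyck path is one obtained from the minimum by a single elementary move, and the set of such moves is naturally parameterized by a cover pair $p\lessdot q$ of $\mathcal{T}_n$ together with a ``level'' datum. This matches exactly the parameterization of the elements of $\tmn{n}{m}$ involving one cover pair, so every join-irreducible lies in $\phi\bigl(\tmn{n}{m}\bigr)$. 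A symmetric argument, using meet-irreducibles and the maximum of $\mtam{n}{m}$, establishes meet-density.

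The main obstacle is establishing a tight bijection between the one-cover multichains in $\tmn{n}{m}$ and the join/meet-irreducibles of $\mtam{n}{m}$, and verifying that this bijection is simultaneously compatible with both orders. On the $\tmn{n}{m}$ side the bookkeeping is clean, being encoded by a cover pair and a multiplicity triple; on the $\mtam{n}{m}$ side one must translate this data into the precise shape of a single-flip $m$-Dyck path and argue completeness. A related delicacy is that $\phi$ must truly be an \emph{embedding}: because the $m$-Tamari order is strictly finer than the naive componentwise comparison on stacked Dyck paths, confirming that $x<y$ forces $\phi(x)<\phi(y)$, but not conversely, requires invoking the explicit cover description of $\mtam{n}{m}$ at each step.
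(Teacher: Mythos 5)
Your overall strategy (embed $\tmn{n}{m}$ into $\mtam{n}{m}$, verify join- and meet-density of the image, and invoke the characterization of the Dedekind--MacNeille completion) is a legitimate route in principle, but it founders on the first step, and this is precisely the difficulty the paper is built to avoid. A general element of $\tmn{n}{m}$ has the form $\bigl(\hat{0}^{l_{0}},\pf^{l_{1}},\qf^{l_{2}}\bigr)$ where $\pf\lessdot\qf$ is an \emph{arbitrary} cover relation of $\mtam{n}{1}$; the path $\pf$ can be any Dyck path whatsoever, so such an element cannot be encoded as ``the minimum of $\mtam{n}{m}$ with a single local move repeated over a column range''---that recipe reaches at best the atoms and their relatives, not the whole poset. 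Moreover, the natural candidate for your $\phi$, namely interleaving the $m$ height sequences of the multichain into one $m$-Dyck path (the inverse of the strip decomposition of Definition~\ref{def:strip_decomposition}), is demonstrably \emph{not} an order embedding of the full posets: Remark~\ref{rem:delta_not_order_preserving_rot} exhibits $\pf\leq_{\text{rot}}\pf'$ in $D_{3}^{(2)}$ whose strips fail to be componentwise comparable in rotation order, and Remark~\ref{rem:delta_not_order_reflecting_dom} gives the failure in the other direction. So no global order-preserving injection is available by these means, and you never actually construct one.

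A second concrete error: you identify the join-irreducibles of $\mtam{n}{m}$ with paths ``obtained from the minimum by a single elementary move.'' Those are the $n-1$ atoms; the join-irreducibles are the $m\tbinom{n}{2}$ paths whose step sequences deviate from the staircase by a constant $s$ on a single contiguous block (Proposition~\ref{prop:join_irreducibles_tamari}), and establishing that list requires a genuine argument about uniqueness of lower covers. The paper's proof sidesteps the global embedding entirely: it characterizes the irreducibles on both sides, shows that the strip decomposition restricts to order isomorphisms $\JJ\bigl(\mtam{n}{m}\bigr)\cong\JJ\bigl(\tmn{n}{m}\bigr)$ and $\MM\bigl(\mtam{n}{m}\bigr)\cong\MM\bigl(\tmn{n}{m}\bigr)$ (Propositions~\ref{prop:meet_isomorphic} and~\ref{prop:join_isomorphic}), proves that the irreducibles of $\tmn{n}{m}$ are join- and meet-dense \emph{in $\tmn{n}{m}$ itself} using canonical join and meet representations in the Tamari lattice (Propositions~\ref{prop:mcover_join_dense} and~\ref{prop:mcover_meet_dense}), and then combines Banaschewski's theorem $\LL\cong\DM\bigl(\JJ(\LL)\cup\MM(\LL)\bigr)$ with Theorem~\ref{thm:completion_characterization}. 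To salvage your plan you would have to either produce the missing embedding explicitly---which the paper's counterexamples suggest is delicate---or restructure the argument along these lines.
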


We want to stress the difference in the superscripts in Theorem~\ref{thm:mtamari}.  This theorem states that we can express the $m$-Tamari lattices $\mtam{n}{m}$ as the smallest lattice that contains the $m$-cover poset of $\mtam{n}{1}$ as a subposet.  In order to prove Theorem~\ref{thm:mtamari}, we view the lattice $\mtam{n}{m}$ as a poset on certain lattice paths, the so-called $m$-Dyck paths of length $(m+1)n$, under rotation order.  The main tool in the proof of Theorem~\ref{thm:mtamari} is a certain decomposition of these $m$-Dyck paths into $m$-tuples of classical Dyck paths of length $2n$, which we call the \alert{strip decomposition}, see Definition~\ref{def:strip_decomposition}. 

Finally, we complete this work by conjecturing that there is a more explicit way to realize $\mtam{n}{m}$ as a lattice of $m$-tuples of Dyck paths: if we modify the strip decomposition of the $m$-Dyck paths of length $(m+1)n$ in a certain way---a procedure we call \alert{bouncing}---then we obtain a slightly different set of $m$-tuples of Dyck paths of length $2n$, and computer experiments suggest that this set under componentwise rotation order is isomorphic to $\mtam{n}{m}$.  See Section~\ref{sec:strip_decomposition} for the details.

\smallskip

Before we proceed to the organization of this article, a few comments are in order.  The Tamari lattices can also be viewed as certain sublattices of the weak order on the symmetric group~\cite{bjorner97shellable}*{Theorem~9.6(ii)}.  This connection was the starting point for Reading's definition of the so-called $\gamma$-Cambrian lattices associated with a Coxeter group $W$, and some Coxeter element $\gamma\in W$~\cite{reading06cambrian}.  The cardinality of these lattices is given by the generalized Catalan number of $W$~\cite{reading07clusters}*{Theorem~9.1}, and its Hasse diagram is isomorphic (as a graph) to the $1$-skeleton of the $\gamma$-generalized associahedron associated with $W$~\cite{hohlweg11permutahedra}*{Theorem~3.4}, which beautifully generalizes the analogous properties of the Tamari lattices.  
We recover the Tamari lattices $\mtam{n}{1}$ in this construction by choosing $W$ to be the symmetric group $\mathfrak{S}_{n}$ and $\gamma=(1\;2\;\ldots\;n)$ to be a certain long cycle~\cite{reading07clusters}*{Example~2.3}.

Recently, an ``$m$-eralized'' version of the $\gamma$-Cambrian lattices was introduced by Stump, Thomas and Williams~\cite{stump15cataland}, which is associated with a Coxeter group $W$, a Coxeter element $\gamma\in W$, and some integer $m>0$.  For $m=1$, their construction recovers the $\gamma$-Cambrian lattices from the previous paragraph.  Moreover, the cardinality of the ``$m$-eralized'' $\gamma$-Cambrian lattices is given by the generalized Fu{\ss}-Catalan number of $W$.  Despite the connection between the $\gamma$-Cambrian lattices and the Tamari lattices in the case $m=1$, and the fact that the cardinality of the $m$-Tamari lattices is given by the classical Fu{\ss}-Catalan numbers, the $m$-Tamari lattices do not belong to the framework of ``$m$-eralized'' $\gamma$-Cambrian lattices~\cite{stump15cataland}*{Remark~4.39}.

The Tamari lattice $\mtam{n}{1}$ can also be realized as poset on triangulations of a convex $(n+2)$-gon, where the partial order is given by ``flipping diagonals''.  There is a straightforward generalization of these objects to $(m+2)$-angulations of a convex $(mn+2)$-gon, which in particular yields a combinatorial model of the generalized cluster complex associated with $\mathfrak{S}_{n}$~\cite{fomin05generalized}*{Section~5.1}.  We remark that for $m=1$, the corresponding generalized cluster complex is the dual complex of the $(n-1)$-dimensional associahedron mentioned above.  However, it is not clear how to generalize the process of ``flipping diagonals'' in order to recover the $m$-Tamari lattices on these $(m+2)$-angulations, or if this is possible at all.  At least the obvious constructions by ``sliding'' diagonals clockwise or counterclockwise fail already for $n=3$ and $m=2$.

For other known realizations of the $m$-Tamari lattices, such as partial orders on $m$-Dyck paths, or as partial orders on $(m+1)$-ary trees~\cite{pons13combinatoire}, there is so far no method available to generalize the corresponding ground sets to other Coxeter groups.  The $m$-cover posets introduced in this article might provide a suitable tool for such a generalization.  For instance, if we start with the $\gamma$-Cambrian lattice of the dihedral group $\mathfrak{D}_{k}$, \ie the bounded poset whose proper part is the disjoint union of a $(k-1)$-chain and a singleton, then we observe that its $m$-cover poset is always a lattice and its cardinality coincides with the generalized Catalan number of $\mathfrak{D}_{k}$.  On the other hand, if we try to mimic the construction in Theorem~\ref{thm:mtamari} for any other $\gamma$-Cambrian lattice, then we obtain lattices with too many (\ie more than allowed by the corresponding generalized Catalan number) elements.  Perhaps, the $\gamma$-Cambrian lattices are 
not the correct starting point for a generalization of $\mtam{n}{m}$ along the lines of Theorem~\ref{thm:mtamari}.  It might be worthwhile to investigate the formulas for the graded Frobenius characteristic of the spaces of higher diagonal harmonics associated with other Coxeter groups proposed in \cite{bergeron13multivariate}, and see if these can be rephrased similarly to the symmetric group case using a summation over intervals of certain posets~\cite{bergeron12higher}.  Perhaps these posets might serve as a suitable generalization of $\mtam{n}{m}$.  In any case, the generalization of the $m$-Tamari lattices to all Coxeter groups remains an intriguing problem.

\smallskip

This article is organized as follows.  In Section~\ref{sec:constructions} we introduce and study the $m$-cover posets.  
More precisely, after providing the the necessary notions in Section~\ref{sec:posets}, 
we formally define the $m$-cover poset of an arbitrary bounded poset in Section~\ref{sec:mcover}, 
and subsequently prove Theorem~\ref{thm:mcover_lattice_prop}.  In Section \ref{sec:path_poset} 
we define the path poset associated with a bounded poset and some northeast path.  
We use this construction in Section~\ref{sec:mcover_topology}, where we prove Theorems~\ref{thm:mcover_path_poset} and 
\ref{thm:mcover_trim}.  In Section~\ref{sec:application} we investigate the $m$-cover poset of the Tamari lattice $\mtam{n}{1}$. 
Again we start by recalling the necessary definitions as well as some basic properties of $\mtam{n}{m}$ 
in Section~\ref{sec:mdyck_paths}.  In Section~\ref{sec:proof_tamari} we introduce the strip decomposition of $m$-Dyck paths, 
which is the main tool in the proof of Theorem~\ref{thm:mtamari} in the same section.  We complete this paper by further investigating the strip decomposition, and by stating a conjecture on an explicit realization of $\mtam{n}{m}$ in terms of $m$-tuples of Dyck paths in Section~\ref{sec:strip_decomposition}.

\section{The $m$-Cover Poset}
 \label{sec:constructions}
In this section we define the $m$-cover poset of an arbitrary bounded poset and prove Theorems~\ref{thm:mcover_lattice_prop}-\ref{thm:mcover_trim}.  First we recall the necessary order-theoretic notions which we will use.  For a more detailed introduction to posets and lattices, we refer to \cites{stanley97enumerative,davey02introduction}. 

\subsection{Partially Ordered Sets}
   \label{sec:posets}
Let $\PP=(P,\leq)$ be a finite\footnote{In fact, in this article, we always assume $\PP$ to be finite without mentioning it explicitly.} partially ordered set (\alert{poset} for short).  By abuse of notation, we sometimes write $p\in\PP$ for $p\in P$.  We say that $\PP$ is \alert{bounded} if it has a least and a greatest element, denoted by $\hat{0}$ and $\hat{1}$, respectively.  The \alert{proper part of $\PP$}, denoted by $\overline{\PP}$, is the poset obtained by removing the elements $\hat{0}$ and $\hat{1}$.  We say that $\PP$ is a \alert{lattice} if for every two elements $p,q\in P$ there exists a least upper bound, which is called the \alert{join} of $p$ and $q$ and which is denoted by $p\vee q$, and there exists a greatest lower bound, which is called the \alert{meet} of $p$ and $q$ and which is denoted by $p\wedge q$. 

If $p<q$ and there does not exist an element $z\in P$ with $p<z<q$, then we say that $q$ \alert{covers} $p$, and we denote it by $p\lessdot q$.  In this case, we also say that $p$ is a \alert{lower cover} of $q$ and that $q$ is an \alert{upper cover} of $p$.  A set $C=\{p_{1},p_{2},\ldots,p_{s}\}\subseteq P$ with $p_{1}<p_{2}<\cdots<p_{s}$ is a \alert{chain} of $\PP$.  If $p_{1}\lessdot p_{2}\lessdot\cdots\lessdot p_{s}$, then we say that $C$ is \alert{saturated}, and if $p_{1}=\hat{0}$ and $p_{s}=\hat{1}$, then we say that $C$ is \alert{maximal}.

By abuse of notation, we call an element $p\in P$ \alert{join-irreducible} if it is not minimal and if it has a unique lower cover, denoted by $p_{\star}$.  We write $\jj(\PP)$ for the set of join-irreducible elements of $\PP$.  Similarly, we call $p$ \alert{meet-irreducible} if it is not maximal and if it has a unique upper cover, denoted by $p^{\star}$.  We write $\mm(\PP)$ for the set of meet-irreducible elements of $\PP$.  Further $p$ is an \alert{atom} of $\PP$ if $\hat{0}\lessdot p$, and $p$ is a \alert{coatom} of $\PP$ if $p\lessdot\hat{1}$.

Moreover, we recall that given two posets $\PP=(P,\leq_{P})$ and $\QQ=(Q,\leq_{Q})$, the \alert{union} of $\PP$ and $\QQ$ is the poset $\PP\cup\QQ=(P\cup Q,\leq)$, with $p\leq q$ if and only if $p\leq_{P}q$ or $p\leq_{Q}q$.  The \alert{direct product} of $\PP$ and $\QQ$ is the poset $\PP\times\QQ=(P\times Q,\leq)$, with $(p_{1},q_{1})\leq(p_{2},q_{2})$ if and only if $p_{1}\leq_{P}p_{2}$ and $q_{1}\leq_{Q}q_{2}$.

Let $\mathcal{E}(\PP)=\bigl\{(p,q)\mid p\lessdot q\bigr\}$ denote the set of edges of the Hasse diagram of $\PP$.  Given some other poset $(\Lambda,\leq_{\Lambda})$, a map $\lambda:\mathcal{E}(\PP)\to\Lambda$ is called an \alert{edge-labeling} of $\PP$.  A saturated chain of $\PP$ is called \alert{rising} with respect to $\lambda$ if the sequence of edge labels of this chain is strictly increasing with respect to $\leq_{\Lambda}$.  An edge-labeling is an \alert{EL-labeling} if in every interval of $\PP$ there exists a unique rising saturated maximal chain, and this chain is lexicographically first among all saturated maximal chains in this interval.  A bounded poset $\PP$ is \alert{EL-shellable} if it admits an EL-labeling. 

The \alert{length} of $\PP$, denoted by $\ell(\PP)$, is the maximal length of a saturated chain from $\hat{0}$ to $\hat{1}$.  If $\lvert\mathcal{J}(\PP)\rvert=\ell(\PP)=\lvert\mathcal{M}(\PP)\rvert$, then $\PP$ is \alert{extremal}~\cite{markowsky92primes}.  If $\PP$ is a lattice, then $p$ is \alert{left-modular} if for every $q<q'$ we have
\begin{equation}\label{eq:left_modularity}
	(q\vee p)\wedge q' = q\vee (p\wedge q'). 
\end{equation}
If there exists a saturated maximal chain consisting of left-modular elements, then $\PP$ is \alert{left-modular}.  Moreover, $\PP$ is \alert{trim} if it is extremal and left-modular~\cite{thomas06analogue}.

Finally, recall that a poset $\PP=(P,\leq)$ is called \alert{$\tpt$-free} if it does not contain elements $x,y,x',y'\in P$ with $x<y$ and $x'<y'$, as well as $x,y\not\leq x',y'$ and $x',y'\not\leq x,y$~\cite{fishburn70intransitive}. 

\subsection{The Construction of the $m$-Cover Poset}
  \label{sec:mcover}
Let $\PP=(P,\leq)$ be a bounded poset, let $m>0$, and consider $m$-tuples of the form 
\begin{align}
  \label{eq:long_tuple}\pb=(\underbrace{\hat{0},\hat{0},\ldots,\hat{0}}_{l_{0}},\underbrace{p_{1},p_{1},\ldots,p_{1}}_{l_{1}},
	  \underbrace{p_{2},p_{2},\ldots,p_{2}}_{l_{2}}),
\end{align}
for non-negative integers $l_{i}$ with $l_{0}+l_{1}+l_{2}=m$, and $p_{1},p_{2}\in P\setminus\{\hat{0}\}$, with $p_{1}\neq p_{2}$.  We will usually abbreviate \eqref{eq:long_tuple} by $\pb=\bigl(\hat{0}^{l_{0}},p_{1}^{l_{1}},p_{2}^{l_{2}}\bigr)$.

\begin{definition}\label{def:mcover}
	Let $\PP=(P,\leq)$ be a bounded poset and let $m>0$.  Consider the set 
  	\begin{align}
		\Pm{P}{m}=\bigl\{\bigl(\hat{0}^{l_{0}},p_{1}^{l_{1}},p_{2}^{l_{2}}\bigr)\mid p_{1}\lessdot p_{2},l_{0}+l_{1}+l_{2}=m\bigr\}.
  	\end{align}
 	The poset $\Pm{\PP}{m}=(\Pm{P}{m},\leq)$, considered as a subposet of the $m$-fold direct product of $\PP$ with itself\footnote{By abuse of notation we use the same symbol for the partial orders of $\PP$ and $\Pm{\PP}{m}$.}, is called the \alert{$m$-cover poset} of $\PP$.
\end{definition}

It is immediate from the definition that for every $m>0$ the poset $\Pm{\PP}{m}$ is an interval of $\Pm{\PP}{m+1}$.  See Figure~\ref{fig:mcover_chain} for an example.  Further examples can be found in Figures~\ref{fig:no_lattice}, \ref{fig:no_join_sub}, and \ref{fig:cover_examples}.  The length and the cardinality of $\Pm{\PP}{m}$ are determined in the following proposition. 

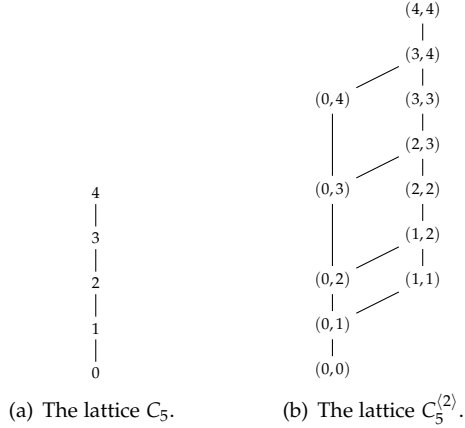
\begin{figure}
	\centering
	\subfigure[The lattice $C_{5}$.]{\label{fig:mcover_chain_1}
		\begin{tikzpicture}\tiny
			\def\x{.6};
			\def\y{.6};
			\draw(.25*\x,3*\y) node{};
			\draw(3.75*\x,3*\y) node{};
			\draw(2*\x,1*\y) node(n1){$0$};
			\draw(2*\x,2*\y) node(n2){$1$};
			\draw(2*\x,3*\y) node(n3){$2$};
			\draw(2*\x,4*\y) node(n4){$3$};
			\draw(2*\x,5*\y) node(n5){$4$};
			\draw(n1) -- (n2) -- (n3) -- (n4) -- (n5);
		\end{tikzpicture}}\hspace*{1cm}
	\subfigure[The lattice $\Pm{C_{5}}{2}$.]{\label{fig:mcover_chain_2}
		\begin{tikzpicture}\tiny
			\def\x{.6};
			\def\y{.6};
			\draw(-.25*\x,3*\y) node{};
			\draw(4.25*\x,3*\y) node{};
			\draw(1*\x,1*\y) node(n1){$(0,0)$};
			\draw(1*\x,2*\y) node(n2){$(0,1)$};
			\draw(1*\x,3*\y) node(n3){$(0,2)$};
			\draw(3*\x,3*\y) node(n4){$(1,1)$};
			\draw(3*\x,4*\y) node(n5){$(1,2)$};
			\draw(1*\x,5*\y) node(n6){$(0,3)$};
			\draw(3*\x,5*\y) node(n7){$(2,2)$};
			\draw(3*\x,6*\y) node(n8){$(2,3)$};			
			\draw(1*\x,7*\y) node(n9){$(0,4)$};
			\draw(3*\x,7*\y) node(n10){$(3,3)$};
			\draw(3*\x,8*\y) node(n11){$(3,4)$};
			\draw(3*\x,9*\y) node(n12){$(4,4)$};
			\draw(n1) -- (n2);
			\draw(n2) -- (n3);
			\draw(n2) -- (n4);
			\draw(n3) -- (n5);
			\draw(n3) -- (n6);
			\draw(n4) -- (n5);
			\draw(n5) -- (n7);
			\draw(n6) -- (n8);
			\draw(n6) -- (n9);
			\draw(n7) -- (n8);
			\draw(n8) -- (n10);
			\draw(n9) -- (n11);
			\draw(n10) -- (n11);
			\draw(n11) -- (n12);
		\end{tikzpicture}}
	\caption{A $5$-chain, and its $2$-cover poset.}
	\label{fig:mcover_chain}
\end{figure}

\begin{proposition}\label{prop:mcover_cardinality}
	Let $\PP=(P,\leq)$ be a bounded poset with $n$ elements, $c$ covering relations and $k$ atoms.  For $m>0$, we have $\ell(\Pm{\PP}{m})=m\cdot\ell(\PP)$ and  
	\begin{displaymath}
		\Bigl\lvert \Pm{P}{m}\Bigr\rvert=(c-k)\cdot\binom{m}{2}+m(n-1)+1.
	\end{displaymath}
\end{proposition}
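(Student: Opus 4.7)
The plan is to prove the length formula first, then enumerate $\Pm{P}{m}$ by a straightforward case analysis on the number of distinct values appearing in a tuple.

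For the length, I would use that $\Pm{\PP}{m}$ is a subposet of $\PP^m$ that contains the bounds $(\hat{0}^m)$ and $(\hat{1}^m)$ of $\PP^m$. Any chain from $(\hat{0}^m)$ to $(\hat{1}^m)$ in $\Pm{\PP}{m}$ is also a chain between these elements in $\PP^m$, so its length is bounded by $\ell(\PP^m)=m\cdot\ell(\PP)$. For the reverse inequality, fix a saturated maximal chain $\hat{0}=q_0\lessdot q_1\lessdot\cdots\lessdot q_{\ell(\PP)}=\hat{1}$ in $\PP$ and build a ``staircase'' chain in $\Pm{\PP}{m}$ of the form
\begin{displaymath}
(q_0^m)\lessdot(q_0^{m-1},q_1)\lessdot(q_0^{m-2},q_1^2)\lessdot\cdots\lessdot(q_1^m)\lessdot(q_1^{m-1},q_2)\lessdot\cdots\lessdot(q_{\ell(\PP)}^m).
\end{displaymath}
Each intermediate tuple is of the form $(q_i^{a},q_{i+1}^{m-a})$ and lies in $\Pm{P}{m}$: if $a\in\{0,m\}$ it is a singleton; otherwise its two distinct values $q_i,q_{i+1}$ form the required cover. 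Consecutive tuples differ in exactly one coordinate, which moves along a cover $q_i\lessdot q_{i+1}$, so they are cover relations already in $\PP^m$, hence in the subposet $\Pm{\PP}{m}$. The chain has $m\cdot\ell(\PP)$ cover steps, proving the length.

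For the cardinality, I would classify each element $\pb\in\Pm{P}{m}$ by the set of distinct coordinates appearing in it. There are three cases:
\begin{enumerate}[(i)]
\item \emph{One distinct value:} the singleton tuples $(q^m)$, one for each $q\in P$, contributing $n$ elements.
\item \emph{Two distinct values $a<b$:} either $a=\hat{0}$, in which case $b$ ranges over $P\setminus\{\hat{0}\}$ (no cover requirement since we may take $p_1=\hat{0}$'s irrelevance absorbed into $l_0$, formally using $l_1=0$ and any lower cover $p_1\lessdot b$), giving $(n-1)(m-1)$ elements when counted together with the $m-1$ choices for $l_0\in\{1,\dots,m-1\}$; or $a\neq\hat{0}$, in which case one must have $l_0=0$, forcing $a\lessdot b$ with $a\neq\hat{0}$, contributing $(c-k)(m-1)$ elements.
\item \emph{Three distinct values $\hat{0}<p_1<p_2$:} necessarily $p_1\lessdot p_2$ with $p_1\neq\hat{0}$, contributing $(c-k)\binom{m-1}{2}$ elements, since the three multiplicities $l_0,l_1,l_2\geq 1$ summing to $m$ give $\binom{m-1}{2}$ choices.
\end{enumerate}
Summing and using $(m-1)+\binom{m-1}{2}=\binom{m}{2}$ together with $n+(n-1)(m-1)=m(n-1)+1$ yields the claimed formula.

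The routine part is the counting. The only subtle point is being careful with the parameterization in Definition~\ref{def:mcover}: an element of $\Pm{P}{m}$ may admit several parameterizations $\bigl(\hat{0}^{l_0},p_1^{l_1},p_2^{l_2}\bigr)$ (for instance singleton tuples can be written with any of $l_0,l_1,l_2$ equal to $m$), so I will carefully check that each underlying multiset of coordinates is counted exactly once by fixing canonical values of the exponents $l_0,l_1,l_2$ according to how many of $\hat{0},p_1,p_2$ genuinely appear. The formula should also be sanity-checked in the base case $m=1$, where both binomial coefficients vanish and the sum reduces to $n$, which matches $\Pm{\PP}{1}=\PP$.
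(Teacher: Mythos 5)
Your proposal is correct and follows essentially the same route as the paper: the length is obtained from the subposet bound $\ell(\PP^m)=m\cdot\ell(\PP)$ together with the same staircase chain, and the cardinality count is the same four-way case split (your cases (i), (ii) with $a=\hat{0}$, (ii) with $a\neq\hat{0}$, and (iii) correspond exactly to the paper's cases (iv), (iii), (ii), and (i)), summing to the identical expression.
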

\begin{proof}
	Suppose that $\PP$ is a bounded poset with $\ell(\PP)=s$, and let $\hat{0}=p_{0}\lessdot p_{1}\lessdot\cdots\lessdot p_{s}=\hat{1}$ be a maximal chain of $\PP$.  Define $\pb_{0,m}=\bigl(\hat{0}^{m}\bigr)$, as well as $\pb_{i,j}=\bigl(p_{i-1}^{m-j},p_{i}^{j}\bigr)$ for $i\in\{1,2,\ldots,s\}$ and $j\in\{1,2,\ldots,m\}$.  It is immediately clear that $\pb_{i,j}\lessdot\pb_{i,j+1}$ for $i\in\{1,2,\ldots,s\}$ and $j\in\{1,2,\ldots,m-1\}$, as well as $\pb_{i-1,m}\lessdot\pb_{i,1}$ for all $i\in\{1,2,\ldots,s\}$.  Thus the chain
	\begin{equation}\label{eq:mcover_maximal_chain}
		\pb_{0,m}\lessdot\pb_{1,1}\lessdot\pb_{1,2}\lessdot\pb_{1,m}\lessdot\pb_{2,1}\lessdot\pb_{2,2}\lessdot\cdots\lessdot\pb_{s,m}
	\end{equation}
	is a maximal chain in $\Pm{\PP}{m}$ with length $ms$, which implies $\ell\bigl(\Pm{\PP}{m}\bigr)\geq ms$.  Since $\Pm{\PP}{m}$ is a subposet of the $m$-fold direct product of $\PP$ with itself, it follows that $\ell\bigl(\Pm{\PP}{m}\bigr)\leq\ell\bigl(\PP^{m}\bigr)=ms$, which implies the claim $\ell(\Pm{\PP}{m})=m\cdot\ell(\PP)$.
	
	\smallskip
	
	Now we want to compute the cardinality of $\Pm{P}{m}$.  If $\pb\in\Pm{P}{m}$, then it necessarily has to be of one of the following four forms:
	
	(i) $\pb=\bigl(\hat{0}^{l_{0}},p^{l_{1}},q^{l_{2}}\bigr)$ with $l_{0},l_{1},l_{2}\neq 0$ and $\hat{0}\neq p\lessdot q$.  Clearly, there are $c-k$ possible choices for $p$ and $q$, and each such choice yields $\tbinom{m-1}{2}$ distinct elements of $\Pm{P}{m}$.
	
	(ii) $\pb=\bigl(p^{l},q^{m-l}\bigr)$ with $l\in\{1,2,\ldots,m-1\}$ and $\hat{0}\neq p\lessdot q$.  Again, there are $c-k$ possible choices for $p$ and $q$, and each such choice yields $m-1$ distinct elements of $\Pm{P}{m}$.
	
	(iii) $\pb=\bigl(\hat{0}^{l},p^{m-l}\bigr)$ with $l\in\{1,2,\ldots,m-1\}$ and $\hat{0}\neq p$.  There are $(m-1)(n-1)$ distinct elements of this form in $\Pm{P}{m}$. 
	
	(iv) $\pb=\bigl(p^{m}\bigr)$ with $p\in P$.  There are $n$ distinct elements of this form in $\Pm{P}{m}$.
	
	If we add up all these possibilities, then we obtain
	\begin{align*}
		\Bigl\lvert\Pm{P}{m}\Bigr\rvert & = (c-k)\binom{m-1}{2} + (c-k)(m-1) + (m-1)(n-1) + n\\
		& = (c-k)\binom{m}{2} + m(n-1) + 1,
	\end{align*}
	as desired.
\end{proof}

The join- and meet-irreducible elements of $\PP$ are related to the join- and meet-irreducible elements of $\Pm{\PP}{m}$ in the following way. 

\begin{proposition}\label{prop:mcover_irreducibles}
	Let $\PP$ be a bounded poset, and let $m>0$. Then,
	\begin{align*}
		\jj\bigl(\Pm{\PP}{m}\bigr) & = \Bigl\{\bigl(\hat{0}^{l},p^{m-l}\bigr)\mid p\in\jj(\PP)\;\text{and}\;0\leq l<m\Bigr\},
			\quad\text{and}\\
		\mm\bigl(\Pm{\PP}{m}\bigr) & = \Bigl\{\bigl(p^{l},(p^{\star})^{m-l}\bigr)\mid p\in\mm(\PP)\setminus\{\hat{0}\}\;\text{and}\;
			0<l\leq m\Bigr\}\\
			& \kern1cm \cup \Bigl\{\bigl(\hat{0}^{l},\hat{1}^{m-l}\bigr)\mid \hat{1}\in\jj(\PP)\;\text{and}\;0<l\leq m\Bigr\}
			\cup \Bigl\{\bigl(\hat{0}^{m}\bigr)\mid \hat{0}\in\mm(\PP)\Bigr\}.
	\end{align*}
\end{proposition}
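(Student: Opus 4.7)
The plan is to compute directly the lower covers (for the first identity) and the upper covers (for the second) of an arbitrary $\pb = \bigl(\hat{0}^{l_0}, p_1^{l_1}, p_2^{l_2}\bigr) \in \Pm{P}{m}$, and to identify those $\pb$ with exactly one such cover. The essential subtlety is that $\Pm{\PP}{m}$ sits as a subposet (not an induced cover subgraph) of $\PP^m$, so a cover $\qb \lessdot \pb$ in $\Pm{\PP}{m}$ need not be a cover in $\PP^m$: tuples of $\PP^m$ strictly between them are permitted provided they violate the shape constraint of Definition~\ref{def:mcover} (too many distinct values, or two non-$\hat{0}$ values not in a cover relation).

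For the join-irreducibles I would split on the number of distinct values of $\pb$. If $\pb$ has three distinct values, or has two non-$\hat{0}$ values (equivalently $l_1, l_2 > 0$), then two distinct lower covers are available: the tuple $\bigl(\hat{0}^{l_0}, p_1^{l_1+1}, p_2^{l_2-1}\bigr)$ obtained by lowering a $p_2$-coordinate to $p_1$, and the tuple $\bigl(\hat{0}^{l_0+1}, p_1^{l_1-1}, p_2^{l_2}\bigr)$ obtained by lowering a $p_1$-coordinate directly to $\hat{0}$. The latter is a genuine cover in $\Pm{\PP}{m}$ because any candidate intermediate---replacing the altered coordinate by some $q$ with $\hat{0} < q < p_1$---would violate the shape constraint and so fall outside $\Pm{P}{m}$. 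Such $\pb$ are therefore not join-irreducible. The remaining elements take the form $(\hat{0}^l, p^{m-l})$ with $p \neq \hat{0}$ and $0 \leq l < m$; for these, any lower cover of $\pb$ agrees with $\pb$ in all but one $p$-coordinate, which must drop to a genuine cover $q \lessdot p$ in $\PP$, since if $q < p$ were not a cover then any $q'$ with $q < q' \lessdot p$ would furnish a valid intermediate $\bigl(\hat{0}^l, q'^1, p^{m-l-1}\bigr)$. Hence the lower covers of $\pb$ biject with the lower covers of $p$ in $\PP$, and $\pb \in \jj(\Pm{\PP}{m})$ precisely when $p \in \jj(\PP)$.

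The meet-irreducible analysis is dual in spirit but asymmetric, because raising a $\hat{0}$-coordinate directly to a non-atom $p$ is \emph{never} a cover in $\Pm{\PP}{m}$: every lower cover $q \neq \hat{0}$ of $p$ yields a valid intermediate inside $\Pm{P}{m}$. A parallel case split produces the three families on the right-hand side of the second identity: elements $\bigl(p^l, (p^{\star})^{m-l}\bigr)$ with $p \in \mm(\PP) \setminus \{\hat{0}\}$, whose unique upper cover reflects the unique upper cover $p^{\star}$ of $p$; elements $\bigl(\hat{0}^l, \hat{1}^{m-l}\bigr)$ with $\hat{1} \in \jj(\PP)$, where uniqueness of the upper cover forces $p = \hat{1}$ and simultaneously annihilates both the lower-cover and upper-cover contributions from $p$; and the minimum $(\hat{0}^m)$ when $\PP$ has a unique atom (equivalently $\hat{0} \in \mm(\PP)$). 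The main obstacle throughout is the bookkeeping required to certify that each candidate cover has no intermediate in $\Pm{P}{m}$: this reduces to checking that every strictly intermediate tuple of $\PP^m$ fails Definition~\ref{def:mcover}, and a number of boundary cases (atom versus non-atom, $p = \hat{1}$, and the element $(\hat{0}^m)$) warrant individual attention.
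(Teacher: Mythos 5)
Your treatment of the join-irreducibles is essentially the paper's own argument and is sound: when $l_{1},l_{2}>0$ the two tuples $\bigl(\hat{0}^{l_{0}},p_{1}^{l_{1}+1},p_{2}^{l_{2}-1}\bigr)$ and $\bigl(\hat{0}^{l_{0}+1},p_{1}^{l_{1}-1},p_{2}^{l_{2}}\bigr)$ are indeed distinct lower covers (every tuple of $\PP^{m}$ strictly between either of them and $\pb$ violates the shape constraint), and for $\bigl(\hat{0}^{l},p^{m-l}\bigr)$ the lower covers biject with the lower covers of $p$ in $\PP$.

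The meet-irreducible half, however, has a genuine gap, located exactly where the paper has to work hardest. To prove the inclusion $\mm\bigl(\Pm{\PP}{m}\bigr)\subseteq\{\ldots\}$ you must show that every $\pb=\bigl(p_{1}^{l_{1}},p_{2}^{l_{2}}\bigr)$ with $\hat{0}\neq p_{1}\lessdot p_{2}$ but $p_{1}\notin\mm(\PP)$ has a \emph{second} upper cover besides $\bigl(p_{1}^{l_{1}-1},p_{2}^{l_{2}+1}\bigr)$. This is not ``dual in spirit'' to the join case: raising a single coordinate of $\pb$ to an upper cover $q_{1}\neq p_{2}$ of $p_{1}$, or to an upper cover $q_{2}$ of $p_{2}$, produces a tuple with three distinct non-$\hat{0}$ values, hence leaves $\Pm{P}{m}$ entirely, and no one-coordinate modification of $\pb$ supplies the missing cover. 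The second upper cover exists only because covers in the subposet may skip over large stretches of $\PP^{m}$: the paper constructs it as $\bar{\pb}=\bigl(w^{l_{1}},q_{2}^{l_{2}}\bigr)$, where $w$ is obtained by climbing a saturated chain from $q_{1}$ up to $q_{2}$ (or up to a minimal common upper bound of $q_{1}$ and $q_{2}$, with a three-way case split on how $q_{1}$ and $q_{2}$ compare), and then verifies minimality of $\bar{\pb}$ over $\pb$. This element is strictly above $\pb$ in \emph{every} coordinate; for instance, in the poset of the paper's Figure~\ref{fig:no_lattice_1} the element $(1,2)$ of the $2$-cover poset has second upper cover $(3,4)$. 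Your stated ``main obstacle'' --- certifying that candidate covers have no intermediate --- is therefore not the actual obstacle here; the obstacle is producing the second candidate at all, and the proposed ``parallel case split'' contains no mechanism for doing so. (By contrast, your exclusion of the cases with $l_{0}>0$ via the intermediate $\bigl(\hat{0}^{l_{0}-1},q,p_{1}^{l_{1}}\bigr)$ is fine.)
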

\begin{proof}
	Let $\pb=\bigl(\hat{0}^{l_{0}},p_{1}^{l_{1}},p_{2}^{l_{2}}\bigr)\in\Pm{P}{m}$ with $\hat{0}\neq p_{1}\lessdot p_{2}$.  First suppose that $\pb\in\jj\bigl(\Pm{\PP}{m}\bigr)$.  If $l_{1}>0$ and $l_{2}>0$, then it follows that the elements $\pb'=\bigl(\hat{0}^{l_{0}+1},p_{1}^{l_{1}-1},p_{2}^{l_{2}}\bigr)$ and $\pb''=\bigl(\hat{0}^{l_{0}},p_{1}^{l_{1}+1},p_{2}^{l_{2}-1}\bigr)$ are both lower covers of $\pb$ in $\Pm{\PP}{m}$, which contradicts the assumption that $\pb$ is join-irreducible.  If $l_{1}=0$ and $l_{2}=0$, then $\pb$ is the least element of $\Pm{\PP}{m}$ and thus not join-irreducible by definition.  Hence, without loss of generality, we can assume that $\pb=\bigl(\hat{0}^{l_{0}},p_{1}^{l_{1}}\bigr)$.  For every $\bar{p}\in P$ with $\bar{p}\lessdot p_{1}$, the element $\bar{\pb}=\bigl(\hat{0}^{l_{0}},\bar{p},p_{1}^{l_{1}-1}\bigr)$ is the only lower cover of $\pb$, which implies the claim. 
	
	\smallskip
	
	Now suppose that $\pb\in\mm\bigl(\Pm{\PP}{m}\bigr)$.  If $l_{0}>0$ and $l_{1}>0$, then it follows that the elements $\pb'=\bigl(\hat{0}^{l_{0}-1},p_{1}^{l_{1}+1},p_{2}^{l_{2}}\bigr)$ and $\pb''=\bigl(\hat{0}^{l_{0}},p_{1}^{l_{1}-1},p_{2}^{l_{2}+1}\bigr)$ are both upper covers of $\pb$ in $\Pm{\PP}{m}$, which contradicts the assumption that $\pb$ is meet-irreducible.  The same reasoning holds if $l_{0}>0,l_{1}=0$ and $l_{2}>0$.  Hence we have either $l_{0}=0$ or at least one of $l_{1}$ and $l_{2}$ is zero. 
	
	First let $l_{0}=0$, and thus $\pb=\bigl(p_{1}^{l_{1}},p_{2}^{l_{2}}\bigr)$.  Clearly the element $\pb''=\bigl(p_{1}^{l_{1}-1},p_{2}^{l_{2}+1}\bigr)$ satisfies $\pb\lessdot\pb''$.  Assume that $p_{1}\notin\mm(\PP)$. It follows that $p_{2}\neq\hat{1}$, because otherwise $p_{1}$ is a coatom, and thus clearly meet-irreducible.  Thus we can choose an upper cover $q_{2}$ of $p_{2}$ in $\PP$, and some upper cover $q_{1}$ of $p_{1}$ in $\PP$ with $q_{1}\neq p_{2}$.  It follows that $q_{1}\neq\hat{1}$, and we distinguish three cases:
	
	(i) If $q_{1}\leq q_{2}$, then there exists a chain $q_{1}=w_{1}\lessdot w_{2}\lessdot\cdots\lessdot w_{k}\lessdot q_{2}$ in $\PP$, and we can choose this chain in such a way that $p_{2}\not\leq w_{k}$, because otherwise we would obtain a contradiction to $p_{1}\lessdot p_{2}\lessdot q_{2}$.  Consider the element $\bar{\pb}=\bigl(w_{k}^{l_{1}},q_{2}^{l_{2}}\bigr)$, which satisfies $\pb\leq\bar{\pb}$.  Suppose that there is some element $\qb\in\Pm{P}{m}$ with $\pb\lessdot\qb\leq\bar{\pb}$.  Since $p_{2}\not\leq w_{k}$, it follows that $\qb=\bigl(x^{l_{1}},y^{l_{2}}\bigr)$ for $p_{1}\leq x\leq w_{k}$ and $p_{2}\leq y\leq q_{2}$.  If $y=p_{2}$, then necessarily $x=p_{1}$, and we obtain $\qb=\pb$, which contradicts the choice of $\qb$.  Hence $y=q_{2}$, and since $x$ is a lower cover of $y$, it follows that $x=w_{k}$, which implies $\qb=\bar{\pb}$.  Hence $\pb\lessdot\bar{\pb}$.  However, since $\bar{\pb}\neq\pb''$, we obtain a contradiction to $\pb$ being meet-irreducible in $\Pm{\PP}{m}$.
	
	(ii) If $q_{2}\leq q_{1}$, then the reasoning is analogous to (i).
	
	(iii) If $q_{1}\not\leq q_{2}$ and $q_{2}\not\leq q_{1}$, then---since $\PP$ is bounded---there exists a (not necessarily unique) minimal element $w\in P$ with $q_{1},q_{2}\leq w$, and there exist chains $q_{1}=u_{1}\lessdot u_{2}\lessdot \cdots\lessdot u_{k}\lessdot w$ and $q_{2}=v_{1}\lessdot v_{2}\lessdot\cdots\lessdot v_{l}\lessdot w$.  Consider the element $\bar{\pb}=\bigl(u_{k}^{l_{1}},w^{l_{2}}\bigr)$, which satisfies $\pb\leq\bar{\pb}$.  Again, suppose that there is some element $\qb\in\Pm{P}{m}$ with $\pb\lessdot\qb\leq\bar{\pb}$.  The minimality of $w$ ensures that $p_{2}\not\leq u_{k}$, and it follows that $\qb=\bigl(x^{l_{1}},y^{l_{2}}\bigr)$ for $p_{1}\leq x\leq u_{k}$ and $p_{2}\leq y\leq w$.  The minimality of $w$ also ensures that $u_{i}\not\leq v_{j}$ and $v_{j}\not\leq u_{i}$ for all $i\in\{1,2,\ldots,k\}$ and $j\in\{1,2,\ldots,l\}$.  Since $x$ is a lower cover of $y$ and $\qb\neq\pb$, it follows that $x=u_{k}$ and $y=w$, which implies $\qb=\bar{\pb}$.  Thus $\pb\lessdot\bar{\pb}$. However,
 since $\bar{\pb}\neq\pb''$ we obtain a contradiction to $\pb$ being meet-irreducible in $\Pm{\PP}{m}$.
	
	Hence if $l_{0}=0$, then it follows that $p_{1}\in\mm(\PP)$ and $p_{2}=p_{1}^{\star}$.
	
	Now suppose that $l_{0}>0$, and thus that $l_{1}=0$ or $l_{2}=0$.  Without loss of generality, we can write $\pb=\bigl(\hat{0}^{l_{0}},p_{1}^{l_{1}}\bigr)$.  If $l_{1}=0$, then every atom of $\PP$ yields an upper cover of $\pb$, and hence $\pb\in\mm\bigl(\Pm{\PP}{m}\bigr)$ if and only if $\hat{0}\in\mm(\PP)$.  Now, let $l_{1}>0$.  If $p_{1}\neq\hat{1}$, then for every upper cover $q$ of $p_{1}$ the element $\pb'=\bigl(\hat{0}^{l_{0}},p_{1}^{l_{1}-1},q\bigr)$ satisfies $\pb\lessdot\pb'$.  Moreover, if $p_{1}$ is an atom, then $\bigl(\hat{0}^{l_{0}-1},p_{1}^{l_{1}+1}\bigr)$ is an upper cover of $\pb$, which is different from $\pb'$.  If $p_{1}$ is no atom, then for every lower cover $q$ of $p_{1}$, the element $\bigl(\hat{0}^{l_{0}-1},q,p_{1}^{l_{1}}\bigr)$ is an upper cover of $\pb$, which is different from $\pb'$.  This contradicts the assumption that $\pb$ is meet-irreducible.  If $p_{1}=\hat{1}$, then for every coatom $c$ of $\PP$ the element $\bigl(\hat{0}^{l_{0}-1},c,\hat{1}^{l_{1}}\bigr)$ is an upper 
cover of $\pb$.  Hence if $l_{0}>0$, then it follows that either $\pb=\bigl(\hat{0}^{m}\bigr)$ or $\pb=\bigl(\hat{0}^{l_{0}},\hat{1}^{l_{1}}\bigr)$ provided that $\hat{1}\in\jj(\PP)$.
\end{proof}

In view of Propositions~\ref{prop:mcover_cardinality} and \ref{prop:mcover_irreducibles}, we can determine the posets for which every $m$-cover poset is extremal, \ie where $\bigl\lvert\jj\bigl(\Pm{\PP}{m}\bigr)\bigr\rvert=\ell\bigl(\Pm{\PP}{m}\bigr)=\bigl\lvert\mm\bigl(\Pm{\PP}{m}\bigr)\bigr\rvert$.

\begin{corollary}\label{cor:mcover_extremal}
	Let $\PP$ be a bounded extremal poset, with $\ell(\PP)=k$. Then, $\Pm{\PP}{m}$ is extremal for every $m>0$ if and only if either $\hat{0}\in\jj(\PP)$ and $\hat{1}\in\mm(\PP)$ or $\hat{0}\notin\jj(\PP)$ and $\hat{1}\notin\mm(\PP)$. 
\end{corollary}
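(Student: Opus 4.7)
The plan is to turn the extremality equation $\bigl\lvert\jj(\Pm{\PP}{m})\bigr\rvert=\ell(\Pm{\PP}{m})=\bigl\lvert\mm(\Pm{\PP}{m})\bigr\rvert$ into a direct count using Propositions~\ref{prop:mcover_cardinality} and~\ref{prop:mcover_irreducibles}. Proposition~\ref{prop:mcover_cardinality} immediately gives $\ell(\Pm{\PP}{m})=mk$. The description of $\jj(\Pm{\PP}{m})$ in Proposition~\ref{prop:mcover_irreducibles} consists of the tuples $(\hat{0}^{l},p^{m-l})$ with $p\in\jj(\PP)$ and $0\leq l<m$; these are pairwise distinct (since $p\neq\hat{0}$, distinct pairs $(l,p)$ produce distinct tuples), so $\bigl\lvert\jj(\Pm{\PP}{m})\bigr\rvert=m\cdot\lvert\jj(\PP)\rvert=mk$ by the extremality of $\PP$. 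The join-side equality therefore holds automatically, and all the content of the corollary sits on the meet-irreducible side.

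The bulk of the work is to enumerate $\mm(\Pm{\PP}{m})$. The three families listed in Proposition~\ref{prop:mcover_irreducibles} are pairwise disjoint except at a single tuple: the minimum $(\hat{0}^{m})$ appears in the second family at $l=m$ whenever $\hat{1}\in\jj(\PP)$, and it is also exactly the third family whenever $\hat{0}\in\mm(\PP)$. Writing $a=[\hat{0}\in\mm(\PP)]$ and $b=[\hat{1}\in\jj(\PP)]$ for the corresponding indicators, and isolating $(\hat{0}^{m})$ to avoid double-counting, one obtains
\[
	\bigl\lvert\mm(\Pm{\PP}{m})\bigr\rvert = m(k-a) + (m-1)\,b + a = mk + (m-1)(b-a),
\]
where the last step uses $\lvert\mm(\PP)\rvert=k$.

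Comparing with $\ell(\Pm{\PP}{m})=mk$, extremality of $\Pm{\PP}{m}$ is equivalent to $(m-1)(b-a)=0$. Demanding this for every $m>0$---and in particular for some $m\geq 2$---forces $b=a$, which is precisely the asserted dichotomy: either $\hat{0}$ and $\hat{1}$ are both irreducible in the respective senses (that is, $a=b=1$), or neither is (that is, $a=b=0$). The only step I expect to require care is the overlap analysis for $(\hat{0}^{m})$; miscounting this single tuple would shift the coefficient $(m-1)$ and obscure the clean dichotomy. Once that bookkeeping is settled, the rest is routine algebra.
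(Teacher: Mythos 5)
Your proof is correct and follows essentially the same route as the paper: both compute $\ell\bigl(\Pm{\PP}{m}\bigr)=mk=\bigl\lvert\jj\bigl(\Pm{\PP}{m}\bigr)\bigr\rvert$ and then compare $\bigl\lvert\mm\bigl(\Pm{\PP}{m}\bigr)\bigr\rvert$ with $mk$ via Proposition~\ref{prop:mcover_irreducibles}, the paper by a four-case analysis and you by the single formula $mk+(m-1)(b-a)$. Your bookkeeping of the tuple $\bigl(\hat{0}^{m}\bigr)$ is the right one (it is meet-irreducible precisely when $\hat{0}\in\mm(\PP)$, so it contributes $a$ while the second family contributes $(m-1)b$; note this also correctly returns $k$ at $m=1$), and even under the alternative reading in which the second family retains its $l=m$ member, the count would still exceed $mk$ exactly when $b>a$ and fall short exactly when $a>b$, so the dichotomy is unaffected.
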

\begin{proof}
	Proposition~\ref{prop:mcover_cardinality} implies that $\ell\bigl(\Pm{\PP}{m}\bigr)=mk$, and it follows from the first part of Proposition~\ref{prop:mcover_irreducibles} that $\bigl\lvert\jj\bigl(\Pm{\PP}{m}\bigr)\bigr\rvert=m\bigl\lvert\jj(\PP)\bigr\rvert=mk$.  Thus it remains to determine the cardinality of the set of meet-irreducibles of $\Pm{\PP}{m}$.  If $\hat{0}\in\mm(\PP)$ and $\hat{1}\notin\jj(\PP)$, then the second part of Proposition~\ref{prop:mcover_irreducibles} implies $\bigl\lvert\mm\bigl(\Pm{\PP}{m}\bigr)\bigr\rvert=m(k-1)+1<mk$ unless $m=1$.  Analogously, if $\hat{0}\notin\mm(\PP)$ and $\hat{1}\in\jj(\PP)$, then the second part of Proposition~\ref{prop:mcover_irreducibles} implies $\bigl\lvert\mm\bigl(\Pm{\PP}{m}\bigr)\bigr\rvert=(m+1)k>mk$.  On the other hand if $\hat{0}\in\mm(\PP)$ and $\hat{1}\in\jj(\PP)$ or $\hat{0}\notin\mm(\PP)$ and $\hat{1}\notin\jj(\PP)$, then the second part of Proposition~\ref{prop:mcover_irreducibles} implies $\bigl\lvert\mm\bigl(\Pm{\PP}{m}\bigr)\bigr\rvert=mk$ 
as desired.
\end{proof}

Now we characterize the cases, where $\Pm{\PP}{m}$ is a lattice.

\begin{theorem}\label{thm:mcover_lattice}
	Let $\PP=(P,\leq)$ be a bounded poset.  The $m$-cover poset $\Pm{\PP}{m}$ is a lattice for all $m>0$ if and only if $\PP$ is a lattice and for all $p,q\in P$ we have $p\wedge q\in\{\hat{0},p,q\}$. 
\end{theorem}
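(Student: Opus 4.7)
My plan is to handle the two implications separately; the forward direction reduces to exhibiting a concrete failure of joins in $\Pm{\PP}{2}$, while the backward direction reduces to showing that componentwise meets in $\PP^{m}$ always stay inside $\Pm{P}{m}$. For $(\Rightarrow)$, observe first that $\Pm{\PP}{1}\cong\PP$, so $\PP$ itself is a lattice. For the meet condition I would argue by contrapositive: if some pair $x,y\in P$ has $z=x\wedge y$ with $\hat{0}<z<x,y$, pick first upper covers $z\lessdot z_{1}\leq x$ and $z\lessdot z_{2}\leq y$ along saturated chains; these must differ, since $z_{1}=z_{2}$ would force $z_{1}\leq x\wedge y=z$. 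Writing $s=z_{1}\vee z_{2}$, I would then inspect the upper bounds of $(z,z_{1}),(z,z_{2})\in\Pm{P}{2}$: every upper bound has second coordinate $\geq s$, but the ``obvious'' candidate $(z,s)$ is forbidden because $z<z_{1}<s$ kills the cover condition. Taking $a$ and $b$ to be the penultimate elements of saturated chains from $z_{1}$ and $z_{2}$ up to $s$, I would verify that $a\neq b$ (a common value would dominate $s$), that $(a,s),(b,s)\in\Pm{P}{2}$ are both minimal upper bounds (any smaller $u\lessdot s$ with $u\leq a$ would contradict $a\lessdot s$), and that $a,b$ are incomparable. Two distinct minimal upper bounds leave no join, contradicting the lattice hypothesis already at $m=2$.

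For $(\Leftarrow)$, assume $\PP$ is a lattice satisfying the meet condition. Since $\Pm{\PP}{m}$ is finite with top $(\hat{1}^{m})$ and bottom $(\hat{0}^{m})$, being a lattice reduces to being a meet-semilattice, and I would establish this by showing that for any $\pb,\qb\in\Pm{P}{m}$ the componentwise meet $\rb$ taken inside $\PP^{m}$ already lies in $\Pm{P}{m}$. Monotonicity of meet makes $\rb$ a multichain, so what remains is to bound its distinct values and verify the required $(\hat{0},r,r')$-with-$r\lessdot r'$ shape. The key reinterpretation is that the meet condition forces each element of $\PP\setminus\{\hat{0},\hat{1}\}$ to have a unique upper cover in $\PP$ -- two distinct upper covers $z_{1},z_{2}$ of a vertex $z>\hat{0}$ would satisfy $z_{1}\wedge z_{2}\geq z>\hat{0}$ while differing from both $z_{1}$ and $z_{2}$, violating the condition -- so $\PP\setminus\{\hat{0}\}$ becomes a tree rooted at $\hat{1}$, and every tuple $\pb=(\hat{0}^{a_{0}},p^{a_{1}},{p'}^{a_{2}})$ traces at most a two-vertex path in that tree.

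Now I would case-split on how the paths of $\pb$ and $\qb=(\hat{0}^{b_{0}},q^{b_{1}},{q'}^{b_{2}})$ sit in the tree: (i) nested inside a single root-to-leaf chain, (ii) in disjoint branches, or (iii) sharing exactly their top vertex $p'=q'$ with distinct children $p,q$. Cases (i) and (ii) are essentially free: in (i) every meet stays on the chain, and in (ii) every mixed meet collapses to $\hat{0}$, so $\rb=(\hat{0}^{m})$. The main obstacle is case (iii), where a priori $\rb$ could exhibit all four values $\hat{0},p,q,p'$. I would pin down that $r_{i}=p$ requires position $i$ to have $\pb$ still at $p$ while $\qb$ has already transitioned to $p'$, and $r_{i}=q$ requires the mirror configuration; comparing the two transition indices $a_{0}+a_{1}$ and $b_{0}+b_{1}$ then shows these regimes are mutually exclusive, so only one of $p,q$ actually appears in $\rb$, leaving at most three values in the required cover configuration. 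Degenerate sub-cases (a path collapses to a single vertex, or $p=q$ which forces $p'=q'$ by uniqueness of parents in the tree) reduce directly to (i) or (ii), completing the proof.
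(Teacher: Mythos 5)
Your forward direction is correct but takes a genuinely different route from the paper's. The paper picks a \emph{maximal} bad pair $p,q$ with $z=p\wedge q\notin\{\hat{0},p,q\}$, passes to upper covers $p'\leq q'$, and shows that $\pb=\bigl(p,(p')^{m-1}\bigr)$ and $\qb=\bigl(q,(q')^{m-1}\bigr)$ have no \emph{meet}, by exhibiting two incomparable lower bounds whose only candidate for a common upper bound below $\pb$ and $\qb$ would be $\bigl(z,(p')^{m-1}\bigr)\notin\Pm{P}{m}$. You instead stay in $\Pm{\PP}{2}$, take the two covers $z\lessdot z_{1},z_{2}$ and $s=z_{1}\vee z_{2}$, and exhibit two distinct minimal upper bounds $(a,s)$ and $(b,s)$ of $(z,z_{1})$ and $(z,z_{2})$, hence no \emph{join}. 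All your verifications go through: $a=b$ would force $a\geq s$; two distinct lower covers of $s$ are automatically incomparable; and any upper bound below $(a,s)$ must have second coordinate $s$ and first coordinate a lower cover of $s$ lying below $a$, hence equal to $a$. This avoids the paper's maximality argument entirely and is, if anything, cleaner.

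The backward direction follows the paper's strategy (componentwise meets stay inside $\Pm{P}{m}$), and your reduction to the tree structure on $P\setminus\{\hat{0}\}$ --- which is exactly the paper's Theorem~\ref{thm:mcover_lattice_prop}, there deduced as a corollary of this theorem --- is a sensible organizing device. But your trichotomy on the relative position of the two edges is not exhaustive. Take $P=\{\hat{0},p,p',q,\hat{1}\}$ with $\hat{0}\lessdot p\lessdot p'\lessdot\hat{1}$ and $\hat{0}\lessdot q\lessdot\hat{1}$; the meet condition holds. The edges $(p,p')$ and $(q,\hat{1})$ lie on no common root-to-leaf chain, do not share their top vertex, and are not in disjoint branches in the sense you need for case~(ii): $p\wedge\hat{1}=p\neq\hat{0}$, and the componentwise meet of $(p,p')$ and $(q,\hat{1})$ in $\Pm{P}{2}$ is $(\hat{0},p')$, not $(\hat{0},\hat{0})$. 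The omitted configuration is the one in which the top vertex of one edge is a proper ancestor of the top vertex of the other while the two bottom vertices sit in different branches. It is easily repaired --- there both cross-meets against the lower edge's bottom vertex are $\hat{0}$, the positions where the higher tuple sits at its top value form a suffix, and the meet tuple comes out as $\bigl(\hat{0}^{c_{0}},p^{c_{1}},(p')^{c_{2}}\bigr)$ --- but as written the conclusion you attach to case~(ii), namely $\rb=\bigl(\hat{0}^{m}\bigr)$, is false here, and neither (i) nor (iii) catches this configuration.
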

\begin{proof}
	Suppose that $\PP$ is a lattice, and suppose that for every $p,q\in P$, we have $p\wedge q\in\{\hat{0},p,q\}$.  We want to show first that $\Pm{\PP}{m}$ is a lattice again.  Let $\pb=\bigl(\hat{0}^{k_{0}},p_{1}^{k_{1}},p_{2}^{k_{2}}\bigr)$ and $\qb=\bigl(\hat{0}^{l_{0}},q_{1}^{l_{1}},q_{2}^{l_{2}}\bigr)$.  We show that the componentwise meet of $\pb$ and $\qb$, denoted by $\zb$, is again contained in $\Pm{P}{m}$, and since $\Pm{\PP}{m}$ is a subposet of $\PP^{m}$ it follows that $\zb$ has to be the meet of $\pb$ and $\qb$ in $\Pm{\PP}{m}$.  We essentially have two choices for $\zb$, depending on the values of $k_{0},k_{1},k_{2}$ and $l_{0},l_{1},l_{2}$:
	\begin{align}
		\zb & = \bigl(\hat{0}^{s_{0}},(p_{1}\wedge q_{1})^{s_{1}},(p_{1}\wedge q_{2})^{s_{2}},(p_{2}\wedge q_{2})^{s_{3}}\bigr),\label{eq:meet_1}\quad\text{or}\\
		\zb & = \bigl(\hat{0}^{s_{0}},(p_{1}\wedge q_{1})^{s_{1}},(p_{2}\wedge q_{1})^{s_{2}},(p_{2}\wedge q_{2})^{s_{3}}\bigr)\label{eq:meet_2},
	\end{align}
	for suitable $s_{0},s_{1},s_{2},s_{3}\in\{0,1,\ldots,m\}$, and we distinguish three cases. 
	
	(i) Let $p_{1}\wedge q_{1}=\hat{0}$.  Here we need to distinguish three more cases:\\
	(ia) Let $p_{1}\wedge q_{2}=\hat{0}$.  If $\zb$ is of the form \eqref{eq:meet_1}, then it follows immediately that $\zb\in\Pm{P}{m}$.  So, suppose that $\zb$ is of the form \eqref{eq:meet_2}.  If $q_{1}\leq p_{2}$, then $q_{1}\leq p_{2}\wedge q_{2}\leq q_{2}$, which implies with $q_{1}\lessdot q_{2}$ that $\zb\in\Pm{P}{m}$.  If $p_{2}\leq q_{1}$, then it follows immediately that $\zb\in\Pm{P}{m}$.  If $q_{1}$ and $p_{2}$ are incomparable, then $p_{2}\wedge q_{1}=\hat{0}$ by assumption and again $\zb\in\Pm{P}{m}$.\\
	(ib) Let $p_{1}\wedge q_{2}=p_{1}$.  Then, both $p_{2}$ and $q_{2}$ are upper bounds for $p_{1}$, and hence $p_{1}\leq p_{2}\wedge q_{2}$.  If $p_{2}$ and $q_{2}$ are incomparable, then $p_{1}=\hat{0}=q_{1}$, and it follows that $\zb=\bigl(\hat{0}^{m}\bigr)\in\Pm{P}{m}$.  If $q_{2}\leq p_{2}$, then $p_{2}=q_{2}$, and it follows that $\zb=\bigl(\hat{0}^{s_{0}+s_{1}},p_{1}^{s_{2}},p_{2}^{s_{3}}\bigr)\in\Pm{P}{m}$, or $\zb=\bigl(\hat{0}^{s_{0}+s_{1}},q_{1}^{s_{2}},q_{2}^{s_{3}}\bigr)\in\Pm{P}{m}$.  If $p_{2}<q_{2}$, then it follows that $p_{2}$ and $q_{1}$ are incomparable.  Hence $\zb=\bigl(\hat{0}^{s_{0}+s_{1}},p_{1}^{s_{2}},p_{2}^{s_{3}}\bigr)\in\Pm{P}{m}$, or $\zb=\bigl(\hat{0}^{s_{0}+s_{1}+s_{2}},p_{2}^{s_{3}}\bigr)\in\Pm{P}{m}$.\\
	(ic) Let $p_{1}\wedge q_{2}=q_{2}$.  This works analogously to (ib).
	
	(ii) Let $p_{1}\wedge q_{1}=p_{1}$.  Then, it follows by assumption that either $p_{2}\leq q_{1}$ or $p_{1}=\hat{0}$.  In the first case, we have $\zb=\bigl(\hat{0}^{s_{0}},p_{1}^{s_{1}+s_{2}},p_{2}^{s_{3}}\bigr)\in\Pm{P}{m}$ if $\zb$ is of the form \eqref{eq:meet_1}, or $\zb=\bigl(\hat{0}^{s_{0}},p_{1}^{s_{1}},p_{2}^{s_{2}+s_{3}}\bigr)\in\Pm{P}{m}$ if $\zb$ is of the form \eqref{eq:meet_2}.  In the second case, we have $\zb=\bigl(\hat{0}^{s_{0}+s_{1}+s_{2}},(p_{2}\wedge q_{2})^{s_{3}}\bigr)\in\Pm{P}{m}$ if $\zb$ is of the form \eqref{eq:meet_1}.  Thus it remains to consider the case where $p_{1}=\hat{0}$, and $\zb$ is of the form \eqref{eq:meet_2}.  Then, we have either $p_{2}\wedge q_{1}=\hat{0}$ (which implies $\zb=\bigl(\hat{0}^{m}\bigr)\in\Pm{P}{m}$), or $p_{2}\wedge q_{1}=p_{2}$ (which implies $\zb=\bigl(\hat{0}^{s_{0}+s_{1}},p_{2}^{s_{2}+s_{3}}\bigr)\in\Pm{P}{m}$), or $p_{2}\wedge q_{1}=q_{1}$ (which implies $\zb=\bigl(\hat{0}^{s_{0}+s_{1}},q_{1}^{s_{2}},q_{2}^{s_{3}}\bigr)\in\Pm{P}{m}$). 
	
	(iii) Let $p_{1}\wedge q_{1}=q_{1}$. This works analogously to (ii).
	
	Hence every two elements $\pb,\qb\in\Pm{P}{m}$ have a meet in $\Pm{\PP}{m}$, and since $\Pm{\PP}{m}$ is finite and bounded, it is a classical lattice-theoretic result that $\Pm{\PP}{m}$ is a lattice.
	
	\smallskip
	
	We prove the converse argument by contradiction.  Since $\PP$ is an interval in $\Pm{\PP}{m}$, it follows immediately that if $\PP$ is no lattice, then $\Pm{\PP}{m}$ cannot be a lattice as well.  So suppose that $\PP$ is a lattice, and suppose further that there exist two elements $p,q\in P$, with $p\wedge q=z\notin\{\hat{0},p,q\}$, and we choose these elements to be maximal (\ie if $\bar{p},\bar{q}\in P$ with $p\leq\bar{p}$ and $q\leq\bar{q}$ satisfy $\bar{p}\wedge\bar{q}\notin\{\hat{0},\bar{p},\bar{q}\}$, then $p=\bar{p}$ or $q=\bar{q}$).  We explicitly construct two elements $\pb,\qb\in\Pm{P}{m}$ that do not have a meet in $\Pm{\PP}{m}$.  By assumption, neither $p=\hat{1}$ nor $q=\hat{1}$.  Hence we can find elements $p',q'\in P$ with $p\lessdot p'$ and $q\lessdot q'$, and by the maximality of $p$ and $q$ we have $p'\wedge q'\in\{\hat{0},p',q'\}$.  Moreover, since $\hat{0}\neq z\leq p'\wedge q'$, it follows that $p'\leq q'$ or $q'\leq p'$, and we assume without loss of generality that $p'\leq q'$. 
	
	On the one hand, consider the elements $\pb=\bigl(p,(p')^{m-1}\bigr)$ and $\qb=(q,(q')^{m-1}\bigr)$, and on the other hand, consider the elements $\ww_{1}=\bigl(\hat{0},(p')^{m-1}\bigr)$ and $\ww_{2}=\bigl(z,(z')^{m-1}\bigr)$, where $z'$ satisfies $z\lessdot z'\leq p$.  Then, we have $\ww_{1},\ww_{2}\leq\pb,\qb$, and both $\pb$ and $\qb$ as well as $\ww_{1}$ and $\ww_{2}$ are mutually incomparable.  The only candidate for an element that would be larger than $\ww_{1}$ and $\ww_{2}$ and at the same time smaller than $\pb$ and $\qb$ is $\bigl(z,(p')^{m-1}\bigr)$, which does, however, not belong to $\Pm{P}{m}$, since $z<p<p'$ and $z\neq\hat{0}$.  Hence $\pb$ and $\qb$ do not have a meet in $\Pm{\PP}{m}$, which implies that $\Pm{\PP}{m}$ is not a lattice.
\end{proof}

\begin{example}
	Theorem~\ref{thm:mcover_lattice} is illustrated by the examples depicted in Figure~\ref{fig:no_lattice}.  In the poset shown in Figure~\ref{fig:no_lattice_1}, the meet of the elements $2$ and $3$ is not the least element, and in its $2$-cover poset, which is shown in Figure~\ref{fig:no_lattice_2}, the elements $(2,4)$ and $(3,4)$ have three mutually incomparable lower bounds, namely $(1,2)$, $(0,4)$ and $(1,3)$.
\end{example}

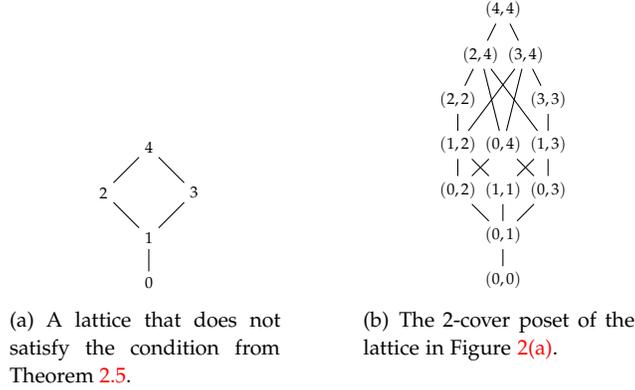
\begin{figure}
	\centering
	\subfigure[A lattice that does not satisfy the condition from Theorem~\ref{thm:mcover_lattice}.]
	  {\label{fig:no_lattice_1}
	  \begin{tikzpicture}\tiny
		\def\x{.6};
		\def\y{.6};
		\draw(.2*\x,2*\y) node{};
		\draw(5.8*\x,2*\y) node{};
		\draw(3*\x,1*\y) node(n1){$0$};
		\draw(3*\x,2*\y) node(n2){$1$};
		\draw(2*\x,3*\y) node(n3){$2$};
		\draw(4*\x,3*\y) node(n4){$3$};
		\draw(3*\x,4*\y) node(n5){$4$};
		\draw(n1) -- (n2);
		\draw(n2) -- (n3);
		\draw(n2) -- (n4);
		\draw(n3) -- (n5);
		\draw(n4) -- (n5);
	  \end{tikzpicture}}\hspace*{1cm}
	\subfigure[The $2$-cover poset of the lattice in Figure~\ref{fig:no_lattice_1}.]{\label{fig:no_lattice_2}
	  \begin{tikzpicture}\tiny
		\def\x{.6};
		\def\y{.6};
		\draw(.2*\x,2*\y) node{};
		\draw(5.8*\x,2*\y) node{};
		\draw(3*\x,1*\y) node(n1){$(0,0)$};
		\draw(3*\x,2*\y) node(n2){$(0,1)$};
		\draw(2*\x,3*\y) node(n3){$(0,2)$};
		\draw(3*\x,3*\y) node(n4){$(1,1)$};
		\draw(4*\x,3*\y) node(n5){$(0,3)$};
		\draw(2*\x,4*\y) node(n6){$(1,2)$};
		\draw(3*\x,4*\y) node(n7){$(0,4)$};
		\draw(4*\x,4*\y) node(n8){$(1,3)$};
		\draw(2*\x,5*\y) node(n9){$(2,2)$};
		\draw(4*\x,5*\y) node(n10){$(3,3)$};
		\draw(2.5*\x,6*\y) node(n11){$(2,4)$};
		\draw(3.5*\x,6*\y) node(n12){$(3,4)$};
		\draw(3*\x,7*\y) node(n13){$(4,4)$};
		\draw(n1) -- (n2);
		\draw(n2) -- (n3);
		\draw(n2) -- (n4);
		\draw(n2) -- (n5);
		\draw(n3) -- (n6);
		\draw(n3) -- (n7);
		\draw(n4) -- (n6);
		\draw(n4) -- (n8);
		\draw(n5) -- (n8);
		\draw(n6) -- (n9);
		\draw(n6) -- (n12);
		\draw(n7) -- (n5);
		\draw(n7) -- (n12);
		\draw(n8) -- (n10);
		\draw(n8) -- (n11);
		\draw(n9) -- (n11);
		\draw(n10) -- (n12);
		\draw(n11) -- (n7);
		\draw(n11) -- (n13);
		\draw(n12) -- (n13);
	  \end{tikzpicture}}
	\caption{An illustration of Theorem~\ref{thm:mcover_lattice}.}
	\label{fig:no_lattice}
\end{figure}

\begin{remark}\label{rem:no_sublattice}
	The proof of Theorem~\ref{thm:mcover_lattice} implies that if $\Pm{\PP}{m}$ is a lattice, then for every $\pb,\qb\in\Pm{P}{m}$ their meet $\pb\wedge\qb$ agrees with the componentwise meet of $\pb$ and $\qb$.  However, we can find simple examples showing that the same is not true for joins, which implies that $\Pm{\PP}{m}$ is not a sublattice of $\PP^{m}$.  Consider for instance the $2$-cover poset of the pentagon poset shown in Figure~\ref{fig:no_join_sub_2}.  The join of the elements $(0,1)$ and $(2,2)$ in $\Pm{N_{5}}{2}$ is $(3,4)$, while their componentwise join is $(2,4)$.
	
	\begin{figure}
		\centering
		\subfigure[The lattice $N_{5}$.]{\label{fig:no_join_sub_1}
		  \begin{tikzpicture}\tiny
			\def\x{.6};
			\def\y{.6};
			\draw(.2*\x,2*\y) node{};
			\draw(3.8*\x,2*\y) node{};
			\draw(2*\x,1*\y) node(n1){$0$};
			\draw(1*\x,2*\y) node(n2){$2$};
			\draw(3*\x,2*\y) node(n3){$1$};
			\draw(1*\x,3*\y) node(n4){$3$};
			\draw(2*\x,4*\y) node(n5){$4$};
			\draw(n1) -- (n2);
			\draw(n1) -- (n3);
			\draw(n2) -- (n4);
			\draw(n3) -- (n5);
			\draw(n4) -- (n5);
		  \end{tikzpicture}}\hspace*{1cm}
		\subfigure[The lattice $\Pm{N_{5}}{2}$.]{\label{fig:no_join_sub_2}
		  \begin{tikzpicture}\tiny
			\def\x{.6};
			\def\y{.6};
			\draw(3*\x,1*\y) node(n1){$(0,0)$};
			\draw(2*\x,2*\y) node(n2){$(0,2)$};
			\draw(1*\x,3*\y) node(n3){$(2,2)$};
			\draw(3*\x,3*\y) node(n4){$(0,3)$};
			\draw(2*\x,4*\y) node(n5){$(2,3)$};
			\draw(6*\x,4*\y) node(n6){$(0,1)$};
			\draw(3*\x,5*\y) node(n7){$(3,3)$};
			\draw(5*\x,5*\y) node(n8){$(0,4)$};
			\draw(7*\x,5*\y) node(n9){$(1,1)$};
			\draw(4*\x,6*\y) node(n10){$(3,4)$};
			\draw(6*\x,6*\y) node(n11){$(1,4)$};
			\draw(5*\x,7*\y) node(n12){$(4,4)$};
			\draw(n1) -- (n2);
			\draw(n1) -- (n6);
			\draw(n2) -- (n3);
			\draw(n2) -- (n4);
			\draw(n3) -- (n5);
			\draw(n4) -- (n5);
			\draw(n4) -- (n8);
			\draw(n5) -- (n7);
			\draw(n6) -- (n8);
			\draw(n6) -- (n9);
			\draw(n7) -- (n10);
			\draw(n8) -- (n10);
			\draw(n8) -- (n11);
			\draw(n9) -- (n11);
			\draw(n10) -- (n12);
			\draw(n11) -- (n12);
		  \end{tikzpicture}}
		\caption{The pentagon poset $N_{5}$, and its $2$-cover poset $\Pm{N_{5}}{2}$.}
		\label{fig:no_join_sub}
	\end{figure}
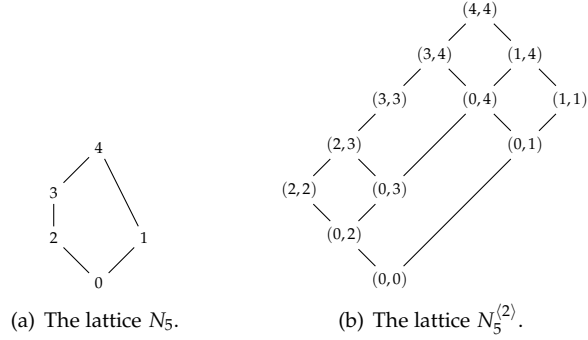
\end{remark}

Now we obtain Theorem~\ref{thm:mcover_lattice_prop} as a corollary.

\begin{proof}[Proof of Theorem~\ref{thm:mcover_lattice_prop}]
	Let $H$ denote the Hasse diagram of $\PP$ with $\hat{0}$ removed, and let $x,y\in P$. 
	
	If $H$ is a tree, then it does not contain a cycle, and it is straightforward to verify that $\PP$ is a lattice and that the meet of any two incomparable elements is the least element.
	
	Conversely, suppose that $\Pm{\PP}{m}$ is a lattice, and thus Theorem~\ref{thm:mcover_lattice} implies that $\PP$ is a lattice such that $p\wedge q\in\{p,q,\hat{0}\}$ for all $p,q\in P$.  Suppose that $H$ is not a tree, and must thus contain a cycle.  We can find distinct elements $p,q,z\in H$ in this cycle such that $z\leq p,q$, which implies $z\leq p\wedge q=\hat{0}$.  It follows that $z=\hat{0}$, which contradicts $z\in H$.  Hence $H$ is a tree, and the proof is completed.
\end{proof}

\begin{remark}
	Interestingly, the posets occurring in Theorem~\ref{thm:mcover_lattice_prop} have an intrinsic connection to the so-called \alert{chord posets} defined in \cite{kim14dyck}.  In particular, if $\PP$ is a bounded poset such that for every $m>0$, the $m$-cover poset $\Pm{\PP}{m}$ is a lattice, then the dual of the proper part of $\PP$ is a chord poset, and vice versa. 
\end{remark}

Finally, we compute the cardinality of the $m$-cover posets which are lattices.

\begin{proposition}\label{prop:mcover_lattice_card}
	Let $\PP$ be a bounded poset with $n$ elements such that $\Pm{\PP}{m}$ is a lattice for all $m>0$.  If $n>1$, then $\bigl\lvert\Pm{P}{m}\bigr\rvert=n\tbinom{m+1}{2}-m^{2}+1$.
\end{proposition}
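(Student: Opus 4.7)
The plan is to combine Theorem~\ref{thm:mcover_lattice_prop} with the cardinality formula from Proposition~\ref{prop:mcover_cardinality}, and then verify the resulting identity by a short algebraic simplification.

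First, I would invoke Theorem~\ref{thm:mcover_lattice_prop} to conclude that, under the hypothesis that $\Pm{\PP}{m}$ is a lattice for all $m>0$, the Hasse diagram $H$ of $\PP$ with $\hat{0}$ removed is a tree rooted at $\hat{1}$. Since $H$ has exactly $n-1$ vertices (the elements of $P\setminus\{\hat{0}\}$), being a tree it has exactly $n-2$ edges, provided $n\geq 2$. These edges correspond bijectively to the covering relations of $\PP$ that do not involve $\hat{0}$, that is, to the $c - k$ covers of $\PP$ other than the covers of the form $\hat{0}\lessdot p$ for some atom $p$. Hence $c - k = n - 2$.

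Next, I would substitute this into the formula from Proposition~\ref{prop:mcover_cardinality} to obtain
\begin{equation*}
	\Bigl\lvert\Pm{P}{m}\Bigr\rvert = (n-2)\binom{m}{2} + m(n-1) + 1.
\end{equation*}
It remains to check that this equals $n\binom{m+1}{2} - m^{2} + 1$. Clearing the factor of $\tfrac{1}{2}$ and using $\binom{m}{2}=\tfrac{m(m-1)}{2}$ and $\binom{m+1}{2}=\tfrac{m(m+1)}{2}$, the asserted identity reduces to
\begin{equation*}
	m(m-1)(n-2) + 2m(n-1) = nm(m+1) - 2m^{2},
\end{equation*}
and both sides expand to $m^{2}n + mn - 2m^{2}$, completing the verification.

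No step of this argument presents any real obstacle; the only piece of content is the identification $c - k = n - 2$ afforded by the tree characterization of Theorem~\ref{thm:mcover_lattice_prop}, after which everything reduces to routine algebra. One small sanity check I would include is that the hypothesis $n>1$ is used precisely to ensure that $H$ has a vertex and thus a well-defined (possibly empty, when $n=2$) edge set of size $n-2\geq 0$, matching the count of non-atom covers in $\PP$.
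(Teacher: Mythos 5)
Your proof is correct and follows essentially the same route as the paper: invoke Theorem~\ref{thm:mcover_lattice_prop} to see that the Hasse diagram minus $\hat{0}$ is a tree on $n-1$ vertices, deduce that the number of covering relations not involving $\hat{0}$ is $c-k=n-2$, and substitute into Proposition~\ref{prop:mcover_cardinality}. The algebraic verification also matches the paper's computation.
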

\begin{proof}
	Theorem~\ref{thm:mcover_lattice_prop} implies that the Hasse diagram of $\PP$ with bottom element removed is a tree with $n-1$ elements.  If $k$ denotes the number of leaves of this tree, then it is immediately clear that $\PP$ has $n-2+k$ covering relations, and Proposition~\ref{prop:mcover_cardinality} implies
	\begin{align*}
		\bigl\lvert\Pm{P}{m}\bigr\rvert & = (n-2)\binom{m}{2} + m(n-1) + 1\\
		& = \frac{(n-2)(m^{2}-m)+2mn-2m+2}{2}\\
		& = \frac{nm^{2}+nm-2m^{2}+2}{2}\\
		& = n\binom{m+1}{2} - m^{2} + 1.
	\end{align*}
\end{proof}

\subsection{The Path Poset}
  \label{sec:path_poset}
A \alert{northeast path} is a lattice path that starts at $(0,0)$ and that consists only of north- and east-steps.  If $\pf$ is a northeast path with $k$ north-steps and $l$ right-steps, then we say that it \alert{has length $k+l$}.  Moreover, we associate with $\pf$ a word $w_{\pf}=w_{1}w_{2}\cdots w_{k+l}$ on the alphabet $\{N,E\}$ in the obvious way, namely by defining
\begin{displaymath}
	w_{j}=\begin{cases}
		N, & \text{if the}\;j\text{-th step of}\;\pf\;\text{is a north-step},\\
		E, & \text{if the}\;j\text{-th step of}\;\pf\;\text{is an east-step},
	\end{cases}
\end{displaymath}
for $j\in\{1,2,\ldots,k+l\}$.  If $w_{j}=E$, then we define 
\begin{displaymath}
	h_{j}=\begin{cases}
		j, & \text{if}\;j=1\;\text{or}\;w_{j-1}=N,\\
		h_{j-1}, & \text{if}\;w_{j-1}=E.
	\end{cases}
\end{displaymath}
Now we define a relation $\lessdot_{\pf}$ on the letters of $w_{\pf}$ in the following way:
\begin{enumerate}[(i)]
	\item $w_{i}\lessdot_{\pf}w_{j}$ if and only if $i\leq j$, $w_{i}=w_{j}=N$, and there is no $s\in\{i+1,i+2,\ldots,j-1\}$ with $w_{s}=N$,\quad and
	\item $w_{j}\lessdot_{\pf}w_{i}$ if and only if $w_{j}=E$, $w_{i}=N$, and $h_{j}-1=i$.
\end{enumerate}
We denote by $\leq_{\pf}$ the reflexive and transitive closure of $\lessdot_{\pf}$, and call it the \alert{path order} of $\pf$.  We remark that the Hasse diagram of $\bigl(\{w_{1},w_{2},\ldots,w_{k+l}\},\leq_{\pf}\bigr)$ is connected if and only if $w_{1}=N$. 

\begin{example}
  \label{ex:path_order}
	Let $\pf$ be given by $w_{\pf}=NENEEENNE$.  We have $h_{2}=2,h_{4}=h_{5}=h_{6}=4$ and $h_{9}=9$.  The path order of $\pf$ is given by the cover relations
	\begin{enumerate}[(i)]
		\item $w_{1}\lessdot_{\pf}w_{3}\lessdot_{\pf}w_{7}\lessdot_{\pf}w_{8}$,\quad and 
		\item $w_{2}\lessdot_{\pf}w_{1}$,\quad $w_{4},w_{5},w_{6}\lessdot_{\pf}w_{3}$,\quad $w_{9}\lessdot_{\pf}w_{8}$.
	\end{enumerate}
	See Figure~\ref{fig:path_order} for an illustration.
	
	\begin{figure}
		\centering
		\begin{tikzpicture}\tiny
			\def\x{1};
			\def\y{1};
			\draw(2*\x,1*\y) node(e2){$w_{2}$};
			\draw(3*\x,2*\y) node(e4){$w_{4}$};
			\draw(4*\x,2*\y) node(e5){$w_{5}$};
			\draw(5*\x,2*\y) node(e6){$w_{6}$};
			\draw(6*\x,4*\y) node(e9){$w_{9}$};
			\draw(1*\x,2*\y) node(n1){$w_{1}$};
			\draw(2*\x,3*\y) node(n3){$w_{3}$};
			\draw(5*\x,4*\y) node(n7){$w_{7}$};
			\draw(5*\x,5*\y) node(n8){$w_{8}$};
			\draw(n1) -- (n3) -- (n7) -- (n8);
			\draw(e2) -- (n1);
			\draw(e4) -- (n3);
			\draw(e5) -- (n3);
			\draw(e6) -- (n3);
			\draw(e9) -- (n8);
		\end{tikzpicture}
		\caption{The path order on the northeast path $\pf$ given by the word $w_{\pf}=NENEEENNE$.}
		\label{fig:path_order}
	\end{figure}
\end{example}

\begin{definition}
  \label{def:path_poset}
	Let $\PP=(P,\leq_{P})$ be a bounded poset, and let $\pf$ be a northeast path having length $k+l$.  The \alert{path poset} $\PP_{\pf}=(P_{\pf},\leq_{P_{\pf}})$ is defined by $P_{\pf}=P\uplus\{w_{1},w_{2},\ldots,w_{k+l}\}$, where $\uplus$ denotes 	disjoint set union, and
	\begin{displaymath}
		p\leq_{P_{\pf}}q\quad\text{if and only if}\quad\begin{cases}
			p,q\in P\;\text{and}\;p\leq_{P}q,\\
			p,q\notin P\;\text{and}\;p\leq_{\pf}q,\\
			p=\hat{0}\;\text{and}\;q=w_{j}=E\;\text{for some}\;j,\\
			p\in P\;\text{and}\;q=w_{j}=N\;\text{for some}\;j,
		\end{cases}
	\end{displaymath}
	for every $p,q\in P_{\pf}$.
\end{definition}

See Figure~\ref{fig:path_poset} for an illustration. 

\begin{remark}
  \label{rem:exclude_trivial_paths}
	If we consider the empty poset $\mathcal{\emptyset}$ as being bounded, then the path poset $\mathcal{\emptyset}_{\pf}$ is precisely the poset $\bigl(\{w_{1},w_{2},\ldots,w_{k+l}\},\leq_{\pf}\bigr)$.  Moreover, if $\pf$ starts with an east-step, then $\PP_{\pf}$ does not possess a greatest element.  Since our focus lies on bounded posets, we usually assume that $\PP$ is non-empty and that $\pf$ starts with a north-step. 
\end{remark}

\begin{figure}
	\centering
	\begin{tikzpicture}\tiny
		\def\x{1};
		\def\y{.5};
		\draw(3*\x,1*\y) node(p0){$0$};
		\draw(1*\x,2*\y) node(p1){$1$};
		\draw(1*\x,3*\y) node(p2){$2$};
		\draw(1*\x,4*\y) node(p3){$3$};
		\draw(1*\x,5*\y) node(p4){$4$};
		\draw(2*\x,6*\y) node(q1){$w_{1}$};
		\draw(2*\x,2*\y) node(q2){$w_{2}$};
		\draw(2.5*\x,2*\y) node(q3){$w_{4}$};
		\draw(3*\x,2*\y) node(q4){$w_{5}$};
		\draw(3.5*\x,2*\y) node(q5){$w_{6}$};
		\draw(5*\x,2*\y) node(q6){$w_{9}$};
		\draw(3*\x,7*\y) node(q7){$w_{3}$};
		\draw(4*\x,8*\y) node(q8){$w_{7}$};
		\draw(5*\x,9*\y) node(q9){$w_{8}$};
		\draw[thick](p0) -- (p1) -- (p2) -- (p3) -- (p4);
		\draw(p4) -- (q1) -- (q7) -- (q8) -- (q9);
		\draw(p0) -- (q2) -- (q1);
		\draw(p0) -- (q3) -- (q7);
		\draw(p0) -- (q4) -- (q7);
		\draw(p0) -- (q5) -- (q7);
		\draw(p0) -- (q6) -- (q9);
	\end{tikzpicture}
	\caption{The path poset $(C_{5})_{\pf}$, where $\pf$ is given by $w_{\pf}=NENEEENNE$.}
	\label{fig:path_poset}
\end{figure}
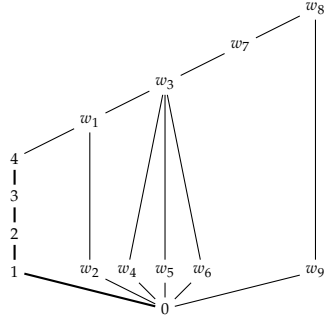

Now let $\pf$ be a northeast path, let $w_{\pf}=w_{1}w_{2}\cdots w_{n}$ be the corresponding word, and let $\PP$ be a bounded poset.  Set $\PP_{i}=\PP_{w_{1}w_{2}\cdots w_{i}}$.  It is immediately clear that $\PP_{\pf}=\bigl(\PP_{n-1}\bigr)_{w_{n}}$.  Thus it is sufficient to explicitly describe how to construct the $m$-cover poset $\Pm{(\PP_{\pf})}{m}$ from $\Pm{\PP}{m}$, provided that $\pf=N$ or $\pf=E$.  
 
\subsubsection{Adding a North-Step}
First let $\pf=N$.  Then, $\PP_{\pf}=\PP_{N}$ is just $\PP$ with a new greatest element, say $N$, attached.  The additional elements in $\Pm{(\PP_{N})}{m}$ are of the form $\bigl(\hat{0}^{l_{0}},\hat{1}^{l_{1}},N^{l_{2}}\bigr)$ with $l_{2}>0$.  Thus it is immediate that there are $\tbinom{m+1}{2}$ such elements.  Let now $G_N=\bigl\{(a,b)\in\mathbb{N}^{2}\mid 1\leq a\leq b\leq m\bigr\}$.  For all $(a_{1},b_{1}),(a_{2},b_{2})\in G_N$ we set $(a_{1},b_{1})\leq_{G_{N}}(a_{2},b_{2})$ if and only if $a_{1}\leq a_{2}$ and $b_{1}\leq b_{2}$.

If we define $\phi_{N}(a,b)=\bigl(\hat{0}^{m-b},\hat{1}^{b-a},N^{a}\bigr)$ for $(a,b)\in G_N$, then it is easy to check that for all $(a_{1},b_{1}),(a_{2},b_{2})\in G_N$, we have $(a_{1},b_{1})\leq_{G_{N}}(a_{2},b_{2})$ if and only if $\phi_{N}(a_{1},b_{1})\leq_{P_{N}}\phi_{N}(a_{2},b_{2})$.  Let us additionally abbreviate $\phi_{N}(0,b)=\bigl(\hat{0}^{m-b},\hat{1}^{b}\bigr)$ for $b\in\{0,1,\ldots,m\}$.  Then, $\Pm{(P_{N})}{m}=\Pm{P}{m}\uplus\bigl\{\phi_{N}(a,b)\mid (a,b)\in G_N\bigr\}$, and for all $\pb,\qb\in\Pm{(P_{N})}{m}$, we have 
\begin{equation}
  \label{eq:cover_north}
	\pb\lessdot_{P_{N}}\qb\quad\text{if and only if}\quad\begin{cases}
		\pb,\qb\in\Pm{P}{m},\;\text{and}\;\pb\lessdot_{P}\qb,\\
		\pb,\qb\notin\Pm{P}{m},\;\text{and}\;\phi_{N}^{-1}(\pb)\lessdot_{G_{N}}\phi_{N}^{-1}(\qb),\\
		\pb=\phi_{N}(0,b),\qb=\phi_{N}(1,b),\;\text{for}\;1\leq b\leq m.
	\end{cases}
\end{equation}

\subsubsection{Adding an East-Step}
Now let $\pf=E$.  Then, $\PP_{\pf}=\PP_{E}$ is $\PP$ with an additional element, say $E$, satisfying $\hat{0}\lessdot_{P}E\lessdot_{P}\hat{1}$.  (This implies that these are the only coverings in $\PP_{E}$ not present in $\PP$.)  The additional elements in $\Pm{(\PP_{E})}{m}$ are of the form $\bigl(\hat{0}^{l_{0}},E^{l_{1}},\hat{1}^{l_{2}}\bigr)$ with $l_{1}>0$, and again it is immediate that there are $\tbinom{m+1}{2}$ such elements.  Let $G_E=\bigl\{(a,b)\in\mathbb{N}^{2}\mid 0\leq a<b\leq m\bigr\}$.  For all $(a_{1},b_{1}),(a_{2},b_{2})\in G_E$ we set $(a_{1},b_{1})\leq_{G_{E}}(a_{2},b_{2})$ if and only if $a_{1}\leq a_{2}$ and $b_{1}\leq b_{2}$.

If we define $\phi_{E}(a,b)=\bigl(\hat{0}^{m-b},E^{b-a},\hat{1}^{a}\bigr)$ for $(a,b)\in G_E$, then it is easy to check that for all $(a_{1},b_{1}),(a_{2},b_{2})\in G_E$, we have $(a_{1},b_{1})\leq_{G_{E}}(a_{2},b_{2})$ if and only if $\phi_{E}(a_{1},b_{1})\leq_{P_{E}}\phi_{E}(a_{2},b_{2})$.  In addition, we abbreviate $\phi_{E}(b,b)=\bigl(\hat{0}^{m-b},\hat{1}^{b}\bigr)$ for $b\in\{0,1,\ldots,m\}$.  Then, $\Pm{(P_{E})}{m}=\Pm{P}{m}\uplus\bigl\{\phi_{E}(a,b)\mid (a,b)\in G_E\bigr\}$, and for all $\pb,\qb\in\Pm{(P_{E})}{m}$, we have
\begin{equation}
  \label{eq:cover_east}
	\pb\lessdot_{P_{E}}\qb\quad\text{if and only if}\quad\begin{cases}
		\pb,\qb\in\Pm{P}{m},\;\text{and}\;\pb\lessdot_{P}\qb,\\
		\pb,\qb\notin\Pm{P}{m},\;\text{and}\;\phi_{E}^{-1}(\pb)\lessdot_{G_{E}}\phi_{E}^{-1}(\qb),\\
		\pb=\phi_{E}(b,b),\qb=\phi_{E}(b,b+1),\;\text{for}\;0\leq b<m.
	\end{cases}
\end{equation}

\subsubsection{A Special Class of Path Posets}
For proving certain topological properties of the $m$-cover posets, namely Theorems~\ref{thm:mcover_path_poset} and \ref{thm:mcover_trim}, it turns out that we need to consider a particular class of path posets, which we will define next.  For nonnegative integers $k$ and $l$, let $\PP_{k,l}$ denote the bounded poset whose proper part is the disjoint union of a $k$-element chain (the elements of which are denoted by $c_{1},c_{2},\ldots,c_{k}$) and an $l$-element antichain (the elements of which are denoted by $a_{1},a_{2},\ldots,a_{l}$).  See Figure~\ref{fig:p33} for an example.  Our main interest lies in the path posets arising from $\PP_{k,l}$ and some northeast path.

\begin{remark}
  \label{rem:exclude_trivial}
	If $k=0$ and $l=0$, then $\PP_{0,0}$ is a $2$-element chain, and $\Pm{\PP_{0,0}}{m}$ is a distributive lattice for all $m>0$.  Hence $\PP_{0,0}$ satisfies the properties stated in Theorems~\ref{thm:mcover_path_poset} and ~\ref{thm:mcover_trim}. If $k=0$ and $l>0$, then $\PP_{0,l}\cong\PP_{1,l-1}$. Hence it is always safe to assume $k>0$.
\end{remark}

\begin{figure}
	\centering
	\subfigure[The lattice $\PP_{3,3}$.]{\label{fig:p33}
	  \begin{tikzpicture}\tiny
		\def\x{.6};
		\def\y{.6};
		\draw(.5*\x,3*\y) node{};
		\draw(4.5*\x,3*\y) node{};
		\draw(2.5*\x,1*\y) node(n1){$\hat{0}$};
		\draw(1*\x,2*\y) node(n2){$c_{1}$};
		\draw(2*\x,2*\y) node(n3){$a_{1}$};
		\draw(3*\x,2*\y) node(n4){$a_{2}$};
		\draw(4*\x,2*\y) node(n5){$a_{3}$};
		\draw(1*\x,3*\y) node(n6){$c_{2}$};
		\draw(1*\x,4*\y) node(n7){$c_{3}$};
		\draw(2.5*\x,5*\y) node(n8){$\hat{1}$};
		\draw(n1) -- (n2);
		\draw(n1) -- (n3);
		\draw(n1) -- (n4);
		\draw(n1) -- (n5);
		\draw(n2) -- (n6);
		\draw(n3) -- (n8);
		\draw(n4) -- (n8);
		\draw(n5) -- (n8);
		\draw(n6) -- (n7);
		\draw(n7) -- (n8);
	  \end{tikzpicture}}\hspace*{2cm}
	\subfigure[The lattice $\PP_{3,3}^{\langle 2\rangle}$.]{
	  \label{fig:2p33}
	  \begin{tikzpicture}\tiny
		\def\x{1};
		\def\y{.6};
		\draw(3*\x,1*\y) node(n1){$(\hat{0},\hat{0})$};
		\draw(2*\x,2*\y) node(n2){$(\hat{0},c_{1})$};
		\draw(3*\x,2*\y) node(n3){$(\hat{0},a_{1})$};
		\draw(4*\x,2*\y) node(n4){$(\hat{0},a_{2})$};
		\draw(5*\x,2*\y) node(n5){$(\hat{0},a_{3})$};
		\draw(1*\x,3*\y) node(n6){$(c_{1},c_{1})$};
		\draw(2*\x,3*\y) node(n7){$(\hat{0},c_{2})$};
		\draw(1*\x,4*\y) node(n8){$(c_{1},c_{2})$};
		\draw(2*\x,4*\y) node(n9){$(\hat{0},c_{3})$};
		\draw(1*\x,5*\y) node(n10){$(c_{2},c_{2})$};
		\draw(3*\x,5*\y) node(n11){$(\hat{0},\hat{1})$};
		\draw(4*\x,5*\y) node(n12){$(a_{1},a_{1})$};
		\draw(5*\x,5*\y) node(n13){$(a_{2},a_{2})$};
		\draw(6*\x,5*\y) node(n14){$(a_{3},a_{3})$};
		\draw(1*\x,6*\y) node(n15){$(c_{2},c_{3})$};
		\draw(1*\x,7*\y) node(n16){$(c_{3},c_{3})$};
		\draw(2*\x,8*\y) node(n17){$(c_{3},\hat{1})$};
		\draw(3*\x,8*\y) node(n18){$(a_{1},\hat{1})$};
		\draw(4*\x,8*\y) node(n19){$(a_{2},\hat{1})$};
		\draw(5*\x,8*\y) node(n20){$(a_{3},\hat{1})$};
		\draw(3*\x,9*\y) node(n21){$(\hat{1},\hat{1})$};
		\draw(n1) -- (n2);
		\draw(n1) -- (n3);
		\draw(n1) -- (n4);
		\draw(n1) -- (n5);
		\draw(n2) -- (n6);
		\draw(n2) -- (n7);
		\draw(n3) -- (n11);
		\draw(n3) -- (n12);
		\draw(n4) -- (n11);
		\draw(n4) -- (n13);
		\draw(n5) -- (n11);
		\draw(n5) -- (n14);
		\draw(n6) -- (n8);
		\draw(n7) -- (n8);
		\draw(n7) -- (n9);
		\draw(n8) -- (n10);
		\draw(n9) -- (n11);
		\draw(n9) -- (n15);
		\draw(n10) -- (n15);
		\draw(n11) -- (n17);
		\draw(n11) -- (n18);
		\draw(n11) -- (n19);
		\draw(n11) -- (n20);
		\draw(n12) -- (n18);
		\draw(n13) -- (n19);
		\draw(n14) -- (n20);
		\draw(n15) -- (n16);
		\draw(n16) -- (n17);
		\draw(n17) -- (n21);
		\draw(n18) -- (n21);
		\draw(n19) -- (n21);
		\draw(n20) -- (n21);
	  \end{tikzpicture}}
	\caption{The lattice $\PP_{3,3}$ and its $2$-cover poset.}
	\label{fig:cover_examples}
\end{figure}
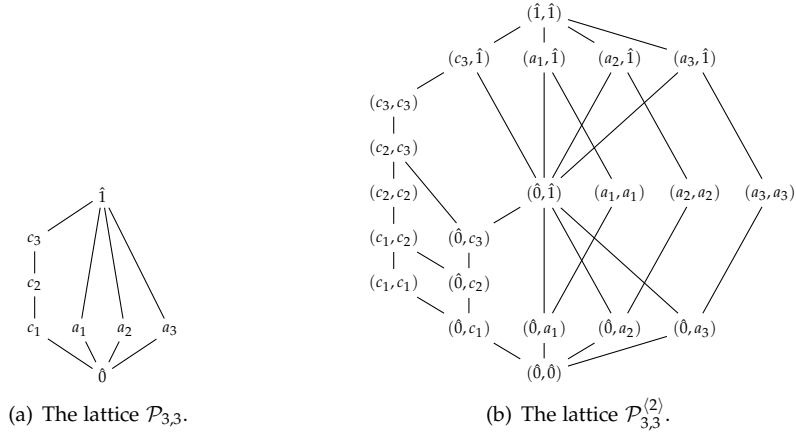

Let us first investigate a certain statistic of the poset $\PP_{k,l}$.  Given any poset $\PP=(P,\leq)$, denote by $u_{n}$ the number of elements in $P$ having precisely $n$ upper covers, and consider the generating function $\text{UF}(x;\PP)=\sum_{n\geq 0}{u_{n}x^{n}}$.  Analogously, denote by $l_{n}$ the number of elements in $P$ having precisely $n$ lower covers, and consider the generating function $\text{LF}(x;\PP)=\sum_{n\geq 0}{l_{n}x^{n}}$.  We have the following result.

\begin{proposition}\label{prop:cover_statistic}
	For $k,l,m>0$, we have 
	\begin{displaymath}
		\text{UF}\Bigl(x;\Pm{\PP_{k,l}}{m}\Bigr) = 1+(k+l)mx+\left((k+l)\binom{m}{2}\right)x^{2}+mx^{l+1} = \text{LF}\Bigl(x;\Pm{\PP_{k,l}}{m}\Bigr).
	\end{displaymath}
\end{proposition}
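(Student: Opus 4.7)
The plan is to perform a case analysis on the shape of the tuples in $\Pm{\PP_{k,l}}{m}$.  I partition the ground set into seven disjoint classes: (A)~$(\hat{0}^m)$; (B1)~$(\hat{0}^{l_0},\hat{1}^{l_1})$ with $l_0,l_1>0$; (B2)~$(\hat{0}^{l_0},p^{l_1})$ with $l_0,l_1>0$ and $p\in\{c_1,\dots,c_k,a_1,\dots,a_l\}$; (C1)~$(\hat{1}^m)$; (C2)~$(p^m)$ for such a $p$; (D)~$(p^{l_1},q^{l_2})$ with $p\lessdot q$, $p\neq\hat{0}$, and $l_1,l_2>0$; and (E)~$(\hat{0}^{l_0},p^{l_1},q^{l_2})$ with $p\lessdot q$, $p\neq\hat{0}$, and $l_0,l_1,l_2>0$.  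Their cardinalities $1$, $m-1$, $(k+l)(m-1)$, $1$, $k+l$, $(k+l)(m-1)$, and $(k+l)\binom{m-1}{2}$ sum to $\bigl\lvert\Pm{P_{k,l}}{m}\bigr\rvert$ as given by Proposition~\ref{prop:mcover_lattice_card}.

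For each class I would determine the number of upper and lower covers using three structural features: every element of the proper part of $\PP_{k,l}$ has a unique upper cover $p^\star$ and a unique lower cover $p_\star$; there are $l+1$ atoms ($c_1,a_1,\dots,a_l$) and $l+1$ coatoms ($c_k,a_1,\dots,a_l$); and a valid tuple in $\Pm{P_{k,l}}{m}$ uses distinct values only from a subset of some $\{\hat{0},p',q'\}$ with $p'\lessdot q'$.  Direct enumeration yields upper cover counts $l+1$ for (A) and (B1), $2$ for (B2) and (E), $0$ for (C1), and $1$ for (C2) and (D); and lower cover counts $0$ for (A), $l+1$ for (B1) and (C1), $1$ for (B2) and (C2), and $2$ for (D) and (E).  Collecting coefficients, $x^0$ receives $1$, $x^1$ receives $(k+l)+(k+l)(m-1)=(k+l)m$, $x^2$ receives $(k+l)(m-1)+(k+l)\binom{m-1}{2}=(k+l)\binom{m}{2}$, and $x^{l+1}$ receives $1+(m-1)=m$ in both generating functions, establishing $\text{UF}=\text{LF}$ and the claimed formula.

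The main obstacle is the verification of covers versus non-covers in classes (B1), (B2), and (D), where the three-value validity constraint and the specific structure of $\PP_{k,l}$ interact delicately.  For instance, if $p=c_i$ with $i>1$ in class (B2), then $(\hat{0}^{l_0-1},p^{l_1+1})$ is \emph{not} an upper cover of $\pb$, because the valid tuple $(\hat{0}^{l_0-1},p_\star,p^{l_1})$ sits strictly between them; yet this latter tuple \emph{is} itself an upper cover, since any finer intermediate would introduce a fourth distinct value and therefore be invalid.  Dually, in class (D) the tuple $(\hat{0},p^{l_1-1},q^{l_2})$ is a lower cover of $(p^{l_1},q^{l_2})$ even though it drops one $p$ straight to $\hat{0}$, because any intermediate $(p_\star,p^{l_1-1},q^{l_2})$ with $p_\star\neq\hat{0}$ would contain three distinct non-$\hat{0}$ values $p_\star,p,q$, contradicting the requirement that three distinct values in a valid tuple must include $\hat{0}$.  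Carrying out these case distinctions according to whether $p$ is an atom and whether $q$ equals $\hat{1}$ confirms the cover counts in every class and completes the proof.
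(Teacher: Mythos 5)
Your proposal is correct and follows essentially the same route as the paper: a case analysis on the possible shapes of the tuples in $\Pm{\PP_{k,l}}{m}$, with a direct count of upper and lower covers for each shape (the paper merely shortcuts the coefficient of $x^{1}$ by invoking its earlier characterization of the meet-irreducibles in Proposition~\ref{prop:mcover_irreducibles}, and dismisses $\text{LF}$ as analogous, whereas you treat both generating functions symmetrically). Your seven-class partition, cover counts, and the delicate cover/non-cover verifications you highlight (e.g.\ that $(\hat{0}^{l_{0}-1},p_{\star},p^{l_{1}})$ interposes when $p$ is not an atom, and that validity of tuples forces the two-cover counts in classes (D) and (E)) all check out against the paper's argument.
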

\begin{proof}
	First of all we note that the greatest element of $\Pm{\PP_{k,l}}{m}$ is the unique element having no upper cover.  Then $\PP_{k,l}$ has precisely $k+l$ meet-irreducible elements, and Proposition~\ref{prop:mcover_irreducibles} implies that $\Pm{\PP_{k,l}}{m}$ has $m(k+l)$ meet-irreducibles, namely elements with exactly one upper cover.  Now suppose that $\xx\in\Pm{P_{k,l}}{m}$ is neither the greatest element nor meet-irreducible.  Let $\hat{0}$ denote the least element of $\PP_{k,l}$, and let $\hat{1}$ denote the greatest element of $\PP_{k,l}$.  Recall that the proper part of $\PP_{k,l}$ is the disjoint union of a $k$-element chain (whose elements are denoted by $c_{1},c_{2},\ldots,c_{k}$), and an $l$-element antichain (whose elements are denoted by $a_{1},a_{2},\ldots,a_{l}$).  Then, $\xx$ is necessarily of one of the following forms:
	
	(i) Let $\xx=\bigl(\hat{0}^{s},\hat{1}^{m-s}\bigr)$ for $s>0$.  We find that $\xx$ has precisely $l+1$ upper covers, namely $\bigl(\hat{0}^{s-1},c_{k},\hat{1}^{m-s}\bigr)$ and $\bigl(\hat{0}^{s-1},a_{i},\hat{1}^{m-s}\bigr)$ for $i\in\{1,2,\ldots,l\}$.
	
	(ii) Let $\xx=\bigl(\hat{0}^{s_{0}},a_{i}^{s_{1}},\hat{1}^{s_{2}}\bigr)$ for $i\in\{1,2,\ldots,l\}$ and $s_{0},s_{1},s_{2}\in\{1,2,\ldots,m-1\}$.  Then, $\xx$ has precisely two upper covers, namely $\bigl(\hat{0}^{s_{0}-1},a_{i}^{s_{1}+1},\hat{1}^{s_{2}}\bigr)$ and $\bigl(\hat{0}^{s_{0}},a_{i}^{s_{1}-1},\hat{1}^{s_{2}+1}\bigr)$.
	
	(iii) Let $\xx=\bigl(\hat{0}^{s_{0}},c_{i}^{s_{1}},c_{i+1}^{s_{2}}\bigr)$ for $i\in\{1,2,\ldots,k\}$ and $s_{0},s_{1}\in\{1,2,\ldots,m-1\}$, $s_{2}\in\{0,1,\ldots,m-1\}$, where we set $c_{k+1}=\hat{1}$.  If $s_{2}=0$, then $\xx=\bigl(\hat{0}^{s_{0}},c_{1}^{m-s_{0}}\bigr)$, and $\xx$ has precisely two upper covers, namely the elements $\bigl(\hat{0}^{s_{0}-1},c_{1}^{m-s_{0}+1}\bigr)$ and $\bigl(\hat{0}^{s_{0}},c_{1}^{m-s_{0}-1},c_{2}\bigr)$.  Otherwise, if $s_{2}>0$, then $\xx$ has again precisely two upper covers, namely the elements $\bigl(\hat{0}^{s_{0}-1},c_{i}^{s_{1}+1},c_{i+1}^{s_{2}}\bigr)$ and $\bigl(\hat{0}^{s_{0}},c_{i}^{s_{1}-1},c_{i+1}^{s_{2}+1}\bigr)$.
	
	If we add all these possibilities, then we obtain the result.  The reasoning for $\text{LF}\Bigl(x;\Pm{\PP_{k,l}}{m}\Bigr)$ is analogous.
\end{proof}

It turns out that we can characterize the path posets of $\PP_{k,l}$ in terms of their Hasse diagrams.  For that, however, we need some further notation.  We abbreviate $(\PP_{k,l})_{\pf}$ by $\PP_{k,l;\pf}$.  Let $H$ be a rooted tree with root $r$, and let $x$ be some vertex of $H$.  If $c$ is a fixed child of $x$, then we say that all elements $y\in H$ such that the unique path from $r$ to $y$ (in $H$) passes through $c$ form a \alert{subtree of $x$}.  Moreover, we say that $H$ \alert{satisfies Condition~\eqref{eq:condition}} if and only if $H$ satisfies the following, recursive condition:
\begin{align}\label{eq:condition}
	\parbox{.9\textwidth}{either the root of $H$ has no subtree, or the root of $H$ has at most one subtree with more than one element, and this subtree again satisfies Condition~\eqref{eq:condition}.}\tag{S}
\end{align}
See Figure~\ref{fig:condition_s} for an example.

\begin{figure}
	\centering
	\subfigure[A tree that satisfies Condition~\eqref{eq:condition}.]{
		\begin{tikzpicture}\tiny
			\def\x{.6};
			\def\y{.6};
			\draw(1*\x,1*\y) node[draw,circle,scale=.4](n1){};
			\draw(2*\x,1*\y) node[draw,circle,scale=.4](n2){};
			\draw(3*\x,1*\y) node[draw,circle,scale=.4](n3){};
			\draw(2*\x,2*\y) node[draw,circle,scale=.4](n4){};
			\draw(3*\x,3*\y) node[draw,circle,scale=.4](n5){};
			\draw(4*\x,3*\y) node[draw,circle,scale=.4](n6){};
			\draw(5*\x,3*\y) node[draw,circle,scale=.4](n7){};
			\draw(4*\x,4*\y) node[draw,circle,scale=.4](n8){};
			\draw(5*\x,4*\y) node[draw,circle,scale=.4](n9){};
			\draw(6*\x,4*\y) node[draw,circle,scale=.4](n10){};
			\draw(5*\x,5*\y) node[draw,circle,scale=.4](n11){};
			\draw(n1) -- (n4) -- (n5) -- (n8) -- (n11);
			\draw(n2) -- (n4);
			\draw(n3) -- (n4);
			\draw(n6) -- (n8);
			\draw(n7) -- (n8);
			\draw(n9) -- (n11);
			\draw(n10) -- (n11);
		\end{tikzpicture}
	}\qquad
	\subfigure[A tree that does not satisfy Condition~\eqref{eq:condition}.]{
		\begin{tikzpicture}\tiny
			\def\x{.6};
			\def\y{.6};
			\draw(1*\x,1*\y) node[draw,circle,scale=.4](n1){};
			\draw(2*\x,1*\y) node[draw,circle,scale=.4](n2){};
			\draw(3*\x,1*\y) node[draw,circle,scale=.4](n3){};
			\draw(2*\x,2*\y) node[draw,circle,scale=.4](n4){};
			\draw(3*\x,3*\y) node[draw,circle,scale=.4](n5){};
			\draw(4*\x,3*\y) node[draw,circle,scale=.4](n6){};
			\draw(5*\x,3*\y) node[draw,circle,scale=.4](n7){};
			\draw(6*\x,3*\y) node[draw,circle,scale=.4](e1){};
			\draw(4*\x,4*\y) node[draw,circle,scale=.4](n8){};
			\draw(5*\x,4*\y) node[draw,circle,scale=.4](n9){};
			\draw(6*\x,4*\y) node[draw,circle,scale=.4](n10){};
			\draw(5*\x,5*\y) node[draw,circle,scale=.4](n11){};
			\draw(n1) -- (n4) -- (n5) -- (n8) -- (n11);
			\draw(n2) -- (n4);
			\draw(n3) -- (n4);
			\draw(n6) -- (n8);
			\draw(n7) -- (n8);
			\draw(n9) -- (n11);
			\draw(n10) -- (n11);
			\draw(e1) -- (n10);
		\end{tikzpicture}
	}
	\caption{Illustration of Condition~\eqref{eq:condition}.}
	\label{fig:condition_s}
\end{figure}
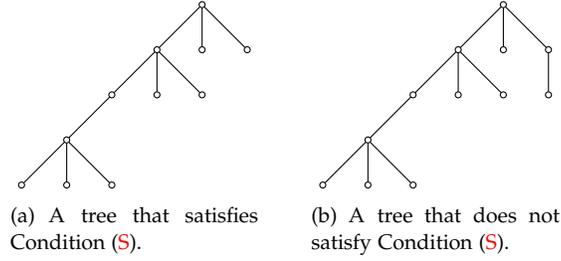

\begin{remark}
	We notice that if $\mathbf{1}$ denotes the singleton-poset, namely the poset consisting of a single element, then we have $\PP_{k,l}\cong\mathbf{1}_{N^{k+1}E^{l}}$.  However, since we frequently use posets of the form $\PP_{k,l}$ as an induction base for results on the path-poset $\PP_{k,l;\pf}$, we keep this explicit notion. 
\end{remark}

\begin{proposition}\label{prop:path_poset_char}
	A bounded poset $\PP$ with more than one element is isomorphic to $\PP_{k,l;\pf}$, for some $k,l\geq 0$ and some northeast path $\pf$, if and only if the Hasse diagram of $\PP$ with $\hat{0}$ removed is a tree rooted at $\hat{1}$ satisfying Condition \eqref{eq:condition}.
\end{proposition}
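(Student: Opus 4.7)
The plan is to prove both implications by induction.

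For the forward direction ($\Rightarrow$), I induct on the length of $\pf$. When $\pf$ is empty and $\PP=\PP_{k,l}$, the Hasse diagram with $\hat{0}$ removed has root $\hat{1}$ whose children are $c_k$ (if $k\geq 1$) together with the leaves $a_1,\dots,a_l$; its only subtree of size larger than one is the chain at $c_k$, and \eqref{eq:condition} holds recursively. For the inductive step, write $\pf=\pf' s$ and apply the construction of Section~\ref{sec:path_poset}: an $N$-step places a new root whose unique child is the old root (whose subtree satisfies \eqref{eq:condition} by the inductive hypothesis), while an $E$-step merely adds one leaf child at the root, changing neither the unique big subtree nor the count of big subtrees. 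Either way, \eqref{eq:condition} is preserved.

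For the backward direction ($\Leftarrow$), I induct on the size of the tree $T$. If $T$ has a single vertex, then $\PP\cong\PP_{0,0}$ with empty path. Otherwise, the root $r=\hat{1}$ has at most one subtree of size larger than one. If it has none, then every child of $r$ is a leaf, so the proper part of $\PP$ is an antichain of $l'$ atoms and $\PP\cong\PP_{1,l'-1}$ (or $\PP_{0,0}$ when $l'=0$) with empty path. If $r$ does have a ``big'' subtree $T'$ with root $r'$, let $e\geq 0$ be the number of leaf children of $r$ and let $\PP'$ be the sub-poset of $\PP$ on $\{\hat{0}\}$ together with the vertex set of $T'$. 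Then $T'$ satisfies \eqref{eq:condition} by the recursive clause of that condition, so the inductive hypothesis applied to $\PP'$ yields $\PP'\cong\PP_{k,l;\pf'}$ for some $k$, $l$, $\pf'$. Unwinding Definition~\ref{def:path_poset}, appending the letter $N$ to $\pf'$ adjoins a new top whose unique lower cover from $V(T')$ is the old top $r'$ of $\PP'$, and each of the subsequent $E$-steps attaches one extra leaf between $\hat{0}$ and the new top; after $e$ such $E$-steps the cover relations match $\PP$ exactly, so $\PP\cong\PP_{k,l;\pf'\cdot NE^e}$.

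The only delicate point is the final cover-relation matching in the second subcase: one has to check that the new top covers precisely $r'$ together with the $e$ newly added leaves, each new leaf covers only $\hat{0}$ from below, and all remaining covers come from $\PP'$. This is a direct unwinding of Definition~\ref{def:path_poset} and is the last nontrivial step in the argument.
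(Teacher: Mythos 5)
Your proof is correct and follows essentially the same route as the paper's: induction on the length of $\pf$ for the forward direction (an $E$-step adds a leaf child of the root, an $N$-step adds a new root with the old tree as its unique subtree), and induction on the size of the poset for the converse, peeling off the unique large subtree of $\hat{1}$ together with the $e$ leaf children and appending $NE^{e}$ to the path obtained inductively. If anything, your case split in the converse (no large subtree versus exactly one) is slightly more careful than the paper's ``chain versus non-chain'' dichotomy, since it explicitly covers the case where $\hat{1}$ has several leaf children and no large subtree.
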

\begin{proof}
	First suppose that $\PP\cong\PP_{k,l;\pf}$, for some northeast path $\pf$.  Since we assumed that $\PP$ is not a singleton, it follows that either $k>0$ or $l>0$.  Let $H$ be the Hasse diagram of $\PP$ with $\hat{0}$ removed.  We need to show that $H$ satisfies Condition~\eqref{eq:condition}, and we proceed by induction on the length of $\pf$.  If $\pf$ is empty, then $\PP\cong\PP_{k,l}$, and $H$ clearly satisfies Condition~\eqref{eq:condition}.  Now suppose that $\pf$ has length $n$ and that the claim is true for all northeast paths of length $<n$.  Let $\bar{\pf}$ denote the subpath of $\pf$ consisting of the first $n-1$ steps, and write $\bar{\PP}=\PP_{k,l;\bar{\pf}}$.  Let $\bar{H}$ denote the Hasse diagram of $\bar{\PP}$ with $\hat{0}$ removed.  We distinguish two cases.
	
	(i) $\pf$ ends with an east-step.  By construction, $\bar{\PP}$ and $\PP$ differ in exactly one element, say $E$, satisfying $\hat{0}\lessdot E\lessdot\hat{1}$.  In particular, $H$ is constructed from $\bar{H}$ by adding a new, single leaf to $\hat{1}$.  Since by induction assumption $\bar{H}$ satisfies Condition~\eqref{eq:condition}, so does $H$.
	
	(ii) $\pf$ ends with a north-step.  By construction, $\bar{\PP}$ and $\PP$ differ in exactly one element, say $N$, satisfying $\hat{1}\lessdot N$.  In particular, $H$ is constructed from $\bar{H}$ by adding a new root.  Thus $\bar{H}$ is the unique subtree of $N$, and since $\bar{H}$ satisfies Condition~\eqref{eq:condition}, so does $H$.
	
	\smallskip
	
	Conversely, let $H$ be the Hasse diagram of $\PP$ with $\hat{0}$ removed, and suppose that $H$ satisfies Condition~\eqref{eq:condition}.  We proceed by induction on the length of $\PP$.  Since we require $\PP$ to have at least two elements, we have $\ell(\PP)>0$.  If $\ell(\PP)=1$, then $\PP$ is a $2$-chain, and we have $\PP\cong\PP_{0,0;\emptyset}$.  So suppose that $\ell(\PP)=n$ and that the claim is true for all such posets with length $<n$.  If $\PP$ is itself a chain, say of length $k$, then we have $\PP\cong\PP_{k,0}$.  Otherwise, since $\PP$ satisfies Condition~\eqref{eq:condition}, we can find a unique subtree of $\hat{1}$, say $H_{0}$, that has more than one element, and we denote by $z_{0}$ the root of $H_{0}$.  Further, let $H_{1},H_{2},\ldots,H_{s}$ denote the other (one-element) subtrees of $\hat{1}$.  By induction assumption, the interval $[\hat{0},z_{0}]$ is isomorphic to some $\PP_{k,l;\bar{\pf}}$, and if we consider the path $\pf=\bar{\pf}NE^{s}$ that is constructed from $\bar{\pf}$ by 
subsequently adding one north-step and $s$ east-steps, then we see immediately that $\PP\cong\PP_{k,l;\pf}$, and we are done.
\end{proof}

\subsection{Topology of the $m$-Cover Poset}
  \label{sec:mcover_topology} 
In the last part of this section, we prove Theorems~\ref{thm:mcover_path_poset} and \ref{thm:mcover_trim}, \ie we derive a characterization of the posets whose $m$-cover posets are left-modular or EL-shellable lattices, and we explicitly characterize which of those posets yield $m$-cover posets that are trim lattices.  We first recall some helpful results.

\begin{theorem}[\cite{liu00left}*{Theorem~1.4}]\label{thm:left_modular}
	Let $\PP=(P,\leq)$ be a finite lattice, and let $x\in P$.  The following are equivalent:
	\begin{enumerate}[(a)]
		\item the element $x$ is left-modular;\quad and
		\item for any $y,z\in P$ with $y\lessdot z$, we have $x\wedge y=x\wedge z$ or $x\vee y=x\vee z$, but not both.
	\end{enumerate}
\end{theorem}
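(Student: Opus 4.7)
My plan splits the theorem into the easier direction (a) $\Rightarrow$ (b) and the substantive direction (b) $\Rightarrow$ (a).

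For (a) $\Rightarrow$ (b), I would instantiate the left-modular identity \eqref{eq:left_modularity} with $q=y$ and $q'=z$ for a cover $y\lessdot z$, giving $(y\vee x)\wedge z = y\vee(x\wedge z)$. Both expressions lie in the two-element interval $[y,z]$, so each equals either $y$ or $z$. One easily checks that the left side equals $z$ precisely when $x\vee y=x\vee z$ (since $(y\vee x)\wedge z=z$ iff $y\vee x\geq z$), and that the right side equals $y$ precisely when $x\wedge y=x\wedge z$ (since $y\vee(x\wedge z)=y$ iff $x\wedge z\leq y$). This yields one of the two equalities. A short check using the assumption $y\lessdot z$ rules out the simultaneous case, giving the exclusivity.

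For (b) $\Rightarrow$ (a), I would first handle the cover case: given $y\lessdot z$ and (b), if $x\vee y=x\vee z$ then $y\vee x\geq z$, so $(y\vee x)\wedge z=z$; exclusivity gives $x\wedge y<x\wedge z$, which combined with $y\leq y\vee(x\wedge z)\leq z$ forces $y\vee(x\wedge z)=z$ (otherwise $x\wedge z\leq y$ would collapse $x\wedge z$ onto $x\wedge y$). The case $x\wedge y=x\wedge z$ is dual. Thus the left-modular identity holds on every cover.

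To extend to arbitrary $q<q'$, I define $f,g\colon[q,q']\to P$ by $f(w)=(q\vee x)\wedge w$ and $g(w)=q\vee(x\wedge w)$, observe $g\leq f$ and $f(q)=g(q)=q$, and assume for contradiction that $f(q')\neq g(q')$. Choose $w_0\in[q,q']$ minimal with $f(w_0)\neq g(w_0)$, and set $u=f(w_0)$, $v=g(w_0)$, so $v<u\leq w_0$. A direct computation using $q\leq u\leq w_0$ and $x\wedge w_0\leq v\leq w_0$ shows that $x\wedge u=x\wedge v=x\wedge w_0$ and $x\vee u=x\vee v=x\vee q$. Consequently, along any saturated chain from $v$ to $u$, every cover step has both $x\wedge{-}$ and $x\vee{-}$ constant, directly contradicting the exclusivity clause in (b).

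The main obstacle is the reverse direction, specifically recognising that the minimal counterexample $w_0$ produces a nontrivial interval $[v,u]$ on which both meet and join with $x$ are stuck. The exclusivity in (b) is essential: without it, the contradiction on the chain from $v$ to $u$ evaporates, and one cannot bootstrap from covers to arbitrary pairs.
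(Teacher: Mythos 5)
The paper does not prove this statement---it is imported verbatim from Liu~\cite{liu00left}*{Theorem~1.4}---so there is no internal proof to compare against; judged on its own terms, your argument is correct and complete. In (a)$\Rightarrow$(b) both sides of the left-modular identity for a cover $y\lessdot z$ lie in the two-element interval $[y,z]$, and your identification of when each side equals $y$ or $z$ yields both the disjunction and, since $y\neq z$, the exclusivity. In (b)$\Rightarrow$(a) the key computation is sound: writing $u=(q\vee x)\wedge w_{0}$ and $v=q\vee(x\wedge w_{0})$ one always has $q\leq v\leq u\leq q\vee x$ and $x\wedge w_{0}\leq v\leq u\leq w_{0}$, which forces $x\wedge v=x\wedge u$ and $x\vee v=x\vee u$; if $v<u$, any cover step on a saturated chain in $[v,u]$ then has both meet and join with $x$ constant, violating the ``not both'' clause. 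Two minor observations: the preliminary treatment of the cover case and the choice of a \emph{minimal} $w_{0}$ are both superfluous, since your final contradiction uses only the exclusivity half of (b) and works directly with $w_{0}=q'$; and the inequality that actually forces $x\vee u=x\vee q$ is $u\leq q\vee x$ (immediate from the definition of $u$), which you omit from the list of facts invoked---a presentational slip, not a gap.
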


\begin{theorem}[\cite{mcnamara06poset}*{Theorem~8}]\label{thm:left_modular_el}
	Every left-modular lattice is EL-shellable.
\end{theorem}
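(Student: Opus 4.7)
The plan is to construct an explicit EL-labeling out of any saturated maximal chain of left-modular elements, of which there is at least one by hypothesis. Fix such a chain $M\colon \hat{0}=m_{0}\lessdot m_{1}\lessdot\cdots\lessdot m_{n}=\hat{1}$ in the finite left-modular lattice $\PP$, endow $\{1,2,\ldots,n\}$ with the usual total order, and label the cover relations of $\PP$ by
\[
\lambda(y\lessdot z)=\min\bigl\{i\in\{1,2,\ldots,n\}\mid m_{i}\vee y\geq z\bigr\}.
\]
This is well-defined because $m_{n}\vee y=\hat{1}\geq z$. The first routine checks are that for each fixed $y\in\PP$ the sequence $(m_{i}\vee y)_{i=0}^{n}$ is weakly increasing, that consecutive strict jumps in this sequence are cover relations in $\PP$ (which one deduces from the left-modularity of $m_{i}$ via Theorem~\ref{thm:left_modular}), and that $\lambda(y\lessdot z)=i$ forces $m_{i}\vee y=z$.

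Next I would show that within every interval $[a,b]$ of $\PP$ there is a unique rising maximal chain under $\lambda$, by induction on the length of $[a,b]$. The inductive step is greedy: let $i^{\star}$ be the smallest index for which $m_{i^{\star}}\vee a\ne m_{i^{\star}-1}\vee a$ and $m_{i^{\star}}\vee a\leq b$. By the preceding paragraph $a^{\star}:=m_{i^{\star}}\vee a$ covers $a$ and lies in $[a,b]$ with $\lambda(a\lessdot a^{\star})=i^{\star}$. The crux is to verify that $[a^{\star},b]$ is again left-modular, the relevant chain being the projection of $M$ onto this sub-interval obtained via successive joins with $a^{\star}$, so that the induction applies and a rising chain from $a$ to $b$ exists. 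For uniqueness I would combine the exclusive-or in Theorem~\ref{thm:left_modular} (each $m_{i}$ either absorbs a given cover relation via meet or separates it via join, but not both) with the observation that any other cover $a\lessdot a'$ in $[a,b]$ necessarily receives a label strictly larger than $i^{\star}$, forcing the initial edge of any rising chain to coincide with the greedy choice.

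Lexicographic minimality of this rising chain is then almost automatic: at every step the rising chain picks the \emph{smallest} label available from its current vertex, so by induction any saturated chain with a lexicographically smaller label sequence would have to coincide with the rising chain from the start, contradicting distinctness. The principal obstacle, as is typical for proofs of this flavor, is the careful bookkeeping in the inductive step — specifically, establishing that left-modularity of a maximal chain descends to every interval $[a^{\star},b]$ under projection by join. This hereditary property is the heart of the argument, since without it neither the existence nor the uniqueness of the rising chain can be recovered inductively, and it is exactly here that the equivalence in Theorem~\ref{thm:left_modular} must be applied repeatedly to the individual $m_{i}$'s.
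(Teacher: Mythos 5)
The paper offers no proof of this statement---it is imported verbatim from McNamara and Thomas---so your sketch can only be measured against the standard argument, and your choice of labeling is indeed the standard one: by left-modularity of $m_{i}$ applied to $y<z$, your $\min\bigl\{i\mid m_{i}\vee y\geq z\bigr\}$ coincides with Liu's labeling $\min\bigl\{i\mid y\vee(m_{i}\wedge z)=z\bigr\}$. However, two of your intermediate claims are wrong. The ``routine check'' that $\lambda(y\lessdot z)=i$ forces $m_{i}\vee y=z$ already fails in the pentagon of Figure~\ref{fig:no_join_sub_1}: with the left-modular chain $0\lessdot 2\lessdot 3\lessdot 4$ the edge $0\lessdot 1$ receives the label $3$, yet $m_{3}\vee 0=4\neq 1$. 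Much more seriously, your greedy inductive step is built on the set $I=\{i\mid m_{i}\vee a\neq m_{i-1}\vee a\;\text{and}\;m_{i}\vee a\leq b\}$, and this set can be empty for intervals of length $\geq 2$, so the induction cannot even start.

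A concrete counterexample lives in $\mtam{4}{1}$ (Figure~\ref{fig:tamari4}), which is left-modular with (unique up to one choice) left-modular maximal chain $(0,1,2,3)\lessdot(0,0,2,3)\lessdot(0,0,1,3)\lessdot(0,0,1,2)\lessdot(0,0,0,2)\lessdot(0,0,0,1)\lessdot(0,0,0,0)$ in step-sequence notation. Take $a=\hat{0}=(0,1,2,3)$ and $b=(0,1,1,1)$. Every element of $[a,b]$ other than $\hat{0}$ has second step-sequence entry at least $1$, while every $m_{i}$ with $i\geq 1$ has second entry $0$; hence no $m_{i}\vee a=m_{i}$ with $i\geq 1$ lies in $[a,b]$ and your $i^{\star}$ does not exist, even though $[a,b]$ is a pentagon of length $3$ whose unique rising chain is $\hat{0}\lessdot(0,1,1,3)\lessdot(0,1,1,2)\lessdot(0,1,1,1)$ with labels $(4,5,6)$. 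The first vertex $(0,1,1,3)$ of that chain is not of the form $m_{i}\vee\hat{0}$; it is $(m_{4}\vee\hat{0})\wedge b=m_{4}\wedge b$. This pinpoints the missing idea: the rising chain of $[a,b]$ must be built from the projected chain $i\mapsto(m_{i}\vee a)\wedge b=a\vee(m_{i}\wedge b)$ (the equality being left-modularity), and the hereditary lemma you flag as ``the heart of the argument'' must be proved for this join-then-meet projection. Your version, ``the projection of $M$ obtained via successive joins with $a^{\star}$,'' omits the meet with $b$ entirely, and with it both the existence and the uniqueness of the rising chain collapse.
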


Now we consider a special case of Theorem~\ref{thm:mcover_path_poset}, namely the case where $\pf$ is the empty path.

\begin{lemma}\label{lem:induction_base}
	If $\PP\cong\PP_{k,l}$ for $k,l\geq 0$, then $\Pm{\PP}{m}$ is left-modular for $m>0$.
\end{lemma}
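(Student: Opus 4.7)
The plan is to exhibit a saturated maximal chain $\mathcal{C}$ of $\Pm{\PP_{k,l}}{m}$ all of whose elements are left-modular. For $\mathcal{C}$ I would take the canonical chain from \eqref{eq:mcover_maximal_chain}, built from $\hat{0}\lessdot c_{1}\lessdot\cdots\lessdot c_{k}\lessdot\hat{1}$ inside $\PP_{k,l}$; writing $c_{0}=\hat{0}$ and $c_{k+1}=\hat{1}$, its elements are exactly $\xx=\bigl(c_{i}^{m-j},c_{i+1}^{j}\bigr)$ with $0\leq i\leq k$ and $0\leq j\leq m$. Note that $\Pm{\PP_{k,l}}{m}$ is a lattice by Theorem~\ref{thm:mcover_lattice_prop}, and that by Remark~\ref{rem:no_sublattice} meets in $\Pm{\PP_{k,l}}{m}$ coincide with componentwise meets, while joins are governed by the explicit recipe in the proof of Theorem~\ref{thm:mcover_lattice}.

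Fix $\xx\in\mathcal{C}$. By Liu's criterion (Theorem~\ref{thm:left_modular}), left-modularity of $\xx$ amounts to verifying that for every cover $\yb\lessdot\zb$ in $\Pm{\PP_{k,l}}{m}$, exactly one of $\xx\wedge\yb=\xx\wedge\zb$ and $\xx\vee\yb=\xx\vee\zb$ holds. I would split the cover relations into three families according to how $\yb,\zb$ use the two parts of $\PP_{k,l}$: (a)~both $\yb,\zb$ use only coordinates from $\{c_{0},c_{1},\ldots,c_{k+1}\}$, so $\xx,\yb,\zb$ sit inside the chain-part sub-lattice $\Pm{C_{k+2}}{m}$; (b)~the cover only modifies antichain data, namely multiplicities of some $a_{s}$ or shifts between $\hat{0}$ and $a_{s}$; and (c)~the cover shifts between $a_{s}$ and $\hat{1}$.

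In cases~(b) and~(c), the identities $c_{s}\wedge a_{t}=\hat{0}$ and $c_{s}\vee a_{t}=\hat{1}$ for $s\geq 1$ make the verification transparent: in~(b) the componentwise meets with $\xx$ annihilate all $a_{s}$-coordinates and yield $\xx\wedge\yb=\xx\wedge\zb$, while the joins differ in the multiplicity of $\hat{1}$-coordinates; in~(c) the roles of meet and join are reversed. Case~(a) reduces Liu's condition to a computation inside $\Pm{C_{k+2}}{m}$, where the three elementary cover types identified in the proof of Proposition~\ref{prop:cover_statistic} each admit a direct verification, using crucially that $\xx$ is a chain-block element on the canonical chain.

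The principal obstacle is not conceptual but combinatorial: the case analysis has many branches, and one must carefully apply the join recipe from the proof of Theorem~\ref{thm:mcover_lattice}, which is not componentwise in general. The simplifying feature is that every element of $\mathcal{C}$ has only chain coordinates, so any antichain element appearing in $\yb$ or $\zb$ interacts with $\xx$ through the uniform collapses $c_{i}\wedge a_{s}=\hat{0}$ and $c_{i}\vee a_{s}=\hat{1}$. Once Liu's criterion is verified at every $\xx\in\mathcal{C}$, the lemma follows.
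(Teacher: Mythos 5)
Your proposal follows essentially the same route as the paper's proof: the same canonical chain of elements $\xb_{i,j}=\bigl(c_{i-1}^{m-j},c_{i}^{j}\bigr)$, verification of left-modularity via Theorem~\ref{thm:left_modular}, and a case split on covers according to whether they involve chain or antichain coordinates, with the antichain cases handled by exactly the collapses $c_{i}\wedge a_{s}=\hat{0}$ and $c_{i}\vee a_{s}=\hat{1}$ that drive the paper's case~(ii). The only small inaccuracy is attributing an explicit ``join recipe'' to the proof of Theorem~\ref{thm:mcover_lattice}, which only describes meets componentwise; but the few joins needed here are computed directly, just as in the paper.
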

\begin{proof}
	Recall from Remark~\ref{rem:exclude_trivial} that it is safe to assume $k>0$.  It is immediate that the chain $\hat{0}\lessdot c_{1}\lessdot c_{2}\lessdot\cdots\lessdot c_{k}\lessdot\hat{1}$ is a left-modular chain of $\PP_{k,l}$.  Set $c_{0}=\hat{0}$ and $c_{k+1}=\hat{1}$.  Furthermore, let $\xb_{0,m}=(\hat{0}^m)$, and for $i\in\{1,2,\ldots,k+1\}$ and $j\in\{1,2,\ldots,m\}$, define $\xb_{i,j}=(c_{i-1}^{m-j},c_{i}^{j})$.  Then, $\xb_{i,j}\in\Pm{\PP}{m}$, and we claim that the chain $C$ given by
	\begin{equation}\label{eq:chain}
		\xb_{0,m}\lessdot\xb_{1,1}\lessdot\xb_{1,2}\lessdot\cdots\lessdot\xb_{1,m}\lessdot\xb_{2,1}\lessdot\cdots\lessdot\xb_{k+1,m}
	\end{equation}
	is a maximal saturated chain in $\Pm{\PP}{m}$ consisting of left-modular elements.  The maximality follows immediately from Proposition~\ref{prop:mcover_cardinality}, since $C$ is precisely the chain of maximal length constructed in the proof there.  It remains to show that each $\xb_{i,j}$ is left-modular.  For that, fix indices $i$ and $j$, and let $\pb,\qb\in\Pm{P}{m}$ with $\pb\lessdot\qb$.  Without loss of generality, we can assume that $\pb\neq\bigl(\hat{0}^{m}\bigr)$ and $\qb\neq\bigl(\hat{1}^{m}\bigr)$, because these elements are trivially left-modular.  Essentially $\qb$ can be of two forms:
	
	(i) Let $\qb=\bigl(\hat{0}^{t_{0}},c_{s}^{t_{1}},c_{s+1}^{t_{2}}\bigr)$ with $t_{0}<m$.  We have two choices for $\pb$, namely $\pb_{1}=\bigl(\hat{0}^{t_{0}+1},c_{s}^{t_{1}-1},c_{s+1}^{t_{2}}\bigr)$ or $\pb_{2}=\bigl(\hat{0}^{t_{0}},c_{s}^{t_{1}+1},c_{s+1}^{t_{2}-1}\bigr)$. 
	We have
	\begin{displaymath}
		\xb_{i,j}\wedge\qb=\begin{cases}
			\bigl(\hat{0}^{t_{0}},c_{s}^{t_{1}},c_{s+1}^{t_{2}}\bigr), & \text{if}\;s<i,\\
			\bigl(\hat{0}^{t_{0}},c_{i-1}^{m-t_{0}-\min\{j,t_{2}\}},c_{i}^{\min\{j,t_{2}\}}\bigr), & \text{if}\;s=i,\\
			\bigl(\hat{0}^{t_{0}},c_{i-1}^{m-j-t_{0}},c_{i}^{j}\bigr), & \text{if}\;s>i.
		\end{cases}
	\end{displaymath}
	It follows that $\xb_{i,j}\wedge\pb_{1}\neq\xb_{i,j}\wedge\qb$ and $\xb_{i,j}\wedge\pb_{2}\neq\xb_{i,j}\wedge\qb$ if and only if $s<i$ or $s=i$ and $t_{2}\leq j$.  On the other hand, we have
	\begin{displaymath}
		\xb_{i,j}\vee\qb=\begin{cases}
			\bigl(c_{i-1}^{m-j},c_{i}^{j}\bigr), & \text{if}\;s<i,\\
			\bigl(c_{i-1}^{m-\max\{j,t_{2}\}},c_{i}^{\max\{j,t_{2}\}}\bigr), & \text{if}\;s=i,\\
			\bigl(c_{s}^{m-t_{2}},c_{s+1}^{t_{2}}\bigr), & \text{if}\;s>i,
		\end{cases}
	\end{displaymath}
	and it follows that $\xb_{i,j}\vee\pb_{1}=\xb_{i,j}\wedge\qb$ and $\xb_{i,j}\wedge\pb_{2}=\xb_{i,j}\wedge\qb$ if and only if $s<i$ or $s=i$ and $t_{2}\leq j$.  The element $\xb_{i,j}$ satisfies Condition~(ii) of Theorem~\ref{thm:left_modular}, and is thus left-modular.
	
	(ii) Let $\qb=\bigl(\hat{0}^{t_{0}},a_{s}^{t_{1}},\hat{1}^{t_{2}}\bigr)$ with $t_{0},t_{2}<m$.  We have two choices for $\pb$, namely $\pb_{1}=\bigl(\hat{0}^{t_{0}+1},a_{s}^{t_{1}-1},\hat{1}^{t_{2}}\bigr)$ or $\pb_{2}=\bigl(\hat{0}^{t_{0}},a_{s}^{t_{1}+1},\hat{1}^{t_{2}-1}\bigr)$.  We have
	\begin{displaymath}
		\xb_{i,j}\wedge\qb = \begin{cases}
			\bigl(\hat{0}^{m-t_{2}},c_{i}^{t_{2}}\bigr), & \text{if}\;t_{2}\leq j,\\
			\bigl(\hat{0}^{m-t_{2}},c_{i-1}^{t_{2}-j},c_{i}^{j}\bigr), & \text{if}\;t_{2}>j.
		\end{cases}
	\end{displaymath}
	It follows that $\xb_{i,j}\wedge\pb_{1}=\xb_{i,j}\wedge\qb$, and $\xb_{i,j}\wedge\pb_{2}\neq\xb_{i,j}\wedge\qb$.  On the other hand, we have 
	\begin{displaymath}
		\xb_{i,j}\vee\qb=\bigl(c_{k}^{t_{0}},\hat{1}^{m-t_{0}}\bigr),
	\end{displaymath}
	and it follows that $\xb_{i,j}\vee\pb_{1}\neq\xb_{i,j}\vee\qb$ and $\xb_{i,j}\vee\pb_{2}=\xb_{i,j}\vee\qb$ as desired for Condition~(ii) in Theorem~\ref{thm:left_modular}.  Hence $\xb_{i,j}$ is left-modular. 

	We conclude that the chain in \eqref{eq:chain} consists of left-modular elements, which by definition means that $\Pm{\PP}{m}$ is left-modular.
\end{proof}

Now we are ready to prove Theorem~\ref{thm:mcover_path_poset}. 

\begin{proof}[Proof of Theorem~\ref{thm:mcover_path_poset}]
	$(a)\Rightarrow(b)$: If $\PP$ is a singleton, then $\Pm{\PP}{m}$ is also a singleton and thus clearly $\tpt$-free.  Now suppose that $\PP\cong\PP_{k,l;\pf}$ for some $k,l\geq 0$ and some northeast path $\pf$.  Let $H$ denote the Hasse diagram of $\PP$ with $\hat{0}$ removed.  We proceed by induction on the length of $\PP$.  If $\ell(\PP)=1$, then $\PP\cong\PP_{0,0;\emptyset}$, hence it is a $2$-chain and thus clearly $\tpt$-free.  Now suppose that $\ell(\PP)=n$ and the claim is true for all such posets of length $<n$.  In view of Proposition~\ref{prop:path_poset_char}, it follows that $H$ satisfies Condition~\eqref{eq:condition}.  Thus there is a unique subtree of $\hat{1}$ with more than one element, say $H_{0}$, and possibly some other one-element subtrees of $\hat{1}$.  Let $z_{0}$ be the root of $H_{0}$.  Again by Proposition~\ref{prop:path_poset_char}, the interval $[\hat{0},z_{0}]$ is isomorphic to some $\PP_{k,l;\bar{\pf}}$, and by induction assumption it is $\tpt$-free.  Thus $\PP$ itself is $\tpt$-
free.
	
	\smallskip
	
	$(b)\Rightarrow(a)$: Let $H$ denote the Hasse diagram of $\PP$ with $\hat{0}$ removed.  Since $\Pm{\PP}{m}$ is a lattice, 
	Theorem~\ref{thm:mcover_lattice} implies that $H$ is a tree rooted at $\hat{1}$.  
	We proceed by induction on the length of $\PP$.  If $\ell(P)=0$, then $\PP$ is a singleton.  
	If $\ell(\PP)=1$, then $\PP\cong\PP_{0,0;\emptyset}$, and if $\ell(\PP)=2$, 
	then $\PP\cong\PP_{1,l;\emptyset}$ for $l\geq 0$, and we are done.  Now suppose that $\ell(\PP)=n>2$, 
	and the claim is true for all posets with length $<n$.  Let $x,y\in P$ with $\hat{0}<x<y<\hat{1}$.  
	(Such elements exist, since $\ell(\PP)>2$.)  Denote by $H_{0}$ the subtree of $\hat{1}$ that contains both $x$ and $y$, 
	and let $H_{1},H_{2},\ldots,H_{s}$ denote the other (non-empty) subtrees of $\hat{1}$.  
	By induction assumption and Proposition~\ref{prop:path_poset_char}, we conlude that $H_{0}$ satisfies 
	Condition~\eqref{eq:condition}.  If there is some $j\in\{1,2,\ldots,s\}$ such that $H_{j}$ contains two or more elements, 
	say $x'$ and $y'$, then---since $H$ is a tree---there cannot exist an element $z\in H_{0}$ satisfying $x'<z$ or $z<y'$.  In particular, $x,y\not\leq x',y'$ and $x',y'\not\leq x,y$, which is a contradiction to $\PP$ being $\tpt$-free.  Hence the cardinality of each subtree $H_{j}$ for $j\in\{1,2,\ldots,s\}$ is one.  This means, however, that $H$ satisfies Condition~\eqref{eq:condition}, and Proposition~\ref{prop:path_poset_char} implies that $\PP=\PP_{k,l;\pf}$ for some $k,l\geq 0$ and some northeast path $\pf$.
	
	\smallskip 
	
	$(a)\Rightarrow(c)$: If $\PP$ is a singleton, then so is $\Pm{\PP}{m}$, and the result is trivial.  Otherwise, in view of Remarks~\ref{rem:exclude_trivial_paths} and \ref{rem:exclude_trivial}, it suffices to consider fixed integers $k>0$ and $l\geq 0$ as well as northeast paths starting with a north-step.  We proceed by induction on the length of the northeast path $\pf=w_{1}w_{2}\cdots w_{n}$.  If $n=0$, then the result follows from Lemma~\ref{lem:induction_base}.  Now assume that $n>0$, and the statement is true for all northeast paths of length $<n$.  Let $\bar{\pf}$ be the subpath of $\pf$ consisting of the first $n-1$ steps of $\pf$, and fix some $m>1$.  Further, define $\PP=\PP_{k,l;\pf}$ and $\bar{\PP}=\PP_{k,l;\bar{\pf}}$.  By induction assumption $\bar{\PP}$ is left-modular, so fix some left-modular chain denoted by $\bar{C}$.  Let $\{s_{1},s_{2},\ldots,s_{t}\}$ denote the indices of the north-steps of $\bar{\pf}$ and let $w_{s_{0}}=c_{k+1}$.  Moreover, let $\pb,\qb\in\Pm{P}{m}$ with 
	$\pb\lessdot\qb$.  We distinguish two cases.
	
	\smallskip
		
	(i) $\pf$ ends with an east-step, \ie $w_{n}=E$.  The greatest element of $\bar{\PP}$ is the same as the greatest element of $\PP$, namely $w_{s_{t}}$.  Further, Proposition~\ref{prop:mcover_cardinality} implies $\ell\bigl(\Pm{\PP}{m}\bigr)=\ell\bigl(\Pm{\bar{\PP}}{m}\bigr)$, and $C=\bar{C}$ will be the candidate for the left-modular chain of $\PP$.  The elements of $C$ are essentially of one of the following forms: for some $j\in\{1,2,\ldots,m\}$, let $\xb=\bigl(c_{i}^{m-j},c_{i+1}^{j}\bigr)$, where $i\in\{0,1,\ldots,k\}$, and let $\xb'=\bigl(w_{s_{i}}^{m-j},w_{s_{i+1}}^{j}\bigr)$, where $i\in\{0,1,\ldots,t-1\}$.
	
	Let $\qb=\bigl(\hat{0}^{a_{0}},w_{n}^{a_{1}},w_{s_{t}}^{a_{2}}\bigr)$ with $a_{0}+a_{1}+a_{2}=m$.  Then, $\qb$ has two possible lower covers, namely $\pb_{1}=\bigl(\hat{0}^{a_{0}+1},w_{n}^{a_{1}-1},w_{s_{t}}^{a_{2}}\bigr)$ and $\pb_{2}=\bigl(\hat{0}^{a_{0}},w_{n}^{a_{1}+1},w_{s_{t}}^{a_{2}-1}\bigr)$, and we notice that these are the only cases that we need to consider.  (If $\pb$ and $\qb$ do not contain $w_{n}$, then $\xb$ and $\xb'$ belong to $\bar{\PP}$ and they satisfy Condition~(ii) in Theorem~\ref{thm:left_modular} by induction hypothesis.)  We obtain
	\begin{displaymath}
		\xb\wedge\pb_{1}=\left.\begin{cases}\bigl(\hat{0}^{m-a_{2}},c_{i}^{a_{2}-j},c_{i+1}^{j}\bigr), & \text{if}\;j<a_{2},\\
			\bigl(\hat{0}^{m-a_{2}},c_{i+1}^{a_{2}}\bigr), & \text{if}\;a_{2}\leq j,\end{cases}\right\}
		=\xb\wedge\qb,
	\end{displaymath}
	and 
	\begin{align*}
		\xb\vee\pb_{1} & = \begin{cases}
			\bigl(\hat{0}^{a_{0}+1},w_{n}^{m-a_{0}-1-\max\{a_{2},j\}},w_{s_{t}}^{\max\{a_{2},j\}}\bigr), & \text{if}\;i=0,\\
			\bigl(w_{s_{t-1}}^{a_{0}+1},w_{s_{t}}^{m-a_{0}-1}\bigr), & \text{if}\;i>0
		\end{cases}\\
		& \neq \begin{cases}
			\bigl(\hat{0}^{a_{0}},w_{n}^{m-a_{0}-\max\{a_{2},j\}},w_{s_{t}}^{\max\{a_{2},j\}}\bigr), 
			  & \text{if}\;i=0,\\
			\bigl(w_{s_{t-1}}^{a_{0}},w_{s_{t}}^{m-a_{0}}\bigr), & \text{if}\;i>0,
		\end{cases}\\
		& = \xb\vee\qb,
	\end{align*}
	as desired.  Next $\pb_{2}$ exists only if $a_{2}>0$, and we have
	\begin{displaymath}
		\xb\wedge\pb_{2}=\begin{cases}
			\bigl(\hat{0}^{m-a_{2}+1},c_{i}^{a_{2}-1-j},c_{i+1}^{j}\bigr), & \text{if}\;j<a_{2},\\
			\bigl(\hat{0}^{m-a_{2}+1},c_{i+1}^{a_{2}-1}\bigr), & \text{if}\;j\geq a_{2},
		\end{cases}
	\end{displaymath}
	and we notice that $\xb\wedge\pb_{2}\neq\xb\wedge\qb$ only if $i>0$ or $j\geq a_{2}$.  On the other hand, we have 
	\begin{displaymath}
		\xb\vee\pb_{2} = \begin{cases}
			\bigl(\hat{0}^{a_{0}},w_{n}^{m-a_{0}-\max\{a_{2}-1,j\}},w_{s_{t}}^{\max\{a_{2}-1,j\}}\bigr), & \text{if}\;i=0,\\
			\bigl(w_{s_{t-1}}^{a_{0}},w_{s_{t}}^{m-a_{0}}\bigr), & \text{if}\;i>0,
		\end{cases}
	\end{displaymath}
	and we notice that $\xb\vee\pb_{2}=\xb\vee\qb$ only if $i>0$ or $j\geq a_{2}$, as desired.  Thus $\xb$ satisfies Condition~(ii) in Theorem~\ref{thm:left_modular}. 

	Now by construction, we have $w_{n}\leq w_{s_{i}}$ if and only if $i=t$.  We have
	\begin{displaymath}
		\xb'\wedge\qb=\left.\begin{cases}
			\bigl(\hat{0}^{m-a_{2}},w_{s_{i+1}}^{a_{2}}\bigr), & \text{if}\;a_{2}\leq j\;\text{and}\;i<t-1,\\
			\bigl(\hat{0}^{m-j},w_{n}^{j-a_{2}},w_{s_{t}}^{a_{2}}\bigr), & \text{if}\;a_{2}\leq j\;\text{and}\;i=t-1,\\
			\bigl(\hat{0}^{m-a_{2}},w_{s_{i}}^{a_{2}-j},w_{s_{i+1}}^{j}\bigr), & \text{if}\;j<a_{2},
		\end{cases}\right\}=\xb'\wedge\pb_{1},
	\end{displaymath}
	and 
	\begin{displaymath}
		\xb'\vee\pb_{1} = \bigl(w_{s_{t-1}}^{a_{0}+1},\hat{1}^{m-a_{0}-1}\bigr) \neq \bigl(w_{s_{t-1}}^{a_{0}},\hat{1}^{m-a_{0}}\bigr) = \xb'\vee\qb,
	\end{displaymath}
	as desired.  Furthermore, we have 
	\begin{displaymath}
		\xb'\wedge\pb_{2}=\begin{cases}
			\bigl(\hat{0}^{m-a_{2}+1},w_{s_{i+1}}^{a_{2}-1}\bigr), & \text{if}\;a_{2}\leq j\;\text{and}\;i<t-1,\\
			\bigl(\hat{0}^{m-j},w_{n}^{j-a_{2}+1},w_{s_{t}}^{a_{2}-1}\bigr), & \text{if}\;a_{2}\leq j\;\text{and}\;i=t-1,\\
			\bigl(\hat{0}^{m-a_{2}+1},w_{s_{i}}^{a_{2}-1-j},w_{s_{i+1}}^{j}\bigr), & \text{if}\;j<a_{2},
		\end{cases},
	\end{displaymath}
	and $\xb'\vee\pb_{2} = \bigl(w_{s_{t-1}}^{a_{0}},w_{s_{t}}^{m-a_{0}}\bigr)$, which implies $\xb'\wedge\pb_{2}\neq\xb'\wedge\qb$ and $\xb'\vee\pb_{2}=\xb'\vee\qb$.  Thus $\xb'$ satisfies Condition~(ii) in Theorem~\ref{thm:left_modular} as well, and $C$ is a left-modular chain of $\PP$.

	(ii) $\pf$ ends with a north-step, \ie $w_{n}=N$.  Then, the greatest element of $\bar{\PP}$ is $w_{s_{t}}$ and the greatest element of $\PP$ is $w_{n}$.  Further, Proposition~\ref{prop:mcover_cardinality} implies $\ell\bigl(\Pm{\PP}{m}\bigr)=\ell\bigl(\Pm{\bar{\PP}}{m}\bigr)+m$, and the candidate for the left-modular chain of $\PP$ will be the chain $C$ which is constructed from $\bar{C}$ by appending the chain $\bigl(w_{s_{t}}^{m-1},w_{n}\bigr)\lessdot\bigl(w_{s_{t}}^{m-2},w_{n}^{2}\bigr)\lessdot\cdots\lessdot\bigl(w_{n}^{m}\bigr)$.  The elements of $C$ are essentially of one of the following forms: for some $j\in\{1,2,\ldots,m\}$, let $\xb=\bigl(c_{i}^{m-j},c_{i+1}^{j}\bigr)$, where $i\in\{0,1,\ldots,k\}$, let $\xb'=\bigl(w_{s_{i}}^{m-j},w_{s_{i+1}}^{j}\bigr)$, where $i\in\{0,1,\ldots,t-1\}$, and let $\xb''=\bigl(w_{s_{t}}^{m-j},w_{n}^{j}\bigr)$.	
	
	If $\pb,\qb\in\Pm{\bar{P}}{m}$, then $\xb$ and $\xb'$ satisfy Condition~(ii) of Theorem~\ref{thm:left_modular} by induction assumption.  We notice by construction, that $\pb\lessdot\qb\leq\xb''$, which implies $\pb\vee\xb''=\xb''=\qb\vee\xb''$ and $\pb\wedge\xb''=\pb\neq\qb=\qb\wedge\xb''$ as desired.  So let $\qb=\bigl(\hat{0}^{a_{0}},w_{s_{t}}^{a_{1}},w_{n}^{a_{2}}\bigr)$ for $a_{0}+a_{1}+a_{2}=m$.  Again, we have two possible lower covers, namely $\pb_{1}=\bigl(\hat{0}^{a_{0}+1},w_{s_{t}}^{a_{1}-1},w_{n}^{a_{2}}\bigr)$ and $\pb_{2}=\bigl(\hat{0}^{a_{0}},w_{s_{t}}^{a_{1}+1},w_{n}^{a_{2}-1}\bigr)$.  We have
	\begin{displaymath}
		\xb\wedge\pb_{1}=\bigl(\hat{0}^{a_{0}+1},c_{i}^{m-a_{0}-1-j},c_{i+1}^{j}\bigr)\neq 
			\bigl(\hat{0}^{a_{0}},c_{i}^{m-a_{0}-j},c_{i+1}^{j}\bigr)=\xb\wedge\qb,
	\end{displaymath}
	and 
	\begin{displaymath}
		\xb\vee\pb_{1}=\bigl(w_{s_{t}}^{m-a_{2}},w_{n}^{a_{2}}\bigr)=\xb\vee\qb,
	\end{displaymath}
	as desired.  Moreover, we have 
	\begin{displaymath}
		\xb\wedge\pb_{2}=\bigl(\hat{0}^{a_{0}},c_{i}^{m-a_{0}-j},c_{i+1}^{j}\bigr)=\xb\wedge\qb,
	\end{displaymath}
	and 
	\begin{displaymath}
		\xb\vee\pb_{2}=\bigl(w_{s_{t}}^{m-a_{2}+1},w_{n}^{a_{2}-1}\bigr)\neq\bigl(w_{s_{t}}^{m-a_{2}},w_{n}^{a_{2}}\bigr)=\xb\vee\qb,
	\end{displaymath}
	as desired.  Hence $\xb$ satisfies Condition~(ii) in Theorem~\ref{thm:left_modular}. The reasoning for $\xb'$ and $\xb''$ is exactly analogous.  It follows that $C$ is a left-modular saturated maximal chain of $\PP$ which concludes this part of the proof.
	
	$(c)\Rightarrow(d)$: This follows from Theorem~\ref{thm:left_modular_el}.
	
	$(d)\Rightarrow(a)$: First recall that $\Pm{\PP}{m}$ is a lattice, and hence by Theorem~\ref{thm:mcover_lattice} the Hasse diagram of $\PP$ with $\hat{0}$ removed is a tree rooted at $\hat{1}$.  Assume that $\PP$ is not isomorphic to $\PP_{k,l;\pf}$ for $k,l\geq 0$ and some northeast path $\pf$.  Then the implication $(b)\Rightarrow(a)$ (which is already proven) implies that $\PP$ is not $\tpt$-free, and hence that we can find elements $x,y,x',y'\in P\setminus\{\hat{0},\hat{1}\}$ with $x<y$ and $x'<y'$, as well as $x,y\not\leq x',y'$ and $x',y'\not\leq x,y$.  Theorem~\ref{thm:mcover_lattice} implies now that $x\wedge x'=x\wedge y'=y\wedge x'=y\wedge y'=\hat{0}$.  Let $z=y\vee y'$, $z'=x\vee x'$, and suppose that $z'<z$.  Then, however, $x$ is a lower bound for $y$ and $z'$, and $x'$ is a lower bound for $y'$ and $z'$, and it follows that $x\leq y\wedge z'$ and $x'\leq y'\wedge z'$.  If $y\wedge z'=\hat{0}$, then $x=\hat{0}$, contradicting the choice of $x$.  If $y\wedge z'=z'$, then $x'\leq z'\leq y$, 
contradicting the choice of $x'$ and $y$.  We have the analogous results for $y'\wedge z'$.  It remains only the case that $y\wedge z'=y$ and $y'\wedge z'=y'$.  In this case, however, we obtain $z=y\vee y'\leq z'<z$, which is a contradiction.  Hence it follows that $x\vee x'=z$, and we focus on the interval $I=[\hat{0},z]$, which looks as depicted in Figure~\ref{fig:2+2_interval}.  (Note that Theorem~\ref{thm:mcover_lattice} implies that we can choose the elements $x,x',y,y'$ in such a way that $\hat{0}\lessdot x,x'$, and $y,y'\lessdot z$.)  Now the proper part of $I$ contains at least two disjoint posets, each having length $\geq 1$.  It follows then immediately  that $I$ cannot be EL-shellable.  Since $I$ is an interval of $\PP$, and $\PP$ is an interval of $\Pm{\PP}{m}$, it follows that $\Pm{\PP}{m}$ is not EL-shellable, which concludes the proof.
\end{proof}

\begin{figure}
	\centering
	\begin{tikzpicture}
		\def\x{1};
		\def\y{1};
		\draw(3*\x,1*\y) node(o){$\hat{0}$};
		\draw(2*\x,2*\y) node(x1){$x$};
		\draw(4*\x,2*\y) node(x2){$x'$};
		\draw(2*\x,4*\y) node(y1){$y$};
		\draw(4*\x,4*\y) node(y2){$y'$};
		\draw(3*\x,5*\y) node(z){$z$};
		\draw(o) -- (x1);
		\draw(o) -- (x2);
		\draw(x1) .. controls (1.8*\x,2.75*\y) and (2.2*\x,3.25*\y) .. (y1);
		\draw(x2) .. controls (4.2*\x,2.75*\y) and (3.8*\x,3.25*\y) .. (y2);
		\draw(y1) -- (z);
		\draw(y2) -- (z);
		\begin{pgfonlayer}{background}
			\fill[white!60!gray](3*\x,1*\y) 
			  -- (2*\x,2*\y) .. controls (1.8*\x,2.75*\y) and (2.2*\x,3.25*\y) .. (2*\x,4*\y) 
			  -- (3*\x,5*\y) .. controls (.5*\x,4*\y) and (.5*\x,2*\y) .. (3*\x,1*\y);
			\fill[white!60!gray](3*\x,1*\y) 
			  -- (4*\x,2*\y) .. controls (4.2*\x,2.75*\y) and (3.8*\x,3.25*\y) .. (4*\x,4*\y) 
			  -- (3*\x,5*\y) .. controls (5.5*\x,4*\y) and (5.5*\x,2*\y) .. (3*\x,1*\y);
		\end{pgfonlayer}
	\end{tikzpicture}
	\caption{Illustration of a case in the proof of Theorem~\ref{thm:mcover_path_poset}.  Straight edges are coverings, curved edges are chains.  The gray areas may contain chains, the white (interior) area does not contain any chains.}
	\label{fig:2+2_interval}
\end{figure}
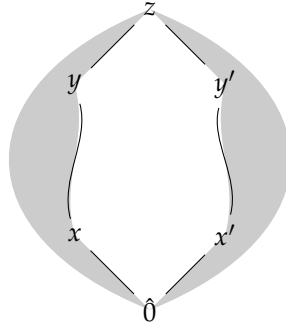

\begin{proof}[Proof of Theorem~\ref{thm:mcover_trim}]
	In view of Theorem~\ref{thm:mcover_path_poset}, it follows that $\Pm{\PP}{m}$ is left-modular if and only if $\PP\cong\PP_{k,l;\pf}$.  Now let us investigate the extremality of $\PP_{k,l;\pf}$.  For the moment, let us assume that $l>0$.  We can quickly check that $\ell(\PP_{k,l})=k+1$ and that $\bigl\lvert\jj(\PP_{k,l})\bigr\rvert=\bigl\lvert\mm(\PP_{k,l})\bigr\rvert=k+l$.  Now let $\pf$ be a northeast path, consisting of $n$ steps, and let $\bar{\pf}$ be the subpath of $\pf$ containing all those steps but the last one.  In view of Remarks~\ref{rem:exclude_trivial_paths} and \ref{rem:exclude_trivial}, we can assume that $\pf$ starts with a north-step.  We distinguish two cases.
	
	(i) $\pf$ ends with an east-step.  By construction we have 
	\begin{align*}
		\ell(\PP_{k,l;\pf}) & = \ell(\PP_{k,l;\bar{\pf}}),\\
		\bigl\lvert\mm(\PP_{k,l;\pf})\bigr\rvert & = \bigl\lvert\mm(\PP_{k,l;\bar{\pf}})\bigr\rvert+1,\\
		\bigl\lvert\jj(\PP_{k,l;\pf})\bigr\rvert & = \begin{cases}
		  \bigl\lvert\jj(\PP_{k,l;\bar{\pf}})\bigr\rvert, & \mbox{if}\;\bar{\pf}\;\mbox{ends with a north-step},\\
		  \bigl\lvert\jj(\PP_{k,l;\bar{\pf}})\bigr\rvert+1, & \mbox{if}\;\bar{\pf}\;\mbox{ends with an east-step},\\
		\end{cases}
	\end{align*}
	
	(ii) $\pf$ ends with a north-step.  By construction, we have $\ell(\PP_{k,l;\pf})=\ell(\PP_{k,l;\bar{\pf}})+1$, and $\bigl\lvert\jj(\PP_{k,l;\pf})\bigr\rvert=\bigl\lvert\jj(\PP_{k,l;\bar{\pf}})\bigr\rvert+1$ and $\bigl\lvert\mm(\PP_{k,l;\pf})\bigr\rvert=\bigl\lvert\mm(\PP_{k,l;\bar{\pf}})\bigr\rvert+1$.
	
	Thus if $\pf$ is a northeast path that contains a subpath of the form $NE$, then we have $\bigl\lvert\jj(\PP_{k,l;\pf})\bigr\rvert\neq\bigl\lvert\mm(\PP_{k,l;\pf})\bigr\rvert$.  Since we have assumed that $\pf$ starts with a north-step, the only remaining candidates for trim $m$-cover lattices are path posets of the form $\PP_{k,l;N^{s}}$, and in this case we obtain $\bigl\lvert\jj(\PP_{k,l;\pf})\bigr\rvert=\bigl\lvert\mm(\PP_{k,l;\pf})\bigr\rvert=k+l+s$, and $\ell(\PP_{k,l;\pf})=k+1+s$.  In view of Proposition~\ref{prop:mcover_irreducibles}, we conclude that $\Pm{\PP}{m}$ is trim if and only if $l=1$, and $\pf$ consists only of north-steps.
	
	Now suppose that $l=0$.  Then, $\PP_{k,0}$ is a $(k+2)$-chain, and hence trivially trim.  If $\pf$ starts with an east-step, then $\PP$ is not bounded, which contradicts the assumption of the theorem.  So suppose that $\pf$ starts with $s$ north-steps followed by an east-step.  Then, let $\bar{\pf}$ be the subpath of $\pf$ starting from the $(s+1)$-st step, and replace $\PP_{k,0;\pf}$ by $\PP_{k+s,1;\bar{\pf}}$, and we are back in the case $l>0$.  If $\pf$ does not contain an east-step, then $\PP_{k,0;\pf}$ is a $(k+2+n)$-chain, where $n$ is the length of $\pf$, and hence trivially trim.
\end{proof}

Recall that a closed interval $[p,q]$ in a lattice is called nuclear if $q$ is the join of atoms of $[p,q]$. We have the following result.

\begin{theorem}[\cite{thomas06analogue}*{Theorem~7}]\label{thm:trim_topology}
	Let $\PP$ be a finite lattice.  If $\PP$ is trim and nuclear, then its order complex is homotopic to a sphere, whose dimension is $2$ less than the number of atoms of $\PP$.  If $\PP$ is trim but not nuclear, then its order complex is contractible. 
\end{theorem}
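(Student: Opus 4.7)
The plan is to leverage the EL-shellability of trim lattices, which is immediate from Theorem~\ref{thm:left_modular_el} together with the definition of trimness, and then translate the resulting wedge-of-spheres decomposition of the order complex into the language of atoms via the extremality hypothesis.

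First I would fix a left-modular maximal chain $M\colon\hat{0}=m_{0}\lessdot m_{1}\lessdot\cdots\lessdot m_{\ell}=\hat{1}$ and consider the associated left-modular EL-labeling $\lambda$, defined on a cover $y\lessdot z$ by letting $\lambda(y,z)$ be the smallest index $i$ such that $m_{i}\vee y=z$. By the standard Bj\"orner--Wachs correspondence, the order complex $\Delta(\overline{\PP})$ is then homotopy equivalent to a wedge of spheres indexed by the maximal chains whose $\lambda$-label sequences are strictly decreasing (the ``falling'' chains), with each such chain of length $d$ contributing a sphere of dimension $d-2$. Crucially, in a trim (hence not necessarily graded) lattice, different maximal chains can have different lengths, which is what will eventually allow the dimension $a-2$ to be smaller than $\ell-2$.

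Second, I would exploit the extremality $\lvert\jj(\PP)\rvert=\ell=\lvert\mm(\PP)\rvert$ to set up a bijective correspondence between the labels $\{1,\ldots,\ell\}$ and $\jj(\PP)$: to each $i$ there corresponds the unique join-irreducible $j_{i}$ with $m_{i-1}\vee j_{i}=m_{i}$, and dually a canonical listing of meet-irreducibles. The combinatorial core of the argument is then to show that a falling maximal chain $\hat{0}\lessdot z_{1}\lessdot\cdots\lessdot\hat{1}$ is completely determined by an ordered tuple $(b_{1},\ldots,b_{d})$ of atoms of $\PP$ with $b_{1}\vee\cdots\vee b_{d}=\hat{1}$ — that is, a linear ordering of atoms that canonically join to the top — and that the length $d$ of the resulting chain equals the number of atoms used. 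This should be driven by Theorem~\ref{thm:left_modular}, which controls exactly when a cover produces a strict drop in $\lambda$.

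Granting that bijection, the dichotomy in the statement becomes transparent. In the nuclear case, $\hat{1}$ is the join of atoms, and extremality constrains the admissible atom-orderings tightly enough that there is exactly one falling maximal chain, of length equal to the total number of atoms $a$; this gives a single $(a-2)$-sphere. In the non-nuclear case, no ordering of atoms can generate $\hat{1}$, so no falling chain exists at all, and the order complex is contractible (a wedge of no spheres is a point, or more precisely the Bj\"orner--Wachs recipe produces the trivial homotopy type). The main obstacle, as I see it, is making the ``canonical atom-decomposition'' machinery precise in the trim setting: one must show both that every falling chain arises in this way and that extremality forces uniqueness of the atom-ordering in the nuclear case, which likely requires a delicate interplay between the left-modular EL-labeling and the canonical join/meet representations of elements that extremality supplies.
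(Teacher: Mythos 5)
This theorem is not proved in the paper at all: it is quoted, with citation, as Theorem~7 of Thomas's \emph{An analogue of distributivity for ungraded lattices}, so there is no internal proof to compare your argument against. Judged on its own terms, your proposal identifies a sensible framework --- trim implies left-modular implies EL-shellable by Theorem~\ref{thm:left_modular_el}, and then the Bj\"orner--Wachs wedge-of-spheres description indexed by falling maximal chains --- but it is a plan rather than a proof, and the step you yourself flag as ``the main obstacle'' is essentially the entire mathematical content of the theorem. You assert without argument that (i) every falling chain has the form $\hat{0}<b_{1}<b_{1}\vee b_{2}<\cdots$ for atoms $b_{i}$, (ii) its length equals the number of atoms used, and (iii) in the nuclear case extremality forces exactly one such chain, of length equal to the total number $a$ of atoms. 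None of these is obvious: (ii) requires ruling out falling chains that climb through join-irreducibles which are not atoms, and (iii) presupposes, among other things, that no falling chain can arise from a proper subset of the atoms whose join is already $\hat{1}$ --- a genuinely nontrivial structural statement about trim lattices. Until these are established, the dichotomy ``one falling chain of length $a$ versus none'' is unsupported, so the proposal has a real gap at its core.

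Two smaller points. Your labeling $\lambda(y,z)=\min\{i:m_{i}\vee y=z\}$ is not well defined for covers off the left-modular chain: in $N_{5}$ with the left-modular chain running along the long side, the cover $\hat{0}\lessdot x$ on the short side has $m_{i}\vee\hat{0}=m_{i}\neq x$ for every $i$; the correct left-modular labeling is $\min\{i:y\vee m_{i}\geq z\}$. Also, the non-nuclear half of the statement needs none of this machinery: if the join of all atoms is strictly below $\hat{1}$, then the crosscut complex on the atoms is a full simplex and the order complex of the proper part is contractible by the crosscut theorem, for an arbitrary finite lattice. Routing the easy half through the unproven falling-chain correspondence makes it depend on the hard half for no gain.
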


\begin{proposition}\label{prop:mcover_trim_mobius}
	Let $\PP$ be a bounded poset such that $\Pm{\PP}{m}$ is a trim lattice for all $m>0$.  If $\mu$ denotes the M{\"o}bius function of $\Pm{\PP}{m}$, then for $\pb,\qb\in\Pm{P}{m}$ with $\pb\leq\qb$, we have
	\begin{displaymath}
		\mu(\pb,\qb) = \begin{cases}
			1, & \text{if}\;[\pb,\qb]\;\text{is nuclear and has two atoms},\\
			-1, & \text{if}\;\qb\;\text{covers}\;\pb,\quad\text{or}\\
			0, & \text{otherwise}.
		\end{cases}
	\end{displaymath}
\end{proposition}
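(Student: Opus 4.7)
The plan is to exploit the explicit structure of $\PP$ given by Theorem~\ref{thm:mcover_trim}: namely, $\PP\cong\PP_{k,1;\pf}$ where $\pf$ consists of north-steps only, say $\pf=N^{s}$. The Hasse diagram of $\PP$ with its least element removed is then a single chain with one length-one branch attached at the unique degree-two vertex. In particular, every element of $\PP$ has at most two upper covers and at most two lower covers. The trivial sub-case where $\PP$ is itself a chain (corresponding to $l=0$ via Remark~\ref{rem:exclude_trivial}) can be handled separately, as all intervals are then chains and the claim reduces to the cover case.

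The first technical step will be to transfer this cover-count bound to the $m$-cover poset: every element of $\Pm{\PP}{m}$ has at most two upper covers and at most two lower covers. For the base case $\PP=\PP_{k,1}$ this is already encoded in Proposition~\ref{prop:cover_statistic} (with $l=1$, all nonconstant summands contribute elements with at most two covers), and the statement for $\PP_{k,1;N^{s}}$ then follows by induction on $s$ using the explicit cover description in~\eqref{eq:cover_north}, since attaching a new top via a north-step only creates fresh elements $\phi_{N}(a,b)$ with at most two covers on each side. From this bound, every interval $[\pb,\qb]$ of $\Pm{\PP}{m}$ has at most two atoms and at most two coatoms.

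Next I invoke the fact that every interval of a trim lattice is again trim (a standard property in the theory of trim lattices). With this in hand, I apply Theorem~\ref{thm:trim_topology} to each interval $[\pb,\qb]$. If $\qb$ covers $\pb$, then the open interval is empty and $\mu(\pb,\qb)=-1$ by the defining recursion for the M\"obius function. If $\qb$ does not cover $\pb$, then $[\pb,\qb]$ is a trim lattice with at least three elements and with at most two atoms. In the nuclear case, having exactly one atom would force that atom to coincide with $\qb$, making $\pb\lessdot\qb$, a contradiction; hence $[\pb,\qb]$ has exactly two atoms, and Theorem~\ref{thm:trim_topology} yields an order complex homotopy equivalent to $S^{0}$, whose reduced Euler characteristic equals $1$. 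Philip Hall's theorem then gives $\mu(\pb,\qb)=1$. In the non-nuclear case, Theorem~\ref{thm:trim_topology} produces a contractible order complex, and hence $\mu(\pb,\qb)=0$.

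The main obstacle is the cover-count bound of the first step, which demands a careful case analysis of how covers in $\Pm{\PP}{m}$ behave under the path-poset construction (particularly around the new elements $\phi_{N}(a,b)$ created when a north-step is appended and, more generally, around the join-only positions of a tuple $\bigl(\hat{0}^{l_{0}},p_{1}^{l_{1}},p_{2}^{l_{2}}\bigr)$ where two simultaneous moves could a priori compete). The inheritance of trimness to subintervals can in principle be cited, but if a self-contained argument is preferred, it can be sidestepped by restricting the explicit left-modular maximal chain constructed in the proof of Theorem~\ref{thm:mcover_path_poset} and checking that its trace on $[\pb,\qb]$ still consists of left-modular elements, whereupon extremality of the restricted chain is forced by the cover-count bound.
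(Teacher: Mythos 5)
Your proposal is correct and follows essentially the same route as the paper: reduce to $\PP\cong\PP_{k,1;N^{s}}$ via Theorem~\ref{thm:mcover_trim}, bound the number of covers using Proposition~\ref{prop:cover_statistic} and \eqref{eq:cover_north}, and then apply Theorem~\ref{thm:trim_topology} together with the identification of the M{\"o}bius function with the reduced Euler characteristic. You are in fact slightly more explicit than the paper about two points it leaves implicit, namely that intervals of trim lattices are again trim and that a nuclear non-cover interval must have exactly two atoms.
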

\begin{proof}
	In view of Theorem~\ref{thm:mcover_trim}, it follows that $\PP\cong\PP_{k,1;N^{s}}$ for some $k,s\geq 0$.  First suppose that $s=0$, then Proposition~\ref{prop:cover_statistic} implies that every element in $\Pm{\PP}{m}$ has at most two upper covers.  If $s>0$, then \eqref{eq:cover_north} implies the same.  Now, let $\pb,\qb\in\Pm{\PP}{m}$ with $\pb\leq\qb$.  If $[\pb,\qb]$ is nuclear, then we have either $\pb\lessdot\qb$ or $\qb$ is the join of the two atoms in $[\pb,\qb]$, and we obtain $\mu(\pb,\qb)=\pm 1$ as desired.  If $[\pb,\qb]$ is not nuclear, then Theorem~\ref{thm:trim_topology} implies that the associated order complex is contractible, and hence has reduced Euler characteristic $0$.  Proposition~3.8.6 in \cite{stanley97enumerative} implies that the M{\"o}bius function of an interval in a poset takes the same value as the reduced Euler characteristic of the associated order complex, and the result follows.
\end{proof}

\section{Application: The $m$-Cover Posets of the Tamari Lattices}
  \label{sec:application}
In this section we investigate the $m$-cover poset of the Tamari lattices $\mtam{n}{1}$. In particular, 
we use the results from the previous section to relate the $m$-cover posets of $\mtam{n}{1}$ to the $m$-Tamari lattices $\mtam{n}{m}$. We start by recalling some terminology and fixing some notation. Consecutively, we introduce the strip decomposition 
of $m$-Dyck paths and prove Theorem~\ref{thm:mtamari}. We complete this section by listing a few properties of the strip decomposition, and by stating a conjecture that explicitly describes 
the lattice $\mtam{n}{m}$ in terms of $m$-tuples of Dyck paths.

\subsection{The $m$-Dyck Paths and the $m$-Tamari Lattices}
  \label{sec:mdyck_paths} 
\subsubsection{The $m$-Dyck Paths.} For $m,n\in\mathbb{N}$, we say that an \alert{$m$-Dyck path of length $(m+1)n$} is a lattice path from $(0,0)$ to $(mn,n)$ that stays weakly above the line $x=my$, and that consists only of north-steps (steps of the form $(0,1)$) and east-steps (steps of the form $(1,0)$).  Let $D_n^{(m)}$ denote the set of all $m$-Dyck paths of length $(m+1)n$.  The cardinality of $D_n^{(m)}$ is given by the \alert{Fu{\ss}-Catalan number} 
\begin{equation}
  \label{eq:fuss_catalan}\text{Cat}^{(m)}(n)=\frac{1}{mn+1}\binom{(m+1)n}{n},
\end{equation}
see for instance \cite{duchon00enumeration}*{Section~6}.  If $m=1$, then we usually write $D_n$ instead of $D_n^{(1)}$.

Let $\pf$ be an $m$-Dyck path $\pf\in D_{n}^{(m)}$.  Then, $\pf$ can be encoded by its \alert{step sequence} $\uu_{\pf}=(u_{1},u_{2},\ldots,u_{n})$, which satisfies
\begin{align}
  \label{eq:mdyck_1} u_{1} & \leq u_{2}\leq\cdots\leq u_{n},\quad\text{and}\\
  \label{eq:mdyck_2} u_{k} & \leq m(k-1),\quad\text{for all}\;k\in\{1,2,\ldots,n\}.
\end{align}
In other words, the entry $u_{k}$ represents at which $x$-coordinate the $k$-th north-step takes place.  Equivalently, $\pf$ can be encoded by its \alert{height sequence} $\hh_{\pf}=(h_{1},h_{2},\ldots,h_{mn})$, which satisfies
\begin{align}
  \label{eq:height_1} h_{1} & \leq h_{2}\leq\cdots\leq h_{mn}\leq n,\quad\text{and}\\
  \label{eq:height_2} h_{k} & \geq\bigl\lceil\tfrac{k}{m}\bigr\rceil,\quad\text{for all}\;k\in\{1,2,\ldots,mn\}.
\end{align}
In other words, the entry $h_{k}$ represents which height the path $\pf$ has at $x$-coordinate $k-\tfrac{1}{2}$.  It is straightforward to prove the following lemma.

\begin{lemma}\label{lem:height_step_conversion}
	Let $\pf\in D_n^{(m)}$ with step sequence $\uu_{\pf}=(u_{1},u_{2},\ldots,u_{n})$ and height sequence $\hh_{\pf}=(h_{1},h_{2},\ldots,h_{mn})$.  If we set $h_{0}=0$, then we have 
	\begin{align*}
		u_{h_{k}+1} & =u_{h_{k}+2}=\cdots=u_{h_{k+1}}=k,\;\text{for all}\;k\in\{0,1,\ldots,mn\}\;\text{with}\;h_{k}<h_{k+1},\quad\text{and}\\
		h_{k} & =\max\bigl\{j\in\{1,2,\ldots,n\}\mid u_{j}<k\bigr\},\;\text{for all}\;k\in\{1,2,\ldots,mn\}.
	\end{align*}
\end{lemma}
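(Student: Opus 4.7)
The plan is to unpack the geometric meaning of the two sequences and observe that each formula is a direct translation between them. Recall that $u_{k}$ records the $x$-coordinate at which the $k$-th north-step of $\pf$ occurs, while $h_{k}$ records the height of $\pf$ as it traverses the vertical line $x = k - \tfrac{1}{2}$, or equivalently the number of north-steps of $\pf$ whose $x$-coordinate is strictly less than $k$. With these interpretations, both identities become tautological and no serious calculation is required; the only work is to handle the boundary conventions cleanly.

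For the first identity, fix $k \in \{0, 1, \ldots, mn\}$ with $h_{k} < h_{k+1}$. Geometrically, the difference $h_{k+1} - h_{k}$ counts the number of north-steps of $\pf$ whose $x$-coordinate equals exactly $k$. These $h_{k+1} - h_{k}$ north-steps are, in the order in which $\pf$ traverses them, the $(h_{k}+1)$-st through $h_{k+1}$-st north-steps of $\pf$, so by definition of the step sequence their $x$-coordinates satisfy $u_{h_{k}+1} = u_{h_{k}+2} = \cdots = u_{h_{k+1}} = k$, as claimed. The convention $h_{0} = 0$ is consistent with this picture, since no north-step occurs at a negative $x$-coordinate and the starting height is $0$.

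For the second identity, fix $k \in \{1, 2, \ldots, mn\}$. By \eqref{eq:mdyck_2} we have $u_{1} \leq 0 < k$, so the set $S_{k} = \bigl\{j \in \{1, \ldots, n\} \mid u_{j} < k\bigr\}$ is nonempty. Since the sequence $u_{1} \leq u_{2} \leq \cdots \leq u_{n}$ is nondecreasing by \eqref{eq:mdyck_1}, $S_{k}$ is a prefix $\{1, 2, \ldots, N_{k}\}$ of $\{1, 2, \ldots, n\}$, and its maximum $N_{k} = \max S_{k}$ equals the total number of north-steps of $\pf$ that take place at some $x$-coordinate strictly less than $k$. But this quantity is precisely the height of $\pf$ just to the left of the vertical line $x = k$, i.e.\; $h_{k}$, so $h_{k} = \max S_{k}$, which is the desired formula.

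The only real obstacle is the bookkeeping around boundary cases: ensuring that $S_{k}$ is nonempty so that the maximum exists, and that the convention $h_{0} = 0$ is compatible with the indexing in the first formula when $k = 0$. Both are handled by the defining inequalities \eqref{eq:mdyck_1}--\eqref{eq:height_2}, which force $u_{1} \leq 0$ and $h_{1} \geq 1$.
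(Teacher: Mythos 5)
Your proof is correct. The paper gives no proof of this lemma at all---it simply remarks that the statement ``is straightforward to prove''---and your geometric translation (reading $h_{k+1}-h_{k}$ as the number of north-steps at $x$-coordinate $k$, and $h_{k}$ as the number of north-steps at $x$-coordinate strictly less than $k$, using monotonicity of $\uu_{\pf}$ to identify the maximum of the prefix set with its cardinality) is exactly the intended argument, with the boundary cases handled appropriately.
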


\begin{example}\label{ex:dyck35_1}
	\begin{figure}
		\centering
		\begin{tikzpicture}
			\draw(0,0) node{
				\begin{tikzpicture}[scale=.8]\tiny
					\draw[white!50!gray](0,0) grid[step=.4] (6,2);
					\draw(0,0) -- (6,2);
					\draw(0,0) -- (0,.4) -- (.8,.4) -- (.8,1.2) -- (3.2,1.2) -- (3.2,1.6) -- (4,1.6) 
					  -- (4,2) -- (6,2);
					\draw[white!50!gray,->](6,0) -- (6.5,0);
					\draw[white!50!gray,->](0,2) -- (0,2.5);
					\draw[white!50!gray](6.4,-.25) node{$x$};
					\draw[white!50!gray](-.25,2.4) node{$y$};
					\foreach\i in {0,1,2,3,4,5,6,7,8,9,10,11,12,13,14,15} {
						\draw(.4*\i,-.25) node[white!50!gray]{$\i$};
					}
					\foreach\i in {0,1,2,3,4,5} {
						\draw(-.25,.4*\i) node[white!50!gray]{$\i$};
					}
					\draw(3,2.25) node{$\pf$};
				\end{tikzpicture}};
		  \end{tikzpicture}
		\caption{A $3$-Dyck path $\pf$ of length $20$.}
		\label{fig:dyck35}
	\end{figure}
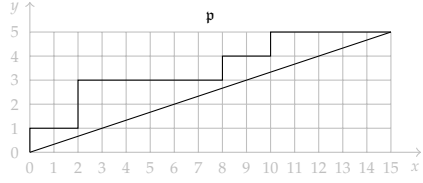
	
	Let $\pf\in D_5^{(3)}$ be the path shown in Figure~\ref{fig:dyck35}.  Its step sequence is $\uu_{\pf}=(0,2,2,8,10)$, and its height sequence is $\hh_{\pf}=(1,1,3,3,3,3,3,3,4,4,5,5,5,5,5)$. 
\end{example}

\subsubsection{The $m$-Tamari Lattices.} Let $\pf\in D_n^{(m)}$ with step sequence $\uu_{\pf}=(u_{1},u_{2},\ldots,u_{n})$.  For $i\in\{1,2,\ldots,n\}$, we say that the \alert{primitive subsequence of $\uu_{\pf}$ at position $i$} is the unique subsequence $(u_{i},u_{i+1},\ldots,u_{k})$ that satisfies 
\begin{align}
	\label{eq:prim_1} & u_{j}-u_{i} < m(j-i),\quad\text{for}\;j\in\{i+1,i+2,\ldots,k\},\quad\text{and}\\
	\label{eq:prim_2} & \text{either}\quad k=n,\quad\text{or}\quad u_{k+1}-u_{i}\geq m(k+1-i).
\end{align}
Bergeron and Pr{\'e}ville-Ratelle define in \cite{bergeron12higher}*{Section~5} a partial order on $D_n^{(m)}$ as follows: let $\pf,\pf'\in D_n^{(m)}$ such that $\uu_{\pf}=(u_{1},u_{2},\ldots,u_{n})$ is the step sequence of $\pf$ and $\uu_{\pf'}$ denotes the step sequence of $\pf'$.  Define 
\begin{equation}\label{eq:rotation_mtamari}
	\pf\lessdot_{\text{rot}}\pf'\quad\text{if and only if}\quad\uu_{\pf'}=(u_{1},\ldots,u_{i-1},u_{i}-1,\ldots,u_{k}-1,u_{k+1},\ldots,u_{n}),
\end{equation}
for some $i\in\{2,3,\ldots,n\}$ satisfying $u_{i-1}<u_{i}$ such that $\{u_{i},u_{i+1},\ldots,u_{k}\}$ is the primitive subsequence of $\uu_{\pf}$ at position $i$.  Let $\leq_{\text{rot}}$ denote the transitive and reflexive closure of $\lessdot_{\text{rot}}$, and call this partial order the \alert{rotation order} on $D_n^{(m)}$.  The name rotation order comes from the fact that we can also describe this partial order as follows: let $E$ be an east-step of $\pf$ that is followed by a north-step $N$, and let $\bar{\pf}$ be the unique shortest nontrivial $m$-Dyck path of length $(m+1)n'<(m+1)n$ starting with $N$.  Denote by $\varrho_{E}(\pf)$ the $m$-Dyck path of length $(m+1)n$ that is constructed from $\pf$ by exchanging $E$ and $\bar{\pf}$.  We have $\pf\lessdot_{\text{rot}}\pf'$ if and only if $\pf'=\varrho_{E}(\pf)$ for a suitable east-step $E$ of $\pf$.  See Figure~\ref{fig:rotation_illustration} for an illustration. 

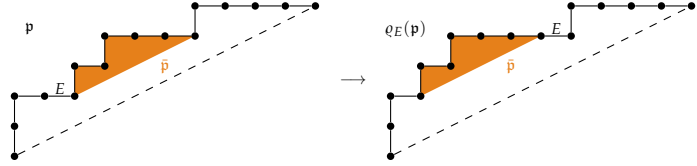
\begin{figure}
	\centering
	\begin{tikzpicture}\tiny
		\def\d{5};
		\draw(0,0) node[circle,fill,scale=.5](n1){};
		\draw(0,.4) node[circle,fill,scale=.5](n2){};
		\draw(0,.8) node[circle,fill,scale=.5](n3){};
		\draw(.4,.8) node[circle,fill,scale=.5](n4){};
		\draw(.8,.8) node[circle,fill,scale=.5](n5){};
		\draw(.8,1.2) node[circle,fill,scale=.5](n6){};
		\draw(1.2,1.2) node[circle,fill,scale=.5](n7){};
		\draw(1.2,1.6) node[circle,fill,scale=.5](n8){};
		\draw(1.6,1.6) node[circle,fill,scale=.5](n9){};
		\draw(2,1.6) node[circle,fill,scale=.5](n10){};
		\draw(2.4,1.6) node[circle,fill,scale=.5](n11){};
		\draw(2.4,2) node[circle,fill,scale=.5](n12){};
		\draw(2.8,2) node[circle,fill,scale=.5](n13){};
		\draw(3.2,2) node[circle,fill,scale=.5](n14){};
		\draw(3.6,2) node[circle,fill,scale=.5](n15){};
		\draw(4,2) node[circle,fill,scale=.5](n16){};
		\draw(n1) -- (n2) -- (n3) -- (n4) -- (n5) -- (n6) -- (n7) -- (n8) -- (n9) -- (n10) -- (n11) -- (n12) -- (n13) -- (n14) -- (n15) -- (n16);
		\draw[dashed](n1) -- (n16);
		\draw(.6,.9) node{$E$};
		\begin{pgfonlayer}{background}
			\fill[\cOne](.8,.8) -- (.8,1.2) -- (1.2,1.2) -- (1.2,1.6) -- (2.4,1.6) -- cycle;
		\end{pgfonlayer}
		\draw(2,1.2) node[\cOne]{$\bar{\pf}$};
		\draw(.2,1.7) node{$\pf$};
		\draw(4.5,1) node{$\longrightarrow$};
		\draw(0+\d,0) node[circle,fill,scale=.5](m1){};
		\draw(0+\d,.4) node[circle,fill,scale=.5](m2){};
		\draw(0+\d,.8) node[circle,fill,scale=.5](m3){};
		\draw(.4+\d,.8) node[circle,fill,scale=.5](m4){};
		\draw(.4+\d,1.2) node[circle,fill,scale=.5](m5){};
		\draw(.8+\d,1.2) node[circle,fill,scale=.5](m6){};
		\draw(.8+\d,1.6) node[circle,fill,scale=.5](m7){};
		\draw(1.2+\d,1.6) node[circle,fill,scale=.5](m8){};
		\draw(1.6+\d,1.6) node[circle,fill,scale=.5](m9){};
		\draw(2+\d,1.6) node[circle,fill,scale=.5](m10){};
		\draw(2.4+\d,1.6) node[circle,fill,scale=.5](m11){};
		\draw(2.4+\d,2) node[circle,fill,scale=.5](m12){};
		\draw(2.8+\d,2) node[circle,fill,scale=.5](m13){};
		\draw(3.2+\d,2) node[circle,fill,scale=.5](m14){};
		\draw(3.6+\d,2) node[circle,fill,scale=.5](m15){};
		\draw(4+\d,2) node[circle,fill,scale=.5](m16){};
		\draw(m1) -- (m2) -- (m3) -- (m4) -- (m5) -- (m6) -- (m7) -- (m8) -- (m9) -- (m10) -- (m11) -- (m12) -- (m13) -- (m14) -- (m15) -- (m16);
		\draw[dashed](m1) -- (m16);
		\draw(2.2+\d,1.7) node{$E$};
		\begin{pgfonlayer}{background}
			\fill[\cOne](.4+\d,.8) -- (.4+\d,1.2) -- (.8+\d,1.2) -- (.8+\d,1.6) -- (2+\d,1.6) -- cycle;
		\end{pgfonlayer}
		\draw(1.6+\d,1.2) node[\cOne]{$\bar{\pf}$};
		\draw(.2+\d,1.7) node{$\varrho_{E}(\pf)$};
	\end{tikzpicture}
	\caption{The rotation on $m$-Dyck paths.}
	\label{fig:rotation_illustration}
\end{figure}

Proposition~4 in \cite{bousquet11number} states that $\mtam{n}{m}=\bigl(D_n^{(m)},\leq_{\text{rot}}\bigr)$ is an interval in the classical Tamari lattice $\mathcal{T}_{mn}$, and is thus called the \alert{$m$-Tamari lattice of parameter $n$}.  Figure~\ref{fig:tamari23} shows the $2$-Tamari lattice $\mtam{3}{2}$.  We conclude this section with another easy lemma.

\begin{lemma}\label{lem:sequence_order}
	Let $\pf,\pf'\in D_{n}^{(m)}$ with $\pf\leq_{\text{rot}}\pf'$.  If $\uu_{\pf}=(u_{1},u_{2},\ldots,u_{n})$ and $\uu_{\pf'}=(u'_{1},u'_{2},\ldots,u'_{n})$ denote the step sequences of $\pf$ and $\pf'$, respectively, then we have $u_{k}\geq u'_{k}$ for all $k\in\{1,2,\ldots,n\}$.  Moreover, if $\hh_{\pf}=(h_{1},h_{2},\ldots,h_{mn})$ and $\hh_{\pf'}=(h'_{1},h'_{2},\ldots,h'_{mn})$ denote the height sequences of $\pf$ and $\pf'$, then we have $h_{k}\leq h'_{k}$ for all $k\in\{1,2,\ldots,mn\}$.
\end{lemma}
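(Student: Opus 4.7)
The plan is to reduce both inequalities to the cover case and then read them off from the defining rewrite \eqref{eq:rotation_mtamari}. Since $\leq_{\text{rot}}$ is the reflexive--transitive closure of $\lessdot_{\text{rot}}$, a general relation $\pf\leq_{\text{rot}}\pf'$ is witnessed by a chain $\pf=\pf_{0}\lessdot_{\text{rot}}\pf_{1}\lessdot_{\text{rot}}\cdots\lessdot_{\text{rot}}\pf_{s}=\pf'$, so if the conclusion holds across every single cover, it propagates to the full relation by induction on $s$.

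For the step-sequence claim, assume $\pf\lessdot_{\text{rot}}\pf'$. By \eqref{eq:rotation_mtamari} there exist indices $i\leq k$ with $u_{i-1}<u_{i}$ and $(u_{i},u_{i+1},\ldots,u_{k})$ the primitive subsequence of $\uu_{\pf}$ at position $i$, such that $u'_{j}=u_{j}$ for $j\notin\{i,\ldots,k\}$ and $u'_{j}=u_{j}-1$ for $j\in\{i,\ldots,k\}$. In particular $u_{j}\geq u'_{j}$ for every $j\in\{1,2,\ldots,n\}$. Chaining through the path $\pf=\pf_{0}\lessdot\cdots\lessdot\pf_{s}=\pf'$ yields $u_{k}\geq u'_{k}$ for all $k$, as desired.

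For the height-sequence claim, I would invoke the second identity of Lemma~\ref{lem:height_step_conversion}, namely
\begin{equation*}
   h_{k}=\max\bigl\{j\in\{1,2,\ldots,n\}\mid u_{j}<k\bigr\}.
\end{equation*}
Here the maximum is always well-defined since $u_{1}\leq m(1-1)=0<k$ for every $k\in\{1,2,\ldots,mn\}$, so the indexed set is non-empty. From the previous paragraph we have $u'_{j}\leq u_{j}$ for all $j$, hence $u_{j}<k$ implies $u'_{j}\leq u_{j}<k$, giving the inclusion
\begin{equation*}
   \bigl\{j\mid u_{j}<k\bigr\}\subseteq\bigl\{j\mid u'_{j}<k\bigr\}.
\end{equation*}
Taking maxima yields $h_{k}\leq h'_{k}$ for every $k\in\{1,2,\ldots,mn\}$.

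There is no serious obstacle: the content is entirely a one-step unpacking of the cover relation plus the duality of Lemma~\ref{lem:height_step_conversion}. The only minor care needed is to verify that the max in the conversion formula is attained (handled by $u_{1}=0$) so that the inclusion of sets really does translate into the claimed inequality on heights.
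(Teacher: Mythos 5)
Your proof is correct and follows exactly the intended route: the paper itself dismisses this lemma with ``This is straightforward from the definition,'' and your write-up simply supplies the details --- reduction to a single cover, reading off $u'_j\leq u_j$ from \eqref{eq:rotation_mtamari}, and transferring the inequality to heights via the conversion formula of Lemma~\ref{lem:height_step_conversion}. No discrepancies to report.
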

\begin{proof}
	This is straightforward from the definition.
\end{proof}

\begin{figure}
	\centering
		\begin{tikzpicture}\tiny
			\def\x{.8};
			\def\y{.8};
			\draw(3*\x,1*\y) node(v1){\dyckTwoThree{0/2/4}{white}{white}{.4}{-1}};
			\draw(2*\x,2*\y) node(v2){\dyckTwoThree{0/1/4}{white}{white}{.4}{-1}};
			\draw(1*\x,3*\y) node(v3){\dyckTwoThree{0/0/4}{white}{white}{.4}{-1}};
			\draw(3*\x,3*\y) node(v4){\dyckTwoThree{0/1/3}{white}{white}{.4}{-1}};
			\draw(2*\x,4*\y) node(v5){\dyckTwoThree{0/0/3}{white}{white}{.4}{-1}};
			\draw(6*\x,4*\y) node(v6){\dyckTwoThree{0/2/3}{white}{white}{.4}{-1}};
			\draw(3*\x,5*\y) node(v7){\dyckTwoThree{0/0/2}{white}{white}{.4}{-1}};
			\draw(5*\x,5*\y) node(v8){\dyckTwoThree{0/1/2}{white}{white}{.4}{-1}};
			\draw(7*\x,5*\y) node(v9){\dyckTwoThree{0/2/2}{white}{white}{.4}{-1}};
			\draw(4*\x,6*\y) node(v10){\dyckTwoThree{0/0/1}{white}{white}{.4}{-1}};
			\draw(6*\x,6*\y) node(v11){\dyckTwoThree{0/1/1}{white}{white}{.4}{-1}};
			\draw(5*\x,7*\y) node(v12){\dyckTwoThree{0/0/0}{white}{white}{.4}{-1}};
			\draw(v1) -- (v2);
			\draw(v1) -- (v6);
			\draw(v2) -- (v3);
			\draw(v2) -- (v4);
			\draw(v3) -- (v5);
			\draw(v4) -- (v5);
			\draw(v4) -- (v8);
			\draw(v5) -- (v7);
			\draw(v6) -- (v8);
			\draw(v6) -- (v9);
			\draw(v7) -- (v10);
			\draw(v8) -- (v10);
			\draw(v8) -- (v11);
			\draw(v9) -- (v11);
			\draw(v10) -- (v12);
			\draw(v11) -- (v12);
		  \end{tikzpicture}
	\caption{The $2$-Tamari lattice of parameter $3$.}
	\label{fig:tamari23}
\end{figure}
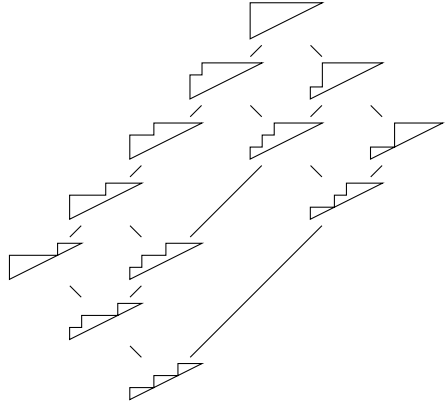

\subsubsection{Irreducible Elements of the $m$-Tamari Lattices}
In this section we characterize the meet- and join-irreducible elements of the lattice $\mtam{n}{m}$.  These characterizations will be useful in the proof of Theorem~\ref{thm:mtamari} in Section~\ref{sec:proof_tamari}.

\begin{proposition}\label{prop:meet_irreducibles_tamari}
	An element $\pf\in D_n^{(m)}$ is meet-irreducible in $\mtam{n}{m}$ if and only if its step sequence $\uu_{\pf}=(u_{1},u_{2},\ldots,u_{n})$ satisfies 
	\begin{equation}
		u_{j}=\begin{cases}\label{eq:mtamari_meet_irreducibles}
			0, & \text{for}\;j\leq i, \\
			a, & \text{for}\;j>i
		\end{cases}
	\end{equation}
	where $a\in\{1,2,\ldots,mi\}$ and $i\in\{1,2,\dots,n-1\}$.
\end{proposition}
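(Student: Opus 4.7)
The plan is to read off upper covers of $\pf$ directly from its step sequence $\uu_{\pf}=(u_{1},u_{2},\ldots,u_{n})$, and argue that they are in bijection with the \emph{strict ascents} of $\uu_{\pf}$, i.e., the indices $i\in\{2,3,\ldots,n\}$ with $u_{i-1}<u_{i}$. Indeed, the defining formula \eqref{eq:rotation_mtamari} shows that a covering $\pf\lessdot_{\text{rot}}\pf'$ is parameterized precisely by such an $i$: the primitive subsequence $(u_{i},\ldots,u_{k})$ is well-defined by \eqref{eq:prim_1}--\eqref{eq:prim_2} (either it extends to $k=n$ or terminates at the first $k$ with $u_{k+1}-u_{i}\geq m(k+1-i)$), and decrementing those entries by one yields a weakly increasing sequence still satisfying \eqref{eq:mdyck_2}, hence a valid $m$-Dyck path. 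Distinct strict ascents $i_{1}<i_{2}$ produce distinct $\pf'$, since the primitive subsequence at $i_{1}$ ends at some $k_{1}<i_{2}$ (otherwise $u_{i_{2}-1}=u_{i_{2}}$, contradicting that $i_{2}$ is a strict ascent), so the altered coordinate sets $\{i_{1},\ldots,k_{1}\}$ and $\{i_{2},\ldots,k_{2}\}$ are disjoint.

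From this, $\pf$ is meet-irreducible in $\mtam{n}{m}$ if and only if $\uu_{\pf}$ has exactly one strict ascent. Since $\uu_{\pf}$ is weakly increasing and $u_{1}=0$ (forced by $u_{1}\leq m\cdot 0=0$ via \eqref{eq:mdyck_2}), a single strict ascent means there is a unique $i\in\{1,2,\ldots,n-1\}$ with
\begin{displaymath}
    u_{1}=u_{2}=\cdots=u_{i}=0<u_{i+1}=u_{i+2}=\cdots=u_{n}=a,
\end{displaymath}
for some $a\geq 1$, which is exactly the form claimed in \eqref{eq:mtamari_meet_irreducibles}. The bound $a\leq mi$ is the constraint \eqref{eq:mdyck_2} applied at position $i+1$: $a=u_{i+1}\leq m((i+1)-1)=mi$; the constraints at later positions are then automatically satisfied since the $u_{j}$ are constant for $j>i$ and $a\leq mi\leq m(n-1)$.

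Conversely, any sequence of the prescribed form clearly satisfies \eqref{eq:mdyck_1}--\eqref{eq:mdyck_2} and has a single strict ascent at position $i+1$, hence yields a meet-irreducible element. The argument is thus essentially a direct unpacking of the definitions once the bijection between upper covers and strict ascents is established; the only nontrivial point is verifying that each strict ascent really produces a valid rotation, which I expect to be the main (but mild) obstacle—specifically, checking that the primitive subsequence exists, that decrementing preserves monotonicity, and that the new entries still obey the height bound \eqref{eq:mdyck_2}. All three follow directly from \eqref{eq:prim_1}--\eqref{eq:prim_2} and the monotonicity of $\uu_{\pf}$.
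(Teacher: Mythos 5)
Your overall strategy is exactly the paper's: the paper's proof consists of the single observation that the number of upper covers of $\pf$ equals the number of indices $i$ with $u_{i}<u_{i+1}$, from which the characterization is immediate. Your unpacking of why each strict ascent yields a valid cover (existence of the primitive subsequence, preservation of monotonicity and of the bound $u_{k}\leq m(k-1)$) is correct, as is the derivation of the explicit form and the bound $a\leq mi$.

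However, one auxiliary claim in your distinctness argument is false: it is not true that the primitive subsequence at a strict ascent $i_{1}$ must terminate before the next strict ascent $i_{2}$. Take $m=2$, $n=3$ and $\uu_{\pf}=(0,1,2)$, which is a valid element of $D_{3}^{(2)}$ with strict ascents at positions $2$ and $3$. Since $u_{3}-u_{2}=1<2=m(3-2)$, the primitive subsequence at position $2$ is $(u_{2},u_{3})$, so $k_{1}=3=i_{2}$ even though $u_{i_{2}-1}=1\neq 2=u_{i_{2}}$; the altered coordinate sets $\{2,3\}$ and $\{3\}$ are not disjoint. The conclusion you want still holds, but for a simpler reason: the cover associated with the ascent at $i_{1}$ has $i_{1}$-st entry $u_{i_{1}}-1$, whereas the cover associated with any later ascent $i_{2}>i_{1}$ leaves all entries before $i_{2}$, in particular the $i_{1}$-st, unchanged. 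So the two covers differ in position $i_{1}$ and are distinct. With that one sentence substituted for the disjointness claim, your proof is complete and coincides with the paper's argument in all essentials.
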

\begin{proof}
	Let $\pf\in D_n^{(m)}$ with step sequence $\uu_{\pf}=(u_1,u_2,\dots,u_n)$.  It follows immediately from the definition that the number of upper covers of $\pf$ is precisely the cardinality of the set $\bigl\{i\in\{1,2,\ldots,n-1\}\mid u_{i}<u_{i+1}\bigr\}$.  Hence $\pf\in\mm\bigl(\mtam{n}{m}\bigr)$ if and only if $\uu_{\pf}$ is of the form \eqref{eq:mtamari_meet_irreducibles}.
\end{proof}

\begin{corollary}\label{cor:card_meet_irreducibles_tamari}
	For every $m,n>0$, we have $\Bigl\lvert\mm\bigl(\mtam{n}{m}\bigr)\Bigr\rvert=m\tbinom{n}{2}$.
\end{corollary}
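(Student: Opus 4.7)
The plan is to directly count the step sequences described in Proposition~\ref{prop:meet_irreducibles_tamari}. By that proposition, each meet-irreducible element $\pf\in\mm\bigl(\mtam{n}{m}\bigr)$ corresponds uniquely to a pair $(i,a)$, where $i\in\{1,2,\ldots,n-1\}$ specifies the position of the unique ``jump'' in the step sequence, and $a\in\{1,2,\ldots,mi\}$ specifies the common value $u_{i+1}=u_{i+2}=\cdots=u_{n}=a$. So the counting reduces to enumerating these pairs.

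First, I would verify that every such pair $(i,a)$ actually gives a valid $m$-Dyck path, i.e., that the constraints \eqref{eq:mdyck_1} and \eqref{eq:mdyck_2} are satisfied. The monotonicity \eqref{eq:mdyck_1} is immediate since $0<a$. For \eqref{eq:mdyck_2}, we need $u_{k}\leq m(k-1)$ for all $k$: for $k\leq i$ this is $0\leq m(k-1)$, which holds; for $k>i$ we need $a\leq m(k-1)$, and since $a\leq mi\leq m(k-1)$ whenever $k\geq i+1$, this holds as well. So the pairs $(i,a)$ are in bijection with $\mm\bigl(\mtam{n}{m}\bigr)$.

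Finally, I would compute the total by summing over $i$:
\begin{displaymath}
	\Bigl\lvert\mm\bigl(\mtam{n}{m}\bigr)\Bigr\rvert = \sum_{i=1}^{n-1} mi = m\cdot\frac{(n-1)n}{2} = m\binom{n}{2},
\end{displaymath}
which is the desired formula. There is no substantial obstacle: the main work was already done in Proposition~\ref{prop:meet_irreducibles_tamari}, so this corollary is essentially a one-line counting argument together with a quick sanity check that the bounds on $a$ and $i$ are consistent with the $m$-Dyck path constraints.
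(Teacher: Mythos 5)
Your proposal is correct and follows essentially the same route as the paper: both invoke Proposition~\ref{prop:meet_irreducibles_tamari} to parametrize the meet-irreducibles by pairs $(i,a)$ with $1\leq a\leq mi$ and compute $\sum_{i=1}^{n-1}mi=m\binom{n}{2}$. Your extra check that each pair yields a valid $m$-Dyck path is a harmless addition the paper leaves implicit.
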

\begin{proof}
	It follows from Proposition~\ref{prop:meet_irreducibles_tamari} that $\pf\in\mm\bigl(\mtam{n}{m}\bigr)$ if and only if $\uu_{\pf}=(0,0,\ldots,0,a,a,\ldots,a)$, where the first non-zero entry occurs in the $(i+1)$-st position, and where $1\leq a\leq mi$.  Thus 
	\begin{displaymath}
		\Bigl\lvert\mm\bigl(\mtam{n}{m}\bigr)\Bigr\rvert=\sum_{i=1}^{n-1}{mi}=m\cdot\frac{n(n-1)}{2}=m\binom{n}{2}.
	\end{displaymath}
\end{proof}

\begin{proposition}\label{prop:join_irreducibles_tamari}
	An element $\pf\in D_n^{(m)}$ is join-irreducible in $\mtam{n}{m}$ if and only if its step sequence $\uu_{\pf}=(u_{1},u_{2},\ldots,u_{n})$ satisfies 
	\begin{equation}\label{eq:mtamari_join_irreducibles}
		u_{j}=\begin{cases}
			m(j-1), & \mbox{for}\;j\notin\{i,i+1,\ldots,k\},\\
			m(j-1)-s, & \mbox{for}\;j\in\{i,i+1,\ldots,k\}
		\end{cases}
	\end{equation}
	for exactly one $i\in\{1,2,\ldots,n\}$, where $k\in\{i+1,i+2,\ldots,n\}$ and $s\in\{1,2,\ldots,m\}$.
\end{proposition}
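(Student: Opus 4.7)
The plan is to analyze the lower covers of $\pf \in \mtam{n}{m}$ directly through step sequences. The cover relation $\pf' \lessdot_{\text{rot}} \pf$ corresponds to a pair of indices $1 \leq i \leq k \leq n$ such that $\uu_{\pf'}$ is obtained from $\uu_\pf$ by increasing each of $u_i, \ldots, u_k$ by one, with the modified block being the primitive subsequence of $\uu_{\pf'}$ at position $i$. Unraveling the definitions, admissibility of $(i,k)$ amounts to three conditions on $\uu_\pf$: \textbf{(I)} $u_j < m(j-1)$ for every $j \in \{i,\ldots,k\}$; \textbf{(II)} $u_j - u_i < m(j-i)$ for $i < j \leq k$; and \textbf{(III)} either $k = n$, or $u_{k+1} - u_i \geq m(k+1-i) + 1$. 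Distinct admissible pairs produce distinct lower covers, so $\pf$ is join-irreducible precisely when exactly one admissible pair exists.

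For the forward direction I would assume $\uu_\pf$ has the prescribed form with depressed block $\{i,\ldots,k\}$ and constant decrement $s$. Condition (I) restricts any admissible $(i',k')$ to lie inside $\{i,\ldots,k\}$. Inside this block the values $u_j$ are arithmetic with common difference $m$, so $u_{j'} - u_{i'} = m(j' - i')$ identically, violating the strict inequality in (II) whenever $k' > i'$; hence $k' = i'$. Condition (III) then compares $u_{k'+1} - u_{k'}$ with $m+1$: strictly inside the block this gap is $m$ and (III) fails, while at the right boundary $k' = k$ the gap is $m + s \geq m + 1$ and (III) holds (the case $k = n$ making (III) vacuous). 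So $(k, k)$ is the unique admissible pair and $\pf$ is join-irreducible.

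For the converse I would assume $\pf$ is join-irreducible and study $J = \{j : u_j < m(j-1)\}$. Nontriviality forces $J \neq \emptyset$. If $J$ were to split into two maximal contiguous pieces with right endpoints $r_1 < r_2$, then both $(r_1, r_1)$ and $(r_2, r_2)$ would satisfy (I)--(III) because the next value beyond each right endpoint lies on the staircase and so the gap exceeds $m$, yielding two distinct lower covers; hence $J$ is a single interval $\{i_0, \ldots, k_0\}$. Within the interval, any interior jump $u_{j+1} - u_j$ strictly larger than $m$ certifies (III) for an extra admissible pair $(j, j)$, while any interior jump strictly smaller than $m$ allows (II) to hold strictly for the pair $(l, k_0)$ with $l$ at the small-jump position, and (III) at $k_0$ is verified either vacuously or by the $\geq m+1$ boundary gap; both cases contradict uniqueness, so all interior jumps equal exactly $m$. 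This forces $u_j = m(j-1) - s$ inside the block for $s := m(i_0 - 1) - u_{i_0}$, and monotonicity at the left boundary confines $s$ to $\{1, \ldots, m\}$.

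The main obstacle is the converse direction, specifically in ensuring that every departure from the prescribed shape (noncontiguous $J$, an interior jump too large, or an interior jump too small) is exhibited as a second admissible pair. The delicate interplay is between (II) and (III): condition (II) eliminates the possibility $k' > i'$ in the arithmetic case, while (III) pins down the unique right endpoint; both checks depend on the boundary gap $u_{k_0+1} - u_{k_0} \geq m+1$, so keeping careful track of where such gaps occur is the heart of the bookkeeping.
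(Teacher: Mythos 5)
Your overall strategy coincides with the paper's: both arguments work directly on step sequences and identify the lower covers of $\pf$ with the contiguous blocks that can be incremented so that the incremented block is a primitive subsequence. Your repackaging of this into the admissibility conditions (I)--(III) for a pair $(i,k)$ is correct (and arguably cleaner than the paper's case-by-case computation), and your forward direction is complete: inside the prescribed block the increments are exactly $m$, so (II) kills every pair with $k'>i'$ and (III) kills every singleton except $(k,k)$.

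The converse, however, has a concrete gap in the small-jump case. You assert that an interior jump $u_{l+1}-u_l<m$ makes $(l,k_0)$ admissible because (II) ``holds strictly.'' That is false as stated: (II) requires $u_j-u_l<m(j-l)$ for \emph{every} $j\le k_0$, and a later large jump can restore equality or worse. For instance, with $m=2$, $n=5$ and $\uu_{\pf}=(0,1,2,5,8)$ one has $J=\{2,3,4\}$ and a small jump at $l=2$, yet $u_4-u_2=4=m(4-2)$, so (II) fails for $(2,4)$ and that pair is not admissible. The conclusion survives only because such a configuration necessarily contains an interior jump $>m$, which you have already shown yields a second admissible singleton; so the repair is to order the cases: first dispose of any interior jump $>m$, and only then---knowing all interior jumps are $\le m$---use the telescoping sum $u_j-u_l=\sum_{t=l}^{j-1}(u_{t+1}-u_t)<m(j-l)$ to get (II) for $(l,k_0)$. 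Note also that (III) for $(l,k_0)$ concerns $u_{k_0+1}-u_l$, not the single boundary gap $u_{k_0+1}-u_{k_0}$; it should be verified from $u_{k_0+1}=mk_0$ and $u_l\le m(l-1)-1$, which give $u_{k_0+1}-u_l\ge m(k_0+1-l)+1$. Finally, your converse correctly produces an interval $J$ with constant decrement but does not (and cannot) force $\lvert J\rvert\ge 2$: a singleton depressed block is also join-irreducible. This is an off-by-one in the proposition as printed (elsewhere the paper takes the depressed block to be $\{i+1,\dots,k\}$, which may be a singleton, consistent with the count $m\binom{n}{2}$), so it is not a defect of your reasoning, but it is worth flagging that the literal statement with $k\ge i+1$ is what your argument would fail to recover.
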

\begin{proof}
	Let $\pf\in D_n^{(m)}$ with associated step sequence $\uu_{\pf}=(u_{1},u_{2},\ldots,u_{n})$, and suppose that $\uu_{\pf}$ is of the form \eqref{eq:mtamari_join_irreducibles}.  Since the entries $u_{j}$ for $j\notin\{i,i+1,\ldots,k\}$ are maximal, a lower cover of $\pf$ can only be obtained by increasing some of the values $u_{i},u_{i+1},\ldots,u_{k}$.  First, we increase only one entry, \ie we consider the path $\pf_{l}\in D_{n}^{(m)}$ given by the step sequence $\uu_{\pf_{l}}=(u_{1},u_{2},\ldots,u_{l-1},u_{l}+1,u_{l+1},u_{l+2},\ldots,u_{n})$, where $l\in\{i,i+1,\ldots,k\}$.  We show that $\pf_{l}\lessdot_{\text{rot}}\pf$ only if $l=k$.  Indeed, if $l<k$, then we have $u_{l+1}-(u_{l}+1)=ml-s-m(l-1)+s-1=m-1<m$.  Hence $u_{l+1}$ is contained in the primitive subsequence of $\pf_{l}$ at position $l$, which implies that $\pf_{l}$ is no lower cover of $\pf$.  If $l=k$, then we have $u_{l+1}-(u_{l}+1)=ml-m(l-1)+s-1=m+s-1\geq m$, and hence $u_{l+1}$ is not contained in the primitive subsequence of $\pf_{l}$ at 
position $l$.  Thus $\pf_{k}\lessdot_{\text{rot}}\pf$. 
	
	Now we increase at least two entries: for $l_{1},l_{2}\in\{i,i+1,\ldots,k\}$ with $l_{1}<l_{2}$, we consider the path $\pf_{l_{1},l_{2}}$ given by the step sequence 
	\begin{displaymath}
		\uu_{\pf_{l_{1},l_{2}}}=(u_{1},u_{2},\ldots,u_{l_{1}-1},u_{l_{1}}+1,u_{l_{1}+1}+1,\ldots,u_{l_{2}}+1,u_{l_{2}+1},u_{l_{2}+2},\ldots,u_{n}).
	\end{displaymath}
	As before we can show that necessarily $l_{2}=k$.  Moreover, we have $u_{k}+1-(u_{l_{1}}+1)=u_{k}-u_{l_{1}}=m(k-l_{1})\geq m$.  Thus $\pf_{l_{1},k}\leq_{\text{rot}}\pf$, but $\pf_{l_{1},k}$ is no lower cover of $\pf$.  Again we see that $\pf_{l_{1},k-1}\not\leq_{\text{rot}}\pf$, which implies that every chain from $\pf_{l_{1},k}$ to $\pf$ has to pass through $\pf_{k}$.  Hence $\pf_{k}$ is the unique lower cover of $\pf$, and it follows that $\pf\in\jj\bigl(\mtam{n}{m}\bigr)$ as desired. 
	
	\smallskip
	
	For the converse, suppose that $\pf$ is not of the form \eqref{eq:mtamari_join_irreducibles}.  Then, we can find two indices $j_{1},j_{2}$ such that $u_{j_{1}}=m(j_{1}-1)-s_{1}$ and $u_{j_{2}}=m(j_{2}-1)-s_{2}$ with $s_{1},s_{2}>0$, and $s_{1}\neq s_{2}$.  Without loss of generality, we can assume that $j_{2}=j_{1}+1$, which implies $u_{j_{2}}-u_{j_{1}}=mj_{1}-s_{2}-m(j_{1}-1)+s_{1}=m+s_{1}-s_{2}$.  If $s_{1}<s_{2}$, then we have $u_{j_{2}}-u_{j_{1}}<m$, and $u_{j_{2}}$ lies in the primitive subsequence of $\uu_{\pf}$ at position $j_{1}$.  Now if we increase the entries of the primitive subsequence of $\uu_{\pf}$ at position $j_{1}$ by one, then we obtain an $m$-Dyck path $\pf_{1}$, and the $j_{2}$-nd entry of $\uu_{\pf_{1}}$ is contained in the primitive subsequence of $\uu_{\pf_{1}}$ at position $j_{1}$.  Then, $\pf_{1}\lessdot_{\text{rot}}\pf$.  Analogously, if we increase the entries of the primitive subsequence of $\uu_{\pf}$ at position $j_{2}$, then we obtain another $m$-Dyck path $\pf_{2}$ with $\pf_
{2}\lessdot_{\text{rot}}\pf$. 	Clearly we have $\pf_{1}\neq\pf_{2}$, which implies that $\pf\notin\jj\bigl(\mtam{n}{m}\bigr)$.  If $s_{1}>s_{2}$, then $u_{j_{2}}-u_{j_{1}}>m$.  Now increasing $u_{j_{1}}$ by one yields an $m$-Dyck path $\pf_{1}$ with $\pf_{1}\lessdot_{\text{rot}}\pf$, and increasing the entries of the primitive subsequence of $\uu_{\pf}$ at position $j_{2}$ by one yields an $m$-Dyck path $\pf_{2}$ with $\pf_{2}\lessdot_{\text{rot}}\pf$.  Again we have $\pf_{1}\neq\pf_{2}$, which implies $\pf\notin\jj\bigl(\mtam{n}{m}\bigr)$. 
\end{proof}

\begin{corollary}\label{cor:card_join_irreducibles_tamari}
	For every $m,n\in\mathbb{N}_{>0}$, we have $\Bigl\lvert\jj\bigl(\mtam{n}{m}\bigr)\Bigr\rvert=m\binom{n}{2}$.
\end{corollary}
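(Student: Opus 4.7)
The plan is a direct count leveraging the parametrization from Proposition~\ref{prop:join_irreducibles_tamari}. That result identifies each $\pf\in\jj(\mtam{n}{m})$ with a triple $(i,k,s)$ where $\{i,i+1,\ldots,k\}$ is the block of positions at which the step sequence has a uniform deficit $s$ from its maximum. Since $i$ and $k$ are recoverable from $\uu_{\pf}$ as the first and last positions with $u_j<m(j-1)$, and $s$ is the common deficit value, the assignment of join-irreducibles to admissible triples is injective, and counting $\jj(\mtam{n}{m})$ reduces to enumerating admissible triples.

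Next I would pin down the admissible ranges of $(i,k,s)$. The $m$-Dyck condition $u_1\geq 0$ forces $i\geq 2$, since for $i=1$ one would have $u_1=-s<0$. The monotonicity requirement $u_{i-1}\leq u_i$ at the left boundary of the deficit block reads $m(i-2)\leq m(i-1)-s$, i.e.\ $s\leq m$. The remaining monotonicity and $m$-Dyck constraints impose no further restriction on $k$, so $k$ ranges over $\{i,i+1,\ldots,n\}$. In particular, the singleton case $k=i$ is admissible and essential to the count: a direct check in the example $\mtam{3}{2}$ of Figure~\ref{fig:tamari23} confirms that paths with step sequences $(0,1,4)$ and $(0,2,3)$ are join-irreducible, even though their deficit blocks are singletons.

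Summing over the admissible parameters then collapses to
\begin{equation*}
	\bigl\lvert\jj\bigl(\mtam{n}{m}\bigr)\bigr\rvert
	\;=\; \sum_{i=2}^{n}\sum_{k=i}^{n}\sum_{s=1}^{m}1
	\;=\; m\sum_{i=2}^{n}(n-i+1)
	\;=\; m\sum_{\ell=1}^{n-1}\ell
	\;=\; m\binom{n}{2},
\end{equation*}
as claimed. The only real subtlety in this otherwise routine argument is verifying that the singleton case $k=i$ must be included in the count; once this is accounted for (a minor bookkeeping point rather than a conceptual difficulty), the remaining algebra is immediate, matching the cardinality of $\mm\bigl(\mtam{n}{m}\bigr)$ established in Corollary~\ref{cor:card_meet_irreducibles_tamari} and reflecting the self-duality of the underlying Fu{\ss}-Catalan enumeration.
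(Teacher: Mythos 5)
Your proof is correct and follows essentially the same route as the paper: the corollary is obtained by counting the admissible parameter triples $(i,k,s)$ from Proposition~\ref{prop:join_irreducibles_tamari}, which the paper does in one line by pairing each of the $\binom{n}{2}$ choices of block endpoints with the $m$ choices of $s$. Your indexing (first deficit position $i\geq 2$, last position $k\geq i$) differs from the paper's convention only by a shift, and your explicit checks of injectivity and of the admissibility constraints---including the singleton-block case---simply make the paper's one-sentence argument precise.
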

\begin{proof}
	For every two indices $i,k\in\{1,2,\ldots,n\}$ with $i<k$, we can find $m$ join-irreducible elements as described in Lemma~\ref{prop:join_irreducibles_tamari}. 
\end{proof}

\subsection{Proof of Theorem~\ref{thm:mtamari}}
  \label{sec:proof_tamari}
In this section we investigate the $m$-cover poset of $\mtam{n}{1}$.  Theorem~\ref{thm:mcover_lattice} implies that $\tmn{3}{m}$ is a lattice for all $m>0$.  However, for $n>3$ this is no longer true.  Moreover, Proposition~\ref{prop:mcover_cardinality} implies the following result.

\begin{lemma}\label{lem:mcover_tamari_cardinality}
	For $m,n>0$, we have
	\begin{displaymath}
		\left\lvert\dmn{n}{m}\right\rvert = \frac{n-1}{2}\biggl(\mbox{Cat}(n)-2\biggr)\binom{m}{2} + m\cdot\mbox{Cat}(n) - m + 1.
	\end{displaymath}
\end{lemma}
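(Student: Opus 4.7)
The plan is to invoke Proposition~\ref{prop:mcover_cardinality} with $\PP=\mtam{n}{1}$, which reduces the proof to identifying three invariants of the Tamari lattice: its number of elements, its number of atoms, and its number of covering relations.  The number of elements is $\text{Cat}(n)$ by definition.

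For the number of atoms, I would first note that by Lemma~\ref{lem:sequence_order} the minimum of $\mtam{n}{1}$ is the Dyck path with step sequence $(0,1,2,\ldots,n-1)$, since this is the componentwise-largest sequence satisfying \eqref{eq:mdyck_1}--\eqref{eq:mdyck_2} for $m=1$.  At each of the $n-1$ ascent positions $i\in\{2,\ldots,n\}$, the primitive subsequence reduces to the singleton $\{u_{i}\}$ (as $u_{i+1}-u_{i}=1\not<1$), so the minimum has exactly $n-1$ upper covers; thus $k=n-1$.

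The main content of the proof is computing the number $c$ of covering relations of $\mtam{n}{1}$.  By the definition of $\lessdot_{\text{rot}}$, the upper covers of a Dyck path $\pf\in D_{n}$ are in bijection with the ascents of its step sequence, which, when $m=1$, correspond exactly to the factors $EN$ (the \emph{valleys}) in $\pf$.  Summing over $\pf\in D_{n}$, the number $c$ therefore equals the total number of valleys carried by Dyck paths of semilength $n$.  I would compute this via the classical identity that the total number of peaks over $D_{n}$ equals $\binom{2n-1}{n-1}$; since peaks and valleys alternate in any nonempty Dyck path, each path contributes exactly one more peak than valley, yielding
\begin{equation*}
c \;=\; \binom{2n-1}{n-1} - \text{Cat}(n) \;=\; \tfrac{1}{2}\binom{2n}{n} - \tfrac{1}{n+1}\binom{2n}{n} \;=\; \tfrac{n-1}{2}\,\text{Cat}(n).
\end{equation*}

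Substituting $\text{Cat}(n)$ for the number of elements, $k=n-1$, and $c=\tfrac{n-1}{2}\text{Cat}(n)$ into Proposition~\ref{prop:mcover_cardinality} produces $c-k=\tfrac{n-1}{2}(\text{Cat}(n)-2)$, and a routine rearrangement yields the claimed formula.  The only nontrivial step is the valley count; if the peak-sum identity $\binom{2n-1}{n-1}$ is to be avoided, one can instead compute $\sum_{k\geq 1}(k-1)N(n,k)$ directly from the Narayana numbers $N(n,k)=\tfrac{1}{n}\binom{n}{k}\binom{n}{k-1}$, but this shortens nothing.
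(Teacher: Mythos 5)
Your proof is correct and follows the same route as the paper: both reduce the count to Proposition~\ref{prop:mcover_cardinality} via the three invariants of $\mtam{n}{1}$ (its $\mathrm{Cat}(n)$ elements, its $n-1$ atoms, and its $\tfrac{n-1}{2}\mathrm{Cat}(n)$ cover relations). The only difference is that the paper simply cites Geyer's theorem for the cover-relation count, whereas you derive it yourself via the ascent--valley correspondence and the peak-sum identity $\binom{2n-1}{n-1}$; that derivation is correct and makes the argument self-contained.
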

\begin{proof}
	It is well-known that $\lvert D_{n}\rvert=\mbox{Cat}(n)$ and it follows by construction that $\mtam{n}{1}$ has $n-1$ atoms.  Moreover, \cite{geyer94on}*{Theorem~5.3} states that the number of cover relations in $\mtam{n}{1}$ is precisely $\tfrac{n-1}{2}\mbox{Cat}(n)$.  Now the result follows from Proposition~\ref{prop:mcover_cardinality}.
\end{proof}

We observe that $\bigl\lvert\dmn{n}{m}\bigr\rvert<\mbox{Cat}^{(m)}(n)$ for $n>3$ and $m>1$.  Since $\tmn{n}{m}$ is in this case not a lattice, it makes sense to consider a lattice completion of $\tmn{n}{m}$.  And indeed, Theorem~\ref{thm:mtamari} claims that the Dedekind-MacNeille completion of $\tmn{n}{m}$, namely the smallest lattice that contains $\tmn{n}{m}$ as a subposet, is isomorphic to $\mtam{n}{m}$.  In order to establish this connection, we introduce the following map from $\mdyck{n}{m}$ to $(\mdyck{n}{1})^{m}$.

\begin{definition}\label{def:strip_decomposition}
	Let $\pf\in\mdyck{n}{m}$ be an $m$-Dyck path with associated height sequence $\hh_{\pf}=(h_{1},h_{2},\ldots,h_{mn})$ and consider the sequence $\delta(\pf)=(\qf_{1},\qf_{2},\ldots,\qf_{m})$, where for each $i\in\{1,2,\ldots,m\}$ the path $\qf_i\in\mdyck{n}{1}$ is determined by the height sequence $\hh_{i}=\bigl(h_{i},h_{i+m},\ldots,h_{i+(n-1)m}\bigr)$.  We call the sequence $\delta(\pf)$ the \alert{strip decomposition of $\pf$}. 
\end{definition}

The following lemma is immediate.

\begin{lemma} \label{lem:delta_injective}
	The map $\delta: D_n^{(m)}\to (D_n)^{m}$ described in Definition~\ref{def:strip_decomposition} is well-defined and injective.
\end{lemma}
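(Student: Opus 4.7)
The plan is to unpack the definition of $\delta$ and check two things separately: first, that for every $i\in\{1,2,\ldots,m\}$ the sub-sequence $\hh_{i}=(h_{i},h_{i+m},\ldots,h_{i+(n-1)m})$ is the height sequence of some $\qf_{i}\in D_{n}$, and second, that the full sequence $\hh_{\pf}$ can be recovered uniquely from the tuple $(\hh_{1},\hh_{2},\ldots,\hh_{m})$.

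For well-definedness, I would verify the three conditions characterising a Dyck height sequence, namely the analogues of \eqref{eq:height_1} and \eqref{eq:height_2} for $m=1$. Weak monotonicity of $\hh_{i}$ is inherited directly from that of $\hh_{\pf}$, since passing to a subsequence preserves the order. The upper bound $h_{i+(n-1)m}\leq n$ is immediate because $i\leq m$ forces $i+(n-1)m\leq mn$, and then \eqref{eq:height_1} gives $h_{i+(n-1)m}\leq h_{mn}\leq n$. The Dyck condition for $\qf_{i}$ requires that its $j$-th entry be at least $j$; using \eqref{eq:height_2} we have
\begin{displaymath}
	h_{i+(j-1)m}\;\geq\;\Bigl\lceil\tfrac{i+(j-1)m}{m}\Bigr\rceil\;=\;(j-1)+\Bigl\lceil\tfrac{i}{m}\Bigr\rceil\;=\;j,
\end{displaymath}
where the last equality uses $1\leq i\leq m$. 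Hence $\qf_{i}\in D_{n}$, so $\delta$ takes values in $(D_{n})^{m}$.

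For injectivity, observe that the map $\pf\mapsto\hh_{\pf}$ is already a bijection between $D_{n}^{(m)}$ and the set of sequences satisfying \eqref{eq:height_1} and \eqref{eq:height_2}, by the correspondence recalled before Lemma~\ref{lem:height_step_conversion}. The tuple $(\hh_{1},\hh_{2},\ldots,\hh_{m})$ obtained from $\delta$ records the values $h_{i+(j-1)m}$ for all $i\in\{1,\ldots,m\}$ and $j\in\{1,\ldots,n\}$, which is simply a partitioning of the indices $\{1,2,\ldots,mn\}$ according to their residue modulo $m$. Thus $\delta(\pf)=\delta(\pf')$ forces $h_{k}=h'_{k}$ for every $k\in\{1,2,\ldots,mn\}$, and hence $\pf=\pf'$.

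There is no real obstacle here; the only minor care needed is keeping the indexing consistent (in particular remembering that $1\leq i\leq m$ so that $\lceil i/m\rceil=1$), and noting that the sub-sequences $\hh_{i}$ together cover all entries of $\hh_{\pf}$ exactly once, which is what yields injectivity.
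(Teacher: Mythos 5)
Your proof is correct: the paper itself gives no argument (it declares the lemma ``immediate''), and your verification—checking that each subsequence $\hh_{i}$ satisfies the $m=1$ analogues of \eqref{eq:height_1} and \eqref{eq:height_2}, with the key computation $h_{i+(j-1)m}\geq\lceil (i+(j-1)m)/m\rceil=j$ for $1\leq i\leq m$, and observing that the $m$ subsequences partition the indices $\{1,\ldots,mn\}$ by residue so that $\hh_{\pf}$ is recoverable—is exactly the routine check the authors had in mind. No gaps.
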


The inverse of $\delta$ can be described explicitly using Lemma~\ref{lem:height_step_conversion}.

\begin{example}\label{ex:dyck35_2}
	We continue with Example~\ref{ex:dyck35_1}.  Let $\pf$ be the $3$-Dyck path of length $20$ shown in Figure~\ref{fig:dyck35_whole} given by the height sequence $\hh_{\pf}=(1,1,3,3,3,3,3,3,4,4,5,5,5,5,5)$.  Since $m=3$, we obtain three sequences
	\begin{align*}
		\hh_{\qf_{1}} & = (h_{1},h_{4},h_{7},h_{10},h_{13}) = (1,3,3,4,5),\\
		\hh_{\qf_{2}} & = (h_{2},h_{5},h_{8},h_{11},h_{14}) = (1,3,3,5,5),\quad\text{and}\\
		\hh_{\qf_{3}} & = (h_{3},h_{6},h_{9},h_{12},h_{15}) = (3,3,4,5,5),
	\end{align*}
	which correspond to the three Dyck paths shown in Figure~\ref{fig:dyck35_decomp}.
	
	\begin{figure}
		\centering
		\subfigure[A $3$-Dyck path $\pf$ of length $20$.]{\label{fig:dyck35_whole}
		  \begin{tikzpicture}
			\draw(0,0) node{
				\begin{tikzpicture}[scale=.8]\tiny
					\draw[white!50!gray](0,0) grid[step=.4] (6,2);
					\draw(0,0) -- (6,2);
					\draw(0,0) -- (0,.4) -- (.8,.4) -- (.8,1.2) -- (3.2,1.2) -- (3.2,1.6) -- (4,1.6) 
					  -- (4,2) -- (6,2);
					\draw[white!50!gray,->](6,0) -- (6.5,0);
					\draw[white!50!gray,->](0,2) -- (0,2.5);
					\draw[white!50!gray](6.4,-.25) node{$x$};
					\draw[white!50!gray](-.25,2.4) node{$y$};
					\foreach\i in {0,1,2,3,4,5,6,7,8,9,10,11,12,13,14,15} {
						\draw(.4*\i,-.25) node[white!50!gray]{$\i$};
					}
					\foreach\i in {0,1,2,3,4,5} {
						\draw(-.25,.4*\i) node[white!50!gray]{$\i$};
					}
					\draw(3,2.25) node{$\pf$};
				\end{tikzpicture}};
		  \end{tikzpicture}}\qquad
		\subfigure[The strip decomposition of $\pf$.]{\label{fig:dyck35_decomp}
		  \begin{tikzpicture}
			\draw(0,0) node{
				\begin{tikzpicture}[scale=.8]\tiny
					\draw[white!50!gray](0,0) grid[step=.4] (2,2);
					\draw[white!50!gray,->](2,0) -- (2.5,0);
					\draw[white!50!gray,->](0,2) -- (0,2.5);
					\draw[white!50!gray](2.4,-.25) node{$x$};
					\draw[white!50!gray](-.25,2.4) node{$y$};
					\foreach\i in {0,1,2,3,4,5} {
						\draw(.4*\i,-.25) node[white!50!gray]{$\i$};
					}
					\foreach\i in {0,1,2,3,4,5} {
						\draw(-.25,.4*\i) node[white!50!gray]{$\i$};
					}
					\draw(0,0) -- (0,.4) -- (.4,.4) -- (.4,1.2) -- (1.2,1.2) 
					  -- (1.2,1.6) -- (1.6,1.6) -- (1.6,2) -- (2,2) -- (0,0);
					\draw(1,2.25) node{$\qf_{1}$};
				\end{tikzpicture}};
			\draw(2.5,0) node{
				\begin{tikzpicture}[scale=.8]\tiny
					\draw[white!50!gray](0,0) grid[step=.4] (2,2);
					\draw[white!50!gray,->](2,0) -- (2.5,0);
					\draw[white!50!gray,->](0,2) -- (0,2.5);
					\draw[white!50!gray](2.4,-.25) node{$x$};
					\draw[white!50!gray](-.25,2.4) node{$y$};
					\foreach\i in {0,1,2,3,4,5} {
						\draw(.4*\i,-.25) node[white!50!gray]{$\i$};
					}
					\foreach\i in {0,1,2,3,4,5} {
						\draw(-.25,.4*\i) node[white!50!gray]{$\i$};
					}
					\draw(0,0) -- (0,.4) -- (.4,.4) -- (.4,1.2) -- (1.2,1.2) 
					  -- (1.2,2) -- (1.6,2) -- (2,2) -- (0,0);
					\draw(1,2.25) node{$\qf_{2}$};
				\end{tikzpicture}};
			\draw(5,0) node{
				\begin{tikzpicture}[scale=.8]\tiny
					\draw[white!50!gray](0,0) grid[step=.4] (2,2);
					\draw[white!50!gray,->](2,0) -- (2.5,0);
					\draw[white!50!gray,->](0,2) -- (0,2.5);
					\draw[white!50!gray](2.4,-.25) node{$x$};
					\draw[white!50!gray](-.25,2.4) node{$y$};
					\foreach\i in {0,1,2,3,4,5} {
						\draw(.4*\i,-.25) node[white!50!gray]{$\i$};
					}
					\foreach\i in {0,1,2,3,4,5} {
						\draw(-.25,.4*\i) node[white!50!gray]{$\i$};
					}
					\draw(0,0) -- (0,1.2) -- (.8,1.2) -- (.8,1.6) -- (1.2,1.6) 
					  -- (1.2,2) -- (2,2) -- (0,0);
					\draw(1,2.25) node{$\qf_{3}$};
				\end{tikzpicture}};
		  \end{tikzpicture}}
		\caption{Illustration of the strip decomposition.}
		\label{fig:dyck35_rev}
	\end{figure}
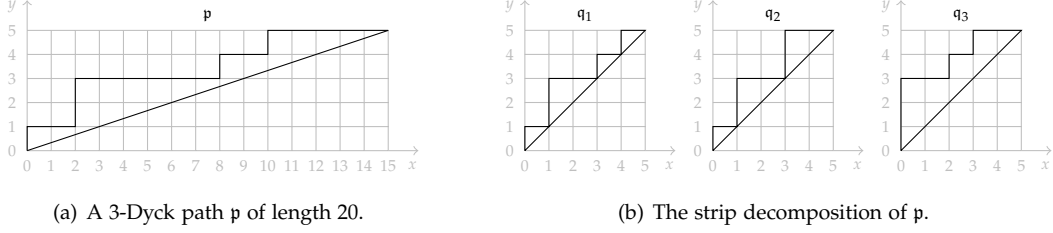
\end{example}

In the remainder of this section, we prove Theorem~\ref{thm:mtamari} by showing that the strip decomposition is an order-isomorphism between the restriction of $\mtam{n}{m}$ and $\tmn{n}{m}$ to their respective sets of join- and meet-irreducible elements.  Recall that for any poset $\PP=(P,\leq)$, the sets of join- and meet-irreducible elements, denoted by $\jj(\PP)$ and $\mm(\PP)$, naturally induce subposets $\JJ(\PP)=\bigl(\jj(\PP),\leq\bigr)$ and $\MM(\PP)=\bigl(\mm(\PP),\leq\bigr)$, respectively. 

We start with the investigation of the subposet of $\tmn{n}{m}$ consisting of meet-irreducible elements.

\begin{proposition}\label{prop:meet_isomorphic}
	The posets $\MM\bigl(\mtam{n}{m}\bigr)$ and $\MM\bigl(\tmn{n}{m}\bigr)$ are isomorphic.
\end{proposition}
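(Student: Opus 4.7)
The plan is to exhibit an explicit order-isomorphism $\phi:\mm(\mtam{n}{m})\to\mm(\tmn{n}{m})$ that agrees on meet-irreducibles with the strip decomposition $\delta$ of Definition \ref{def:strip_decomposition}. First I would pin down both sides. By Proposition \ref{prop:meet_irreducibles_tamari}, an element of $\mm(\mtam{n}{m})$ has step sequence $(0^i,b^{n-i})$ with $1\le i\le n-1$ and $1\le b\le mi$. By Proposition \ref{prop:mcover_irreducibles} applied to $\PP=\mtam{n}{1}$, together with the $m=1$ case of Proposition \ref{prop:meet_irreducibles_tamari} (and noting that $\hat 0\notin\mm(\mtam{n}{1})$ and $\hat 1\notin\jj(\mtam{n}{1})$ for $n\ge 3$, with the cases $n\in\{1,2\}$ verified directly), an element of $\mm(\tmn{n}{m})$ is a tuple $(p^l,(p^\star)^{m-l})$ where $p$ is a meet-irreducible of $\mtam{n}{1}$ with step sequence $(0^i,a^{n-i})$ for some $a\in\{1,\dots,i\}$, and $l\in\{1,\dots,m\}$. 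Using \eqref{eq:rotation_mtamari}, one reads off that $p^\star$ has step sequence $(0^i,(a-1)^{n-i})$ when $a\ge 2$, while $p^\star=\hat 1$ when $a=1$. By Corollary \ref{cor:card_meet_irreducibles_tamari} both sets contain $m\binom{n}{2}$ elements.

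Next, given $q\in\mm(\mtam{n}{m})$ with step sequence $(0^i,b^{n-i})$, I would write uniquely $b=(a-1)m+l$ with $a\in\{1,\dots,i\}$ and $l\in\{1,\dots,m\}$, and define $\phi(q)=(p^l,(p^\star)^{m-l})$ for the $p$ with step $(0^i,a^{n-i})$. Using Lemma \ref{lem:height_step_conversion}, the height sequence of $q$ is the sequence whose first $(a-1)m+l$ entries equal $i$ and whose remaining entries equal $n$; slicing this sequence into the $m$ arithmetic progressions dictated by Definition \ref{def:strip_decomposition} produces $l$ copies of the height sequence of $p$ followed by $m-l$ copies of the height sequence of $p^\star$, so that $\phi$ is precisely the restriction of $\delta$ to meet-irreducibles. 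Bijectivity is immediate from the bijection $b\leftrightarrow(a,l)$ between $\{1,\dots,mi\}$ and $\{1,\dots,i\}\times\{1,\dots,m\}$.

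The heart of the argument is order-preservation, and this reduces to a simple description of the relevant up-sets. Applying \eqref{eq:rotation_mtamari} to $q=(0^i,b^{n-i})$, the primitive subsequence at position $i+1$ necessarily runs to $n$ since all later entries equal $b$, so the unique upper cover of $q$ is $(0^i,(b-1)^{n-i})$, and the principal up-set of $q$ in $\mtam{n}{m}$ is the chain $\{(0^i,c^{n-i}):0\le c\le b\}$. Consequently, two meet-irreducibles of $\mtam{n}{m}$ are comparable only when they share the same $i$, and then $q_1\le q_2$ iff $b_1\ge b_2$. Running the same argument inside $\mtam{n}{1}$ shows that a componentwise comparison in $\tmn{n}{m}$ between two meet-irreducible tuples $(p_1^{l_1},(p_1^\star)^{m-l_1})$ and $(p_2^{l_2},(p_2^\star)^{m-l_2})$ likewise forces $i_1=i_2$. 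A brief case analysis on whether $l_1\le l_2$ or $l_1\ge l_2$ then reduces componentwise comparison to the numerical condition ``$a_1>a_2$, or $a_1=a_2$ and $l_1\ge l_2$'', which is precisely $b_1\ge b_2$ under the identification $b_j=(a_j-1)m+l_j$.

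The main technical care is needed in the boundary case $a=1$, where $p^\star=\hat 1$ and so those components of the tuple containing $p^\star$ impose only a vacuous constraint; treating this case carefully prevents spurious order relations and yields the clean equivalence $\phi(q_1)\le\phi(q_2)\Leftrightarrow b_1\ge b_2\Leftrightarrow q_1\le q_2$. Combining this with the bijection $\phi$ constructed above establishes the claimed poset isomorphism.
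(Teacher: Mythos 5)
Your proposal is correct and takes essentially the same route as the paper: both sides are identified via Propositions~\ref{prop:meet_irreducibles_tamari} and \ref{prop:mcover_irreducibles}, the isomorphism is the strip decomposition $\delta$ restricted to meet-irreducibles (your $\phi$, written via the division $b=(a-1)m+l$, is exactly the paper's computation with $a=mk+t$), and comparability is reduced to the same numerical condition on the parameters $(i,b)$ versus $(i,a,l)$. Your packaging of the order-preservation step through the observation that the principal filter of a meet-irreducible $(0^{i},b^{n-i})$ is the chain $\{(0^{i},c^{n-i}):0\leq c\leq b\}$ is a slightly cleaner justification of the assertion ``$\pf\leq_{\text{rot}}\pf'$ forces $i=j$'' that the paper states without elaboration, and your explicit parametrization replaces the paper's injectivity-plus-cardinality argument, but the substance is identical.
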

\begin{proof}
	If $n\leq 3$, then $\mtam{n}{m}\cong\tmn{n}{m}$, and we are done.  So suppose that $n>3$.  Since $\mtam{n}{m}$ has $n-1$ atoms and $n-1$ coatoms, Proposition~\ref{prop:mcover_irreducibles} implies that the set of meet-irreducible elements of $\tmn{n}{m}$ can be written as 
	\begin{displaymath}
		\mm\bigl(\tmn{n}{m}\bigr)=\Bigl\{\bigl(\qf^{l},(\qf^{\star})^{m-l}\bigr)\mid\qf\in\mm\bigl(\mtam{n}{1}\bigr)\;\text{and}\;1\leq l\leq m\Bigr\}.
	\end{displaymath}
	We show that the map $\delta$ from Definition~\ref{def:strip_decomposition} is a poset-isomorphism from $\MM\bigl(\mtam{n}{m}\bigr)$ to $\MM\bigl(\tmn{n}{m}\bigr)$.  
    
	First we show that $\delta\Bigl(\mm\bigl(\mtam{n}{m}\bigr)\Bigr)\subseteq\mm\bigl(\tmn{n}{m}\bigr)$.  Let $\pf\in\mm\bigl(\mtam{n}{m}\bigr)$.  Proposition~\ref{prop:meet_irreducibles_tamari} implies that $\uu_{\pf}=(0,0,\dots,0,a,a,\dots,a)$, where $a$ first appears at the $(i+1)$-st position and satisfies $1\leq a\leq mi$.  Moreover, if we write $a=mk+t$ with $t\in\{0,1,\dots,m-1\}$ and $k\in\{0,1,\ldots,i\}$, then we obtain $\delta(\pf)=(\qf_{1},\qf_{2},\dots,\qf_{m})$, where 
	\begin{equation}\label{eq:image_step_meet}
		\uu_{\qf_j}=\begin{cases}
		  (0,0,\dots,0,k+1,k+1,\dots,k+1), & \ \text{if}\ j\leq t \\
		  (0,0,\dots,0,k,k,\dots,k), & \ \text{if}\ j>t,  
		\end{cases}
	\end{equation}
	and $k+1$ (respectively $k$) first appears at the $(i+1)$-st position of $\uu_{\qf_{j}}$.  Proposition~\ref{prop:meet_irreducibles_tamari} implies $\qf_{j}\in\mm\bigl(\mtam{n}{1}\bigr)$, and it follows immediately from the definition of $\leq_{\text{rot}}$ that $\qf_{t}\lessdot_{\text{rot}}\qf_{t+1}$.  Hence $\delta(\pf)\in\mm\bigl(\tmn{n}{m}\bigr)$, which implies the claim.  Lemma~\ref{lem:delta_injective} implies that $\delta$ is injective, and Corollary~\ref{cor:card_meet_irreducibles_tamari} and Proposition~\ref{prop:mcover_irreducibles} imply that 
	\begin{displaymath}
		\Bigl\lvert\mm\bigl(\tmn{n}{m}\bigr)\Bigr\rvert = m\binom{n}{2} = \Bigl\lvert\mm\bigl(\mtam{n}{m}\bigr)\Bigr\rvert.
	\end{displaymath}
	Hence we have $\delta\Bigl(\mm\bigl(\mtam{n}{m}\bigr)\Bigr)=\mm\bigl(\tmn{n}{m}\bigr)$.
  
	It remains to show that (in the current situation) $\delta$ and its inverse are both order-preserving.  Let $\pf,\pf'\in\mm(\mtam{n}{m})$, and first assume that $\pf\leq_{\text{rot}}\pf'$.  By assumption, we can write 
	\begin{displaymath}
		\uu_{\pf}=(0,0,\dots,0,a,a,\dots,a)\quad\text{and}\quad\uu_{\pf'}=(0,0,\dots,0,a',a',\dots,a'),
	\end{displaymath}
	where $a=mk+t$ first appears in the $i$-th position of $\uu_{\pf}$ and $a'=mk'+t'$ first appears in the $j$-th position of $\uu_{\pf'}$.  Since $\pf\leq_{\text{rot}}\pf'$, we conclude that $i=j$, and either $k'<k$, or $k'=k$ and $t'<t$.  Let $\delta(\pf)=(\qf_{1},\qf_{2},\ldots,\qf_{m})$, and let $\delta(\pf')=(\qf'_{1},\qf'_{2},\ldots,\qf'_{m})$. \\
	(i) First suppose that $t'<t$.  Then, $k'\leq k$, and in view of \eqref{eq:image_step_meet} we obtain 
	\begin{align*}
		\uu_{\qf_{1}} = \uu_{\qf_{2}} = \cdots = \uu_{\qf_{t'}} & = (0,0,\ldots,0,k+1,k+1,\ldots,k+1),\\
		\uu_{\qf_{t'+1}} = \uu_{\qf_{t'+2}} = \cdots = \uu_{\qf_{t}} & = (0,0,\ldots,0,k+1,k+1,\ldots,k+1),\quad\text{and}\\
		\uu_{\qf_{t+1}} = \uu_{\qf_{t+2}} = \cdots = \uu_{\qf_{m}} & = (0,0,\ldots,0,k,k,\ldots,k),
	\end{align*}
	as well as
	\begin{align*}
		\uu_{\qf'_{1}} = \uu_{\qf'_{2}} = \cdots = \uu_{\qf'_{t'}} & = (0,0,\ldots,0,k'+1,k'+1,\ldots,k'+1),\\
		\uu_{\qf'_{t'+1}} = \uu_{\qf'_{t'+2}} = \cdots = \uu_{\qf'_{t}} & = (0,0,\ldots,0,k',k',\ldots,k'),\quad\text{and}\\
		\uu_{\qf'_{t+1}} = \uu_{\qf'_{t+2}} = \cdots = \uu_{\qf'_{m}} & = (0,0,\ldots,0,k',k',\ldots,k'),
	\end{align*}
	and it follows immediately that $\qf_{s}\leq_{\text{rot}}\qf'_{s}$ for all $s\in\{1,2,\ldots,m\}$.\\ 
	(ii) Now, suppose that $t'\geq t$.  Then, $k'<k$, and in view of \eqref{eq:image_step_meet}, we obtain 
	\begin{align*}
		\uu_{\qf_{1}} = \uu_{\qf_{2}} = \cdots = \uu_{\qf_{t}} & = (0,0,\ldots,0,k+1,k+1,\ldots,k+1),\\
		\uu_{\qf_{t+1}} = \uu_{\qf_{t+2}} = \cdots = \uu_{\qf_{t'}} & = (0,0,\ldots,0,k,k,\ldots,k),\quad\text{and}\\
		\uu_{\qf_{t'+1}} = \uu_{\qf_{t'+2}} = \cdots = \uu_{\qf_{m}} & = (0,0,\ldots,0,k,k,\ldots,k),
	\end{align*}
	as well as
	\begin{align*}
		\uu_{\qf'_{1}} = \uu_{\qf'_{1}} = \cdots = \uu_{\qf'_{t}} & = (0,0,\ldots,0,k'+1,k'+1,\ldots,k'+1),\\
		\uu_{\qf'_{t+1}} = \uu_{\qf'_{t+2}} = \cdots = \uu_{\qf'_{t'}} & = (0,0,\ldots,0,k'+1,k'+1,\ldots,k'+1),\quad\text{and}\\
		\uu_{\qf'_{t'+1}} = \uu_{\qf'_{t'+2}} = \cdots = \uu_{\qf'_{m}} & = (0,0,\ldots,0,k',k',\ldots,k'),
	\end{align*}
	and it follows again that $\qf_{s}\leq_{\text{rot}}\qf'_{s}$ for all $s\in\{1,2,\ldots,m\}$.
	
	If we assume that $\delta(\pf)\leq_{\text{rot}}\delta(\pf')$, then we obtain analogously that $\pf\leq_{\text{rot}}\pf'$, and we are done.
\end{proof}

Now we investigate the subposet of $\tmn{n}{m}$ consisting of join-irreducible elements.

\begin{proposition}\label{prop:join_isomorphic}
	The posets $\JJ\bigl(\mtam{n}{m}\bigr)$ and $\JJ\bigl(\tmn{n}{m}\bigr)$ are isomorphic.
\end{proposition}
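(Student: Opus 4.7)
The plan is to adapt the argument used for Proposition~\ref{prop:meet_isomorphic}, showing that the strip decomposition map $\delta$ of Definition~\ref{def:strip_decomposition} restricts to an order isomorphism from $\JJ(\mtam{n}{m})$ onto $\JJ(\tmn{n}{m})$. The proof will proceed in three stages: first I would identify $\delta(\pf)$ explicitly for every join-irreducible $\pf$ and verify that the image is a join-irreducible of $\tmn{n}{m}$; next perform a cardinality check to upgrade this to a bijection; finally verify that $\delta$ preserves and reflects the rotation order on join-irreducibles.

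For the first stage, fix $\pf \in \jj(\mtam{n}{m})$ with parameters $(i,k,s)$ as in Proposition~\ref{prop:join_irreducibles_tamari}, so that $u_j = m(j-1)$ for $j \notin [i,k]$ and $u_j = m(j-1) - s$ for $j \in [i,k]$ with $s \in \{1,\ldots,m\}$. Converting to the height sequence via Lemma~\ref{lem:height_step_conversion}, one finds that $h_c$ exceeds the bottom-path value $\lceil c/m \rceil$ by exactly one when $c$ lies in one of the intervals $\bigl(m(j-1) - s,\, m(j-1)\bigr]$ for $j \in [i,k]$, and equals $\lceil c/m\rceil$ otherwise. Sampling these heights along the residue classes modulo $m$ then yields the clean description
\[
	\delta(\pf) = \bigl(\hat{0}^{m-s},\, \qf^{s}\bigr),
\]
where $\qf \in \jj(\mtam{n}{1})$ is the join-irreducible with parameters $(i,k,1)$, and by Proposition~\ref{prop:mcover_irreducibles} the tuple on the right is a join-irreducible of $\tmn{n}{m}$. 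Lemma~\ref{lem:delta_injective} then gives injectivity, and the counts
\[
	\bigl\lvert\jj(\mtam{n}{m})\bigr\rvert
	= m\binom{n}{2}
	= m\,\bigl\lvert\jj(\mtam{n}{1})\bigr\rvert
	= \bigl\lvert\jj(\tmn{n}{m})\bigr\rvert,
\]
obtained from Corollary~\ref{cor:card_join_irreducibles_tamari} and Proposition~\ref{prop:mcover_irreducibles}, imply that $\delta$ restricts to a bijection.

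The hard part will be the order-preservation step, since the necessary componentwise criterion from Lemma~\ref{lem:sequence_order} is not in general sufficient for $\leq_{\text{rot}}$: a rotation applied to a short primitive subsequence inside $\mtam{n}{m}$ is forced to couple neighbouring entries, so that certain componentwise comparisons of step sequences cannot be realized by any chain of rotations. I plan to handle this by first establishing the explicit characterization
\[
	\pf_{(i_1,k_1,s_1)} \leq_{\text{rot}} \pf_{(i_2,k_2,s_2)}
	\quad\Longleftrightarrow\quad i_1 = i_2,\; k_1 \leq k_2,\; s_1 \leq s_2,
\]
with necessity following from Lemma~\ref{lem:sequence_order} together with an analysis of the primitive subsequences at the left endpoint of the drop interval (which forces $i_1 = i_2$), and sufficiency proved by exhibiting an explicit saturated chain that first extends the right endpoint from $k_1$ to $k_2$ one step at a time and then deepens the drop from $s_1$ to $s_2$. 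A parallel and much easier computation on the image side, using $\delta(\pf_j) = \bigl(\hat{0}^{m-s_j}, \qf_{(i_j,k_j,1)}^{s_j}\bigr)$ and the componentwise order on $\tmn{n}{m}$, reduces $\delta(\pf_1) \leq \delta(\pf_2)$ to the two conditions $s_1 \leq s_2$ and $\qf_{(i_1,k_1,1)} \leq_{\text{rot}} \qf_{(i_2,k_2,1)}$ in $\mtam{n}{1}$; invoking the same characterization in the case $m = 1$ then recovers exactly the same three conditions, completing the proof that $\delta$ is an order isomorphism from $\JJ(\mtam{n}{m})$ to $\JJ(\tmn{n}{m})$.
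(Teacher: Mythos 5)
Your proposal follows the same route as the paper's proof: compute $\delta(\pf)$ explicitly for $\pf\in\jj\bigl(\mtam{n}{m}\bigr)$ with parameters $(i,k,s)$, identify the image as $\bigl(\mathfrak{0}^{m-s},\qf^{s}\bigr)$ with $\qf\in\jj\bigl(\mtam{n}{1}\bigr)$, upgrade this to a bijection via injectivity of $\delta$ together with the count $m\binom{n}{2}$ on both sides, and finally check that $\delta$ preserves and reflects $\leq_{\text{rot}}$. Where you genuinely differ is the order-comparison step, and there your version is the sharper one. The paper extracts from Lemma~\ref{lem:sequence_order} only the inequalities $i'\leq i\leq k\leq k'$ and $s\leq s'$ and then declares the existence of a connecting chain from $\qf$ to $\qf'$ to be immediate; but if $i'<i$ were possible, no such chain would exist. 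For instance, in $\mtam{4}{1}$ the join-irreducibles with step sequences $(0,1,1,3)$ and $(0,0,1,2)$ satisfy the componentwise inequality of Lemma~\ref{lem:sequence_order} yet are incomparable, since every rotation lowering $u_{2}$ from $1$ to $0$ has a primitive subsequence that also contains $u_{3}$ and hence destroys an entry the target path retains. Your characterization $i_{1}=i_{2}$, $k_{1}\leq k_{2}$, $s_{1}\leq s_{2}$ is exactly what is needed: equality of the left endpoints does not follow from Lemma~\ref{lem:sequence_order} alone, but it does follow from the observation that the property ``$m(j-1)-u_{j}$ is strictly larger at $j=i+1$ than at $j=i$'' is preserved by every rotation (a rotation whose primitive subsequence contains position $i$ but not $i+1$ forces that quantity at $i+1$ to be at most its value at $i$), and once the left endpoints agree the explicit chains you describe do exist. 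So your plan is correct, buys a cleaner two-sided argument (the same characterization applied for general $m$ and for $m=1$), and in fact repairs the one soft spot in the published proof; what remains is only to write out the preserved-invariant argument in full.
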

\begin{proof}
	Proposition~\ref{prop:mcover_irreducibles} implies that that the set of join-irreducible elements of $\tmn{n}{m}$ can be written as 
	\begin{displaymath}
		\jj\bigl(\tmn{n}{m}\bigr)=\Bigl\{\bigl(\mathfrak{0}^{l},\qf^{m-l}\bigr)\mid\qf\in\jj\bigl(\mtam{n}{1}\bigr)\;\text{and}\;0\leq l<m\Bigr\},
	\end{displaymath}
	where $\mathfrak{0}$ is the Dyck path with associated step sequence $(0,1,\ldots,n-1)$.  We show that the map $\delta$ from Definition~\ref{def:strip_decomposition} is a poset-isomorphism from $\JJ\bigl(\mtam{n}{m}\bigr)$ to $\JJ\bigl(\tmn{n}{m}\bigr)$.
    
	First we show that $\delta\Bigl(\jj\bigl(\mtam{n}{m}\bigr)\Bigr)\subseteq\jj\bigl(\tmn{n}{m}\bigr)$.  Let $\pf\in\jj\bigl(\mtam{n}{m}\bigr)$ and let $\uu_{\pf}=(u_{1},u_{2},\ldots,u_{n})$ be its step sequence.  Then, Proposition~\ref{prop:join_irreducibles_tamari} implies that 
	\begin{displaymath}
		u_{j}=\begin{cases}m(j-1), & \text{for}\;j\notin\{i+1,i+2,\ldots,k\},\\
		  m(j-1)-s, & \text{for}\;j\in\{i+1,i+2,\ldots,k\}.
		\end{cases}
	\end{displaymath}
	for exactly one $i\in\{1,2,\ldots,n\}$ and some $k\in\{i+1,i+2,\ldots,n\}$ and some (fixed) $s\in\{1,2,\ldots,m\}$.  Now let $\delta(\pf)=(\qf_{1},\qf_{2},\ldots,\qf_{m})$.  Then, it is straightforward to show that 
	\begin{equation}\label{eq:image_step_join}
		\uu_{\qf_{j}}=\begin{cases}
			(0,1,\ldots,n-1), & \text{if}\;j\leq m-s,\\
			(0,1,\ldots,i-2,i-2,i-1\ldots,k-2,k,k+1\ldots,n-1), & \text{if}\;j>m-s.
		\end{cases}
	\end{equation}
	Proposition~\ref{prop:join_irreducibles_tamari} implies that either $\qf_{j}=\mathfrak{0}$ or $\qf_{j}\in\jj\bigl(\mtam{n}{1}\bigr)$.  Hence it follows that $\delta(\pf)\in\jj\bigl(\tmn{n}{m}\bigr)$, which implies the claim.  Lemma~\ref{lem:delta_injective} implies that $\delta$ is injective, and Corollary~\ref{cor:card_join_irreducibles_tamari} and Proposition~\ref{prop:mcover_irreducibles} imply that 
	\begin{displaymath}
		\Bigl\lvert\jj\bigl(\tmn{n}{m}\bigr)\Bigr\rvert = m\binom{n}{2} = \Bigl\lvert\jj\bigl(\mtam{n}{m}\bigr)\Bigr\rvert.
	\end{displaymath}
	Hence we have $\delta\Bigl(\jj\bigl(\mtam{n}{m}\bigr)\Bigr)=\jj\bigl(\tmn{n}{m}\bigr)$.
	
	It remains to show that (in the current situation) $\delta$ and its inverse are both order-preserving.  Let $\pf,\pf'\in\jj(\mtam{n}{m})$, and first assume that $\pf\leq_{\text{rot}}\pf'$.  Denote by $i,k,s$ the parameters of $\pf$ according to Proposition~\ref{prop:join_irreducibles_tamari}, and denote by $i',k',s'$ the analogous parameters of $\pf'$.  In view of \eqref{eq:image_step_join}, we can write $\delta(\pf)=\bigl(\mathfrak{0}^{m-s},\qf^{s}\bigr)$ and $\delta(\pf')=\bigl(\mathfrak{0}^{m-s'},(\qf')^{s'}\bigr)$, for $\qf,\qf'\in\jj\bigl(\mtam{n}{1}\bigr)$.  Moreover, let $\uu_{\pf}=(u_{1},u_{2},\ldots,u_{n})$ and $\uu_{\pf'}=(u'_{1},u'_{2},\ldots,u'_{n})$ denote the step sequences of $\pf$ and $\pf'$.  We need to show that $s\leq s'$ and that $\qf\leq_{\text{rot}}\qf'$.  First of all Lemma~\ref{lem:sequence_order} implies that $i'\leq i$ and $k\leq k'$.  For $j\in\{i+1,i+2,\ldots,k\}$, we obtain $u_{j}=m(j-1)-s$ and $u'_{j}=m(j-1)-s'$.  Again, Lemma~\ref{lem:sequence_order} implies $u_{j}\geq u'_{j}$,
 which yields $s\leq s'$.  In view of \eqref{eq:image_step_join}, we obtain $\uu_{\qf}=(0,1,\ldots,i-2,i-2,i-1,\ldots,k-2,k,k+1,\ldots,n)$ and $\uu_{\qf'}=(0,1,\ldots,i'-2,i'-2,i'-1,\ldots,k'-2,k',k'+1,\ldots,n)$.  It is immediate that we can construct a sequence of coverings in $\mtam{n}{1}$ that yields a saturated chain from $\qf$ to $\qf'$.  Hence $\qf\leq_{\text{rot}}\qf'$, and we obtain $\delta(\pf)\leq_{\text{rot}}\delta(\pf')$.
	
	For the converse, suppose that $(\mathfrak{0}^{m-s},\qf^{s})\leq_{\text{rot}}(\mathfrak{0}^{m-s'},(\qf')^{s'})$.  This implies immediately that $s\leq s'$ and $\qf\leq_{\text{rot}}\qf'$.  Since $\qf$ and $\qf'$ are join-irreducible in $\mtam{n}{1}$, we can apply Proposition~\ref{prop:join_irreducibles_tamari}.  We obtain two parameters $i,k$ for $\qf$ and two parameters $i',k'$ for $\qf'$, which satisfy $i'\leq i\leq k\leq k'$.  Thus we have $u_{j}=m(j-1)$ for $j\notin\{i+1,i+2,\ldots,k\}$ and $u_{j}=m(j-1)-s$ for $j\in\{i+1,i+2,\ldots,k\}$, as well as $u'_{j}=m(j-1)$ for $j\notin\{i'+1,i'+2,\ldots,k'\}$ and $u'_{j}=m(j-1)-s'$ for $j\in\{i'+1,i'+2,\ldots,k'\}$.  Again it is immediate that we can construct a sequence of coverings in $\mtam{n}{m}$ that yields a chain from $\pf$ to $\pf'$.  Hence $\pf\leq_{\text{rot}}\pf'$, and we are done.
\end{proof}

By abuse of notation, we say that for any poset $\PP=(P,\leq)$, a set $\{a_{1},a_{2},\ldots,a_{s}\}\subseteq P$ is a \alert{join representation} of $p\in P$ if $p=a_{1}\vee a_{2}\vee\cdots\vee a_{s}$.  A join representation $A$ of $p$ is \alert{canonical} if for any other join representation $B$ of $p$ and any $a\in A$ there exists some $b\in B$ with $a\leq b$.  Dually, a set $\{a_{1},a_{2},\ldots,a_{s}\}\subseteq P$ is a \alert{meet representation} of $p\in P$ if $p=a_{1}\wedge a_{2}\wedge\cdots\wedge a_{s}$.  A meet representation $A$ of $p$ is \alert{canonical} if for any other meet representation $B$ of $p$ and any $a\in A$ there exists some $b\in B$ with $b\leq a$.  See \cite{freese95free}*{Chapter~I, Section~3} for more background on canonical representations in lattices.  

A subset $Q\subseteq P$ is \alert{join-dense} if any $p\in P$ has a join representation that is entirely contained in $Q$, and dually $Q\subseteq P$ is \alert{meet-dense} if any $p\in P$ has a meet representation that is entirely contained in $Q$.  The following result is a consequence of \cite{urquhart78topological}*{Theorem~12} and \cite{freese95free}*{Theorems~2.13~and~2.21}.

\begin{theorem}\label{thm:tamari_canonical}
	For $n>0$, any element of $\mtam{n}{1}$ has both a canonical join representation and a canonical meet representation. 
\end{theorem}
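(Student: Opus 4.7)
The plan is to obtain this theorem as a direct combination of two well-known results from the cited literature, with essentially no combinatorial work specific to the paper at hand. First I would invoke \cite{urquhart78topological}*{Theorem~12}, which establishes that the Tamari lattice $\mtam{n}{1}$ is semidistributive, meaning that it is both join-semidistributive (the implication $x\vee y = x\vee z \Rightarrow x\vee y = x\vee(y\wedge z)$ holds) and meet-semidistributive (its order-dual). This reflects the deeper structural fact, originally due to Urquhart, that $\mtam{n}{1}$ arises as a bounded homomorphic image of a free lattice.

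Next I would cite \cite{freese95free}*{Theorem~2.13}, which characterizes the finite lattices in which every element admits a canonical join representation as precisely the join-semidistributive ones. Dually, \cite{freese95free}*{Theorem~2.21} (together with the standard dualization indicated there) shows that every element admits a canonical meet representation if and only if the ambient lattice is meet-semidistributive.

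Combining the two inputs, the theorem follows immediately: since $\mtam{n}{1}$ is both join- and meet-semidistributive, every one of its elements has both a canonical join representation and a canonical meet representation.

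There is no genuine obstacle in the argument itself; the theorem functions here as a bridge lemma that repackages prior results into the form needed for the sequel. The substantive content lies entirely in the cited papers: Urquhart's proof of semidistributivity exploits a topological representation of $\mtam{n}{1}$, while the Freese--Jezek--Nation equivalence between canonical representations and semidistributivity is proved via analysis of the $D$-relation and the associated minimal join covers on a finite lattice.
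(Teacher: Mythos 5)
Your proposal is correct and follows exactly the route the paper itself takes: the paper offers no independent argument but states the theorem as a direct consequence of Urquhart's Theorem~12 (semidistributivity of $\mtam{n}{1}$) combined with Theorems~2.13 and 2.21 of Freese--Je\v{z}ek--Nation (equivalence of join-/meet-semidistributivity with the existence of canonical join/meet representations in a finite lattice). Your reconstruction of the two cited inputs and their combination matches the intended proof.
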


\begin{proposition}\label{prop:mcover_join_dense}
	The set $\jj\bigl(\tmn{n}{m}\bigr)$ is join-dense in $\tmn{n}{m}$.
\end{proposition}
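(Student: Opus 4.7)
The plan is to show that every $\pb \in \tmn{n}{m}$ can be realised as the least upper bound of a specific finite set of elements of $\jj\bigl(\tmn{n}{m}\bigr)$.  The construction will combine Proposition~\ref{prop:mcover_irreducibles}, which describes $\jj\bigl(\tmn{n}{m}\bigr)$ explicitly, with the existence of decompositions of any element of $\mtam{n}{1}$ into join-irreducibles of $\mtam{n}{1}$ (Theorem~\ref{thm:tamari_canonical} provides even canonical ones, though any such decomposition will suffice here).

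After handling the trivial case $\pb = \bigl(\hat{0}^{m}\bigr)$ as the empty join, I would write $\pb = \bigl(\hat{0}^{l_{0}}, p_{1}^{l_{1}}, p_{2}^{l_{2}}\bigr)$ with $p_{1} \lessdot p_{2}$ in $\mtam{n}{1}$, and fix decompositions $p_{1} = a_{1} \vee \cdots \vee a_{s}$ and $p_{2} = b_{1} \vee \cdots \vee b_{t}$ into join-irreducibles of $\mtam{n}{1}$.  For each $i \in \{1,\ldots,s\}$ and $j \in \{1,\ldots,t\}$, define
\[
\beta_{i} = \bigl(\hat{0}^{l_{0}}, a_{i}^{m - l_{0}}\bigr) \quad\text{and}\quad \gamma_{j} = \bigl(\hat{0}^{l_{0}+l_{1}}, b_{j}^{l_{2}}\bigr).
\]
Proposition~\ref{prop:mcover_irreducibles} then certifies that $\beta_{i}, \gamma_{j} \in \jj\bigl(\tmn{n}{m}\bigr)$; in the degenerate sub-cases $l_{1} = 0$ or $l_{2} = 0$ the corresponding family is simply omitted.

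The key step is to compute the componentwise join in $\mtam{n}{1}^{m}$ of the family $\{\beta_{i}\} \cup \{\gamma_{j}\}$.  Coordinate by coordinate: the first $l_{0}$ positions yield $\hat{0}$; positions $l_{0}+1,\ldots,l_{0}+l_{1}$ yield $\bigvee_{i} a_{i} = p_{1}$, since each $\gamma_{j}$ contributes $\hat{0}$ there; and positions $l_{0}+l_{1}+1,\ldots,m$ yield $(\bigvee_{i} a_{i}) \vee (\bigvee_{j} b_{j}) = p_{1} \vee p_{2} = p_{2}$, using that $p_{1} \lessdot p_{2}$ forces $p_{1} < p_{2}$.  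Hence the componentwise join equals $\pb$ and in particular lies back in $\tmn{n}{m}$.

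Finally, since $\tmn{n}{m}$ is a subposet of $\mtam{n}{1}^{m}$, any upper bound of $\{\beta_{i}\} \cup \{\gamma_{j}\}$ inside $\tmn{n}{m}$ is also an upper bound in the ambient product and therefore dominates the componentwise join $\pb$; thus $\pb$ is the least upper bound of $\{\beta_{i}\} \cup \{\gamma_{j}\}$ in $\tmn{n}{m}$, yielding the desired join representation.  The main obstacle I anticipate is exactly this last point: Remark~\ref{rem:no_sublattice} shows that componentwise joins of elements of $\tmn{n}{m}$ need not return to $\tmn{n}{m}$, so one has to argue carefully that the specific family chosen exploits the covering relation $p_{1} \lessdot p_{2}$ in order to collapse the join back into the admissible three-block shape of $\pb$.
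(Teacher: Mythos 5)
Your proposal is correct and follows essentially the same route as the paper: the same lifts $\bigl(\hat{0}^{l_{0}},a_{i}^{l_{1}+l_{2}}\bigr)$ and $\bigl(\hat{0}^{l_{0}+l_{1}},b_{j}^{l_{2}}\bigr)$ of join-irreducible decompositions of the two components, followed by the same componentwise join computation collapsing to $\pb$ via $p_{1}\vee p_{2}=p_{2}$. The only (harmless) differences are that the paper takes the canonical join representations from Theorem~\ref{thm:tamari_canonical} where you allow arbitrary ones, and that you spell out explicitly why the componentwise join is also the join in the subposet, a point the paper leaves implicit.
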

\begin{proof}
	Let $\bigl(\mathfrak{0}^{l_{0}},\qf^{l_{1}},(\qf')^{l_{2}}\bigr)\in\dmn{n}{m}$.  If $l_{1}=l_{2}=0$, then $\bigl(\mathfrak{0}^{m}\bigr)$ is the least element of $\tmn{n}{m}$ and can clearly be expressed as the empty join over any set.  Otherwise, suppose that $\{\rf_{1},\rf_{2},\ldots,\rf_{k}\}$ is the canonical join representation of $\qf$ and that $\{\rf'_{1},\rf'_{2},\ldots,\rf'_{k'}\}$ is the canonical join representation of $\qf'$.  (These exist thanks to Theorem~\ref{thm:tamari_canonical}.)  In particular, we have $\qf=\rf_{1}\vee\rf_{2}\vee\cdots\vee\rf_{k}$ and $\qf'=\rf'_{1}\vee\rf'_{2}\vee\cdots\vee\rf'_{k'}$ with $\rf_{i},\rf'_{j}\in\jj\bigl(\mtam{n}{1}\bigr)$ for $i\in\{1,2,\ldots,k\},j\in\{1,2,\ldots,k'\}$.  (It is straightforward to verify that the elements of a canonical join representation are join-irreducible.)
	
	Define $\ww_{i}=\bigl(\mathfrak{0}^{l_{0}},\rf_{i}^{l_{1}+l_{2}}\bigr)$ for $i\in\{1,2,\ldots,k\}$, and $\ww'_{j}=(\mathfrak{0}^{l_{0}+l_{1}},(\rf'_{j})^{l_{2}}\bigr)$ for $j\in\{1,2,\ldots,k'\}$.  Proposition~\ref{prop:mcover_irreducibles} implies that $\ww_{i},\ww'_{j}\in\jj\bigl(\tmn{n}{m}\bigr)$ for $i\in\{1,2,\ldots,k\}$ and $j\in\{1,2,\ldots,k'\}$.  Moreover,
	\begin{align*}
		\ww_{1}\vee\cdots\vee\ww_{k}\vee\ww'_{1}\vee\cdots\vee\ww'_{k'} 
		& = \left(\Bigl(\bigvee_{i=1}^{k}{\mathfrak{0}}\vee\bigvee_{j=1}^{k'}{\mathfrak{0}}\Bigr)^{l_{0}},\Bigl(\bigvee_{i=1}^{k}{\rf_{i}}\vee\bigvee_{j=1}^{k'}{\mathfrak{0}}\Bigr)^{l_{1}},\Bigl(\bigvee_{i=1}^{k}{\rf_{i}}\vee\bigvee_{j=1}^{k'}{\rf'_{j}}\Bigr)^{l_{2}}\right)\\
		& = \left(\mathfrak{0}^{l_{0}},\Bigl(\qf\vee\mathfrak{0}\Bigr)^{l_{1}},\Bigl(\qf\vee\qf'\Bigr)^{l_{2}}\right)\\
		& = \Bigl(\mathfrak{0}^{l_{0}},\qf^{l_{1}},(\qf')^{l_{2}}\Bigr),
	\end{align*}
	as desired.
\end{proof}

\begin{proposition}\label{prop:mcover_meet_dense}
	The set $\mm\bigl(\tmn{n}{m}\bigr)$ is meet-dense in $\tmn{n}{m}$. 
\end{proposition}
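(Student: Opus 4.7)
The plan is to dualize the proof of Proposition~\ref{prop:mcover_join_dense}, replacing the canonical join representations used there with canonical meet representations in $\mtam{n}{1}$, whose existence is guaranteed by Theorem~\ref{thm:tamari_canonical}. Fix $\pb=(\mathfrak{0}^{l_0},\qf^{l_1},(\qf')^{l_2})\in\dmn{n}{m}$. Dispose of the least element case $(\mathfrak{0}^m)$ separately by taking the componentwise meet of constant tuples $(\rf^m)$ as $\rf$ ranges over a meet representation of $\mathfrak{0}$ in $\mtam{n}{1}$; each such tuple lies in $\mm(\tmn{n}{m})$ by Proposition~\ref{prop:mcover_irreducibles}. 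The edge cases $l_1=0$ or $l_2=0$ are handled analogously by omitting the corresponding family below.

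In the general case, write $\qf=\rf_1\wedge\cdots\wedge\rf_k$ and $\qf'=\rf'_1\wedge\cdots\wedge\rf'_{k'}$ canonically in $\mtam{n}{1}$, and introduce three families of tuples in $\mm(\tmn{n}{m})$ (meet-irreducibility from Proposition~\ref{prop:mcover_irreducibles}): $\ww_i=(\rf_i^{l_0+l_1},(\rf_i^\star)^{l_2})$ for $1\leq i\leq k$, $\ww'_j=((\rf'_j)^m)$ for $1\leq j\leq k'$, and $\ww_\sigma=(\sigma^{l_0},(\sigma^\star)^{m-l_0})$ as $\sigma$ ranges over $T=\{\sigma\in\mm(\mtam{n}{1}):\sigma^\star\geq\qf'\}$. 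A direct check shows each of these is componentwise at least $\pb$. The remainder of the proof is the computation of their componentwise meet in $\mtam{n}{1}^m$ block by block: in the $\qf'$-block every contribution is bounded below by $\qf'$ while the $\rf'_j$'s already meet to $\qf'$; in the $\qf$-block every contribution is bounded below by $\qf$ while the $\rf_i$'s already meet to $\qf$; in the leading $\mathfrak{0}$-block the contributions reduce to $\qf\wedge\qf'\wedge\bigwedge T=\qf\wedge\bigwedge T$.

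The main obstacle is therefore the lattice-theoretic claim $\qf\wedge\bigwedge T=\mathfrak{0}$ in $\mtam{n}{1}$. This is the crux of the argument, and it uses the join-semidistributivity of the Tamari lattice together with the canonical bijection $\kappa:\jj(\mtam{n}{1})\to\mm(\mtam{n}{1})$ satisfying $\kappa(\jmath)^\star=\kappa(\jmath)\vee\jmath$. For every join-irreducible $\jmath\leq\qf$, the cover relation $\qf\lessdot\qf'$ guarantees that $\kappa(\jmath)^\star\geq\qf'$, so $\kappa(\jmath)\in T$, while $\jmath\not\leq\kappa(\jmath)$ by the defining property of $\kappa$; hence $\bigwedge T$ cannot dominate any join-irreducible below $\qf$, and since $\qf$ is the join of its join-irreducibles the expression $\qf\wedge\bigwedge T$ must vanish, completing the proof.
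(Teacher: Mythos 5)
Your construction has the same skeleton as the paper's: the families $\ww_{i}=\bigl(\rf_{i}^{l_{0}+l_{1}},(\rf_{i}^{\star})^{l_{2}}\bigr)$ and $\ww'_{j}=\bigl((\rf'_{j})^{m}\bigr)$ are exactly the ones used there, and the paper's third family --- the coatom tuples $\bigl(\cf_{s}^{l_{0}},\mathfrak{1}^{l_{1}+l_{2}}\bigr)$ --- is a subfamily of your $\{\ww_{\sigma}\}_{\sigma\in T}$, since every coatom $\sigma$ satisfies $\sigma^{\star}=\mathfrak{1}\geq_{\text{rot}}\qf'$. But two steps do not hold up. First, the inequality $\ww_{i}\geq\pb$ is not a ``direct check'': in the last $l_{2}$ coordinates it says $\qf'\leq_{\text{rot}}\rf_{i}^{\star}$, and this is the crux of the paper's proof, established there by a contradiction argument that uses the refinement property of the \emph{canonical} meet representation in an essential way. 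It is not a formal consequence of $\qf\leq_{\text{rot}}\rf_{i}$ and $\qf\lessdot_{\text{rot}}\qf'$: in $\mtam{3}{1}$ (the pentagon), with $\qf=\mathfrak{0}$, $\qf'=(0,1,1)$ and the non-canonical meet representation $\{(0,0,2),(0,1,1)\}$ of $\mathfrak{0}$, one has $(0,0,2)^{\star}=(0,0,1)\not\geq_{\text{rot}}(0,1,1)$. Your write-up omits this argument entirely.

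Second, your proof of the ``main obstacle'' $\qf\wedge\bigwedge T=\mathfrak{0}$ is wrong. The claim that $\kappa(\jmath)^{\star}\geq_{\text{rot}}\qf'$ for \emph{every} join-irreducible $\jmath\leq_{\text{rot}}\qf$ is false: in $\mtam{3}{1}$ take $\qf=(0,0,1)\lessdot_{\text{rot}}\qf'=(0,0,0)$ and $\jmath=\qf$, which is join-irreducible; then $\kappa(\jmath)=(0,0,2)$ and $\kappa(\jmath)^{\star}=(0,0,1)\not\geq_{\text{rot}}\qf'$, so $\kappa(\jmath)\notin T$. (The correct semidistributivity statement attaches one join-irreducible to the cover $\qf\lessdot_{\text{rot}}\qf'$, not all join-irreducibles below $\qf$.) The identity you need is nevertheless true, but for the simpler reason that $T$ contains all coatoms of $\mtam{n}{1}$ and their meet is already $\mathfrak{0}$ --- which is exactly the paper's route, and it spares you from importing semidistributivity and the $\kappa$-bijection, neither of which is established in the paper.
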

\begin{proof}
	Let $\bigl(\mathfrak{0}^{l_{0}},\qf^{l_{1}},(\qf')^{l_{2}}\bigr)\in\dmn{n}{m}$.  If $\qf'=\mathfrak{1}$, \ie the Dyck path given by the step sequence $(0,0,\ldots,0)$, and $l_{0}=l_{1}=0$, then $\bigl(\mathfrak{1}^{m}\bigr)$ is the greatest element of $\tmn{n}{m}$ and can clearly be expressed as the empty meet over any set.  Otherwise, suppose that $\{\rf_{1},\rf_{2},\ldots,\rf_{k}\}$ is the canonical meet representation of $\qf$ and that $\{\rf'_{1},\rf'_{2},\ldots,\rf'_{k'}\}$ is the canonical meet representation of $\qf'$.  (These exist thanks to Theorem~\ref{thm:tamari_canonical}.)  In particular, we have $\qf=\rf_{1}\wedge\rf_{2}\wedge\cdots\wedge\rf_{k}$ and $\qf'=\rf'_{1}\wedge\rf'_{2}\wedge\cdots\wedge\rf'_{k'}$ with $\rf_{i},\rf'_{j}\in\mm\bigl(\mtam{n}{1}\bigr)$ for $i\in\{1,2,\ldots,k\},j\in\{1,2,\ldots,k'\}$.  (It is straightforward to verify that the elements of a canonical meet representation are meet-irreducible.)
	
	Recall that for $i\in\{1,2,\ldots,k\}$, the meet-irreducible element $\rf_{i}$ has a unique upper cover, denoted by $\rf_{i}^{\star}$.  Define $\ww_{i}=\bigl(\rf_{i}^{l_{0}+l_{1}},(\rf_{i}^{\star})^{l_{2}}\bigr)$ for $i\in\{1,2,\ldots,k\}$, and $\ww'_{j}=\bigl((\rf')^{m}\bigr)$ for $j\in\{1,2,\ldots,k'\}$.  Proposition~\ref{prop:mcover_irreducibles} implies that $\ww_{i},\ww'_{j}\in\mm\bigl(\tmn{n}{m}\bigr)$ for $i\in\{1,2,\ldots,k\}$ and $j\in\{1,2,\ldots,k'\}$.  Moreover, let $\qf^{\star}=\rf_{1}^{\star}\wedge\rf_{2}^{\star}\wedge\cdots\wedge\rf_{k}^{\star}$.  Since $\{\rf_{1},\rf_{2},\ldots,\rf_{k}\}$ is the canonical meet representation of $\qf$, we conclude that $\qf<_{\text{rot}}\qf^{\star}$.  Suppose that $\qf'\not\leq_{\text{rot}}\qf^{\star}$.  Since $\qf<_{\text{rot}}\qf^{\star}$, this can only be the case if $\qf'$ and $\qf^{\star}$ are incomparable, and it follows that $\qf=\qf'\wedge\qf^{\star}$.  We can thus write
	\begin{displaymath}
		\qf = \rf'_{1}\wedge\rf'_{2}\wedge\cdots\wedge\rf'_{k'}\wedge\rf_{1}^{\star}\wedge\rf_{2}^{\star}\wedge\cdots\wedge\rf_{k}^{\star}.
	\end{displaymath}
	Since $\{\rf_{1},\rf_{2},\ldots,\rf_{k}\}$ is the canonical meet representation of $\qf$ (which is easily checked to consist of pairwise incomparable elements), and since $\rf_{i}\lessdot_{\text{rot}}\rf_{i}^{\star}$ we conclude that for any $\rf_{i}$ there must be some $\rf'_{j}$ with $\rf'_{j}\leq_{\text{rot}}\rf_{i}$.  Hence $\qf'\leq_{\text{rot}}\rf_{i}$ for all $i\in\{1,2,\ldots,k\}$, and we conclude
	\begin{displaymath}
		\qf'\leq_{\text{rot}}\rf_{1}\wedge\rf_{2}\wedge\cdots\wedge\rf_{k}=\qf\lessdot_{\text{rot}}\qf',
	\end{displaymath}
	which is a contradiction.  Hence we have $\qf'\leq_{\text{rot}}\qf^{\star}$.  
	
	For $s\in\{1,2,\ldots,n-1\}$, let $\cf_{s}\in\mdyck{n}{1}$ be the Dyck path with step sequence $\uu_{\cf_{s}}=(0,\ldots,0,1,\ldots,1)$, where the first $1$ appears in the $s$-th position.  It is immediate that $\cf_{s}\lessdot_{\text{rot}}\mathfrak{1}$, and consequently $\cf_{s}\in\mm\bigl(\mtam{n}{1}\bigr)$ for all $s\in\{1,2,\ldots,n-1\}$.  Define $\cb_{s}=\bigl(\cf_{s}^{l_{0}},\mathfrak{1}^{l_{1}+l_{2}}\bigr)$ for $s\in\{1,2,\ldots,n-1\}$.  Proposition~\ref{prop:mcover_irreducibles} implies that $\cb_{s}\in\mm\bigl(\tmn{n}{m}\bigr)$ for all $s\in\{1,2,\ldots,n-1\}$.  We have
	\begin{align*}
		\ww_{1}\wedge\cdots\wedge\ww_{k}\wedge\ww'_{1}\wedge\cdots\wedge\ww'_{k'}\wedge\cb_{1}\wedge\cdots\wedge\cb_{n-1}
			& = \left(\Bigl(\bigwedge_{i=1}^{k}{\rf_{i}}\wedge\bigwedge_{j=1}^{k'}{\rf'_{j}}\wedge\bigwedge_{s=1}^{n-1}{\cf_{s}}\Bigr)^{l_{0}},\right.\\
			& \kern1cm \Bigl(\bigwedge_{i=1}^{k}{\rf_{i}}\wedge\bigwedge_{j=1}^{k'}{\rf'_{j}}\wedge\bigwedge_{s=1}^{n-1}{\mathfrak{1}}\Bigr)^{l_{1}},\\
			& \kern1cm \left.\Bigl(\bigwedge_{i=1}^{k}{\rf_{i}^{\star}}\wedge\bigwedge_{j=1}^{k'}{\rf'_{j}}\wedge\bigwedge_{s=1}^{n-1}{\mathfrak{1}}\Bigr)^{l_{2}}\right)\\
			& = \left(\Bigl(\qf\wedge\qf'\wedge\mathfrak{0}\Bigr)^{l_{0}},\Bigl(\qf\wedge\qf'\wedge\mathfrak{1}\Bigr)^{l_{1}},\Bigl(\qf^{\star}\wedge\qf'\wedge\mathfrak{1}\Bigr)^{l_{2}}\right)\\
			& = \left(\mathfrak{0}^{l_{0}},\qf^{l_{1}},(\qf')^{l_{2}}\right),
	\end{align*}
	as desired.
\end{proof}

Let us now formally define the Dedekind-MacNeille completion of a poset $\PP=(P,\leq)$.  For $A\subseteq P$, define 
\begin{displaymath}
	A^{u} = \{x\in P\mid a\leq x\;\text{for all}\;a\in A\},\quad\text{and}\quad
	A^{l} = \{x\in P\mid x\leq a\;\text{for all}\;a\in A\}.
\end{displaymath}
The poset $\DM(\PP)=\bigl(\DM(P),\subseteq\bigr)$, where $\DM(P)=\{A\subseteq P\mid A^{ul}=A\}$, is the \alert{Dedekind-MacNeille completion} of $\PP$, \ie the smallest lattice that contains $\PP$ as a subposet.  Recall the following results.

\begin{theorem}[\cite{banaschewski56huellensysteme}*{Korollar~3}]\label{thm:completion_irreducibles}
	For any finite lattice $\LL$ we have
	\begin{displaymath}
		\LL\cong\DM\Bigl(\JJ(\LL)\cup\MM(\LL)\Bigr).
	\end{displaymath}
\end{theorem}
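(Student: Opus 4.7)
The plan is to invoke the well-known universal property of the Dedekind-MacNeille completion: for a poset $\PP$ embedded as a subposet of a finite lattice $\LL'$, one has $\DM(\PP)\cong\LL'$ if and only if $\PP$ is both join-dense and meet-dense in $\LL'$. Taking $\PP=\JJ(\LL)\cup\MM(\LL)$ as a subposet of $\LL$, the main content of the proof is to verify these two density properties and then unwind the definition of the completion.

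For join-density, I would prove the stronger statement that already $\JJ(\LL)$ is join-dense in $\LL$, namely that every $x\in\LL$ equals the join of those join-irreducibles which lie below it. This follows by induction on the cardinality of the order-ideal below $x$: the case $x=\hat{0}$ is handled by the empty join, the case of $x$ join-irreducible is trivial, and otherwise $x$ has two distinct lower covers whose join equals $x$, each of which is by induction the join of the join-irreducibles below it and hence below $x$. The meet-density of $\MM(\LL)$ in $\LL$ follows by a dual argument.

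To turn the two densities into the desired isomorphism, I would define $\phi\colon\LL\to\DM(\PP)$ by $\phi(x)=\{p\in\PP:p\leq x\}$ and verify that $\phi(x)^{ul}=\phi(x)$, where the closure operations are taken in $\PP$. The inclusion $\phi(x)\subseteq\phi(x)^{ul}$ is automatic; for the reverse, meet-density supplies the set $M_{x}=\{r\in\MM(\LL):x\leq r\}\subseteq\PP$, which upper-bounds $\phi(x)$ and satisfies $\bigwedge M_{x}=x$ in $\LL$, so any $q\in\phi(x)^{ul}$ must satisfy $q\leq r$ for every $r\in M_{x}$ and hence $q\leq x$, giving $q\in\phi(x)$. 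The inverse map $\psi\colon\DM(\PP)\to\LL$, defined by $\psi(A)=\bigvee A$ with the join taken in $\LL$, is well-defined, and a short check using join-density shows that $\psi\phi=\mathrm{id}_{\LL}$ and $\phi\psi=\mathrm{id}_{\DM(\PP)}$; both maps are clearly order-preserving.

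The main technical obstacle is the closure identity $\phi(x)^{ul}=\phi(x)$, where the two density statements must be combined: meet-density controls the upper bounds of $\phi(x)$ inside $\PP$, while join-density is used implicitly to guarantee that distinct elements of $\LL$ produce distinct closed sets. Once this compatibility is established, the identification $\LL\cong\DM\bigl(\JJ(\LL)\cup\MM(\LL)\bigr)$ reduces to a formal unpacking of the Galois-closure description of the Dedekind-MacNeille completion.
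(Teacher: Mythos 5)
Your argument is correct, but note that the paper does not prove this statement at all: it is quoted verbatim from Banaschewski (Korollar~3), and the paper's only related machinery is the separately quoted characterization of the Dedekind--MacNeille completion (Theorem~\ref{thm:completion_characterization}, from Davey--Priestley), which states precisely the universal property you invoke. So your proposal in effect supplies the missing proof by (i) showing that $\jj(\LL)$ is join-dense and $\mm(\LL)$ is meet-dense in a finite lattice --- the standard induction on order ideals, using that a non-minimal element with at least two lower covers is the join of any two of them --- and (ii) re-deriving the universal property itself via the explicit maps $\phi(x)=\{p\in\PP\mid p\leq x\}$ and $\psi(A)=\bigvee A$. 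All the individual steps check out: $\bigwedge M_{x}=x$ follows from meet-density since any meet representation of $x$ inside $\mm(\LL)$ is contained in $M_{x}$; the closure identity $\phi(x)^{ul}=\phi(x)$ then follows as you say; $\psi\circ\phi=\mathrm{id}_{\LL}$ uses join-density, and $\phi\circ\psi=\mathrm{id}$ is formal from $A=A^{ul}$. Had you instead simply cited Theorem~\ref{thm:completion_characterization}(ii) after establishing the two density statements, you would have matched the shortest route available within the paper's own toolkit; your version is longer but self-contained and correct.
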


\begin{theorem}[\cite{davey02introduction}*{Theorem~7.42}]\label{thm:completion_characterization}
	Let $\PP=(P,\leq)$ be a poset, and let $\varphi:P\to\DM(P)$ be the map given by $\varphi(p)=\{x\in P\mid x\leq p\}$.  
	\begin{enumerate}[(i)]
		\item $\varphi(P)$ is both join-dense and meet-dense in $\DM(P)$. 
		\item Let $\LL$ be a lattice such that $P$ is join-dense and meet-dense in $\LL$.  Then $\LL\cong\DM(P)$ via an order-isomorphism which agrees with $\varphi$ on $P$. 
	\end{enumerate}
\end{theorem}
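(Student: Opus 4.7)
The plan is to work directly from the definition, using the fact that the closure operator $A\mapsto A^{ul}$ turns $\DM(P)$ into a complete lattice in which meets are intersections and joins are given by $\bigvee_i A_i = \bigl(\bigcup_i A_i\bigr)^{ul}$.  As a preliminary, I would check that each principal ideal $\varphi(p)=\{p\}^l$ belongs to $\DM(P)$: it is straightforward that $(\{p\}^l)^u = \{p\}^u$ and $(\{p\}^u)^l = \{p\}^l$, so $\varphi(p)^{ul}=\varphi(p)$.

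For part (i), I would fix $A\in\DM(P)$ and handle meet-density first, since it is essentially a direct restatement of the closure condition:
\[
  \bigcap_{p\in A^u}\varphi(p) \;=\; (A^u)^l \;=\; A^{ul} \;=\; A,
\]
exhibiting $A$ as a meet of elements of $\varphi(P)$.  For join-density I would show that $\bigcup_{p\in A}\varphi(p)=A$: the inclusion $A\subseteq\bigcup_{p\in A}\varphi(p)$ is trivial, while conversely if $x\leq p$ for some $p\in A$ then $x$ lies below every element of $A^u$, so $x\in A^{ul}=A$.  Therefore $\bigvee\{\varphi(p):p\in A\}=\bigl(\bigcup_{p\in A}\varphi(p)\bigr)^{ul}=A$, as required.

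For part (ii), I would define the candidate isomorphism $\psi:\LL\to\DM(P)$ by $\psi(a)=\{p\in P:p\leq a\}$, which plainly restricts to $\varphi$ on $P\subseteq\LL$.  The main technical step, and where both density hypotheses are used crucially, is verifying that $\psi(a)$ is actually a closed cut.  I would first show $\psi(a)^u=\{y\in P:a\leq y\}$: the nontrivial inclusion invokes join-density to write $a=\bigvee_{\LL}\psi(a)$, so any $y\in P$ that lies above every $p\in\psi(a)$ necessarily lies above $a$.  A symmetric argument using meet-density gives $\psi(a)^{ul}=\{x\in P:x\leq a\}=\psi(a)$.  This double closure identity is the step I expect to be the main obstacle, since both inclusions must be verified and each one converts a $P$-level condition into an $\LL$-level inequality via the appropriate density assumption.

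What remains is to see that $\psi$ is a bijection.  Order preservation in both directions is immediate: $a\leq b$ obviously implies $\psi(a)\subseteq\psi(b)$, and conversely $\psi(a)\subseteq\psi(b)$ combined with $a=\bigvee_{\LL}\psi(a)$ yields $a\leq b$.  For surjectivity I would take $A\in\DM(P)$ and set $a=\bigvee_{\LL}A$; then $A\subseteq\psi(a)$ is trivial, and for the reverse inclusion I observe that every $y\in A^u$ is an upper bound of $A$ in $\LL$ and therefore satisfies $a\leq y$, so any $p\in\psi(a)$ satisfies $p\leq a\leq y$ for all $y\in A^u$, placing $p$ in $A^{ul}=A$.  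This finishes the isomorphism and confirms that $\psi$ agrees with $\varphi$ on $P$.
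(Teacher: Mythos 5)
Your proof is correct. Note that the paper itself offers no proof of this statement---it is quoted directly as Theorem~7.42 of Davey and Priestley---so there is nothing to compare against; your argument is the standard textbook one (principal ideals are closed cuts, $A=\bigcap_{p\in A^{u}}\varphi(p)$ yields meet-density, $A=\bigl(\bigcup_{p\in A}\varphi(p)\bigr)^{ul}$ yields join-density, and for (ii) the map $a\mapsto\{p\in P\mid p\leq a\}$ is shown to take values in $\DM(P)$ using both density hypotheses and then to be an order-isomorphism). The only caveat is that in (ii) you form $\bigvee_{\LL}A$ for an arbitrary $A\subseteq P$, which in general requires $\LL$ to be complete (or the relevant joins to exist); this is harmless in the present context, where all posets are finite.
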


Now we have gathered all the ingredients to prove Theorem~\ref{thm:mtamari}.

\begin{proof}[Proof of Theorem~\ref{thm:mtamari}]
	Theorem~\ref{thm:completion_irreducibles}, and Propositions~\ref{prop:meet_isomorphic}, and \ref{prop:join_isomorphic} imply
	\begin{displaymath}
		\mtam{n}{m}\cong\DM\Bigl(\JJ\bigl(\mtam{n}{m}\bigr)\cup\MM\bigl(\mtam{n}{m}\bigr)\Bigr)\cong\DM\Bigl(\JJ\bigl(\tmn{n}{m}\bigr)\cup\MM\bigl(\tmn{n}{m}\bigr)\Bigr).
	\end{displaymath}
	Propositions~\ref{prop:mcover_join_dense} and \ref{prop:mcover_meet_dense} imply that $\jj\bigl(\tmn{n}{m}\bigr)\cup\mm\bigl(\tmn{n}{m}\bigr)$ is join-dense and meet-dense in $\tmn{n}{m}$, and by Theorem~\ref{thm:completion_characterization}(i) also in $\DM\Bigl(\tmn{n}{m}\Bigr)$.  Hence Theorem~\ref{thm:completion_characterization}(ii) implies 
	\begin{displaymath}
		\DM\Bigl(\JJ\bigl(\tmn{n}{m}\bigr)\cup\MM\bigl(\tmn{n}{m}\bigr)\Bigr)\cong\DM\Bigl(\tmn{n}{m}\Bigr).
	\end{displaymath}
\end{proof}

\begin{example}
  \label{ex:23_tamari_completion}
	\begin{figure}
		\centering
		\begin{tikzpicture}\tiny
			\def\x{.8};
			\def\y{.8};
			\draw(4.5*\x,1*\y) node(f1){\dyckFour{0/1/2/3}{white}{.4}};
			\draw(2.5*\x,2*\y) node(f2){\dyckFour{0/0/2/3}{white}{.4}};
			\draw(4.5*\x,2*\y) node(f3){\dyckFour{0/1/1/3}{white}{.4}};
			\draw(6.5*\x,2*\y) node(f4){\dyckFour{0/1/2/2}{white}{.4}};
			\draw(1.5*\x,3*\y) node(f5){\dyckFour{0/0/1/3}{white}{.4}};
			\draw(4.5*\x,3*\y) node(f6){\dyckFour{0/0/2/2}{white}{.4}};
			\draw(5.75*\x,3*\y) node(f7){\dyckFour{0/1/1/2}{white}{.4}};
			\draw(1*\x,4*\y) node(f8){\dyckFour{0/0/0/3}{white}{.4}};
			\draw(2*\x,4*\y) node(f9){\dyckFour{0/0/1/2}{white}{.4}};
			\draw(7.75*\x,4*\y) node(f10){\dyckFour{0/1/1/1}{white}{.4}};
			\draw(1.5*\x,5*\y) node(f11){\dyckFour{0/0/0/2}{white}{.4}};
			\draw(3*\x,5*\y) node(f12){\dyckFour{0/0/1/1}{white}{.4}};
			\draw(2.75*\x,6*\y) node(f13){\dyckFour{0/0/0/1}{white}{.4}};
			\draw(4.5*\x,7*\y) node(f14){\dyckFour{0/0/0/0}{white}{.4}};
			\draw(f1) -- (f2);
			\draw(f1) -- (f3);
			\draw(f1) -- (f4);
			\draw(f2) -- (f5);
			\draw(f2) -- (f6);
			\draw(f3) -- (f7);
			\draw(f3) -- (f8);
			\draw(f4) -- (f6);
			\draw(f4) -- (f10);
			\draw(f5) -- (f8);
			\draw(f5) -- (f9);
			\draw(f6) -- (f12);
			\draw(f7) -- (f10);
			\draw(f7) -- (f13);
			\draw(f8) -- (f11);
			\draw(f9) -- (f11);
			\draw(f9) -- (f12);
			\draw(f10) -- (f14);
			\draw(f11) -- (f13);
			\draw(f12) -- (f14);
			\draw(f13) -- (f14);
		\end{tikzpicture}
		\caption{The Tamari lattice $\mtam{4}{1}$.}
		\label{fig:tamari4}
	\end{figure}
	
	Figure~\ref{fig:tamari4} shows the Tamari lattice $\mtam{4}{1}$.  The elements of $\dmn{4}{2}$ are precisely: 
	\begin{align*}
		& \bigl(\raisebox{-.05cm}{\dyckFour{0/1/2/3}{white}{.4},\dyckFour{0/1/2/3}{white}{.4}}\bigr), 
		  && \bigl(\raisebox{-.05cm}{\dyckFour{0/1/2/3}{white}{.4},\dyckFour{0/0/2/3}{white}{.4}}\bigr), 
		  && \bigl(\raisebox{-.05cm}{\dyckFour{0/1/2/3}{white}{.4},\dyckFour{0/1/1/3}{white}{.4}}\bigr),
		  && \bigl(\raisebox{-.05cm}{\dyckFour{0/1/2/3}{white}{.4},\dyckFour{0/1/2/2}{white}{.4}}\bigr),
		  && \bigl(\raisebox{-.05cm}{\dyckFour{0/1/2/3}{white}{.4},\dyckFour{0/0/1/3}{white}{.4}}\bigr),\\ 
		& \bigl(\raisebox{-.05cm}{\dyckFour{0/1/2/3}{white}{.4},\dyckFour{0/0/2/2}{white}{.4}}\bigr),
		  && \bigl(\raisebox{-.05cm}{\dyckFour{0/1/2/3}{white}{.4},\dyckFour{0/1/1/2}{white}{.4}}\bigr), 
		  && \bigl(\raisebox{-.05cm}{\dyckFour{0/1/2/3}{white}{.4},\dyckFour{0/0/0/3}{white}{.4}}\bigr),
		  && \bigl(\raisebox{-.05cm}{\dyckFour{0/1/2/3}{white}{.4},\dyckFour{0/0/1/2}{white}{.4}}\bigr),
		  && \bigl(\raisebox{-.05cm}{\dyckFour{0/1/2/3}{white}{.4},\dyckFour{0/1/1/1}{white}{.4}}\bigr), \\
		& \bigl(\raisebox{-.05cm}{\dyckFour{0/1/2/3}{white}{.4},\dyckFour{0/0/0/2}{white}{.4}}\bigr),
		  && \bigl(\raisebox{-.05cm}{\dyckFour{0/1/2/3}{white}{.4},\dyckFour{0/0/1/1}{white}{.4}}\bigr),
		  && \bigl(\raisebox{-.05cm}{\dyckFour{0/1/2/3}{white}{.4},\dyckFour{0/0/0/1}{white}{.4}}\bigr),
		  && \bigl(\raisebox{-.05cm}{\dyckFour{0/1/2/3}{white}{.4},\dyckFour{0/0/0/0}{white}{.4}}\bigr),
		  && \bigl(\raisebox{-.05cm}{\dyckFour{0/0/2/3}{white}{.4},\dyckFour{0/0/2/3}{white}{.4}}\bigr),\\
		& \bigl(\raisebox{-.05cm}{\dyckFour{0/0/2/3}{white}{.4},\dyckFour{0/0/1/3}{white}{.4}}\bigr),
		  && \bigl(\raisebox{-.05cm}{\dyckFour{0/0/2/3}{white}{.4},\dyckFour{0/0/2/2}{white}{.4}}\bigr), 
		  && \bigl(\raisebox{-.05cm}{\dyckFour{0/1/1/3}{white}{.4},\dyckFour{0/1/1/3}{white}{.4}}\bigr),
		  && \bigl(\raisebox{-.05cm}{\dyckFour{0/1/1/3}{white}{.4},\dyckFour{0/1/1/2}{white}{.4}}\bigr), 
		  && \bigl(\raisebox{-.05cm}{\dyckFour{0/1/1/3}{white}{.4},\dyckFour{0/0/0/3}{white}{.4}}\bigr),\\
		& \bigl(\raisebox{-.05cm}{\dyckFour{0/1/2/2}{white}{.4},\dyckFour{0/1/2/2}{white}{.4}}\bigr),
		  && \bigl(\raisebox{-.05cm}{\dyckFour{0/1/2/2}{white}{.4},\dyckFour{0/0/2/2}{white}{.4}}\bigr), 
		  && \bigl(\raisebox{-.05cm}{\dyckFour{0/1/2/2}{white}{.4},\dyckFour{0/1/1/1}{white}{.4}}\bigr), 
		  && \bigl(\raisebox{-.05cm}{\dyckFour{0/0/1/3}{white}{.4},\dyckFour{0/0/1/3}{white}{.4}}\bigr),
		  && \bigl(\raisebox{-.05cm}{\dyckFour{0/0/1/3}{white}{.4},\dyckFour{0/0/0/3}{white}{.4}}\bigr), \\
		& \bigl(\raisebox{-.05cm}{\dyckFour{0/0/1/3}{white}{.4},\dyckFour{0/0/1/2}{white}{.4}}\bigr), 
		  && \bigl(\raisebox{-.05cm}{\dyckFour{0/0/2/2}{white}{.4},\dyckFour{0/0/2/2}{white}{.4}}\bigr),
		  && \bigl(\raisebox{-.05cm}{\dyckFour{0/0/2/2}{white}{.4},\dyckFour{0/0/1/1}{white}{.4}}\bigr),
		  && \bigl(\raisebox{-.05cm}{\dyckFour{0/1/1/2}{white}{.4},\dyckFour{0/1/1/2}{white}{.4}}\bigr), 
		  && \bigl(\raisebox{-.05cm}{\dyckFour{0/1/1/2}{white}{.4},\dyckFour{0/1/1/1}{white}{.4}}\bigr),\\
		& \bigl(\raisebox{-.05cm}{\dyckFour{0/1/1/2}{white}{.4},\dyckFour{0/0/0/1}{white}{.4}}\bigr), 
		  && \bigl(\raisebox{-.05cm}{\dyckFour{0/0/0/3}{white}{.4},\dyckFour{0/0/0/3}{white}{.4}}\bigr),
		  && \bigl(\raisebox{-.05cm}{\dyckFour{0/0/0/3}{white}{.4},\dyckFour{0/0/0/2}{white}{.4}}\bigr),
		  && \bigl(\raisebox{-.05cm}{\dyckFour{0/0/1/2}{white}{.4},\dyckFour{0/0/1/2}{white}{.4}}\bigr), 
		  && \bigl(\raisebox{-.05cm}{\dyckFour{0/0/1/2}{white}{.4},\dyckFour{0/0/0/2}{white}{.4}}\bigr), \\
		& \bigl(\raisebox{-.05cm}{\dyckFour{0/0/1/2}{white}{.4},\dyckFour{0/0/1/1}{white}{.4}}\bigr),
		  && \bigl(\raisebox{-.05cm}{\dyckFour{0/1/1/1}{white}{.4},\dyckFour{0/1/1/1}{white}{.4}}\bigr), 
		  && \bigl(\raisebox{-.05cm}{\dyckFour{0/1/1/1}{white}{.4},\dyckFour{0/0/0/0}{white}{.4}}\bigr), 
		  && \bigl(\raisebox{-.05cm}{\dyckFour{0/0/0/2}{white}{.4},\dyckFour{0/0/0/2}{white}{.4}}\bigr),
		  && \bigl(\raisebox{-.05cm}{\dyckFour{0/0/0/2}{white}{.4},\dyckFour{0/0/0/1}{white}{.4}}\bigr),\\ 
		& \bigl(\raisebox{-.05cm}{\dyckFour{0/0/1/1}{white}{.4},\dyckFour{0/0/1/1}{white}{.4}}\bigr), 
		  && \bigl(\raisebox{-.05cm}{\dyckFour{0/0/1/1}{white}{.4},\dyckFour{0/0/0/0}{white}{.4}}\bigr),
		  && \bigl(\raisebox{-.05cm}{\dyckFour{0/0/0/1}{white}{.4},\dyckFour{0/0/0/1}{white}{.4}}\bigr), 
		  && \bigl(\raisebox{-.05cm}{\dyckFour{0/0/0/1}{white}{.4},\dyckFour{0/0/0/0}{white}{.4}}\bigr),
		  && \bigl(\raisebox{-.05cm}{\dyckFour{0/0/0/0}{white}{.4},\dyckFour{0/0/0/0}{white}{.4}}\bigr).
	\end{align*}
	Now we notice that for instance the pairs $\bigl(\raisebox{-.05cm}{\dyckFour{0/0/2/2}{white}{.4},\dyckFour{0/0/0/2}{white}{.4}}\bigr)$ and $\bigl(\raisebox{-.05cm}{\dyckFour{0/0/1/3}{white}{.4},\dyckFour{0/0/1/2}{white}{.4}}\bigr)$ do not have a meet in $\tmn{4}{2}$, since 
	\begin{displaymath}
		\bigl(\raisebox{-.05cm}{\dyckFour{0/1/2/3}{white}{.4},\dyckFour{0/0/1/2}{white}{.4}}\bigr)\leq_{\text{rot}}
		  \bigl(\raisebox{-.05cm}{\dyckFour{0/0/2/2}{white}{.4},\dyckFour{0/0/0/2}{white}{.4}}\bigr),
		  \bigl(\raisebox{-.05cm}{\dyckFour{0/0/1/3}{white}{.4},\dyckFour{0/0/1/2}{white}{.4}}\bigr),
		  \quad\text{and}\quad
		\bigl(\raisebox{-.05cm}{\dyckFour{0/0/2/3}{white}{.4},\dyckFour{0/0/1/3}{white}{.4}}\bigr)\leq_{\text{rot}}
		  \bigl(\raisebox{-.05cm}{\dyckFour{0/0/2/2}{white}{.4},\dyckFour{0/0/0/2}{white}{.4}}\bigr),
		  \bigl(\raisebox{-.05cm}{\dyckFour{0/0/1/3}{white}{.4},\dyckFour{0/0/1/2}{white}{.4}}\bigr),
	\end{displaymath}
	but $\bigl(\raisebox{-.05cm}{\dyckFour{0/1/2/3}{white}{.4},\dyckFour{0/0/1/2}{white}{.4}}\bigr)$ and $\bigl(\raisebox{-.05cm}{\dyckFour{0/0/2/3}{white}{.4},\dyckFour{0/0/1/3}{white}{.4}}\bigr)$ are mutually incomparable. 
	
	The componentwise meet of $\bigl(\raisebox{-.05cm}{\dyckFour{0/0/2/2}{white}{.4},\dyckFour{0/0/0/2}{white}{.4}}\bigr)$ and $\bigl(\raisebox{-.05cm}{\dyckFour{0/0/1/3}{white}{.4},\dyckFour{0/0/1/2}{white}{.4}}\bigr)$ is $\bigl(\raisebox{-.05cm}{\dyckFour{0/0/2/3}{white}{.4},\dyckFour{0/0/1/2}{white}{.4}}\bigr)$.  If we now successively add all the missing meets, then we can check that we have to include the following ten elements:
	\begin{align*}
		& \bigl(\raisebox{-.05cm}{\dyckFour{0/0/1/3}{white}{.4},\dyckFour{0/0/0/2}{white}{.4}}\bigr), 
		  && \bigl(\raisebox{-.05cm}{\dyckFour{0/1/1/3}{white}{.4},\dyckFour{0/0/0/2}{white}{.4}}\bigr), 
		  && \bigl(\raisebox{-.05cm}{\dyckFour{0/0/2/3}{white}{.4},\dyckFour{0/0/1/2}{white}{.4}}\bigr),
		  && \bigl(\raisebox{-.05cm}{\dyckFour{0/0/2/3}{white}{.4},\dyckFour{0/0/1/1}{white}{.4}}\bigr),
		  && \bigl(\raisebox{-.05cm}{\dyckFour{0/0/1/2}{white}{.4},\dyckFour{0/0/0/1}{white}{.4}}\bigr),\\
		& \bigl(\raisebox{-.05cm}{\dyckFour{0/1/1/3}{white}{.4},\dyckFour{0/0/0/1}{white}{.4}}\bigr),
		  && \bigl(\raisebox{-.05cm}{\dyckFour{0/1/2/2}{white}{.4},\dyckFour{0/0/1/1}{white}{.4}}\bigr), 
		  && \bigl(\raisebox{-.05cm}{\dyckFour{0/1/2/2}{white}{.4},\dyckFour{0/0/0/0}{white}{.4}}\bigr),
		  && \bigl(\raisebox{-.05cm}{\dyckFour{0/0/1/2}{white}{.4},\dyckFour{0/0/0/0}{white}{.4}}\bigr), 
		  && \bigl(\raisebox{-.05cm}{\dyckFour{0/1/1/2}{white}{.4},\dyckFour{0/0/0/0}{white}{.4}}\bigr),
	\end{align*}
	and these $55$ elements form a lattice which is indeed isomorphic to $\mtam{4}{2}$, see Figure~\ref{fig:dm_2tam4}.
	
	\begin{figure}
		\centering
		\begin{tikzpicture}\tiny
			\def\x{.77};
			\def\y{1.2};
			\def\d{-.1cm}
			\draw(7.67*\x,1.33*\y) node(n0){\dyckFour{0/1/2/3}{white}{.4}\hspace*{\d}\dyckFour{0/1/2/3}{white}{.4}};
			\draw(7.67*\x,2.33*\y) node(n1){\dyckFour{0/1/2/3}{white}{.4}\hspace*{\d}\dyckFour{0/0/2/3}{white}{.4}};
			\draw(3.67*\x,3.33*\y) node(n2){\dyckFour{0/1/2/3}{white}{.4}\hspace*{\d}\dyckFour{0/1/1/3}{white}{.4}};
			\draw(5.67*\x,3.33*\y) node(n3){\dyckFour{0/1/2/3}{white}{.4}\hspace*{\d}\dyckFour{0/0/1/3}{white}{.4}};
			\draw(7.67*\x,3.33*\y) node(n4){\dyckFour{0/0/2/3}{white}{.4}\hspace*{\d}\dyckFour{0/0/2/3}{white}{.4}};
			\draw(2*\x,4*\y) node(n5){\dyckFour{0/1/1/3}{white}{.4}\hspace*{\d}\dyckFour{0/1/1/3}{white}{.4}};
			\draw(5.67*\x,4.33*\y) node(n6){\dyckFour{0/0/2/3}{white}{.4}\hspace*{-.2cm}\dyckFour{0/0/1/3}{white}{.4}};
			\draw(7.67*\x,4.33*\y) node(n7){\dyckFour{0/1/2/3}{white}{.4}\hspace*{\d}\dyckFour{0/0/1/2}{white}{.4}};
			\draw(3*\x,5*\y) node(n8){\dyckFour{0/1/2/3}{white}{.4}\hspace*{\d}\dyckFour{0/0/0/3}{white}{.4}};
			\draw(5*\x,5*\y) node(n9){\dyckFour{0/0/1/3}{white}{.4}\hspace*{\d}\dyckFour{0/0/1/3}{white}{.4}};
			\draw(7.67*\x,5.33*\y) node(n10){\dyckFour{0/0/2/3}{orange!80!gray}{.4}\hspace*{\d}\dyckFour{0/0/1/2}{orange!80!gray}{.4}};
			\draw(1*\x,6*\y) node(n11){\dyckFour{0/1/1/3}{white}{.4}\hspace*{\d}\dyckFour{0/0/0/3}{white}{.4}};
			\draw(3*\x,6*\y) node(n12){\dyckFour{0/0/1/3}{white}{.4}\hspace*{\d}\dyckFour{0/0/0/3}{white}{.4}};
			\draw(5*\x,6*\y) node(n13){\dyckFour{0/1/2/3}{white}{.4}\hspace*{\d}\dyckFour{0/0/0/2}{white}{.4}};
			\draw(7*\x,6*\y) node(n14){\dyckFour{0/0/1/3}{white}{.4}\hspace*{\d}\dyckFour{0/0/1/2}{white}{.4}};
			\draw(8.67*\x,6.33*\y) node(n15){\dyckFour{0/1/2/3}{white}{.4}\hspace*{\d}\dyckFour{0/1/1/2}{white}{.4}};
			\draw(16.67*\x,6*\y) node(n16){\dyckFour{0/1/2/3}{white}{.4}\hspace*{\d}\dyckFour{0/1/2/2}{white}{.4}};
			\draw(1*\x,7*\y) node(n17){\dyckFour{0/0/0/3}{white}{.4}\hspace*{\d}\dyckFour{0/0/0/3}{white}{.4}};
			\draw(3*\x,7*\y) node(n18){\dyckFour{0/1/1/3}{orange!80!gray}{.4}\hspace*{\d}\dyckFour{0/0/0/2}{orange!80!gray}{.4}};
			\draw(5*\x,7*\y) node(n19){\dyckFour{0/0/1/3}{orange!80!gray}{.4}\hspace*{\d}\dyckFour{0/0/0/2}{orange!80!gray}{.4}};
			\draw(7*\x,7*\y) node(n20){\dyckFour{0/1/1/3}{white}{.4}\hspace*{-.2cm}\dyckFour{0/1/1/2}{white}{.4}};
			\draw(8*\x,7*\y) node(n21){\dyckFour{0/0/1/2}{white}{.4}\hspace*{-.2cm}\dyckFour{0/0/1/2}{white}{.4}};
			\draw(16.67*\x,7*\y) node(n22){\dyckFour{0/1/2/3}{white}{.4}\hspace*{\d}\dyckFour{0/0/2/2}{white}{.4}};
			\draw(18.67*\x,7*\y) node(n23){\dyckFour{0/1/2/2}{white}{.4}\hspace*{\d}\dyckFour{0/1/2/2}{white}{.4}};
			\draw(3*\x,8*\y) node(n24){\dyckFour{0/0/0/3}{white}{.4}\hspace*{\d}\dyckFour{0/0/0/2}{white}{.4}};
			\draw(6*\x,8*\y) node(n25){\dyckFour{0/0/1/2}{white}{.4}\hspace*{\d}\dyckFour{0/0/0/2}{white}{.4}};
			\draw(8*\x,8*\y) node(n26){\dyckFour{0/1/2/3}{white}{.4}\hspace*{\d}\dyckFour{0/0/0/1}{white}{.4}};
			\draw(9*\x,8*\y) node(n27){\dyckFour{0/1/1/2}{white}{.4}\hspace*{\d}\dyckFour{0/1/1/2}{white}{.4}};
			\draw(11.67*\x,8*\y) node(n28){\dyckFour{0/1/2/3}{white}{.4}\hspace*{\d}\dyckFour{0/1/1/1}{white}{.4}};
			\draw(14.67*\x,8*\y) node(n29){\dyckFour{0/1/2/3}{white}{.4}\hspace*{\d}\dyckFour{0/0/1/1}{white}{.4}};
			\draw(16.67*\x,8*\y) node(n30){\dyckFour{0/0/2/3}{white}{.4}\hspace*{\d}\dyckFour{0/0/2/2}{white}{.4}};
			\draw(18.67*\x,8*\y) node(n31){\dyckFour{0/1/2/2}{white}{.4}\hspace*{\d}\dyckFour{0/0/2/2}{white}{.4}};
			\draw(4*\x,9*\y) node(n32){\dyckFour{0/0/0/2}{white}{.4}\hspace*{\d}\dyckFour{0/0/0/2}{white}{.4}};
			\draw(6*\x,9*\y) node(n33){\dyckFour{0/1/1/3}{orange!80!gray}{.4}\hspace*{\d}\dyckFour{0/0/0/1}{orange!80!gray}{.4}};
			\draw(8*\x,9*\y) node(n34){\dyckFour{0/0/1/2}{orange!80!gray}{.4}\hspace*{\d}\dyckFour{0/0/0/1}{orange!80!gray}{.4}};
			\draw(10.33*\x,8.67*\y) node(n35){\dyckFour{0/1/1/2}{white}{.4}\hspace*{\d}\dyckFour{0/1/1/1}{white}{.4}};
			\draw(13.67*\x,9*\y) node(n36){\dyckFour{0/1/2/2}{white}{.4}\hspace*{\d}\dyckFour{0/1/1/1}{white}{.4}};
			\draw(14.67*\x,9*\y) node(n37){\dyckFour{0/0/2/3}{orange!80!gray}{.4}\hspace*{\d}\dyckFour{0/0/1/1}{orange!80!gray}{.4}};
			\draw(16.67*\x,9*\y) node(n38){\dyckFour{0/1/2/2}{orange!80!gray}{.4}\hspace*{\d}\dyckFour{0/0/1/1}{orange!80!gray}{.4}};
			\draw(18.67*\x,9*\y) node(n39){\dyckFour{0/0/2/2}{white}{.4}\hspace*{\d}\dyckFour{0/0/2/2}{white}{.4}};
			\draw(6*\x,10*\y) node(n40){\dyckFour{0/0/0/2}{white}{.4}\hspace*{\d}\dyckFour{0/0/0/1}{white}{.4}};
			\draw(8*\x,10*\y) node(n41){\dyckFour{0/1/1/2}{white}{.4}\hspace*{\d}\dyckFour{0/0/0/1}{white}{.4}};
			\draw(11.33*\x,9.67*\y) node(n42){\dyckFour{0/1/2/3}{white}{.4}\hspace*{-.05cm}\dyckFour{0/0/0/0}{white}{.4}};
			\draw(12.33*\x,9.67*\y) node(n43){\dyckFour{0/1/1/1}{white}{.4}\hspace*{\d}\dyckFour{0/1/1/1}{white}{.4}};
			\draw(13.33*\x,9.67*\y) node(n44){\dyckFour{0/0/1/2}{white}{.4}\hspace*{\d}\dyckFour{0/0/1/1}{white}{.4}};
			\draw(16.67*\x,10*\y) node(n45){\dyckFour{0/0/2/2}{white}{.4}\hspace*{\d}\dyckFour{0/0/1/1}{white}{.4}};
			\draw(8*\x,11*\y) node(n46){\dyckFour{0/0/0/1}{white}{.4}\hspace*{\d}\dyckFour{0/0/0/1}{white}{.4}};
			\draw(9.33*\x,10.67*\y) node(n47){\dyckFour{0/1/1/2}{orange!80!gray}{.4}\hspace*{-.05cm}\dyckFour{0/0/0/0}{orange!80!gray}{.4}};
			\draw(11.33*\x,10.67*\y) node(n48){\dyckFour{0/0/1/2}{orange!80!gray}{.4}\hspace*{-.05cm}\dyckFour{0/0/0/0}{orange!80!gray}{.4}};
			\draw(13.33*\x,10.67*\y) node(n49){\dyckFour{0/1/2/2}{orange!80!gray}{.4}\hspace*{-.05cm}\dyckFour{0/0/0/0}{orange!80!gray}{.4}};
			\draw(15.33*\x,10.67*\y) node(n50){\dyckFour{0/0/1/1}{white}{.4}\hspace*{\d}\dyckFour{0/0/1/1}{white}{.4}};
			\draw(9.33*\x,11.67*\y) node(n51){\dyckFour{0/0/0/1}{white}{.4}\hspace*{-.05cm}\dyckFour{0/0/0/0}{white}{.4}};
			\draw(11.33*\x,11.67*\y) node(n52){\dyckFour{0/1/1/1}{white}{.4}\hspace*{-.05cm}\dyckFour{0/0/0/0}{white}{.4}};
			\draw(13.33*\x,11.67*\y) node(n53){\dyckFour{0/0/1/1}{white}{.4}\hspace*{-.05cm}\dyckFour{0/0/0/0}{white}{.4}};
			\draw(11.33*\x,12.67*\y) node(n54){\dyckFour{0/0/0/0}{white}{.4}\hspace*{-.05cm}\dyckFour{0/0/0/0}{white}{.4}};
			\draw(n0) -- (n1);
			\draw(n0) -- (n2);
			\draw(n0) -- (n16);
			\draw(n1) -- (n3);
			\draw(n1) -- (n4);
			\draw(n1) -- (n22);
			\draw(n2) -- (n5);
			\draw(n2) -- (n8);
			\draw(n2) -- (n15);
			\draw(n3) -- (n6);
			\draw(n3) -- (n7);
			\draw(n3) -- (n8);
			\draw(n4) -- (n6);
			\draw(n4) -- (n30);
			\draw(n5) -- (n11);
			\draw(n5) -- (n20);
			\draw(n6) -- (n9);
			\draw(n6) -- (n10);
			\draw(n7) -- (n10);
			\draw(n7) -- (n13);
			\draw(n7) -- (n29);
			\draw(n8) -- (n11);
			\draw(n8) -- (n12);
			\draw(n8) -- (n13);
			\draw(n9) -- (n12);
			\draw(n9) -- (n14);
			\draw(n10) -- (n14);
			\draw(n10) -- (n37);
			\draw(n11) -- (n17);
			\draw(n11) -- (n18);
			\draw(n12) -- (n17);
			\draw(n12) -- (n19);
			\draw(n13) -- (n18);
			\draw(n13) -- (n19);
			\draw(n13) -- (n26);
			\draw(n14) -- (n19);
			\draw(n14) -- (n21);
			\draw(n15) -- (n20);
			\draw(n15) -- (n26);
			\draw(n15) -- (n28);
			\draw(n16) -- (n22);
			\draw(n16) -- (n23);
			\draw(n16) -- (n28);
			\draw(n17) -- (n24);
			\draw(n18) -- (n24);
			\draw(n18) -- (n33);
			\draw(n19) -- (n24);
			\draw(n19) -- (n25);
			\draw(n20) -- (n27);
			\draw(n20) -- (n33);
			\draw(n21) -- (n25);
			\draw(n21) -- (n44);
			\draw(n22) -- (n29);
			\draw(n22) -- (n30);
			\draw(n22) -- (n31);
			\draw(n23) -- (n31);
			\draw(n23) -- (n36);
			\draw(n24) -- (n32);
			\draw(n25) -- (n32);
			\draw(n25) -- (n34);
			\draw(n26) -- (n33);
			\draw(n26) -- (n34);
			\draw(n26) -- (n42);
			\draw(n27) -- (n35);
			\draw(n27) -- (n41);
			\draw(n28) -- (n35);
			\draw(n28) -- (n36);
			\draw(n28) -- (n42);
			\draw(n29) -- (n37);
			\draw(n29) -- (n38);
			\draw(n29) -- (n42);
			\draw(n30) -- (n37);
			\draw(n30) -- (n39);
			\draw(n31) -- (n38);
			\draw(n31) -- (n39);
			\draw(n32) -- (n40);
			\draw(n33) -- (n40);
			\draw(n33) -- (n41);
			\draw(n34) -- (n40);
			\draw(n34) -- (n48);
			\draw(n35) -- (n43);
			\draw(n35) -- (n47);
			\draw(n36) -- (n43);
			\draw(n36) -- (n49);
			\draw(n37) -- (n44);
			\draw(n37) -- (n45);
			\draw(n38) -- (n45);
			\draw(n38) -- (n49);
			\draw(n39) -- (n45);
			\draw(n40) -- (n46);
			\draw(n41) -- (n46);
			\draw(n41) -- (n47);
			\draw(n42) -- (n48);
			\draw(n42) -- (n49);
			\draw(n43) -- (n52);
			\draw(n44) -- (n48);
			\draw(n44) -- (n50);
			\draw(n45) -- (n50);
			\draw(n46) -- (n51);
			\draw(n47) -- (n51);
			\draw(n47) -- (n52);
			\draw(n48) -- (n51);
			\draw(n48) -- (n53);
			\draw(n49) -- (n52);
			\draw(n49) -- (n53);
			\draw(n50) -- (n53);
			\draw(n51) -- (n54);
			\draw(n52) -- (n54);
			\draw(n53) -- (n54);
		\end{tikzpicture}
		\caption{The lattice $\DM\bigl(\tmn{4}{2}\bigr)$.  The highlighted elements are added during the lattice completion.}
		\label{fig:dm_2tam4}
	\end{figure}
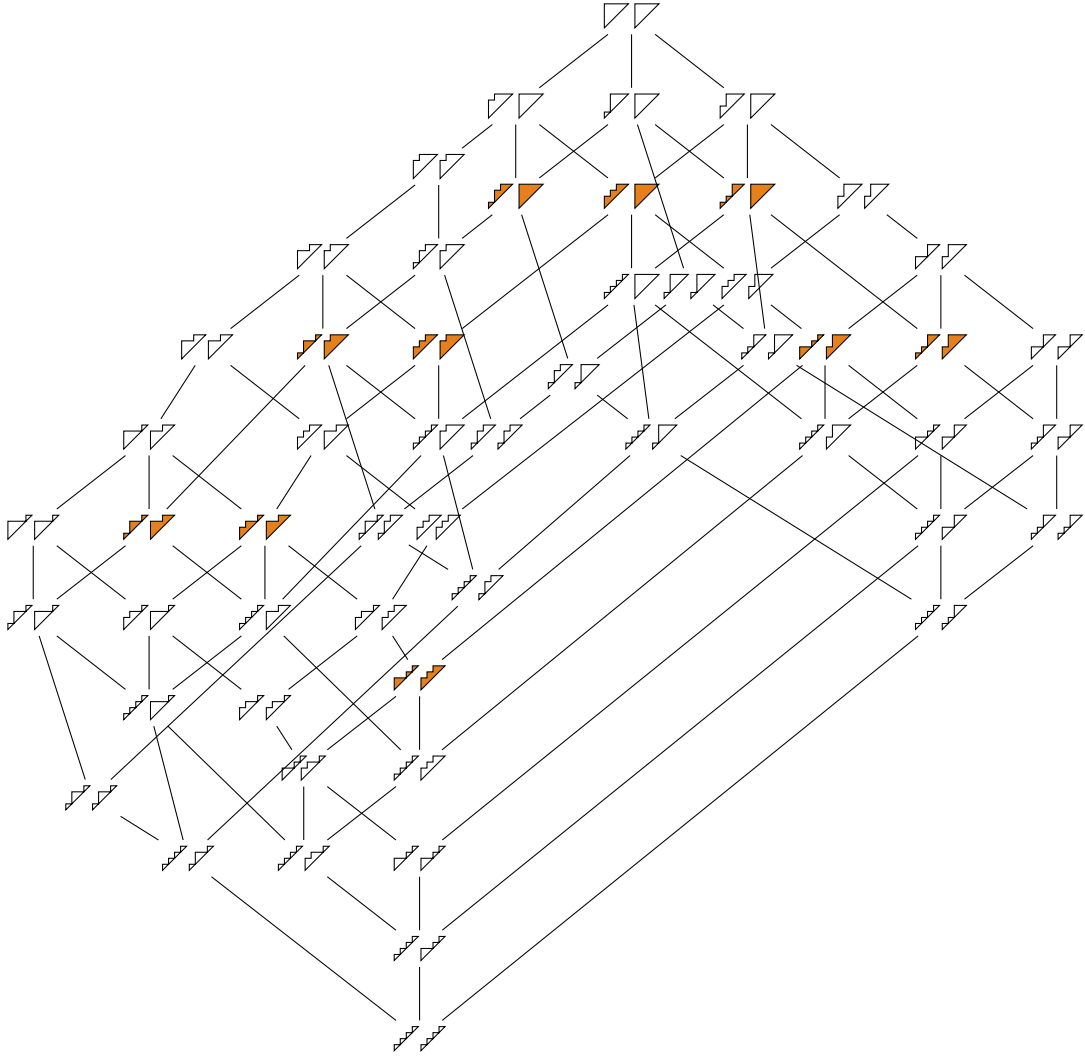
\end{example}

\subsection{Further Properties of the Strip Decomposition of $m$-Dyck Paths}
  \label{sec:strip_decomposition}
Now let $\qf,\qf'\in D_n$ have height sequences $\hh_{\qf}=(h_{1},h_{2},\ldots,h_{n})$, and $\hh_{\qf'}=(h'_{1},h'_{2},\ldots,h'_{n})$.  We say that \alert{$\qf'$ dominates $\qf$} if and only if $h_{i}\leq h'_{i}$ for all $i\in\{1,2,\ldots,n\}$, and we denote it by $\qf\leq_{\text{dom}}\qf'$.  (In other words, the two Dyck paths $\qf$ and $\qf'$ are noncrossing, but might share common edges.)  The partial order $\leq_{\text{dom}}$ on is called the \alert{dominance order} on $D_n$.  An \alert{$m$-fan of Dyck paths} is a tuple $(\qf_{1},\qf_{2},\ldots,\qf_{m})$ with $\qf_{i}\in D_n$ for $i\in\{1,2,\ldots,m\}$ such that the paths $\qf_{i}$ are pairwise noncrossing.  We say that an $m$-fan of Dyck paths $(\qf_{1},\qf_{2},\ldots,\qf_{m})$ is \alert{increasing} if $\qf_{1}\leq_{\text{dom}}\qf_{2}\leq_{\text{dom}}\cdots\leq_{\text{dom}}\qf_{m}$.  The following result gives an enumeration formula for the set of increasing $m$-fans of Dyck paths of length $2n$. 

\begin{theorem}[\cite{jonsson05generalized}*{Corollary~17},\cite{krattenthaler06growth}]\label{thm:increasing_fans}
	The number of increasing $m$-fans of Dyck paths of length $2n$ is given by 
	\begin{displaymath}
		\prod_{1\leq i\leq j<n}\frac{i+j+2m}{i+j}.
	\end{displaymath}
\end{theorem}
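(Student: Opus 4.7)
The plan is to realize increasing $m$-fans of Dyck paths of length $2n$ as configurations of non-intersecting lattice paths via a standard shifting trick, and then to apply the Lindström--Gessel--Viennot (LGV) lemma to express the count as a determinant that evaluates to the claimed product.

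First I would translate the dominance condition into a strict non-intersection condition. Via the height-sequence encoding from Section~\ref{sec:mdyck_paths}, a Dyck path $\qf_{j}$ in an $m$-fan is determined by a sequence $h^{(j)}_{1}\leq h^{(j)}_{2}\leq\cdots\leq h^{(j)}_{n}$ with $k\leq h^{(j)}_{k}\leq n$, and the condition $\qf_{j}\leq_{\text{dom}}\qf_{j+1}$ is precisely the weak inequality $h^{(j)}_{k}\leq h^{(j+1)}_{k}$. Setting $H^{(j)}_{k}:=h^{(j)}_{k}+(j-1)$ converts the weak inequalities in $j$ into strict ones, so that the vertically translated paths $\qf_{j}'$ (each running from $(0,j-1)$ to $(n,n+j-1)$ and constrained to lie weakly above the line $y=x+(j-1)$, the shifted image of the Dyck constraint) form a family of strictly non-intersecting lattice paths in the plane.

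Second, I would apply the LGV lemma with sources $A_{i}=(0,i-1)$ and sinks $B_{j}=(n,n+j-1)$. Because the sources and sinks are linearly ordered by height in the same direction, only the identity permutation can contribute a non-intersecting path system, and so the count of increasing $m$-fans equals
\begin{equation*}
  \det_{1\leq i,j\leq m}\bigl[N_{i,j}\bigr],
\end{equation*}
where $N_{i,j}$ is the number of single lattice paths from $A_{i}$ to $B_{j}$ staying weakly above $y=x+(i-1)$. A routine reflection-principle computation expresses each $N_{i,j}$ as a difference of two binomial coefficients (and $N_{i,j}=0$ when $j<i$), reducing the statement to a purely algebraic determinant identity.

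The main obstacle is the third step: evaluating this determinant in closed form as $\prod_{1\leq i\leq j<n}\frac{i+j+2m}{i+j}$. This is the technical heart of the argument and is not elementary; one clean route is to recognize the determinant, after suitable row/column manipulation, as a principal specialization of an irreducible symplectic character, so that the product drops out of the Weyl dimension formula, while a more self-contained route applies Krattenthaler's condensation toolbox for determinants of binomial-type entries. As a sanity check, for $m=1$ the product telescopes to $\mathrm{Cat}(n)$, which matches $\lvert D_{n}\rvert$ since every $1$-fan is just a single Dyck path; for $m=2$, $n=2$ the product equals $3$, agreeing with the three increasing pairs in $D_{2}$.
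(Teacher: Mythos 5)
The paper offers no proof of this statement: it is imported verbatim from the cited references (Jonsson's Corollary~17, via generalized triangulations, and Krattenthaler's growth-diagram methods), so there is no internal argument to compare yours against. Your LGV strategy is a legitimate and standard alternative route to formulas of this shape, and your sanity checks are correct, but the proposal as written contains a step that fails and leaves the actual content of the theorem unproved.

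The failing step is the claim that the vertical shift $H^{(j)}_{k}=h^{(j)}_{k}+(j-1)$ turns an increasing fan into a vertex-disjoint family. It does not. Take $n=2$, $m=2$ and $\qf_{1}=\qf_{2}\in D_{2}$ with height sequence $(1,2)$, i.e.\ the path $(0,0)\to(0,1)\to(1,1)\to(1,2)\to(2,2)$; its translate by $(0,1)$ passes through $(0,1)$ and $(1,2)$, both of which lie on $\qf_{1}$. Strict inequalities of the heights at half-integer abscissae only separate the horizontal steps; the vertical segments of consecutive paths can still share lattice points, and the Lindstr{\"o}m--Gessel--Viennot cancellation requires genuine vertex-disjointness. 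The standard repair is the antidiagonal shift (translate $\qf_{j}$ by $(-(j-1),\,j-1)$), which one can check does give vertex-disjointness for dominance-ordered fans; but then the wall constraint becomes $y\geq x+2(j-1)$, i.e.\ it depends on $j$, so the naive determinant of single-wall path counts is no longer directly what LGV produces --- this is precisely the ``vicious walkers with a wall'' situation and needs either the reflection-principle refinement of LGV or an argument that the automatic ordering of a non-intersecting family forces each path above its own wall. Finally, the determinant evaluation that you explicitly defer is where the theorem actually lives: the product is the Weyl dimension of the irreducible $Sp(2(n-1))$-module with highest weight $m$ times the last fundamental weight, and until that identity (or an equivalent bijection with symplectic tableaux) is carried out, the proposal is an outline of a known proof strategy rather than a proof.
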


\begin{lemma}\label{lem:decomposition_increasing}
	If $\pf\in D_{n}^{(m)}$, then $\delta(\pf)$ forms an increasing $m$-fan of Dyck paths.
\end{lemma}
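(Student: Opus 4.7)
The plan is to verify the dominance chain $\qf_1 \leq_{\text{dom}} \qf_2 \leq_{\text{dom}} \cdots \leq_{\text{dom}} \qf_m$ directly by comparing height sequences entry by entry; this implies both that the paths are pairwise noncrossing and that the $m$-fan is increasing, which together is exactly what is required. Since Lemma~\ref{lem:delta_injective} has already established that each $\qf_i$ is a well-defined Dyck path in $D_n$, nothing else needs to be checked.

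More concretely, first I would fix $i \in \{1,2,\ldots,m-1\}$ and write out the height sequences of two consecutive components of the strip decomposition explicitly: by Definition~\ref{def:strip_decomposition}, the $k$-th entry of $\hh_{\qf_i}$ is $h_{i+(k-1)m}$ and the $k$-th entry of $\hh_{\qf_{i+1}}$ is $h_{i+1+(k-1)m}$, where $\hh_\pf = (h_1, h_2, \ldots, h_{mn})$ is the height sequence of $\pf$. Then I would invoke the weak monotonicity condition~\eqref{eq:height_1}, which gives
\[
  h_{i+(k-1)m} \leq h_{i+1+(k-1)m}
\]
for every $k \in \{1,2,\ldots,n\}$. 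By the definition of dominance order this says precisely $\qf_i \leq_{\text{dom}} \qf_{i+1}$.

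Iterating over $i = 1, 2, \ldots, m-1$ yields the full chain $\qf_1 \leq_{\text{dom}} \qf_2 \leq_{\text{dom}} \cdots \leq_{\text{dom}} \qf_m$, and since any two paths that are comparable in the dominance order are automatically noncrossing, $(\qf_1, \qf_2, \ldots, \qf_m)$ is an $m$-fan of Dyck paths, and increasing by construction. There is no genuine obstacle here: the argument is essentially a one-line consequence of the fact that $\hh_\pf$ is weakly increasing, since the strip decomposition simply distributes the entries of $\hh_\pf$ among the $m$ components according to their residue class modulo $m$, and any such distribution of a weakly increasing sequence produces subsequences whose termwise comparison is inherited from adjacency in the original sequence.
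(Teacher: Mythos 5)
Your proof is correct and follows essentially the same route as the paper's: both arguments read off the height sequences of the components from Definition~\ref{def:strip_decomposition} and deduce the dominance chain $\qf_{1}\leq_{\text{dom}}\cdots\leq_{\text{dom}}\qf_{m}$ directly from the weak monotonicity condition~\eqref{eq:height_1}. (Your indexing $h_{i+(k-1)m}$ is in fact the one consistent with the definition; the paper's proof writes $h_{(i-1)m+k}$, transposing the roles of $i$ and $k$ in the subscript, but the argument is unaffected since the height sequence is weakly increasing either way.)
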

\begin{proof}
	Let $\pf\in D_{n}^{(m)}$ with height sequence $\hh_{\pf}=(h_{1},h_{2},\ldots,h_{mn})$, and let $\delta(\pf)=(\qf_{1},\qf_{2},\ldots,\qf_{m})$.  For $i\in\{1,2,\ldots,m\}$, let $\hh_{\qf_{i}}=(h_{1}^{(i)},h_{2}^{(i)},\ldots,h_{n}^{(i)})$ denote the height sequence associated with $\qf_{i}$.  By construction, we have $h_{k}^{(i)}=h_{(i-1)m+k}$ for $k\in\{1,2,\ldots,n\}$ and $i\in\{1,2,\ldots,m\}$.  Equation \eqref{eq:height_1} implies that $h_{1}\leq h_{2}\leq\cdots\leq h_{mn}$.  Thus for $i,j\in\{1,2,\ldots,m\}$ with $i<j$ and $k\in\{1,2,\ldots,n\}$ we have $h_{k}^{(i)}=h_{(i-1)m+k}\leq h_{(j-1)m+k}=h_{k}^{(j)}$.  Thus $\qf_{j}$ dominates $\qf_{i}$, which implies that $\delta(\pf)=(\qf_{1},\qf_{2},\ldots,\qf_{m})$ is an increasing $m$-fan of Dyck paths.
\end{proof}

\begin{example}\label{ex:dyck23_2}
	\begin{figure}
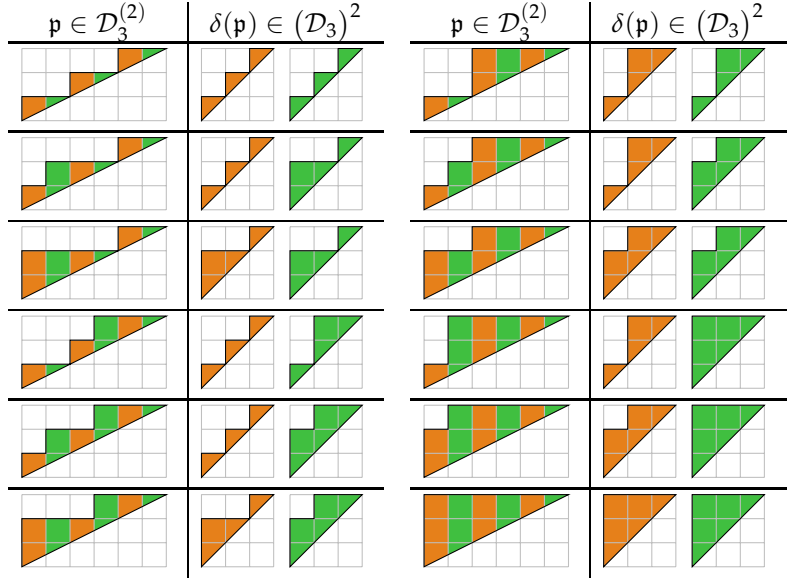

		\centering
		\begin{tabular}{c|ccc|c}
			$\pf\in\mathcal{D}_{3}^{(2)}$ & $\delta(\pf)\in\bigl(\mathcal{D}_{3}\bigr)^{2}$ & & $\pf\in\mathcal{D}_{3}^{(2)}$ & $\delta(\pf)\in\bigl(\mathcal{D}_{3}\bigr)^{2}$\\
			\cline{1-2}\cline{4-5}
			\dyckTwoThree{0/2/4}{\cOne}{\cTwo}{.8}{1} & \dyckThree{0/1/2}{\cOne}{.8}{1}\dyckThree{0/1/2}{\cTwo}{.8}{1} 
			& & \dyckTwoThree{0/2/2}{\cOne}{\cTwo}{.8}{1} & \dyckThree{0/1/1}{\cOne}{.8}{1}\dyckThree{0/1/1}{\cTwo}{.8}{1} \\
			\cline{1-2}\cline{4-5}
			\dyckTwoThree{0/1/4}{\cOne}{\cTwo}{.8}{1} & \dyckThree{0/1/2}{\cOne}{.8}{1}\dyckThree{0/0/2}{\cTwo}{.8}{1} 
			& & \dyckTwoThree{0/1/2}{\cOne}{\cTwo}{.8}{1} & \dyckThree{0/1/1}{\cOne}{.8}{1}\dyckThree{0/0/1}{\cTwo}{.8}{1} \\
			\cline{1-2}\cline{4-5}
			\dyckTwoThree{0/0/4}{\cOne}{\cTwo}{.8}{1} & \dyckThree{0/0/2}{\cOne}{.8}{1}\dyckThree{0/0/2}{\cTwo}{.8}{1} 
			& & \dyckTwoThree{0/0/2}{\cOne}{\cTwo}{.8}{1} & \dyckThree{0/0/1}{\cOne}{.8}{1}\dyckThree{0/0/1}{\cTwo}{.8}{1} \\
			\cline{1-2}\cline{4-5}
			\dyckTwoThree{0/2/3}{\cOne}{\cTwo}{.8}{1} & \dyckThree{0/1/2}{\cOne}{.8}{1}\dyckThree{0/1/1}{\cTwo}{.8}{1} 
			& & \dyckTwoThree{0/1/1}{\cOne}{\cTwo}{.8}{1} & \dyckThree{0/1/1}{\cOne}{.8}{1}\dyckThree{0/0/0}{\cTwo}{.8}{1} \\
			\cline{1-2}\cline{4-5}
			\dyckTwoThree{0/1/3}{\cOne}{\cTwo}{.8}{1} & \dyckThree{0/1/2}{\cOne}{.8}{1}\dyckThree{0/0/1}{\cTwo}{.8}{1} 
			& & \dyckTwoThree{0/0/1}{\cOne}{\cTwo}{.8}{1} & \dyckThree{0/0/1}{\cOne}{.8}{1}\dyckThree{0/0/0}{\cTwo}{.8}{1} \\
			\cline{1-2}\cline{4-5}
			\dyckTwoThree{0/0/3}{\cOne}{\cTwo}{.8}{1} & \dyckThree{0/0/2}{\cOne}{.8}{1}\dyckThree{0/0/1}{\cTwo}{.8}{1} 
			& & \dyckTwoThree{0/0/0}{\cOne}{\cTwo}{.8}{1} & \dyckThree{0/0/0}{\cOne}{.8}{1}\dyckThree{0/0/0}{\cTwo}{.8}{1} \\
		\end{tabular}
		\caption{The twelve $2$-Dyck paths of length $9$ and the corresponding (increasing) $2$-fans of Dyck paths of length $6$.}
		\label{fig:decomposition_dyck23}
	\end{figure}
 
	Figure~\ref{fig:decomposition_dyck23} shows the twelve $2$-Dyck paths of length $9$ and the corresponding increasing $2$-fans of Dyck paths of length $6$.  However, Theorem~\ref{thm:increasing_fans} implies that there are fourteen increasing $2$-fans of Dyck paths of length $6$, and the two increasing $2$-fans of Dyck paths of length $6$ that are missing in Figure~\ref{fig:decomposition_dyck23} are shown in Figure~\ref{fig:invalid_decomposition_dyck23}.
	
	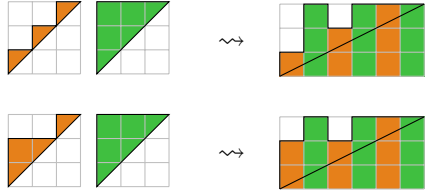
\begin{figure}
		\centering
		\begin{tikzpicture}
			\draw(0,0) node{\dyckThree{0/1/2}{\cOne}{.8}{1}\dyckThree{0/0/0}{\cTwo}{.8}{1}};
			\draw(1.9,-.1) node{$\leadsto$};
			\draw(3.5,0) node{
				\begin{tikzpicture}[scale=.8]\tiny
					\def\r{.4};
					\fill[\cOne](0,0) -- (1*\r,0) -- (1*\r,1*\r) -- (0,1*\r) -- cycle;
					\fill[\cTwo](1*\r,0) -- (2*\r,0) -- (2*\r,3*\r) -- (1*\r,3*\r) -- cycle;
					\fill[\cOne](2*\r,0) -- (3*\r,0) -- (3*\r,2*\r) -- (2*\r,2*\r) -- cycle;
					\fill[\cTwo](3*\r,0) -- (4*\r,0) -- (4*\r,3*\r) -- (3*\r,3*\r) -- cycle;
					\fill[\cOne](4*\r,0) -- (5*\r,0) -- (5*\r,3*\r) -- (4*\r,3*\r) -- cycle;
					\fill[\cTwo](5*\r,0) -- (6*\r,0) -- (6*\r,3*\r) -- (5*\r,3*\r) -- cycle;
					\draw[white!50!gray](0,0) grid[step=.4] (2.4,1.2);
					\draw(0,0) -- (2.4,1.2);
					\draw(1.2,1.3) node{};
					\draw(0,0) -- (0,.4) -- (.4,.4) -- (.4,1.2) -- (.8,1.2) -- (.8,.8) 
					  -- (1.2,.8) -- (1.2,1.2) -- (2.4,1.2);
				\end{tikzpicture}};
			\draw(0,-1.5) node{\dyckThree{0/0/2}{\cOne}{.8}{1}\dyckThree{0/0/0}{\cTwo}{.8}{1}};
			\draw(1.9,-1.6) node{$\leadsto$};
			\draw(3.5,-1.5) node{
				\begin{tikzpicture}[scale=.8]\tiny
					\def\r{.4}
					\fill[\cOne](0,0) -- (1*\r,0) -- (1*\r,2*\r) -- (0,2*\r) -- cycle;
					\fill[\cTwo](1*\r,0) -- (2*\r,0) -- (2*\r,3*\r) -- (1*\r,3*\r) -- cycle;
					\fill[\cOne](2*\r,0) -- (3*\r,0) -- (3*\r,2*\r) -- (2*\r,2*\r) -- cycle;
					\fill[\cTwo](3*\r,0) -- (4*\r,0) -- (4*\r,3*\r) -- (3*\r,3*\r) -- cycle;
					\fill[\cOne](4*\r,0) -- (5*\r,0) -- (5*\r,3*\r) -- (4*\r,3*\r) -- cycle;
					\fill[\cTwo](5*\r,0) -- (6*\r,0) -- (6*\r,3*\r) -- (5*\r,3*\r) -- cycle;
					\draw[white!50!gray](0,0) grid[step=.4] (2.4,1.2);
					\draw(0,0) -- (2.4,1.2);
					\draw(1.2,1.3) node{};
					\draw(0,0) -- (0,.8) -- (.4,.8) -- (.4,1.2) -- (.8,1.2) -- (.8,.8) 
					  -- (1.2,.8) -- (1.2,1.2) -- (2.4,1.2);
				\end{tikzpicture}};
		\end{tikzpicture}
		\caption{The two increasing $2$-fans of Dyck paths of length $6$ that do not produce a valid $2$-Dyck path of length $9$.}
		\label{fig:invalid_decomposition_dyck23}
	\end{figure}
\end{example}

Recall that given two posets $\PP=(P,\leq_{P})$ and $\QQ=(Q,\leq_{Q})$, a map $\varphi:P\to Q$ is called \alert{order-preserving} if $p\leq_{P}p'$ implies $\varphi(p)\leq_{Q}\varphi(p')$ for all $p,p'\in P$.  By abuse of notation, we denote the partial order of a poset and of its $m$-fold direct product by the same symbol.

\begin{lemma}\label{lem:delta_order_preserving_dom}
	The map $\delta$ is order-preserving from $\Bigl(D_{n}^{(m)},\leq_{\text{rot}}\Bigr)$ to $\Bigl(\delta\bigl(D_{n}^{(m)}\bigr),\leq_{\text{dom}}\Bigr)$.  More precisely, let $\pf,\pf'\in D_{n}^{(m)}$ with $\pf\leq_{rot}\pf'$.  If $\delta(\pf)=(\qf_{1},\qf_{2},\ldots,\qf_{m})$ and $\delta(\pf')=(\qf'_{1},\qf'_{2},\ldots,\qf'_{m})$, then we have $\qf_{k}\leq_{\text{dom}}\qf'_{k}$ for all $k\in\{1,2,\ldots,m\}$.
\end{lemma}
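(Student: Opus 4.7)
The plan is to apply Lemma~\ref{lem:sequence_order} directly and then extract the relevant subsequences of the height sequences to read off componentwise dominance of the individual strips. Since both the hypothesis (after Lemma~\ref{lem:sequence_order}) and the conclusion are phrased as entrywise inequalities on height sequences, no reduction to cover relations $\lessdot_{\text{rot}}$ is necessary; the statement for a general $\leq_{\text{rot}}$-comparable pair follows in one step.

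More concretely, I would let $\hh_{\pf}=(h_{1},h_{2},\ldots,h_{mn})$ and $\hh_{\pf'}=(h'_{1},h'_{2},\ldots,h'_{mn})$ denote the height sequences of $\pf$ and $\pf'$. Applying Lemma~\ref{lem:sequence_order} to the assumption $\pf\leq_{\text{rot}}\pf'$ gives $h_{j}\leq h'_{j}$ for every $j\in\{1,2,\ldots,mn\}$. Then, by Definition~\ref{def:strip_decomposition}, the $k$-th strip $\qf_{k}$ has height sequence $\bigl(h_{k},h_{k+m},\ldots,h_{k+(n-1)m}\bigr)$, and analogously $\qf'_{k}$ has height sequence $\bigl(h'_{k},h'_{k+m},\ldots,h'_{k+(n-1)m}\bigr)$. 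Restricting the $mn$ pointwise inequalities above to the arithmetic progression of indices $k,k+m,\ldots,k+(n-1)m$ yields that $\hh_{\qf_{k}}$ is entrywise dominated by $\hh_{\qf'_{k}}$, which is the definition of $\qf_{k}\leq_{\text{dom}}\qf'_{k}$. Since $k\in\{1,2,\ldots,m\}$ was arbitrary, this gives the componentwise dominance claim.

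There is no real obstacle to overcome; the lemma is essentially a tautology once Lemma~\ref{lem:sequence_order} is in hand, because the strip decomposition is by design a subsampling of the height sequence and dominance order is by design the entrywise order on height sequences. All the genuine content sits inside Lemma~\ref{lem:sequence_order}, which has already been established from the primitive-subsequence description of the cover relation $\lessdot_{\text{rot}}$.
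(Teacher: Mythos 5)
Your proof is correct and matches the paper exactly: the paper's proof of this lemma is the single sentence ``This follows immediately from Lemma~\ref{lem:sequence_order},'' and your argument simply spells out why, namely that Lemma~\ref{lem:sequence_order} gives the entrywise inequality $h_{j}\leq h'_{j}$ on the full height sequences and the strip decomposition merely subsamples these sequences along arithmetic progressions, so dominance of each strip is read off directly.
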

\begin{proof}
	This follows immediately from Lemma~\ref{lem:sequence_order}.
\end{proof}

\begin{remark}
  \label{rem:delta_not_order_reflecting_dom}
	The converse of Lemma~\ref{lem:delta_order_preserving_dom} is not true, \ie $\delta^{-1}$ is not an order-preserving map from $\Bigl(\delta\bigl(D_{n}^{(m)}\bigr),\leq_{\text{dom}}\Bigr)$ to $\Bigl(D_{n}^{(m)},\leq_{\text{rot}}\Bigr)$.  Consider for instance the Dyck paths $\qf$ and $\qf'$ with step sequences $\uu_{\qf}=(0,1,1)$ and $\uu_{\qf'}=(0,0,1)$.  In view of Figure~\ref{fig:decomposition_dyck23} we find that $\pf=\delta^{-1}(\qf,\qf)$ has step sequence $\uu_{\pf}=(0,2,2)$ and $\pf'=\delta^{-1}(\qf,\qf')$ has step sequence $\uu_{\pf'}=(0,1,2)$.  We have $\qf\leq_{\text{dom}}\qf$ and $\qf\leq_{\text{dom}}\qf'$, but $\pf\not\leq_{\text{rot}}\pf'$. See Figure~\ref{fig:tamari23} and Figure~\ref{fig:bruhat23} for an illustration of the posets $\Bigl(\mdyck{3}{2},\leq_{\text{rot}}\Bigr)$ and $\Bigl(\delta\bigl(\mdyck{3}{2}\bigr),\leq_{\text{dom}}\Bigr)$.
	
	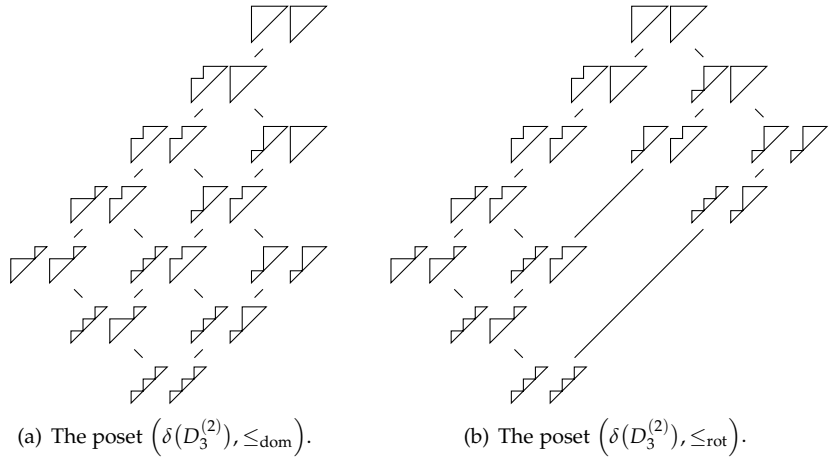
\begin{figure}
		\centering
		\subfigure[The poset $\Bigl(\delta\bigl(D_{3}^{(2)}\bigr),\leq_{\text{dom}}\Bigr)$.]{\label{fig:bruhat23}
		  \begin{tikzpicture}\tiny
			\def\x{.8};
			\def\y{.8};
			\draw(3*\x,1*\y) node(v1){\dyckThree{0/1/2}{white}{.4}{-1}\hspace*{-.1cm}\dyckThree{0/1/2}{white}{.4}{-1}};
			\draw(2*\x,2*\y) node(v2){\dyckThree{0/1/2}{white}{.4}{-1}\hspace*{-.1cm}\dyckThree{0/0/2}{white}{.4}{-1}};
			\draw(4*\x,2*\y) node(v9){\dyckThree{0/1/2}{white}{.4}{-1}\hspace*{-.1cm}\dyckThree{0/1/1}{white}{.4}{-1}};
			\draw(1*\x,3*\y) node(v3){\dyckThree{0/0/2}{white}{.4}{-1}\hspace*{-.1cm}\dyckThree{0/0/2}{white}{.4}{-1}};
			\draw(3*\x,3*\y) node(v4){\dyckThree{0/1/2}{white}{.4}{-1}\hspace*{-.1cm}\dyckThree{0/0/1}{white}{.4}{-1}};
			\draw(5*\x,3*\y) node(v10){\dyckThree{0/1/1}{white}{.4}{-1}\hspace*{-.1cm}\dyckThree{0/1/1}{white}{.4}{-1}};
			\draw(2*\x,4*\y) node(v5){\dyckThree{0/0/2}{white}{.4}{-1}\hspace*{-.1cm}\dyckThree{0/0/1}{white}{.4}{-1}};
			\draw(4*\x,4*\y) node(v11){\dyckThree{0/1/1}{white}{.4}{-1}\hspace*{-.1cm}\dyckThree{0/0/1}{white}{.4}{-1}};
			\draw(3*\x,5*\y) node(v6){\dyckThree{0/0/1}{white}{.4}{-1}\hspace*{-.1cm}\dyckThree{0/0/1}{white}{.4}{-1}};
			\draw(5*\x,5*\y) node(v12){\dyckThree{0/1/1}{white}{.4}{-1}\hspace*{-.1cm}\dyckThree{0/0/0}{white}{.4}{-1}};
			\draw(4*\x,6*\y) node(v7){\dyckThree{0/0/1}{white}{.4}{-1}\hspace*{-.1cm}\dyckThree{0/0/0}{white}{.4}{-1}};
			\draw(5*\x,7*\y) node(v8){\dyckThree{0/0/0}{white}{.4}{-1}\hspace*{-.1cm}\dyckThree{0/0/0}{white}{.4}{-1}};
			\draw(v1) -- (v2);
			\draw(v1) -- (v9);
			\draw(v2) -- (v3);
			\draw(v2) -- (v4);
			\draw(v3) -- (v5);
			\draw(v4) -- (v5);
			\draw(v4) -- (v11);
			\draw(v5) -- (v6);
			\draw(v6) -- (v7);
			\draw(v7) -- (v8);
			\draw(v9) -- (v4);
			\draw(v9) -- (v10);
			\draw(v10) -- (v11);
			\draw(v11) -- (v6);
			\draw(v11) -- (v12);
			\draw(v12) -- (v7);
		  \end{tikzpicture}}\hspace*{.5cm}
		\subfigure[The poset $\Bigl(\delta\bigl(D_{3}^{(2)}\bigr),\leq_{\text{rot}}\Bigr)$.]{\label{fig:weak_23}
		  \begin{tikzpicture}\tiny
			\def\x{.8};
			\def\y{.8};
			\draw(3*\x,1*\y) node(v1){\dyckThree{0/1/2}{white}{.4}{-1}\hspace*{-.1cm}\dyckThree{0/1/2}{white}{.4}{-1}};
			\draw(2*\x,2*\y) node(v2){\dyckThree{0/1/2}{white}{.4}{-1}\hspace*{-.1cm}\dyckThree{0/0/2}{white}{.4}{-1}};
			\draw(1*\x,3*\y) node(v3){\dyckThree{0/0/2}{white}{.4}{-1}\hspace*{-.1cm}\dyckThree{0/0/2}{white}{.4}{-1}};
			\draw(3*\x,3*\y) node(v4){\dyckThree{0/1/2}{white}{.4}{-1}\hspace*{-.1cm}\dyckThree{0/0/1}{white}{.4}{-1}};
			\draw(2*\x,4*\y) node(v5){\dyckThree{0/0/2}{white}{.4}{-1}\hspace*{-.1cm}\dyckThree{0/0/1}{white}{.4}{-1}};
			\draw(6*\x,4*\y) node(v6){\dyckThree{0/1/2}{white}{.4}{-1}\hspace*{-.1cm}\dyckThree{0/1/1}{white}{.4}{-1}};
			\draw(3*\x,5*\y) node(v7){\dyckThree{0/0/1}{white}{.4}{-1}\hspace*{-.1cm}\dyckThree{0/0/1}{white}{.4}{-1}};
			\draw(5*\x,5*\y) node(v8){\dyckThree{0/1/1}{white}{.4}{-1}\hspace*{-.1cm}\dyckThree{0/0/1}{white}{.4}{-1}};
			\draw(7*\x,5*\y) node(v9){\dyckThree{0/1/1}{white}{.4}{-1}\hspace*{-.1cm}\dyckThree{0/1/1}{white}{.4}{-1}};
			\draw(4*\x,6*\y) node(v10){\dyckThree{0/0/1}{white}{.4}{-1}\hspace*{-.1cm}\dyckThree{0/0/0}{white}{.4}{-1}};
			\draw(6*\x,6*\y) node(v11){\dyckThree{0/1/1}{white}{.4}{-1}\hspace*{-.1cm}\dyckThree{0/0/0}{white}{.4}{-1}};
			\draw(5*\x,7*\y) node(v12){\dyckThree{0/0/0}{white}{.4}{-1}\hspace*{-.1cm}\dyckThree{0/0/0}{white}{.4}{-1}};
			\draw(v1) -- (v2);
			\draw(v1) -- (v6);
			\draw(v2) -- (v3);
			\draw(v2) -- (v4);
			\draw(v3) -- (v5);
			\draw(v4) -- (v5);
			\draw(v4) -- (v8);
			\draw(v5) -- (v7);
			\draw(v6) -- (v9);
			\draw(v7) -- (v10);
			\draw(v8) -- (v11);
			\draw(v9) -- (v11);
			\draw(v10) -- (v12);
			\draw(v11) -- (v12);
		  \end{tikzpicture}}
		\caption{Two partial orders on $\delta\bigl(D_{3}^{(2)}\bigr)$.}
		\label{fig:poset_23}
	\end{figure}
\end{remark}

\begin{remark}\label{rem:delta_not_order_preserving_rot}
	We can also consider $\delta$ as a map from $\Bigl(D_{n}^{(m)},\leq_{\text{rot}}\Bigr)$ to $\Bigl(\delta\bigl(D_{n}^{(m)}),\leq_{\text{rot}}\Bigr)$.  However, in this case $\delta$ is not order-preserving.  Consider for instance $\pf,\pf'\in D_{3}^{(2)}$ with $\uu_{\pf}=(0,1,2)$ and $\uu_{\pf'}=(0,0,1)$.  Then, we have $\pf\leq_{\text{rot}}\pf'$, and $\delta(\pf)=(\qf_{1},\qf_{2}),\ \delta(\pf')=(\qf'_{1},\qf'_{2})$ with $\uu_{\qf_{1}}=(0,1,1)$ and $\uu_{\qf_{2}}=(0,0,1)$, as well as $\uu_{\qf'_{1}}=(0,0,1)$ and $\uu_{\qf'_{2}}=(0,0,0)$.  We see immediately that $\qf_{1}\not\leq_{\text{rot}}\qf'_{1}$. See Figure~\ref{fig:tamari23} and Figure~\ref{fig:weak_23} for an illustration of the posets $\Bigl(\mdyck{3}{2},\leq_{\text{rot}}\Bigr)$ and $\Bigl(\delta\bigl(\mdyck{3}{2}\bigr),\leq_{\text{rot}}\Bigr)$.
\end{remark}

The following proposition characterizes $\delta\bigl(D_{n}^{(m)}\bigr)$. 

\begin{proposition}\label{prop:m_classification}
	Let $(\qf_{1},\qf_{2},\ldots,\qf_{m})$ be an increasing $m$-fan of Dyck paths of length $2n$, with associated height sequences $\hh_{\qf_{j}}=(h_{1}^{(j)},h_{2}^{(j)},\ldots,h_{n}^{(j)})$ for all $j\in\{1,2,\ldots,m\}$.  Then, $(\qf_{1},\qf_{2},\ldots,\qf_{m})$ induces an $m$-Dyck path $\pf\in D_{n}^{(m)}$ via $\delta^{-1}$ if and only if $h_{i}^{(k)}\leq h_{i+1}^{(j)}$ for all $i\in\{1,2,\ldots,n-2\}$ and for all $k>j$.
\end{proposition}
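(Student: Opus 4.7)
\medskip

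The plan is to verify conditions \eqref{eq:height_1} and \eqref{eq:height_2} for the candidate height sequence built from $(\qf_1,\qf_2,\ldots,\qf_m)$. Given an increasing $m$-fan, the inverse of $\delta$ (if it exists) must produce the path with height sequence $\hh=(h_1,h_2,\ldots,h_{mn})$ defined by $h_{(i-1)m+j}=h_i^{(j)}$ for $i\in\{1,2,\ldots,n\}$ and $j\in\{1,2,\ldots,m\}$. So the task reduces to deciding exactly when $\hh$ satisfies \eqref{eq:height_1} and \eqref{eq:height_2}, i.e. when $\hh$ is the height sequence of some element of $D_n^{(m)}$.

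The bounds on $\hh$ from \eqref{eq:height_2} and the upper bound in \eqref{eq:height_1} come for free. Since $\qf_j\in D_n$, its height sequence satisfies $h_i^{(j)}\geq i$ and $h_i^{(j)}\leq n$; setting $k=(i-1)m+j$ gives $\lceil k/m\rceil=i$, so that $h_k=h_i^{(j)}\geq \lceil k/m\rceil$ and $h_k\leq n$ hold for every $k$. Thus the only nontrivial requirement is the monotonicity $h_k\leq h_{k+1}$ for all $k\in\{1,2,\ldots,mn-1\}$.

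I will split this monotonicity into two cases according to the position of $k$ relative to the block structure. For $k=(i-1)m+j$ with $1\leq j<m$, the inequality $h_k\leq h_{k+1}$ reads $h_i^{(j)}\leq h_i^{(j+1)}$, which is automatic from the hypothesis that $(\qf_1,\ldots,\qf_m)$ is an increasing $m$-fan. For $k=im$ with $1\leq i\leq n-1$, the inequality reads $h_i^{(m)}\leq h_{i+1}^{(1)}$. For $i=n-1$ this is trivial (since $h_n^{(1)}=n$ and all heights are at most $n$), so the genuine constraints are $h_i^{(m)}\leq h_{i+1}^{(1)}$ for $i\in\{1,2,\ldots,n-2\}$.

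It remains to check that these constraints are equivalent to the condition stated in the proposition. One direction is immediate: specializing $k=m$ and $j=1$ recovers $h_i^{(m)}\leq h_{i+1}^{(1)}$. Conversely, assuming $h_i^{(m)}\leq h_{i+1}^{(1)}$ and using the increasing fan condition (which gives $h_i^{(k)}\leq h_i^{(m)}$ and $h_{i+1}^{(1)}\leq h_{i+1}^{(j)}$), we chain $h_i^{(k)}\leq h_i^{(m)}\leq h_{i+1}^{(1)}\leq h_{i+1}^{(j)}$ for any $k>j$. Combining this equivalence with the monotonicity discussion completes the proof. The argument is really just careful bookkeeping about the interleaving performed by $\delta^{-1}$, and the only subtle point is recognizing that the block-boundary inequalities are the only nontrivial requirements and that they collapse to the single inequality $h_i^{(m)}\leq h_{i+1}^{(1)}$ in the presence of the fan hypothesis.
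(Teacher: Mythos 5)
Your proof is correct and follows essentially the same route as the paper's: both reduce the question to checking that the interleaved sequence satisfies \eqref{eq:height_1} and \eqref{eq:height_2}, with the hypothesis of an increasing fan handling the within-block inequalities and the stated condition supplying exactly the block-boundary monotonicity. The only difference is presentational: you treat general $m$ uniformly and make explicit that, under the fan hypothesis, the stated condition collapses to the single inequality $h_{i}^{(m)}\leq h_{i+1}^{(1)}$, whereas the paper writes out the case $m=2$ in full and declares the general case analogous.
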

\begin{proof}
	First suppose that $m=2$.  In this case, the map $\delta^{-1}$ constructs a sequence 
	\begin{displaymath}
		\hh=\bigl(h_{1}^{(1)},h_{1}^{(2)},h_{2}^{(1)},h_{2}^{(2)},h_{3}^{(1)},\ldots,h_{n}^{(1)},h_{n}^{(2)}\bigr).
	\end{displaymath}
	Since $(\qf_{1},\qf_{2})$ is increasing we obtain $h_{i}^{(1)}\leq h_{i}^{(2)}$ for all $i\in\{1,2,\ldots,n\}$.  By assumption we have $h_{i}^{(2)}\leq h_{i+1}^{(1)}$ for all $i\in\{1,2,\ldots,n-2\}$.  Moreover, we have $h_{n}^{(1)}=h_{n}^{(2)}$, which implies $h_{1}^{(1)}\leq h_{1}^{(2)}\leq h_{2}^{(1)}\leq\cdots\leq h_{n}^{(2)}$.  Thus $\hh$ satisfies \eqref{eq:height_1}.  By construction we have $h_{2i-1}=h_{i}^{(1)}$ and $h_{2i}=h_{i}^{(2)}$ for $i\in\{1,2,\ldots,n\}$.  Then, \eqref{eq:height_2} applied to $\hh_{\qf_{1}}$ and $\hh_{\qf_{2}}$ yields $h_{2i-1}=h_{i}^{(1)}\geq i=\lceil\tfrac{2i-1}{2}\rceil$ and $h_{2}=h_{i}^{(2)}\geq i=\lceil\tfrac{2i}{2}\rceil$, and thus $\hh$ satisfies \eqref{eq:height_2}.  Hence it is the height sequence of some $\pf\in D_{n}^{(2)}$. 
	
 	Conversely let $\pf\in D_{n}^{(2)}$ have height sequence $\hh_{\pf}=(h_{1},h_{2},\ldots,h_{2n}\}$ and let $\delta(\pf)=(\qf_{1},\qf_{2})$.  First of all, Lemma~\ref{lem:decomposition_increasing} implies that $(\qf_{1},\qf_{2})$ is an increasing fan of Dyck paths.  By construction the height sequences of $\qf_{1}$ and $\qf_{2}$ are $\hh_{\qf_{1}}=(h_{1},h_{3},\ldots,h_{2n-1})$ and $\hh_{\qf_{2}}=(h_{2},h_{4},\ldots,h_{2n})$, respectively.  In view of \eqref{eq:height_1} we obtain $h_{2i}\leq h_{2i+1}$ for all $i\in\{1,2,\ldots,n-2\}$, and we are done.

	\smallskip

	The reasoning for $m>2$ is exactly analogous.
\end{proof}

We observe that the connection between $\tmn{n}{m}$ and $\mtam{n}{m}$ described in Theorem~\ref{thm:mtamari} is rather implicit, since for large $m$ and $n$, the elements of $\tmn{n}{m}$ only make a small fraction of the elements in $\DM\Bigl(\tmn{n}{m}\Bigr)$.  We conclude this article with an explicit, but conjectural description of these elements.  Again the strip decomposition of $m$-Dyck paths plays an important role.

Let $\pf\in D_{n}^{(m)}$ and let $\delta(\pf)=(\qf_{1},\qf_{2},\ldots,\qf_{m})$.  For every $i,j\in\{1,2,\ldots,m\}$ with $i\neq j$ we define a map 
\begin{multline*}
	\beta_{i,j}:\bigl(D_{n}\bigr)^{m}\to\bigl(D_{n}\bigr)^{m},\quad (\qf_{1},\qf_{2},\ldots,\qf_{m})\mapsto\\(\qf_{1},\ldots,\qf_{i-1},\qf_{i}\wedge\qf_{j},\qf_{i+1},\ldots,\qf_{j-1},\qf_{i}\vee\qf_{j},\qf_{j+1},\ldots,\qf_{m}).
\end{multline*}
Now consider the composition 
\begin{equation}\label{eq:m_bouncing}
	\beta=\beta_{m-1,m}\circ\cdots\circ\beta_{2,3}\circ\beta_{1,m}\circ\cdots\circ\beta_{1,3}\circ\beta_{1,2},
\end{equation}
acting from the left, which we will refer to as the \alert{bouncing map}.  In particular, $\beta(\qf_{1},\qf_{2},\ldots,\qf_{m})$ is a multichain in $\mtam{n}{1}$.  If we abbreviate $\zeta=\beta\circ\delta$, then we obtain a map
\begin{equation}
	\zeta:D_{n}^{(m)}\to\bigl(D_{n}\bigr)^m, \quad p\mapsto\zeta(p).
\end{equation}

\begin{example}\label{ex:dyck23_4}
	\begin{figure}
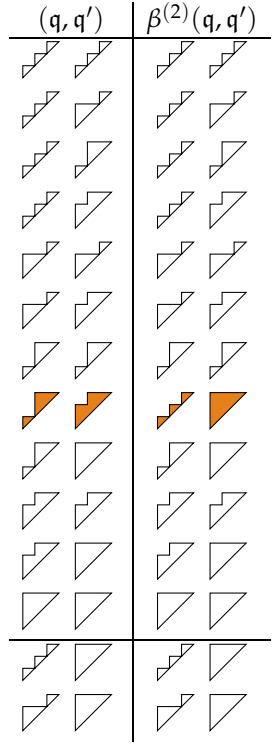

		\centering
		\begin{tabular}{c|c}
			$(\qf,\qf')$ & $\beta^{(2)}(\qf,\qf')$\\
			\hline
			\dyckThree{0/1/2}{white}{.4}{-1}\dyckThree{0/1/2}{white}{.4}{-1} & \dyckThree{0/1/2}{white}{.4}{-1}\dyckThree{0/1/2}{white}{.4}{-1}\\
			\dyckThree{0/1/2}{white}{.4}{-1}\dyckThree{0/0/2}{white}{.4}{-1} & \dyckThree{0/1/2}{white}{.4}{-1}\dyckThree{0/0/2}{white}{.4}{-1}\\
			\dyckThree{0/1/2}{white}{.4}{-1}\dyckThree{0/1/1}{white}{.4}{-1} & \dyckThree{0/1/2}{white}{.4}{-1}\dyckThree{0/1/1}{white}{.4}{-1}\\
			\dyckThree{0/1/2}{white}{.4}{-1}\dyckThree{0/0/1}{white}{.4}{-1} & \dyckThree{0/1/2}{white}{.4}{-1}\dyckThree{0/0/1}{white}{.4}{-1}\\
			\dyckThree{0/0/2}{white}{.4}{-1}\dyckThree{0/0/2}{white}{.4}{-1} & \dyckThree{0/0/2}{white}{.4}{-1}\dyckThree{0/0/2}{white}{.4}{-1}\\
			\dyckThree{0/0/2}{white}{.4}{-1}\dyckThree{0/0/1}{white}{.4}{-1} & \dyckThree{0/0/2}{white}{.4}{-1}\dyckThree{0/0/1}{white}{.4}{-1}\\
			\dyckThree{0/1/1}{white}{.4}{-1}\dyckThree{0/1/1}{white}{.4}{-1} & \dyckThree{0/1/1}{white}{.4}{-1}\dyckThree{0/1/1}{white}{.4}{-1}\\
			\dyckThree{0/1/1}{\cOne}{.4}{-1}\dyckThree{0/0/1}{\cOne}{.4}{-1} & \dyckThree{0/1/2}{\cOne}{.4}{-1}\dyckThree{0/0/0}{\cOne}{.4}{-1}\\
			\dyckThree{0/1/1}{white}{.4}{-1}\dyckThree{0/0/0}{white}{.4}{-1} & \dyckThree{0/1/1}{white}{.4}{-1}\dyckThree{0/0/0}{white}{.4}{-1}\\
			\dyckThree{0/0/1}{white}{.4}{-1}\dyckThree{0/0/1}{white}{.4}{-1} & \dyckThree{0/0/1}{white}{.4}{-1}\dyckThree{0/0/1}{white}{.4}{-1}\\
			\dyckThree{0/0/1}{white}{.4}{-1}\dyckThree{0/0/0}{white}{.4}{-1} & \dyckThree{0/0/1}{white}{.4}{-1}\dyckThree{0/0/0}{white}{.4}{-1}\\
			\dyckThree{0/0/0}{white}{.4}{-1}\dyckThree{0/0/0}{white}{.4}{-1} & \dyckThree{0/0/0}{white}{.4}{-1}\dyckThree{0/0/0}{white}{.4}{-1}\\
			\hline
			\dyckThree{0/1/2}{white}{.4}{-1}\dyckThree{0/0/0}{white}{.4}{-1} & \dyckThree{0/1/2}{white}{.4}{-1}\dyckThree{0/0/0}{white}{.4}{-1}\\
			\dyckThree{0/0/2}{white}{.4}{-1}\dyckThree{0/0/0}{white}{.4}{-1} & \dyckThree{0/0/2}{white}{.4}{-1}\dyckThree{0/0/0}{white}{.4}{-1}\\
		\end{tabular}
		\caption{The bouncing map in action.  The highlighted pair is the only pair on which $\beta$ acts non-trivially.}
		\label{fig:bouncing_dyck23}
	\end{figure}

	The first column in Figure~\ref{fig:bouncing_dyck23} shows the fourteen pairs of Dyck paths in $D_{3}$ that we found in Example~\ref{ex:dyck23_2}.  The two pairs which do not satisfy the conditions of Proposition~\ref{prop:m_classification}, are placed at the end, separated by a horizontal line.  The second column shows the corresponding pairs of Dyck paths after the application of the bouncing map $\beta$.  If we order these pairs by componentwise rotation order, then we obtain the lattice shown in Figure~\ref{fig:dyck23_decomp}, and we notice that this lattice is isomorphic to $\mtam{3}{2}$ shown in Figure~\ref{fig:tamari23}. 

	\begin{figure}
		\centering	
		\begin{tikzpicture}\tiny
			\def\x{.8};
			\def\y{.8};
			\draw(3*\x,1*\y) node(v1){\dyckThree{0/1/2}{white}{.4}{-1}\hspace*{-.1cm}\dyckThree{0/1/2}{white}{.4}{-1}};
			\draw(2*\x,2*\y) node(v2){\dyckThree{0/1/2}{white}{.4}{-1}\hspace*{-.1cm}\dyckThree{0/0/2}{white}{.4}{-1}};
			\draw(1*\x,3*\y) node(v3){\dyckThree{0/0/2}{white}{.4}{-1}\hspace*{-.1cm}\dyckThree{0/0/2}{white}{.4}{-1}};
			\draw(3*\x,3*\y) node(v4){\dyckThree{0/1/2}{white}{.4}{-1}\hspace*{-.1cm}\dyckThree{0/0/1}{white}{.4}{-1}};
			\draw(2*\x,4*\y) node(v5){\dyckThree{0/0/2}{white}{.4}{-1}\hspace*{-.1cm}\dyckThree{0/0/1}{white}{.4}{-1}};
			\draw(6*\x,4*\y) node(v6){\dyckThree{0/1/2}{white}{.4}{-1}\hspace*{-.1cm}\dyckThree{0/1/1}{white}{.4}{-1}};
			\draw(3*\x,5*\y) node(v7){\dyckThree{0/0/1}{white}{.4}{-1}\hspace*{-.1cm}\dyckThree{0/0/1}{white}{.4}{-1}};
			\draw(5*\x,5*\y) node(v8){\dyckThree{0/1/2}{white}{.4}{-1}\hspace*{-.1cm}\dyckThree{0/0/0}{white}{.4}{-1}};
			\draw(7*\x,5*\y) node(v9){\dyckThree{0/1/1}{white}{.4}{-1}\hspace*{-.1cm}\dyckThree{0/1/1}{white}{.4}{-1}};
			\draw(4*\x,6*\y) node(v10){\dyckThree{0/0/1}{white}{.4}{-1}\hspace*{-.1cm}\dyckThree{0/0/0}{white}{.4}{-1}};
			\draw(6*\x,6*\y) node(v11){\dyckThree{0/1/1}{white}{.4}{-1}\hspace*{-.1cm}\dyckThree{0/0/0}{white}{.4}{-1}};
			\draw(5*\x,7*\y) node(v12){\dyckThree{0/0/0}{white}{.4}{-1}\hspace*{-.1cm}\dyckThree{0/0/0}{white}{.4}{-1}};
			\draw(v1) -- (v2);
			\draw(v1) -- (v6);
			\draw(v2) -- (v3);
			\draw(v2) -- (v4);
			\draw(v3) -- (v5);
			\draw(v4) -- (v5);
			\draw(v4) -- (v8);
			\draw(v5) -- (v7);
			\draw(v6) -- (v8);
			\draw(v6) -- (v9);
			\draw(v7) -- (v10);
			\draw(v8) -- (v10);
			\draw(v8) -- (v11);
			\draw(v9) -- (v11);
			\draw(v10) -- (v12);
			\draw(v11) -- (v12);
		\end{tikzpicture}
		\caption{Componentwise rotation order on the bounced strip decomposition of the set of $2$-Dyck paths of length $9$ yields the $2$-Tamari lattice of parameter $3$.}
		\label{fig:dyck23_decomp}
	\end{figure}
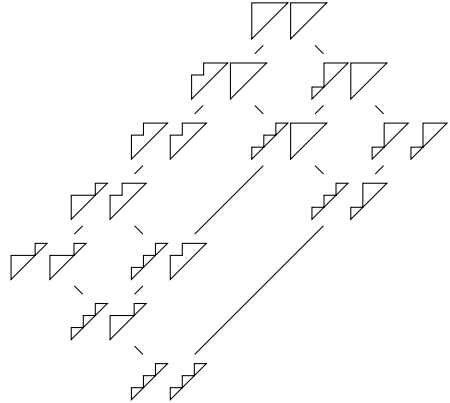
\end{example}

Computer experiments suggest the following property of the bouncing map.

\begin{conjecture}\label{conj:zeta_order_isomorphism}
	The posets $\Bigl(D_{n}^{(m)},\leq_{\text{rot}}\Bigr)$ and $\Bigl(\zeta\bigl(D_{n}^{(m)}\bigr),\leq_{\text{rot}}\Bigr)$ are isomorphic.
\end{conjecture}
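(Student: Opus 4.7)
The plan is to prove Conjecture~\ref{conj:zeta_order_isomorphism} in four stages: (1) show that $\zeta$ lands in the set of multichains of $\mtam{n}{1}$ of length $m$, (2) prove $\zeta$ is injective, (3) prove that $\zeta$ is order-preserving, and (4) prove that $\zeta$ is order-reflecting. Bijectivity onto the image will then combine with $\lvert D_n^{(m)}\rvert=\text{Cat}^{(m)}(n)=\lvert\mtam{n}{m}\rvert$ to yield the desired isomorphism.

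For stage (1), I would read the composition in~\eqref{eq:m_bouncing} as a selection sort in the Tamari lattice: each sweep $\beta_{i,i+1},\beta_{i,i+2},\ldots,\beta_{i,m}$ brings the Tamari-meet of positions $i,\ldots,m$ into position $i$, and a short induction shows that orderings established by earlier sweeps are preserved (later bounces at position $i'>i$ only replace entries $x_{i'},x_{j'}$ with $x_{i'}\wedge x_{j'}$ and $x_{i'}\vee x_{j'}$, both of which remain $\geq$ the current position $i$, since that position was already below each of them). Hence $\zeta(\pf)$ is weakly increasing in $\leq_{\text{rot}}$.

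For stage (2), since $\delta$ is injective by Lemma~\ref{lem:delta_injective}, I would show that $\beta$ is injective on $\delta(D_n^{(m)})$. The characterization of $\delta$-images in Proposition~\ref{prop:m_classification}, specifically the shift condition $h_i^{(k)}\leq h_{i+1}^{(j)}$ for $k>j$, is essential: it provides enough rigidity to invert the iterated meets and joins defining $\beta$. Concretely, I would reconstruct $\delta(\pf)=(\qf_1,\ldots,\qf_m)$ from $\zeta(\pf)=(\qf'_1,\ldots,\qf'_m)$ by peeling off bounces in reverse order, using at each step that the shift condition forces a unique splitting of the current meet/join pair.

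For stages (3) and (4), the natural starting point is Lemma~\ref{lem:delta_order_preserving_dom}, which gives $\delta(\pf)\leq_{\text{dom}}\delta(\pf')$ componentwise whenever $\pf\leq_{\text{rot}}\pf'$. Monotonicity of the Tamari operations $\wedge$ and $\vee$ would immediately transport componentwise $\leq_{\text{rot}}$ comparisons through $\beta$, but Remark~\ref{rem:delta_not_order_preserving_rot} warns us that $\delta$ itself does not preserve componentwise $\leq_{\text{rot}}$. The key technical step is therefore to prove that, on $\delta$-images, $\beta$ converts componentwise $\leq_{\text{dom}}$ into componentwise $\leq_{\text{rot}}$. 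For order-reflection, once order-preservation is established, a cover-counting argument should close the gap: every cover $\pf\lessdot_{\text{rot}}\pf'$ in $\mtam{n}{m}$ differs by a single rotation, which should translate through $\zeta$ to a single componentwise cover in the image, and matching cover counts plus bijectivity gives an isomorphism.

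The main obstacle is the dominance-to-rotation monotonicity of $\beta$ on $\delta$-images in stage (3), which is the heart of the conjecture. The shift condition from Proposition~\ref{prop:m_classification} must be exploited in a lattice-theoretic way, perhaps by proving that for $\delta$-image tuples the Tamari meet and join can be computed coordinatewise on height sequences, effectively reducing the Tamari order to dominance on the relevant sub-tuples. Finding such a ``local distributivity'' phenomenon for $\delta$-image tuples, or an equivalent structural lemma governing how $\wedge$ and $\vee$ in $\mtam{n}{1}$ interact with strip-decomposition sequences, appears to be the critical missing ingredient.
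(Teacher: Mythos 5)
The statement you are addressing is not proved in the paper at all: it is stated as Conjecture~\ref{conj:zeta_order_isomorphism}, supported only by computer verification (for $n\leq 5$, $m\leq 7$, and for $n=6$, $m\leq 4$), so there is no argument of the authors to compare yours against, and your text is, by your own admission, a plan rather than a proof. The decisive gap is exactly where you locate it, in stage (3), and it is worse than ``missing'': the key technical step you formulate --- that on $\delta$-images $\beta$ converts componentwise $\leq_{\text{dom}}$ into componentwise $\leq_{\text{rot}}$ --- is \emph{false} as stated. Remark~\ref{rem:beta_not_order_preserving_dom} exhibits pairs $(\qf_{1},\qf_{2})$ and $(\qf'_{1},\qf'_{2})$ in $\delta\bigl(\mdyck{5}{2}\bigr)$ with $\qf_{i}\leq_{\text{dom}}\qf'_{i}$ for $i=1,2$ but $\beta(\qf_{1},\qf_{2})\not\leq_{\text{rot}}\beta(\qf'_{1},\qf'_{2})$. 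Hence the hypothesis you would actually need is not the componentwise dominance supplied by Lemma~\ref{lem:delta_order_preserving_dom} but the strictly stronger global relation $\pf\leq_{\text{rot}}\pf'$, and no mechanism for exploiting that stronger hypothesis through the meets and joins in \eqref{eq:m_bouncing} is identified; in particular the hoped-for ``local distributivity'' cannot be a property of individual $\delta$-image tuples, since the counterexample consists of two perfectly valid $\delta$-images.

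Stages (2) and (4) also contain unproved core claims. In a lattice one cannot in general recover $(x,y)$ from $\bigl(x\wedge y,\,x\vee y\bigr)$, so injectivity of $\beta$ on $\delta\bigl(\mdyck{n}{m}\bigr)$ --- equivalently of $\zeta$, given Lemma~\ref{lem:delta_injective} --- is a genuine assertion that ``peeling off bounces in reverse order'' does not establish; you would at least have to show that the shift condition of Proposition~\ref{prop:m_classification} forces a unique splitting at every stage, and no such argument is given (note also that the shift condition need not be preserved by intermediate bounces). The closing cover-counting step of stage (4) is circular: to know that $\bigl(\zeta\bigl(\mdyck{n}{m}\bigr),\leq_{\text{rot}}\bigr)$ has the same number of cover or comparability relations as $\bigl(\mdyck{n}{m},\leq_{\text{rot}}\bigr)$ you would already need to understand the order on the image, which is what is to be proved. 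Your stage (1), the multichain property of $\beta$ via the selection-sort induction, is sound and matches the unproved assertion the authors make after \eqref{eq:m_bouncing}; everything after that remains open, as it does in the paper.
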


This conjecture was verified for $n\leq 5$ and $m\leq 7$, as well as for $n=6$ and $m\leq 4$ with \text{Sage-Combinat}~\cites{sage,sagecombinat}.  The corresponding script can be obtained from \url{http://homepage.univie.ac.at/henri.muehle/files/m_tamari_decomposition.sage}.

\begin{remark}
  \label{rem:beta_not_order_preserving_dom}
	In general, $\beta$ is not an order-preserving map from $\bigl(\delta(D_{n}^{(m)}),\leq_{\text{dom}}\bigr)$ to $\bigl(\zeta(D_{n}^{(m)}),\leq_{\text{rot}}\bigr)$.
	
	Consider for instance the Dyck paths $\qf_{1},\qf_{2},\qf'_{1},\qf'_{2}\in D_{5}$ given by the step sequences $\hh_{\qf_{1}}=(1,3,3,4,5),\hh_{\qf_{2}}=(2,3,4,4,5)$ as well as $\hh_{\qf'_{1}}=(2,3,3,5,5),\hh_{\qf'_{2}}=(2,3,4,5,5)$.  Then we have $\qf_{1}\leq_{\text{dom}}\qf'_{1}$ and $\qf_{2}\leq_{\text{dom}}\qf'_{2}$.  Moreover, the pairs $(\qf_{1},\qf_{2})$ and $(\qf'_{1},\qf'_{2})$ satisfy the conditions from Proposition~\ref{prop:m_classification}, so they are indeed contained in $\delta\bigl(D_{5}^{(2)}\bigr)$.  We obtain $\hh_{\qf_{1}\wedge\qf_{2}}=(1,2,3,4,5)$ and $\hh_{\qf_{1}\vee\qf_{2}}=(3,3,4,4,5)$ as well as $\hh_{\qf'_{1}\wedge\qf'_{2}}=(2,3,3,4,5)$ and $\hh_{\qf'_{1}\vee\qf'_{2}}=(2,3,5,5,5)$.  The corresponding step sequences are $\uu_{\qf_{1}\wedge\qf_{2}}=(0,1,2,3,4)$ and $\uu_{\qf_{1}\vee\qf_{2}}=(0,0,0,2,4)$, as well as $\uu_{\qf'_{1}\wedge\qf'_{2}}=(0,0,1,3,4)$ and $\uu_{\qf'_{1}\vee\qf'_{2}}=(0,0,1,2,2)$.  This implies that $\beta(\qf_{1},\qf_{2})\not\leq\beta(\qf'_{1},\qf'_{2})$.
		
	However, if $\pf,\pf'\in D_{5}^{(2)}$ are the $2$-Dyck paths satisfying $\delta(\pf)=(\qf_{1},\qf_{2})$ and $\delta(\pf')=(\qf'_{1},\qf'_{2})$, then we can quickly check that $\pf$ and $\pf'$ are determined by the step sequences $\uu_{\pf}=(0,1,2,5,8)$ and $\uu_{\pf'}=(0,0,2,5,6)$, which implies $\pf\not\leq_{\text{rot}}\pf'$. So this is \emph{not} a counterexample to Conjecture~\ref{conj:zeta_order_isomorphism}.
\end{remark}

\section*{Acknowledgments}
	We are very grateful to the anonymous referee for the many valuable comments and suggestions which helped us improve the presentation and content of this article. 
	
\bibliography{literature}
  \label{sec:bibliography}

\end{document}